\def\bint{{\ifinner\rlap{\bf\kern.30em--}
\int\else\rlap{\bf\kern.35em--}\int\fi}\ignorespaces}
\def\sbint{{\ifinner\rlap{\bf\kern.32em--}
\hspace{0.078cm}\int\else\rlap{\bf\kern.45em--}\int\fi}\ignorespaces}
\def\red{\color{red}}
\def\rr{\mathbb{R}}
\def\rn{\mathbb{R}^n}
\def\rnn{\mathbb{R}^{2n}}
\def\cc{\mathbb{C}}
\def\nn{\mathbb{N}}
\def\zz{\mathbb{Z}}
\def\lz{\lambda}
\def\dz{\delta}
\def\ez{\epsilon}
\def\gz{{\gamma}}
\def\tz{\theta}
\def\ls{\lesssim}
\def\fz{\infty}
\def\az{\alpha}
\def\cb{{\mathcal B}}
\def\cd{{\mathcal D}}
\def\cl{{\mathcal L}}
\def\cm{{\mathcal M}}
\def\co{{\mathcal O}}
\def\cx{{\mathcal X}}
\def\cy{{\mathcal Y}}
\def\BMO{\mathrm{\,BMO\,}}
\def\bmo{\mathrm{\,bmo\,}}
\def\CMO{\mathop\mathrm{\,CMO\,}}
\def\cmo{\mathop\mathrm{\,cmo\,}}
\def\VMO{\mathop\mathrm{\,VMO\,}}
\def\vmo{\mathop\mathrm{\,vmo\,}}
\def\XMO{\mathop\mathrm{\,XMO\,}}
\def\xmo{\mathop\mathrm{\,xmo\,}}
\def\MMO{\mathop\mathrm{\,MMO\,}}
\def\Lpwa{{L_{w_1}^{p_1}(\rn)}}
\def\Lpwb{{L_{w_2}^{p_2}(\rn)}}
\def\Lpw{{L_w^p(\rn)}}
\def\xMO{\mathop\mathrm{\,X_1MO\,}}
\def\r{\right}
\def\lf{\left}
\def\noz{{\nonumber}}
\def\dis{\displaystyle}
\def\r{\right}
\def\lf{\left}
\def\at{{\mathop\mathrm{\,at\,}}}
\def\supp{{\mathop\mathrm{\,supp\,}}}
\def\loc{{\mathop\mathrm{\,loc\,}}}
\def\eqref#1{(\ref{#1})}
\newtheorem{theorem}{Theorem}[section]
\newtheorem{lemma}[theorem]{Lemma}
\newtheorem{corollary}[theorem]{Corollary}
\newtheorem{proposition}[theorem]{Proposition}
\theoremstyle{definition}
\newtheorem{remark}[theorem]{Remark}
\newtheorem{definition}[theorem]{Definition}
\newtheorem{question}[theorem]{Question}
\numberwithin{equation}{section}
\begin{document}

\title{\bf\Large  A Survey on Function Spaces of John--Nirenberg Type
\footnotetext{\hspace{-0.35cm} 2020 {\it
Mathematics Subject Classification}. Primary 42B35; Secondary 42B30, 46E35. \endgraf
{\it Key words and phrases}. Euclidean space, cube, congruent cube,
BMO, $JN_p$, (localized) John--Nirenberg--Campanato space,
Riesz--Morrey space, vanishing John--Nirenberg space, duality,
commutator.
 \endgraf
This project is supported by the National
Natural Science Foundation of China (Grant Nos. 11971058, 12071197 and 11871100)
and the National Key Research
and Development Program of China
(Grant No. 2020YFA0712900).}}
\date{ }
\author{Jin Tao, Dachun Yang\footnote{Corresponding author,
E-mail: dcyang@bnu.edu.cn/{\red  July 20, 2021}/Final version.}\ \ and Wen Yuan}
\maketitle

\vspace{-0.8cm}

\begin{center}
\begin{minipage}{13cm}
{\small {\bf Abstract}\quad
In this article, the authors give a survey on the
recent developments of both the John--Nirenberg space $JN_p$
and the space BMO as well as their vanishing subspaces such as VMO, XMO, CMO, $VJN_p$, and $CJN_p$
on $\mathbb{R}^n$ or a given cube $Q_0\subset\mathbb{R}^n$ with finite side length.
In addition, some related open questions are also presented.}
\end{minipage}
\end{center}

\vspace{0.2cm}

\tableofcontents

\section{Introduction}

In this article, a \emph{cube $Q$} means
that it has finite side length and
all its sides parallel to the coordinate axes,
but $Q$ is not necessary to be open or closed.
Moreover, we always let $\mathcal{X}$ be $\rn$
or a given cube of $\rn$.
Recall that the \emph{Lebesgue space} $L^q(\cx)$
with $q\in[1,\fz]$ is defined to be the set of
all measurable functions $f$ on $\cx$ such that
$$\|f\|_{L^q(\mathcal{X})}:=
\begin{cases}
	\displaystyle{\lf[\int_{\cx}|f(x)|^q\,dx\right]^\frac1q}
	&{\rm when}\quad q\in [1,\fz),\\
	\displaystyle{\mathop{\mathrm{ess\,sup}}_{x\in\cx}|f(x)|}
	&{\rm when}\quad q=\fz
\end{cases}$$
is finite.
In what follows, we use $\mathbf{1}_E$ to denote
the \emph{characteristic function} of a set $E\subset\rn$,
and, for any given $q\in[1,\fz)$, $L_{\loc}^q(\cx)$
the set of all measurable functions $f$ on $\cx$ such that
$f{\mathbf 1}_E\in L^q(\cx)$
for any bounded measurable set $E\subset\mathcal{X}$.

It is well known that $L^p(\cx)$ with $p\in[1,\fz]$
plays a leading role in the modern analysis of mathematics.
In particular, when $p\in(1,\fz)$, the space $L^p(\cx)$
enjoys some elegant properties,
such as the reflexivity and the separability,
which no longer hold true in $L^\fz(\cx)$.
Thus, many studies related to $L^p(\cx)$
need some modifications when $p=\fz$;
for instance, the boundedness of Calder\'on--Zygmund operators.
Recall that the Calder\'on--Zygmund operator $T$ is bounded on $L^p(\rn)$
for any given $p\in(1,\fz)$, but not bounded on $L^\fz(\rn)$.
Indeed, $T$ maps $L^\fz(\rn)$ into
$$\BMO(\rn):=\lf\{f\in L^1_\loc(\rn):\,\,
\|f\|_{\BMO(\rn)}:=\sup_{{\rm cube}\,Q}\fint_Q\lf|f(x)-f_Q\r|\,dx<\fz\r\}$$
which was introduced by John and Nirenberg \cite{JN61} in 1961
to study the functions of \emph{bounded mean oscillation},
here and thereafter,
$$f_Q:=\fint_Q f(y)\,dy:=\frac{1}{|Q|}\int_Q f(y)\,dy$$
and the supremum is taken over all cubes $Q$ of $\rn$.
This implies that $\BMO(\cx)$ is a fine substitute of $L^\fz(\cx)$.
Also, it should be mentioned that, in the sense modulo constants,
$\BMO(\cx)$ is a Banach space, but, for simplicity,
we regard $f\in\BMO(\cx)$ as a function rather than
an equivalent class $f+\cc:=\{f+c:\ c\in\cc\}$
if there exists no confusion.
Moreover, the space $\BMO(\cx)$ and its numerous variants
as well as their vanishing subspaces
have attracted a lot of attentions since 1961.
For instance, Fefferman and Stein \cite{FS72} proved that
the dual space of the Hardy space $H^1(\rn)$ is $\BMO(\rn)$;
Coifman et al. \cite{CRW1976} showed an equivalent characterization
of the boundedness of Calder\'on--Zygmund commutators via $\BMO(\rn)$;
Coifman and Weiss introduced the space of homogeneous type and
studied the Hardy space and the BMO space in this context;
Sarason \cite{Sarason75} obtained the equivalent characterization
of $\VMO(\rn)$, the closure in $\BMO(\rn)$ of uniformly continuous functions,
and used it to study stationary stochastic processes satisfying the
strong mixing condition and the algebra $H^\fz+C$;
Uchiyama \cite{U78} established an equivalent characterization
of the compactness of Calder\'on--Zygmund commutators via $\CMO(\rn)$
which is defined to be the closure in $\BMO(\rn)$ of
infinitely differentiable functions on $\rn$ with compact support;
Nakai and Yabuta \cite{ny85} studied pointwise multipliers
for functions on $\rn$ of bounded mean oscillation;
Iwaniec \cite{I92} used the compactness theorem
in Uchiyama \cite{U78} to study linear complex Beltrami equations and
the $L^p(\cc)$-theory of quasiregular mappings.
All these classical results have wide generalizations as well as applications,
and inspire a myriad of further studies in recent years;
see, for instance, the references \cite{J78,cl99,BT13,BDMT15}
for their applications in singular integral operators as well as their commutators,
the references \cite{n93,n97,ny97,n16,n17,lny18}
for their applications in pointwise multipliers,
the references \cite{CDS99,ny12,tyyPA} for their applications in partial differential equations,
and the references \cite{cds05,ABBF16,bd20,dg20,dgl20,dgy21}
for more variants and properties of $\BMO(\rn)$.
In particular, we refer the reader to Chang and Sadosky \cite{cs06} for an
instructive survey on functions of bounded mean oscillation,
and also Chang et al. \cite{cds05} for
BMO spaces on the Lipschitz domain of $\rn$.

Naturally, $\BMO(\cx)$ extends $L^\fz(\cx)$, in the sense that
$L^\fz(\cx)\subsetneqq \BMO(\cx)$ and
$\|\cdot\|_{\BMO(\cx)}\le 2\|\cdot\|_{L^\fz(\cx)}$.
Similarly, such extension exists as well for any $L^p(\cx)$ with $p\in(1,\fz)$.
Indeed, John and Nirenberg \cite{JN61} also introduced
a generalized version of the BMO condition which
was subsequently used to define the so-called John--Nirenberg space $JN_p(Q_0)$ with
exponent $p\in(1,\fz)$ and $Q_0$ being any given cube of $\rn$.
Recall that, for any given $p\in(1,\infty)$ and any given
cube $Q_0$ of $\rn$, the \emph{John--Nirenberg space  $JN_p(Q_0)$}
is defined to be the set of all $f\in L^1(Q_0)$ such that
\begin{equation}\label{jnp}
\|f\|_{JN_p(Q_0)}:=\sup \lf[ \sum_i|Q_i|\lf\{\fint_{Q_i}\lf|f(x)-f_{Q_i}\r|\,dx
   \r\}^p \r]^\frac1p<\infty,
\end{equation}
where the supremum is taken over all collections of
\emph{interior pairwise disjoint} cubes $\{Q_i\}_i$ of $Q_0$.
It is easy to see that
the limit of  $JN_p(Q_0)$ when $p\to\fz$ is just $\BMO(Q_0)$;
see also Corollary \ref{p=8} below.
Moreover, the John--Nirenberg space is closely related to
the Lebesgue space $L^p(Q_0)$ and the weak Lebesgue space $L^{p,\fz}(Q_0)$
which is defined as in Definition \ref{def-weak} below.
Precisely, let $p\in(1,\infty)$. On one hand,
the inequality obtained in \cite[Lemma 3]{JN61}
(see also Theorem \ref{JN61Lem3} below) implies that
$JN_p(Q_0)\subset L^{p,\infty}(Q_0)$;
also, by \cite[Example 3.5]{ABKY11}, we further know that
$JN_p(Q_0)\subsetneqq L^{p,\fz}(Q_0)$.
On the other hand, it is obvious that $L^p(Q_0)\subset JN_p(Q_0)$
with $\|\cdot\|_{JN_p(Q_0)}\le2\|\cdot\|_{L^p(Q_0)}$,
but the striking nontriviality was showed very recently by
Dafni et al. \cite[Proposition 3.2 and Corollary 4.2]{DHKY18}
which says that $L^p(Q_0)\subsetneqq JN_p(Q_0)$.
Combining these facts, we conclude that
\begin{align}\label{Lp<JNp<WLp}
L^p(Q_0)\subsetneqq JN_p(Q_0)\subsetneqq L^{p,\fz}(Q_0).
\end{align}
Therefore, John--Nirenberg spaces are new spaces between Lebesgue spaces
and weak Lebesgue spaces, which motivates us to study the properties of $JN_p$.
Furthermore, various John--Nirenberg-type spaces have attracted a lot of attentions
as well in recent years; see, for instance,
\cite{hmv14,BKM16,m16,DHKY18,BE20,SXY19,tyyBJMA}
for the Euclidean space case, \cite{fpw98,mp98,ABKY11,ms16}
for the metric measure space case.

It should be mentioned that the mean oscillation truly makes a difference
in both $\BMO$ and $JN_p$; for instance,
\begin{enumerate}
\item[{\rm(i)}]
via the characterization of distribution functions,
we know that $\BMO$ is closely related to the space $L_{\exp}$
whose definition [see \eqref{Orlicz} below]
is similar to an equivalent expression
of $\BMO$ but with $f-f_{Q}$ replaced by $f$
(see Proposition \ref{BMO-exp} below);

\item[{\rm(ii)}]
there exists an interesting observation presented by Riesz \cite{R1910},
which says that, in \eqref{jnp}, if we replace $f-f_{Q_i}$ by $f$,
then $JN_p(Q_0)$ turns to be $L^p(Q_0)$.
Moreover, this conclusion also holds true when $Q_0$ is replaced by $\rn$;
see Proposition \ref{Riesz} below.
\end{enumerate}

The main purpose of this article is to give a survey on some
recent developments of both the John--Nirenberg space $JN_p$
and the space BMO, mainly including their several generalized
(or related) spaces and some vanishing subspaces.
We warm up in Section \ref{Sec-JNp} by recalling
some definitions and basic properties of BMO and $JN_p$.
Section \ref{Sec-(L)JNC} summarizes some recent developments of
the John--Nirenberg--Campanato space,
the localized John--Nirenberg--Campanato space,
and the special John--Nirenberg--Campanato space via congruent cubes.
Section \ref{Sec-Riesz} focuses on the Riesz-type space
which differs from the John--Nirenberg space in subtracting integral means,
and its congruent counterpart.
In Section \ref{Sec-Van}, we pay attention to
some vanishing subspaces of aforementioned John--Nirenberg-type spaces,
such as VMO, XMO, CMO, $VJN_p$, and $CJN_p$
on $\mathbb{R}^n$ or any given cube $Q_0$ of $\rn$.
In addition, several related open questions are also summarized
in this survey.

More precisely, the remainder of this survey is organized as follows.

Section \ref{Sec-JNp} is split into two subsections.
In Subsection \ref{Subsec-BMO},
via recalling the definitions of distribution functions and
some related function spaces (including the weak Lebesgue space,
the Morrey space, and the space $L_{\exp}$),
we present the relation
$$L^\fz(Q_0)\subsetneqq \BMO(Q_0)\subsetneqq L_{\exp}(Q_0)$$
in Proposition \ref{LfzBMOLexp} below,
which is a counterpart of \eqref{Lp<JNp<WLp} above,
and also show two equivalent Orlicz-type norms on $\BMO(\rn)$
in Proposition \ref{BMO-exp} below;
moreover, corresponding results for the localized BMO space are also obtained
in Corollary \ref{bmo-exp} below.
Subsection \ref{Subsec-JNp} is devoted to some significant results of $JN_p$,
including the famous John--Nirenberg inequality
(see Theorem \ref{JN61Lem3} below), and the accurate relations
of $JN_p$ and $L^p$ as well as $L^{p,\fz}$
(see Remark \ref{LpJNpWLp} below).
Furthermore, some recent progress of $JN_p$ is also briefly listed
at the end of this subsection.

Section \ref{Sec-(L)JNC} is split into three subsections.
In Subsection \ref{Subsec-JNC},
we first recall the notions of the John--Nirenberg--Campanato space
(for short, JNC space), the corresponding Hardy-type space,
and their basic properties which include
the limit results and the relations with other classical spaces.
Then we review the dual theorem between these two spaces,
and the independence over the second sub-index of
JNC spaces and Hardy-type spaces.
Subsection \ref{Subsec-LJNC} is devoted to the localized counterpart of
Subsection \ref{Subsec-JNC}.
The aim of Subsection \ref{Subsec-ConJNC} is the summary of
the special JNC space defined via congruent cubes
(for short, congruent JNC space), including their basic properties
corresponding to those in Subsection \ref{Subsec-JNC}.
Also, some applications about the boundedness of operators
on congruent spaces are mentioned as well.

In Section \ref{Sec-Riesz}, via subtracting integral means in the JNC space,
we first give the definition of the Riesz-type space appearing in \cite{tyyBJMA},
and then present some basic facts about this space in Subsection \ref{Subsec-RM}.
Moreover, the predual space (namely, the block-type space)
and the corresponding dual theorem
of the Riesz-type space are also displayed in this subsection.
Subsection \ref{Subsec-RM-Appl} is devoted to the congruent counterpart of
the Riesz-type space and the boundedness of some important operators.

Section \ref{Sec-Van} is split into three subsections.
Subsection \ref{Subsec-VanBMO} is devoted to several vanishing subspaces of $\BMO(\rn)$,
including $\VMO(\rn)$, $\CMO(\rn)$, $\MMO(\rn)$, $\XMO(\rn)$, and $\xMO(\rn)$.
We first recall their definitions, and then review
their [except $\MMO(\rn)$] mean oscillation characterizations,
respectively, in Theorems \ref{VMO-char}, \ref{CMO-char}, and \ref{xMO-char} below.
Meanwhile, an open question on the corresponding equivalent characterization
of $\MMO(\rn)$ is also listed in Question \ref{openQ-MMO} below.
Then we further review the compactness theorems of the Calder\'on--Zygmund
commutators $[b,T]$ where $b$ belongs to the vanishing subspaces
$\CMO(\rn)$ as well as $\XMO(\rn)$,
and propose an open question on $[b,T]$ with $b\in\XMO(\rn)$.
Moreover, the characterizations via Riesz transforms
of $\BMO(\rn)$, $\VMO(\rn)$, and $\CMO(\rn)$,
as well as the localized results of these vanishing subspaces are presented.
Also, some open questions are listed in this subsection.
Subsection \ref{Subsec-VanJNp} devotes to the vanishing subspaces of JNC spaces.
We first recall the definition of the vanishing JNC space on cubes
in Definition \ref{def-VJNp},
and then review its equivalent characterization as well as its dual result,
respectively, in Theorems \ref{VJNp-BB} and \ref{Vdual-BB}.
Moreover, for the case of $\rn$, we review the corresponding results for
$VJN_p(\rn)$ and $CJN_p(\rn)$, which are, respectively, counterparts of
$\VMO(\rn)$ and $\CMO(\rn)$
(see Theorems \ref{VJNp-char} and \ref{CJNp-char} below).
As before, some open questions are also listed at the end of this subsection.
Subsection \ref{Subsec-VanConJNp} is devoted to the congruent counterpart of
Subsection \ref{Subsec-VanJNp}, some similar conclusions are listed in this subsection;
meanwhile, some open questions in the JNC space have affirmative answers
in the congruent setting; see Proposition \ref{nontrivial} below.

Finally, we make some conventions on notation.
Let $\nn:=\{1,2,\ldots\}$, $\zz_+:=\nn\cup\{0\}$,
and $\zz_+^n:=(\zz_+)^n$. We always denote by $C$ and $\widetilde{C}$
positive constants which are independent of the main parameters,
but they may vary from line to line.
Moreover, we use $C_{(\gamma,\ \beta,\ \ldots)}$
to denote a positive constant depending on the indicated
parameters $\gamma,\ \beta,\ \ldots$. Constants with subscripts,
such as $C_{0}$ and $A_1$, do not change in different occurrences.
Moreover, the  {symbol} $f\lesssim g$ represents that
$f\le Cg$ for some positive constant $C$.
If $f\lesssim g$ and $g\lesssim f$,
we then write $f\sim g$. If $f\le Cg$ and $g=h$ or $g\le h$,
we then write $f\ls g\sim h$
or $f\ls g\ls h$, \emph{rather than} $f\ls g=h$
or $f\ls g\le h$.
For any $p\in[1,\fz]$, let $p'$ be its \emph{conjugate index},
that is, $p'$ satisfies  $1/p+1/p'=1$.
We use ${\mathbf 1}_E$ to denote the \emph{characteristic function}
of a set $E\subset\rn$
and $|E|$ the \emph{Lebesgue measure} when $E\subset\rn$ is measurable,
and $\mathbf{0}$ the \emph{origin} of $\rn$.
For any function $f$ on $\rn$, let
$\supp(f):=\{x\in\rn:\,\,f(x)\neq0\}$.
Let $\mathbb X$ be a normed linear space. We use $(\mathbb X)^\ast$
to denote its dual space.

\section{BMO and $JN_p$}\label{Sec-JNp}

It is well known that the space BMO has played an important role in
harmonic analysis, partial differential equations,
and other mathematical fields
since it was introduced by John and Nirenberg in the celebrated article
\cite{JN61}. However, in the same article \cite{JN61},
another mysterious space appeared as well,
which is nowadays called the John--Nirenberg space $JN_p$.
Indeed, BMO can be viewed as the limit
space of $JN_p$ as $p\to\fz$; see Proposition \ref{ptofz} and
Corollary \ref{p=8} below with $\az:=0$.
To establish the relations of BMO and $JN_p$,
and also summarize some recent works of John--Nirenberg-type spaces,
we first recall some basic properties of BMO and $JN_p$
in this section.

This section is devoted to some well-known results
of $\BMO(\cx)$ and $JN_p(\cx)$, respectively,
in Subsections \ref{Subsec-BMO} and \ref{Subsec-JNp}.
In addition, it is trivial to find that
all the results in Subsection \ref{Subsec-BMO} also hold true
with the cube $Q_0$ replaced by the ball $B_0$ of $\rn$.

\subsection{(Localized) BMO and $L_{\exp}$}\label{Subsec-BMO}

This subsection is devoted to several equivalent norms of
the spaces BMO and localized BMO.
To this end, we begin with the \emph{distribution function}
\begin{align}\label{distr}
\mathfrak{D}(f;\cx)(t):=|\{x\in\cx:\,\,|f(x)|>t\}|,
\end{align}
where $f\in L^1_\loc(\cx)$ and $t\in(0,\fz)$.
Recall that the distribution function is closely related to the
following weak Lebesgue space.
\begin{definition}\label{def-weak}
Let $p\in(0,\fz)$.
The \emph{weak Lebesgue space} $L^{p,\fz}(\cx)$ is defined by setting
$$L^{p,\fz}(\cx):=\lf\{f\ {\rm is\ measurable\ on\ }\cx:\,\,
\|f\|_{L^{p,\fz}(\cx)}<\fz \r\},$$
where, for any measurable function $f$ on $\cx$,
$$\|f\|_{L^{p,\fz}(\cx)}:=\sup_{t\in(0,\fz)}
\lf[t |\{x\in\cx:\,\,|f(x)|>t\}|^\frac1p\r].$$
\end{definition}

Moreover, the distribution function also features $\BMO(\cx)$,
which is exactly the famous result obtained by John and Nirenberg \cite[Lemma 1']{JN61}:
there exist positive constants $C_1$ and $C_2$, depending only on the dimension $n$,
such that, for any given $f\in\BMO(\cx)$, any given cube $Q\subset\cx$,
and any $t\in(0,\fz)$,
\begin{align}\label{JN-BMO}
\lf|\lf\{x\in Q:\ |f(x)-f_Q|>t \r\} \r|
\le C_1 e^{-\frac{C_2}{\|f\|_{\BMO(\cx)}}t}|Q|.
\end{align}
The main tool used in the proof of \eqref{JN-BMO} is the following
well-known \emph{Calder\'on--Zygmund decomposition};
see, for instance, \cite[p.\,34, Theorem 2.11]{Duo} and also
\cite[p.\,150, Lemma 1]{Stein93}.
\begin{theorem}\label{CZ-decom}
For a given function $f$ which is integrable and non-negative on $\cx$,
and a given positive number $\lz$,
there exists a sequence $\{Q_j\}_j$ of disjoint dyadic cubes of $\cx$ such that
\begin{itemize}
\item [{\rm(i)}]
$f(x)\le\lz\ $ for almost every $x\in\cx\setminus\bigcup_j Q_j$;

\item [{\rm(ii)}]
$|\bigcup_j Q_j |\le\frac1\lz\|f\|_{L^1(\cx)}$;

\item [{\rm(iii)}]
$\lz<\fint_{Q_j}f(x)\,dx\le 2^n\lz$.
\end{itemize}
\end{theorem}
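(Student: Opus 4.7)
The plan is to run the classical dyadic stopping-time construction on $f$. First I would fix a dyadic grid on $\cx$ and choose a starting family of pairwise disjoint dyadic cubes that tile $\cx$ and on each of which the average of $f$ does not exceed $\lz$: when $\cx=\rn$, any dyadic grid at a generation coarse enough that $2^{kn}>\|f\|_{L^1(\rn)}/\lz$ works, since then Chebyshev gives $\fint_Q f\le\lz$ for every such $Q$; when $\cx=Q_0$ is a fixed cube, one may start with $Q_0$ itself under the standing assumption $\fint_{Q_0}f\le\lz$.

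Next I would iterate bisection and selection: given any cube $Q$ in the current queue with $\fint_Q f\le\lz$, I split $Q$ into its $2^n$ dyadic children, push a child $Q'$ into the final family $\{Q_j\}_j$ precisely when $\fint_{Q'}f>\lz$, and otherwise push $Q'$ back into the queue to be bisected again. This yields an at most countable family of pairwise disjoint dyadic cubes.

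To verify (iii), the selection test forces $\fint_{Q_j}f>\lz$, while the dyadic parent $\wz{Q_j}$ was not selected, so $\fint_{\wz{Q_j}}f\le\lz$; since $f\ge 0$ and $|\wz{Q_j}|=2^n|Q_j|$, one obtains
$$\fint_{Q_j}f=\frac{1}{|Q_j|}\int_{Q_j}f\le\frac{1}{|Q_j|}\int_{\wz{Q_j}}f=2^n\fint_{\wz{Q_j}}f\le 2^n\lz.$$
Summing the strict lower bound $|Q_j|<\lz^{-1}\int_{Q_j}f$ and using disjointness yields (ii). For (i), any $x\in\cx\setminus\bigcup_j Q_j$ lies in an infinite nested chain of dyadic cubes $\{Q^{(k)}\}_k$ that are all unselected, hence each satisfies $\fint_{Q^{(k)}}f\le\lz$; letting their side lengths shrink to zero and applying the Lebesgue differentiation theorem along such dyadic cubes gives $f(x)\le\lz$ at almost every such $x$.

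The main obstacle is not conceptual but rather the setup: one must fix a consistent dyadic grid on $\cx$ so that bisection and parent-child relations are well defined, confirm that a sufficiently coarse starting generation exists (on $\rn$ via $f\in L^1(\rn)$ and Chebyshev), and verify that the Lebesgue differentiation theorem applies along shrinking dyadic cubes at almost every non-selected point. Everything else is a routine iteration.
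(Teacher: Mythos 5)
Your proposal is correct and follows the classical Calder\'on--Zygmund stopping-time argument, which is exactly the proof in the references the paper cites for this theorem (the survey itself gives no proof, only the citations to Duoandikoetxea and Stein). You also rightly flag that when $\cx=Q_0$ one needs the standing assumption $\fint_{Q_0}f\le\lz$ to start the iteration (otherwise the upper bound in (iii) can fail, e.g.\ for constant $f$ with large mean), and this hypothesis is indeed satisfied in the only place the paper applies the theorem.
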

As an application of \eqref{JN-BMO},
we find that, for any given $q\in(1,\fz)$,
$f\in\BMO(\rn)$ if and only if $f\in L^1_\loc(\rn)$ and
\begin{align*}
\|f\|_{{\rm BMO}_q(\rn)}:=\sup_{{\rm cube}\,Q\subset\rn}
\lf[\fint_Q\lf|f(x)-f_Q\r|^q\,dx\r]^\frac1q<\fz;
\end{align*}
meanwhile, $\|\cdot\|_{\BMO(\rn)}\sim\|\cdot\|_{{\rm BMO}_q(\rn)}$;
see, for instance, \cite[p.\,125, Corollary 6.12]{Duo}.
Recently, B\'enyi el al. \cite{BMMST19} gave a comprehensive approach for the
boundedness of weighted commutators via a new equivalent Orlicz-type norm
\begin{align}\label{BMO-OrliczType}
\|f\|_{\mathcal{BMO}(\cx)}:=\sup_{{\rm cube}\,Q\subset\cx}
\|f-f_Q\|_{L_{\exp}(Q)}
\end{align}
(this equivalence is proved in Proposition \ref{BMO-exp} below),
here and thereafter, for any given cube $Q$ of $\rn$,
and any measurable function $g$,
the \emph{locally normalized Orlicz norm} $\|g\|_{L_{\exp}(Q)}$
is defined by setting
\begin{align}\label{Orlicz}
\|g\|_{L_{\exp}(Q)}:=\inf\lf\{\lz\in(0,\fz):\
\fint_Q\lf[e^{\frac{|g(x)|}{\lz}}-1\r]\,dx\le1\r\}.
\end{align}
Moreover, for any given cube $Q$ of $\rn$,
the \emph{space} $L_{\exp}(Q)$ is defined by setting
$$L_{\exp}(Q):=\lf\{f\ {\rm is\ measurable\ on\ }Q:\,\,
\exists\,\lz\in(0,\fz)\ {\rm such\ that}\
\fint_Q e^{\frac{|f(x)|}{\lz}}\,dx<\fz \r\}.$$
The space $L_{\exp}(Q)$ was studied in the interpolation of operators
(see, for instance, \cite[p.\,243]{BS88})
and it is closely related to the space $\BMO(Q)$
(see Proposition \ref{BMO-exp} below).

On the Orlicz function in \eqref{Orlicz}, we have the following properties.
\begin{lemma}\label{type1}
For any $t\in[0,\fz)$, let $\Phi(t):=e^{t}-1$.
Then
\begin{enumerate}
\item [{\rm(i)}]
$\Phi$ is of \emph{lower type} 1,
namely, for any $s\in(0,1)$ and $t\in(0,\fz)$,
$$\Phi(st)\le s\Phi(t);$$

\item [{\rm(ii)}]
$\Phi$ is of \emph{critical lower type} 1,
namely, there exists no $p\in(1,\fz)$ such that,
for any $s\in(0,1)$ and $t\in(0,\fz)$,
$$\Phi(st)\le C s^p\Phi(t)$$
holds true for some constant $C\in[1,\fz)$ independent of $s$ and $t$.
\end{enumerate}
\end{lemma}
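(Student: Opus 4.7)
My plan is to handle the two parts separately, with (i) following from convexity and (ii) from a short scaling argument at the origin.

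For part (i), the key observation is that $\Phi(t) = e^t - 1$ is convex on $[0,\infty)$ and satisfies $\Phi(0) = 0$. For any $s \in (0,1)$ and $t \in (0,\infty)$, I would write $st = s\cdot t + (1-s)\cdot 0$ and apply convexity:
\begin{equation*}
\Phi(st) \le s\Phi(t) + (1-s)\Phi(0) = s\Phi(t).
\end{equation*}
This is essentially a one-line proof. As a sanity check one could also define $g(s) := s\Phi(t) - \Phi(st)$, note $g(0) = g(1) = 0$ and $g''(s) = -t^2 e^{st} < 0$ so that $g$ is concave and hence nonnegative on $[0,1]$, but the convexity argument is cleaner.

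For part (ii), I would argue by contradiction. Suppose there exist $p \in (1,\infty)$ and a constant $C \in [1,\infty)$ such that $\Phi(st) \le C s^p \Phi(t)$ for all $s \in (0,1)$ and $t \in (0,\infty)$. The idea is to exploit the behavior of $\Phi$ near the origin, where $\Phi$ behaves linearly rather than with any higher power. Fix $s \in (0,1)$ and let $t \to 0^+$. Using $\lim_{t \to 0^+} \Phi(t)/t = 1$, dividing the putative inequality by $t$ yields
\begin{equation*}
s = \lim_{t\to 0^+}\frac{\Phi(st)}{t} \le C s^p \lim_{t\to 0^+}\frac{\Phi(t)}{t} = C s^p,
\end{equation*}
so $s^{p-1} \ge 1/C$ for every $s \in (0,1)$. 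Letting $s \to 0^+$ and using $p-1 > 0$ gives $0 \ge 1/C$, a contradiction.

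Neither step poses a real obstacle: (i) is immediate once convexity and $\Phi(0)=0$ are noticed, and (ii) is a standard scaling argument exploiting that $\Phi'(0) = 1 \ne 0$. The only small care needed is to justify passing the limit $t \to 0^+$ inside the inequality, which is trivial here since both sides are continuous in $t$.
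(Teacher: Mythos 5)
Your proof is correct, and both parts take a mildly different route from the paper's. For (i), the paper fixes $s$ and shows $h(s,t):=\Phi(st)-s\Phi(t)$ is decreasing in $t$ (computing $\partial_t h=s(e^{st}-e^t)<0$ and using $h(s,0)=0$), whereas you invoke convexity of $\Phi$ together with $\Phi(0)=0$ and write $st=s\cdot t+(1-s)\cdot 0$; the two arguments are equivalent in substance, but yours is shorter and makes transparent that the lower-type-1 property is nothing but convexity through the origin, so it generalizes verbatim to any convex Orlicz function vanishing at $0$. For (ii), the paper fixes $t$ and lets $s\to0^+$, using the L'Hospital rule to show $\Phi(st)/(s^p\Phi(t))\to\infty$, while you fix $s$ and let $t\to0^+$, using only $\Phi(t)/t\to1$ to deduce $s\le Cs^p$ and then letting $s\to0^+$; both exploit the same phenomenon (that $\Phi$ is exactly linear to first order at the origin, so no exponent $p>1$ can dominate), but your version avoids L'Hospital and isolates the relevant fact $\Phi'(0)=1\neq0$ more cleanly, at the modest cost of a two-step limit. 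Your closing remark about passing to the limit in the inequality is indeed the only point requiring care, and it is justified as you say by continuity.
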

\begin{proof}
We first show (i).
For any $s\in(0,1)$ and $t\in(0,\fz)$, let
$$h(s,t):=\Phi(st)-s\Phi(t)
=e^{st}-1-s(e^t-1).$$
Then
$$\frac{\partial}{\partial t}h(s,t)=s e^{st}-s e^t=s(e^{st}-e^t).$$
From this and $s\in(0,1)$, we deduce that, for any $t\in(0,\fz)$,
$\frac{\partial}{\partial t}h(s,t)<0$ and hence
$h(s,t)\le h(s,0)=0$, which shows that $\Phi$ is of lower type 1
and hence completes the proof of (i)

Next, we show that $\Phi$ is of critical lower type 1.
Suppose that there exist a $p\in(1,\fz)$ and a constant $C\in[1,\fz)$ such that,
for any $s\in(0,1)$ and $t\in(0,\fz)$,
$\Phi(st)\le C s^p\Phi(t)$, namely,
\begin{align}\label{crit1}
e^{st}-1\le C s^p(e^t-1).
\end{align}
From $p\in(1,\fz)$ and the L'Hospital rule, we deduce that
$$\lim_{s\to0^+}\frac{\Phi(st)}{s^p\Phi(t)}
=\lim_{s\to0^+}\frac{e^{st}-1}{s^p(e^t-1)}
=\lim_{s\to0^+}\frac{t e^{st}}{p s^{p-1}(e^t-1)}=\fz,$$
which contradicts to \eqref{crit1}, and hence $\Phi$ is of critical lower type $1$.
Here and thereafter, $s\to0^+$ means $s\in(0,1)$ and $s\to0$.
This finishes the proof of (ii) and hence of Lemma \ref{type1}.
\end{proof}

Before showing the equivalent Orlicz-type norms of $\BMO(\cx)$,
we first prove the following equivalent characterizations of $\BMO(\cx)$.
These characterizations might be well known. But, to the best of our knowledge,
we did not find a complete proof. For the convenience of the reader, we present
the details here.
\begin{proposition}\label{equBMO}
The following three statements are mutually equivalent:
\begin{itemize}
\item [{\rm(i)}] $f\in\BMO(\cx)$;

\item [{\rm(ii)}] $f\in L^1_\loc(\cx)$ and
there exist positive constants $C_3$ and $C_4$
such that, for any cube $Q\subset\cx$ and any $t\in(0,\fz)$,
$$\lf|\lf\{x\in Q:\ |f(x)-f_Q|>t \r\} \r|
\le C_3 e^{-C_4 t}|Q|;$$

\item [{\rm(iii)}] $f\in L^1_\loc(\cx)$ and
there exists a $\lz\in(0,\fz)$ such that
$$\sup_{{\rm cube}\,Q\subset \cx}\fint_Qe^{\frac{|f(x)-f_Q|}{\lz}}\,dx<\fz.$$
\end{itemize}
\end{proposition}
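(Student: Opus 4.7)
The plan is to prove the cyclic chain (i) $\Rightarrow$ (ii) $\Rightarrow$ (iii) $\Rightarrow$ (i), since each direction is short once one chooses the right tool.

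For (i) $\Rightarrow$ (ii), I would simply invoke the John--Nirenberg inequality \eqref{JN-BMO}, which the excerpt has already recorded. Given $f \in \BMO(\cx)$, setting $C_3 := C_1$ and $C_4 := C_2 / \|f\|_{\BMO(\cx)}$ yields the desired exponential decay estimate for every cube $Q \subset \cx$. (If $\|f\|_{\BMO(\cx)} = 0$ the claim is trivial.) The fact that (i) already presupposes $f \in L^1_{\loc}(\cx)$ takes care of the membership assumption in (ii).

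For (ii) $\Rightarrow$ (iii), I would use the layer-cake representation. Fix $\lz \in (0, \fz)$ with $C_4 \lz > 1$. For any cube $Q \subset \cx$,
\begin{align*}
\fint_Q e^{\frac{|f(x)-f_Q|}{\lz}}\,dx
&= 1 + \frac{1}{|Q|}\int_Q \int_0^{|f(x)-f_Q|/\lz} e^s \, ds \, dx \\
&= 1 + \int_0^{\fz} e^s\, \frac{|\{x\in Q:\,|f(x)-f_Q|>\lz s\}|}{|Q|}\,ds \\
&\le 1 + C_3 \int_0^{\fz} e^{s}\, e^{-C_4 \lz s}\,ds = 1 + \frac{C_3}{C_4\lz - 1},
\end{align*}
where the hypothesis in (ii) was applied with $t := \lz s$. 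The right-hand side is finite and independent of $Q$, so taking the supremum over all cubes gives (iii).

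For (iii) $\Rightarrow$ (i), I would use the elementary inequality $t \le e^t - 1$ valid for all $t \ge 0$. Let $\lz$ be as in (iii) and write $M := \sup_{Q\subset\cx}\fint_Q e^{|f(x)-f_Q|/\lz}\,dx < \fz$. Then, for every cube $Q \subset \cx$,
\begin{equation*}
\fint_Q \frac{|f(x)-f_Q|}{\lz}\,dx
\le \fint_Q \lf[e^{\frac{|f(x)-f_Q|}{\lz}} - 1\r] dx
\le M - 1,
\end{equation*}
so $\|f\|_{\BMO(\cx)} \le \lz(M-1) < \fz$, giving (i).

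The main (and only real) obstacle is the direction (i) $\Rightarrow$ (ii), but this is essentially the John--Nirenberg inequality \eqref{JN-BMO} cited from \cite{JN61}; the other two directions are routine applications of the layer-cake formula and the inequality $t \le e^t - 1$, respectively. One subtle bookkeeping point worth highlighting is that in (ii) $\Rightarrow$ (iii) the parameter $\lz$ must be chosen strictly larger than $1/C_4$ to ensure integrability of $e^s e^{-C_4\lz s}$ on $(0,\fz)$; this is the only place where the quantitative strength of the exponential decay in (ii) is actually used.
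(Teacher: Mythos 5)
Your proposal is correct and follows essentially the same route as the paper: (i)$\Rightarrow$(ii) is the John--Nirenberg inequality \eqref{JN-BMO}, (ii)$\Rightarrow$(iii) is a layer-cake/distribution-function estimate (the paper phrases it via the distribution function of $e^{\frac{C_4}{2}|f-f_Q|}$ split at height $1$, which after the change of variables $t=e^s$ is the same computation as yours, with the same quantitative requirement that the exponential decay beat the exponential weight), and (iii)$\Rightarrow$(i) uses $t\le e^t-1$ exactly as in the paper. No gaps.
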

\begin{proof}
We prove this proposition via showing
$\rm (i)\Longrightarrow(ii)\Longrightarrow(iii)\Longrightarrow(i)$.

First, the implication $\rm (i)\Longrightarrow(ii)$ was
proved by John and Nirenberg in \cite[Lemma 1']{JN61};
see \eqref{JN-BMO} above.

Next, we show the implication $\rm (ii)\Longrightarrow(iii)$.
Suppose that $f$ satisfies (ii).
Then there exist positive constants $C_3$ and $C_4$ such that,
for any cube $Q\subset \cx$ and any $t\in(0,\fz)$,
$$\lf|\lf\{x\in Q:\ |f(x)-f_Q|>t \r\} \r|\le C_3 e^{-C_4 t}|Q|$$
and hence
\begin{align}\label{C3C4}
&\fint_Q e^{\frac{C_4}{2}|f(x)-f_Q|}\,dx\noz\\
&\quad=\frac1{|Q|}\int_0^\fz \lf|\lf\{x\in Q:\
e^{\frac{C_4}{2}|f(x)-f_Q|}>t \r\} \r|\,dt\noz\\
&\quad=\frac1{|Q|}\lf(\int_0^1+\int_1^\fz\r)\lf|\lf\{x\in Q:\
e^{\frac{C_4}{2}|f(x)-f_Q|}>t \r\} \r|\,dt\notag\\
&\quad\le1+\frac1{|Q|}\int_1^\fz\lf|\lf\{x\in Q:\
|f(x)-f_Q|>2C_4^{-1}\log t \r\} \r|\,dt\notag\\
&\quad\le1+\frac1{|Q|}\int_1^\fz C_3 e^{-C_4 2C_4^{-1}\log t}|Q|\,dt\notag\\
&\quad=1+C_3\int_1^\fz t^{-2}\,dt=1+C_3,
\end{align}
which implies that $f$ satisfies (iii).
This shows the implication $\rm (ii)\Longrightarrow(iii)$.

Finally, we show the implication $\rm (iii)\Longrightarrow(i)$.
Suppose that $f$ satisfies (iii).
Then there exists a $\lz\in(0,\fz)$ such that
$$\sup_{Q\subset \cx}\fint_Q e^{\frac{|f(x)-f_Q|}{\lz}}\,dx<\fz.$$
From this and the basic inequality $x\le e^x-1$ for any $x\in\rr$,
we deduce that
\begin{align*}
\sup_{{\rm cube}\,Q\subset \cx}\fint_Q \lf|f(x)-f_Q\r|\,dx
\le\lz\sup_{{\rm cube}\,Q\subset \cx}\fint_Q \lf[e^{\frac{|f(x)-f_Q|}{\lz}}-1\r]\,dx<\fz,
\end{align*}
which implies that $f$ satisfies (i),
and hence the implication $\rm (iii)\Longrightarrow(i)$ holds true.
This finishes the proof of Proposition \ref{equBMO}.
\end{proof}

In what follows, for any normed space $\mathbb{Y}(\cx)$,
equipped with the norm $\|\cdot\|_{\mathbb{Y}(\cx)}$,
whose elements are measurable functions on $\cx$, let
$$\mathbb{Y}(\cx)/\cc:=\lf\{f\ {\rm is\ measurable\ on\ }\cx:\
\|f\|_{\mathbb{Y}(\cx)/\cc}
:=\inf_{c\in\cc}\|f+c\|_{\mathbb{Y}(\cx)}<\fz\r\}.$$

\begin{proposition}\label{LfzBMOLexp}
Let $Q_0$ be a cube of $\rn$.
Then
$$\lf[L^\fz(Q_0)/\cc\r]\subsetneqq \BMO(Q_0)
\subsetneqq \lf[L_{\exp}(Q_0)/\cc\r].$$
\end{proposition}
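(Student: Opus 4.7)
The plan is to verify the two chain links separately, each requiring a forward inclusion together with a witness of strictness.

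For $[L^\fz(Q_0)/\cc]\subset \BMO(Q_0)$, I would take a bounded representative $g$ of any class in $L^\fz(Q_0)/\cc$ and note that $|g(x)-g_Q|\le 2\|g\|_{L^\fz(Q_0)}$ for every cube $Q\subset Q_0$; since the $\BMO$ seminorm is invariant under additive constants, this yields $\|f\|_{\BMO(Q_0)}\le 2\|f\|_{L^\fz(Q_0)/\cc}$, and in particular the inclusion. Strictness is witnessed by the canonical logarithm: fix $x_0\in Q_0$ and take $f(x):=\log|x-x_0|$. A classical computation (splitting a generic subcube $Q$ according to its relation to a ball centered at $x_0$ with radius $\diam Q$) shows $f\in\BMO(Q_0)$; but $f(x)\to-\fz$ as $x\to x_0$, so no additive constant can render $f$ essentially bounded, whence $f\notin[L^\fz(Q_0)/\cc]$.

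For $\BMO(Q_0)\subset[L_{\exp}(Q_0)/\cc]$, I would specialize Proposition \ref{equBMO}(iii) to $Q=Q_0$ alone: for any $f\in\BMO(Q_0)$ there is some $\lz\in(0,\fz)$ with $\fint_{Q_0}e^{|f-f_{Q_0}|/\lz}\,dx<\fz$, so $f-f_{Q_0}\in L_{\exp}(Q_0)$ and therefore $f\in L_{\exp}(Q_0)/\cc$. For strictness I would construct a function whose global exponential integral on $Q_0$ is finite yet whose mean oscillations on carefully chosen subcubes blow up. Pick pairwise disjoint subcubes $\{Q_k\}_{k\in\nn}$ of $Q_0$ with $|Q_k|\sim 2^{-k^2}$; inside each $Q_k$ let $E_k$ be a concentric subcube with $|E_k|=|Q_k|/2$. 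Define
\[
f:=\sum_{k=1}^{\fz}k\,\mathbf{1}_{E_k}.
\]
For any $\lz\in(0,\fz)$,
\[
\fint_{Q_0}e^{|f|/\lz}\,dx\le 1+C\sum_{k=1}^{\fz}e^{k/\lz}\,2^{-k^2}<\fz
\]
because $k^2\log 2$ eventually dominates $k/\lz$, so $f\in L_{\exp}(Q_0)$. On the other hand, $f_{Q_k}=k/2$ and an elementary calculation gives $\fint_{Q_k}|f-f_{Q_k}|\,dx=k/2$, forcing $\|f\|_{\BMO(Q_0)}=\fz$; since this seminorm is invariant under additive constants, $f\notin\BMO(Q_0)/\cc$.

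The two inclusion steps are essentially bookkeeping once Proposition \ref{equBMO} is in hand, and the logarithm is a standard $\BMO$ archetype. The one mildly delicate point is laying out $\{Q_k\}$ and $\{E_k\}$ as axis-aligned subcubes of $Q_0$, but the rapid decay $|Q_k|\sim 2^{-k^2}$ makes $\sum_k|Q_k|$ arbitrarily small, so room inside $Q_0$ is never an issue.
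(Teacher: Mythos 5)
Your argument is correct, and the two inclusions together with the logarithm witness for $[L^\fz(Q_0)/\cc]\subsetneqq\BMO(Q_0)$ follow the paper's route exactly (the paper also deduces the inclusion $\BMO(Q_0)\subset[L_{\exp}(Q_0)/\cc]$ from Proposition \ref{equBMO}(iii) and takes $\log|\cdot-c_0|$ with $c_0$ the center of $Q_0$). Where you genuinely diverge is the strictness of $\BMO(Q_0)\subsetneqq[L_{\exp}(Q_0)/\cc]$: the paper reduces (without loss of generality) to $Q_0=(-1,1)$ and uses the odd function $g(x)=-\log(-x)$ for $x\in(-1,0)$, $g(x)=\log x$ for $x\in(0,1)$, whose oscillation on the symmetric intervals $(-\ez,\ez)$ equals $1-\log\ez\to\fz$ while $\fint_{Q_0}e^{|g|/2}\,dx<\fz$; you instead build the staircase $f=\sum_k k\,\mathbf{1}_{E_k}$ on disjoint subcubes $Q_k$ with $|Q_k|\sim 2^{-k^2}$ and $|E_k|=|Q_k|/2$, so that $\fint_{Q_k}|f-f_{Q_k}|\,dx=k/2\to\fz$ while $e^{|f|/\lz}$ is integrable for every $\lz$ because $2^{-k^2}$ crushes $e^{k/\lz}$. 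Both examples exploit the same phenomenon --- $L_{\exp}$ controls the size of $|f|$ but not its oscillation --- but your construction works verbatim in every dimension $n$ and avoids the paper's implicit one-dimensional reduction, at the cost of a small amount of bookkeeping (packing the disjoint cubes, which your volume estimate indeed justifies), whereas the paper's example is a single closed-form classical function. Two cosmetic remarks: since $\BMO(Q_0)$ is already understood modulo constants, writing $f\notin\BMO(Q_0)$ suffices (the object ``$\BMO(Q_0)/\cc$'' is not separately defined), and it is worth one line noting $f\in L^1(Q_0)$, which is immediate from $\sum_k k\,2^{-k^2}<\fz$.
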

\begin{proof}
Indeed, on one hand, from
$$\fint_Q \lf|f(x)-f_Q\r|\,dx\le2\fint_Q \lf|f(x)+c\r|\,dx
\le 2\|f+c\|_{L^\fz(Q_0)}$$
for any $c\in\cc$,
we deduce that $[L^\fz(Q_0)/\cc]\subset \BMO(Q_0)$.
Moreover, let $g(\cdot):=\log|\cdot-c_0|$, where $c_0$ is the center of $Q_0$.
Then $g\in \BMO(Q_0)\setminus [L^\fz(Q_0)/\cc]$;
see \cite[Example 3.1.3]{GTM250} for this fact.
To sum up, we have $[L^\fz(Q_0)/\cc]\subsetneqq \BMO(Q_0)$.

On the other hand, by Proposition \ref{equBMO}(iii),
we easily find that
$\BMO(Q_0)\subset [L_{\exp}(Q_0)/\cc]$.
Moreover, without loss of generality,  we may assume that $Q_0:=(-1,1)$
and let
\begin{align*}
g(x):=
\begin{cases}
-\log(-x),& x\in(-1,0),\\
0, & x=0,\\
\log(x),& x\in(0,1).
\end{cases}
\end{align*}
We claim that $g\in [L_{\exp}(Q_0)/\cc]\setminus\BMO(Q_0)$.
Indeed, for any $\ez\in(0,1)$, let $I_\ez:=(-\ez,\ez)$.
Then
$$\fint_{I_\ez}\lf|g(x)-g_{I_\ez}\r|\,dx
=\frac1{2\ez}\int_{-\ez}^\ez\lf|\log|x|\r|\,dx
=-\frac1\ez\int_0^\ez\log(x)\,dx=1-\log(\ez)\to\fz$$
as $\ez\to0^+$, which implies that $g\notin \BMO(Q_0)$.
However,
$$\int_{Q_0}e^{\frac12 |g(x)|}\,dx=2\int_0^1 e^{-\frac12 \log(x)}\,dx
=2\int_0^1 x^{-\frac12}\,dx=4<\fz,$$
which implies that $g\in L_{\exp}(Q_0)$.
Therefore, $\BMO(Q_0)\subsetneqq [L_{\exp}(Q_0)/\cc]$,
which completes the proof of Proposition \ref{LfzBMOLexp}.
\end{proof}

Now, we show that the two Orlicz-type norms,
\eqref{BMO-OrliczType} and
\begin{align*}
\|f\|_{\widetilde{L_{\exp}}(\cx)}
:=\inf\lf\{\lz\in(0,\fz):\,\,\sup_{{\rm cube}\,Q\subset\cx}
\fint_Q\lf[e^{\frac{|f(x)-f_Q|}{\lz}}-1\r]\,dx\le1\r\}
\end{align*}
for any $f\in L^1_\loc(\cx)$, are equivalent norms of $\BMO(\cx)$.

\begin{proposition}\label{BMO-exp}
The following three statements are mutually equivalent:
\begin{itemize}
\item [{\rm(i)}] $f\in\BMO(\cx)$;
	
\item [{\rm(ii)}] $f\in L^1_\loc(\cx)$ and $\|f\|_{\mathcal{BMO}(\cx)}<\fz$;

\item [{\rm(iii)}] $f\in L^1_\loc(\cx)$ and $\|f\|_{\widetilde{L_{\exp}}(\cx)}<\fz$.
\end{itemize}
Moreover, $\|\cdot\|_{\BMO(\cx)}\sim\|\cdot\|_{\mathcal{BMO}(\cx)}
\sim\|\cdot\|_{\widetilde{L_{\exp}}(\cx)}$.
\end{proposition}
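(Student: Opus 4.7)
The plan is to prove $\mathrm{(i)}\Longleftrightarrow\mathrm{(iii)}$ and $\mathrm{(ii)}\Longleftrightarrow\mathrm{(iii)}$ separately, then extract the norm equivalence from both chains. The equivalence $\mathrm{(i)}\Longleftrightarrow\mathrm{(iii)}$ is essentially a quantitative refinement of Proposition \ref{equBMO}, while $\mathrm{(ii)}\Longleftrightarrow\mathrm{(iii)}$ is a purely formal swap of $\sup$ and $\inf$ facilitated by the monotonicity of the defining integral in $\lambda$.

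For $\mathrm{(iii)}\Longrightarrow\mathrm{(i)}$, I would copy the argument at the very end of the proof of Proposition \ref{equBMO}: if $\lambda>\|f\|_{\widetilde{L_{\exp}}(\cx)}$, then $\sup_{Q\subset\cx}\fint_Q[e^{|f(x)-f_Q|/\lambda}-1]\,dx\le1$, so applying $x\le e^x-1$ pointwise and taking the supremum over $Q$ yields $\|f\|_{\BMO(\cx)}\le\lambda$; letting $\lambda\downarrow\|f\|_{\widetilde{L_{\exp}}(\cx)}$ gives $\|f\|_{\BMO(\cx)}\le\|f\|_{\widetilde{L_{\exp}}(\cx)}$.

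The main obstacle is the reverse implication $\mathrm{(i)}\Longrightarrow\mathrm{(iii)}$, since from Proposition \ref{equBMO} (equivalently, from the computation \eqref{C3C4}) one only obtains $\fint_Q[e^{(C_4/2)|f-f_Q|}-1]\,dx\le C_3$, where the right-hand side need not be bounded by $1$. To fix this, I plan to invoke Lemma \ref{type1}(i): the Orlicz function $\Phi(t):=e^t-1$ has lower type $1$. Choosing $s:=\min\{1,1/C_3\}\in(0,1]$ and applying $\Phi(st)\le s\Phi(t)$ inside the integral, one gets
\begin{align*}
\fint_Q\lf[e^{s(C_4/2)|f(x)-f_Q|}-1\r]\,dx
\le s\fint_Q\lf[e^{(C_4/2)|f(x)-f_Q|}-1\r]\,dx
\le sC_3\le1.
\end{align*}
Taking the supremum over $Q\subset\cx$ shows that $\lambda:=2/(sC_4)$ is admissible in the defining infimum of $\|f\|_{\widetilde{L_{\exp}}(\cx)}$, and recalling from \eqref{JN-BMO} that $C_4$ is proportional to $1/\|f\|_{\BMO(\cx)}$ while $C_3$ is dimensional, one obtains $\|f\|_{\widetilde{L_{\exp}}(\cx)}\ls\|f\|_{\BMO(\cx)}$.

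Finally, for $\mathrm{(ii)}\Longleftrightarrow\mathrm{(iii)}$, I would observe that both norms involve exactly the same double object $\fint_Q[e^{|f-f_Q|/\lambda}-1]\,dx$, but with $\sup_Q$ and $\inf_\lambda$ taken in opposite orders. The direction $\|f\|_{\mathcal{BMO}(\cx)}\le\|f\|_{\widetilde{L_{\exp}}(\cx)}$ is immediate: any $\lambda$ admissible for $\widetilde{L_{\exp}}$ is admissible for each $L_{\exp}(Q)$-norm. Conversely, given $\varepsilon>0$, monotonicity of $\lambda\mapsto\fint_Q[e^{|f-f_Q|/\lambda}-1]\,dx$ implies that $\lambda:=\|f\|_{\mathcal{BMO}(\cx)}+\varepsilon$ works uniformly for every $Q$ simultaneously, so $\|f\|_{\widetilde{L_{\exp}}(\cx)}\le\|f\|_{\mathcal{BMO}(\cx)}+\varepsilon$; letting $\varepsilon\to0^+$ yields equality $\|f\|_{\mathcal{BMO}(\cx)}=\|f\|_{\widetilde{L_{\exp}}(\cx)}$. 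Combining the three comparisons produces $\|\cdot\|_{\BMO(\cx)}\sim\|\cdot\|_{\mathcal{BMO}(\cx)}\sim\|\cdot\|_{\widetilde{L_{\exp}}(\cx)}$ and completes the proof.
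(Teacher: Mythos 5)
Your proof is correct, and the substantive core is the same as the paper's: both directions hinge on the elementary inequality $t\le e^t-1$ for $\|f\|_{\BMO(\cx)}\le\|f\|_{\widetilde{L_{\exp}}(\cx)}$, and on the John--Nirenberg inequality \eqref{JN-BMO} combined with the computation \eqref{C3C4} and the lower-type-$1$ property of $\Phi(t)=e^t-1$ (Lemma \ref{type1}(i)) to rescale the Orlicz integral below $1$ in the hard direction, exactly as in the paper (your constant $2\max\{1,C_3\}/C_4\sim C_1\|f\|_{\BMO(\cx)}/C_2$ matches the paper's $C_1/C_2$ up to a harmless factor). The one genuine organizational difference is your treatment of the link between (ii) and (iii): you observe that the two Orlicz-type quantities are built from the same expression $\fint_Q[e^{|f-f_Q|/\lz}-1]\,dx$ with $\sup_Q$ and $\inf_\lz$ in opposite orders, and that monotonicity in $\lz$ forces the exact identity $\|f\|_{\mathcal{BMO}(\cx)}=\|f\|_{\widetilde{L_{\exp}}(\cx)}$; this lets you compare only $\|\cdot\|_{\widetilde{L_{\exp}}(\cx)}$ with $\|\cdot\|_{\BMO(\cx)}$, whereas the paper runs the two-sided comparison with the BMO norm separately for each of the two Orlicz-type norms. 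Your route is slightly more economical and yields the stronger conclusion that the two Orlicz-type norms coincide exactly (not merely up to constants), a fact the paper does not record; the paper's parallel treatment, on the other hand, keeps each norm's comparison with $\|\cdot\|_{\BMO(\cx)}$ self-contained. The sup--inf interchange is justified as you say: any $\lz$ admissible for $\widetilde{L_{\exp}}$ is admissible for every $L_{\exp}(Q)$-norm, and conversely $\lz:=\|f\|_{\mathcal{BMO}(\cx)}+\varepsilon$ dominates some admissible $\lz'<\lz$ for each $Q$, hence is itself admissible uniformly in $Q$ by monotonicity of $\lz\mapsto\fint_Q[e^{|f-f_Q|/\lz}-1]\,dx$.
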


\begin{proof} To prove this proposition, we only need to prove that,
for any $f\in L^1_\loc(\cx)$,
$$\|f\|_{\BMO(\cx)}\sim\|f\|_{\mathcal{BMO}(\cx)}
\sim\|f\|_{\widetilde{L_{\exp}}(\cx)}.$$

We first show that, for any $f\in L^1_\loc(\cx)$, $\|f\|_{\BMO(\cx)}
\le\|f\|_{\mathcal{BMO}(\cx)}$
and $\|f\|_{\BMO(\cx)}\le\|f\|_{\widetilde{L_{\exp}}(\cx)}$.
To this end, let $f\in L^1_\loc(\cx)$.
For any cube $Q\subset\cx$ and any $\lz\in(0,\fz)$,
by $t\le e^t-1$ for any $t\in(0,\fz)$, we have
$$\fint_Q \lf|f(x)-f_Q\r|\,dx\le\lz\fint_Q \lf[e^{\frac{|f(x)-f_Q|}{\lz}}-1\r]\,dx
\le\lz,$$
which implies that
$$\fint_Q \lf|f(x)-f_Q\r|\,dx\le\|f-f_Q\|_{L_{\exp}(Q)}$$
and hence
$$\|f\|_{\BMO(\cx)}\le\|f\|_{\mathcal{BMO}(\cx)}.$$
Moreover, to show $\|f\|_{\BMO(\cx)}\le\|f\|_{\widetilde{L_{\exp}}(\cx)}$,
it suffices to assume that $f\in\widetilde{L_{\exp}}(\cx)$,
otherwise $\|f\|_{\widetilde{L_{\exp}}(\cx)}=\fz$ and hence
the desired inequality automatically holds true.
Then, by $t\le e^t-1$ for any $t\in(0,\fz)$, we conclude that,
for any $n\in\nn$ and any cube $Q\subset\cx$,
\begin{align}\label{Lexp1}
\fint_Q\frac{|f(x)-f_Q|}{\|f\|_{\widetilde{L_{\exp}}(\cx)}+\frac1n}\,dx
\le\fint_Q \lf[e^{\frac{|f(x)-f_Q|}{\|f\|_{\widetilde{L_{\exp}}(\cx)}+\frac1n}}-1\r]\,dx.
\end{align}
From the definition of $\|\cdot\|_{\widetilde{L_{\exp}}(\cx)}$,
we deduce that, for any $n\in\nn$, there exists a
$$\lz_n\in\lf(\|f\|_{\widetilde{L_{\exp}}(\cx)},\|f\|_{\widetilde{L_{\exp}}(\cx)}+\frac1n\r)$$
such that
$$\sup_{{\rm cube}\,Q\subset\cx}
\fint_Q \lf[e^{\frac{|f(x)-f_Q|}{\lz_n}}-1\r]\,dx\le1.$$
By this, \eqref{Lexp1}, and the monotonicity of $e^{(\cdot)}-1$,
we conclude that, for any $n\in\nn$ and any cube $Q\subset\cx$,
$$\fint_Q\frac{|f(x)-f_Q|}{\|f\|_{\widetilde{L_{\exp}}(\cx)}+\frac1n}\,dx
\le 1$$
and hence
$$\fint_Q|f(x)-f_Q|\,dx\le \|f\|_{\widetilde{L_{\exp}}(\cx)}+\frac1n.$$
Letting $n\to\fz$, we then obtain
$$\|f\|_{\BMO(\cx)}=
\sup_{{\rm cube}\,Q\subset\cx}\fint_Q|f(x)-f_Q|\,dx
\le\|f\|_{\widetilde{L_{\exp}}(\cx)}.$$
To sum up, we have, for any $f\in L^1_\loc(\cx)$,
\begin{align}\label{LMBJZS}
\|f\|_{\BMO(\cx)}\le\|f\|_{\mathcal{BMO}(\cx)}
\ \ {\rm and}\ \
\|f\|_{\BMO(\cx)}\le\|f\|_{\widetilde{L_{\exp}}(\cx)}.
\end{align}

Next, we show that the reverse inequalities hold true for any
$f\in L^1_\loc(\cx)$, respectively. Actually, we may assume that
$f\in\BMO(\cx)$ because, otherwise, the desired inequalities
automatically hold true.
Now, let $f\in\BMO(\cx)$. Then, for any cube $Q\subset\cx$ and any
$\lz\in(C_2^{-1}\|f\|_{\BMO(\cx)},\fz)$,
by \eqref{JN-BMO} and the calculation of \eqref{C3C4}, we obtain
\begin{align*}
&\fint_Q e^{\frac{|f(x)-f_Q|}{\lz}}\,dx\\
&\quad\le1+\frac1{|Q|}\int_1^\fz
\lf|\lf\{x\in Q:\ |f(x)-f_Q|>\lz\log t \r\} \r|\,dt\noz\\
&\quad\le1+\frac1{|Q|}\int_1^\fz C_1 e^{-\frac{C_2}{\|f\|_{\BMO(\cx)}}\lz \log t}|Q|\,dt\\
&\quad=1+C_1\int_1^\fz t^{-\frac{C_2 \lz}{\|f\|_{\BMO(\cx)}}}\,dt=1+C_1
\end{align*}
and hence
$$\fint_Q \lf[e^{\frac{|f(x)-f_Q|}{\lz}}-1\r]\,dx\le C_1,$$
where $C_1\in(1,\fz)$ is as in \eqref{JN-BMO}.
From this and Lemma \ref{type1}(i) with $s$ replaced by $1/C_1$, we deduce that
\begin{align}\label{indepQ}
\fint_Q \lf[e^{\frac{|f(x)-f_Q|}{\lz C_1}}-1\r]\,dx
\le\frac{1}{ C_1}\fint_Q \lf[e^{\frac{|f(x)-f_Q|}{\lz}}-1\r]\,dx
\le1.
\end{align}
On one hand, by \eqref{indepQ} and
$$\frac{ C_1}{C_2}\|f\|_{\BMO(\cx)}<\lz C_1<\fz,$$
we conclude that
\begin{align*}
\|f-f_Q\|_{L_{\exp}(Q)}
&=\inf\lf\{\widetilde{\lz}>0:\
\fint_Q\lf[e^{\frac{|f(x)-f_Q|}{\widetilde{\lz}}}-1\r]\,dx\le1\r\}\\
&\le\frac{ C_1}{C_2}\|f\|_{\BMO(\cx)}
\end{align*}
and hence
\begin{align}\label{cBMOBMO}
\|f\|_{\mathcal{BMO}(\cx)}
=\sup_{{\rm cube}\,Q\subset\cx}\|f-f_Q\|_{L_{\exp}(Q)}
\le \frac{ C_1}{C_2}\|f\|_{\BMO(\cx)}.
\end{align}
On the other hand, by \eqref{indepQ}, we conclude that
$$\sup_{Q\subset\cx}\fint_Q \lf[e^{\frac{|f(x)-f_Q|}{\lz C_1}}-1\r]\,dx
\le1.$$
From this and
$$\frac{ C_1}{C_2}\|f\|_{\BMO(\cx)}<\lz C_1<\fz,$$
we deduce that
\begin{align*}
\|f\|_{\widetilde{L_{\exp}}(\cx)}
&=\inf\lf\{\lz\in(0,\fz):\,\,\sup_{{\rm cube}\,Q\subset\cx}
\fint_Q\lf[e^{\frac{|f(x)-f_Q|}{\lz}}-1\r]\,dx\le1\r\}\\
&\le \frac{ C_1}{C_2}\|f\|_{\BMO(\cx)}.
\end{align*}
Combining this with \eqref{cBMOBMO},
we have, for any $f\in\BMO(\cx)$,
\begin{align*}
\|f\|_{\mathcal{BMO}(\cx)}\le\frac{ C_1}{C_2}\|f\|_{\BMO(\cx)}
\ \ {\rm and}\ \
\|f\|_{\widetilde{L_{\exp}}(\cx)}\le\frac{ C_1}{C_2}\|f\|_{\BMO(\cx)}.
\end{align*}
This, together with \eqref{LMBJZS}, then finishes
the proof of Proposition \ref{BMO-exp}.
\end{proof}

\begin{remark}\label{LexpStar}
There exists another norm on $L_{\exp}(Q_0)$,
defined by the distribution functions as follows.
Let $f$ be a measurable function on $Q_0$.
The \emph{decreasing rearrangement} $f^\ast$ of $f$ is defined by setting, for any $u\in[0,\fz)$,
$$f^\ast(u):=\inf\{t\in(0,\fz):\ |\{x\in Q_0:\,\,|f(x)|>t\}|\le u\}.$$
Moreover, for any $v\in(0,\fz)$, let
$$f^{\ast\ast}(v):=\frac1v\int_0^v f^\ast(u)\,du.$$
Then $f\in L_{\exp}(Q_0)$ if and only if $f$ is measurable on $Q_0$ and
$$\|f\|_{L_{\exp}^\ast(Q_0)}:=
\sup_{v\in(0,|Q_0|]}\frac{f^{\ast\ast}(v)}{1+\log(\frac{|Q_0|}{v})}<\fz;$$
meanwhile, $\|\cdot\|_{L_{\exp}^\ast(Q_0)}$ is a norm of $L_{\exp}(Q_0)$;
see \cite[p.\,246, Theorem 6.4]{BS88} for more details.
Furthermore, from \cite[p.\,7, Corollary 1.9]{BS88}, we deduce that
$\|\cdot\|_{L_{\exp}^\ast(Q_0)}$ and $\|\cdot\|_{L_{\exp}(Q_0)}$
are equivalent.
Notice that  $f^\ast$ and $f^{\ast\ast}$ are fundamental tools in the theory
of \emph{Lorentz spaces}; see \cite[p.\,48]{GTM249} for more details.
\end{remark}

Recently, Izuki et al. \cite{ins} obtained both the John--Nirenberg inequality and
the equivalent characterization of $\BMO(\rn)$ on the ball Banach function space
which contains Morrey spaces, (weighted, mixed-norm, variable) Lebesgue spaces,
and Orlicz-slice spaces as special cases;
see \cite[Definition 2.8]{ins} and also \cite{tyyz21} for the related definitions.
Precisely, let $X$ be a ball Banach function space
satisfying the additional assumption that
the Hardy--Littlewood maximal operator $M$ is bounded on $X'$
(the associate space of $X$; see \cite[Definition 2.9]{ins} for its definition), and,
for any $b\in L^1_\loc(\rn)$,
$$\|b\|_{\BMO_X}:=\sup_B\frac1{\|\mathbf1_B\|_X}
\big\| |b-b_B|\mathbf1_B\big\|_X,$$
where the supremum is taken over all balls $B$ of $\rn$.
It is obvious that
$\|\cdot\|_{\BMO_{L^1(\rn)}}=\|\cdot\|_{\BMO(\rn)}$.
Moreover, in \cite[Theorem 1.2]{ins}, Izuki et al. showed that,
under the above assumption of $X$, $b\in\BMO(\rn)$ if and only if
$b\in L^1_\loc(\rn)$ and $\|b\|_{\BMO_X}<\fz$; meanwhile,
$$\|\cdot\|_{\BMO_X}\sim\|\cdot\|_{\BMO(\rn)}.$$
Furthermore, the John--Nirenberg inequality on $X$ was also obtained
in \cite[Theorem 3.1]{ins} which shows that
there exists some positive constant $\widetilde{C}$ such that,
for any ball $B\subset\rn$ and any $\tau\in[0,\fz)$,
$$\lf\|\mathbf1_{\{x\in B:\ |b(x)-b_B|>\tau2^{n+2}\|b\|_{\BMO(\rn)}\}}\r\|_X
\le \widetilde{C} 2^{-\frac{\tau}{1+2^{n+4}\|M\|_{X'\to X'}}} \lf\| \mathbf1_B\r\|_X,$$
where $\|M\|_{X'\to X'}$ denotes the \emph{operator norm} of $M$ on $X'$.
Later, these results were applied in \cite{tyyz21} to establish
the compactness characterization of commutators
on ball Banach function spaces.

Now, we come to the localized counterpart.
The local space $\BMO(\rn)$, denoted by ${\rm bmo}\,(\rn)$,
was originally introduced by Goldberg \cite{G79}.
In the same article, Goldberg also introduced the localized Campanato space
$\Lambda_{\alpha}(\rn)$ with $\alpha\in(0,\fz)$,
which proves the dual space of the localized Hardy space. Later,
Jonsson et al. \cite{JSW84} constructed the localized Hardy space and the localized
Campanato space on the subset of $\rn$;
Chang \cite{C94} studied the localized Campanato space on bounded Lipschitz domains;
Chang et al. \cite{CDS99}
studied the localized Hardy space and its dual space on smooth domains
as well as their applications to boundary value problems;
Dafni and Liflyand \cite{dl19} characterized
the localized Hardy space in the sense of Goldberg, respectively,
by means of the localized Hilbert transform and localized molecules.
In what follows, for any cube $Q$ of $\rn$,
we use $\ell(Q)$ to denote its side length,
and let $\ell(\rn):=\fz$.
Recall that
$$\bmo(\cx):=\lf\{f\in L^1_\loc(\cx):\
\|f\|_{\bmo(\cx)}<\fz \r\},$$
where
$$\|f\|_{\bmo(\cx)}
:=\sup_Q\fint_Q\lf|f(x)-f_{Q,c_0}\r|\,dx$$
with
\begin{align}\label{fQc0}
f_{Q,c_0}:=
\begin{cases}
f_Q &{\rm if}\ \ell(Q)\in(0,c_0),\\
0 &{\rm if}\ \ell(Q)\in[c_0,\ell(\cx))
\end{cases}
\end{align}
for some given $c_0\in(0,\ell(\cx))$,
and the supremum taken over all cubes $Q$ of $\cx$.
Also, a well-known fact is that
$\bmo(\cx)$ is independent of the choice of $c_0$;
see, for instance, \cite[Lemma 6.1]{dy12}.

\begin{proposition}\label{bmoBMO}
Let $\cx$ be $\rn$ or a cube $Q_0$ of $\rn$.
Then
\begin{equation}\label{2.11w}
\lf[L^\fz(\cx)/\cc\r]\subset \lf[\bmo(\cx)/\cc\r]\subset \BMO(\cx)
\end{equation}
and
\begin{align}\label{Bb8}
\|\cdot\|_{\BMO(\cx)}\le 2\inf_{c\in\cc}\|\cdot+c\|_{\bmo(\cx)}
\le 4\inf_{c\in\cc}\|\cdot+c\|_{L^\fz(\cx)}.
\end{align}
Moreover,
\begin{align}\label{8bB}
\lf[L^\fz(\rn)/\cc\r]\subsetneqq \lf[\bmo(\rn)/\cc\r]\subsetneqq \BMO(\rn)
\end{align}
and, for any cube $Q_0$ of $\rn$,
\begin{align}\label{8bBe}
\lf[L^\fz(Q_0)/\cc\r]\subsetneqq \lf[\bmo(Q_0)/\cc\r]
=\BMO(Q_0)\subsetneqq \lf[L_{\exp}(Q_0)/\cc\r]
\end{align}
with
$$\|\cdot\|_{\BMO(Q_0)}\le2\inf_{c\in\cc}\|\cdot+c\|_{\bmo(Q_0)}
\le4\|\cdot\|_{\BMO(Q_0)}.$$
\end{proposition}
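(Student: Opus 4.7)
The plan is to split the statement into three independent pieces: the general inclusions \eqref{2.11w} together with the norm chain \eqref{Bb8} (valid on $\cx\in\{\rn,Q_0\}$); the strictness in \eqref{8bB} for $\cx=\rn$; and the chain \eqref{8bBe} for $\cx=Q_0$, whose crux is the identity $[\bmo(Q_0)/\cc]=\BMO(Q_0)$ together with the quantitative bound $\inf_c\|\cdot+c\|_{\bmo(Q_0)}\le 2\|\cdot\|_{\BMO(Q_0)}$.

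For \eqref{2.11w} and \eqref{Bb8}, the sole mechanism is a case split on a cube $Q\subset\cx$ according to whether $\ell(Q)<c_0$ or $\ell(Q)\ge c_0$. In the first case $f_{Q,c_0}=f_Q$, so the $\bmo$ and $\BMO$ integrands coincide and $\fint_Q|f-f_Q|\le 2\|f\|_{L^\fz(\cx)}$. In the second case $f_{Q,c_0}=0$, so $\fint_Q|f|\le\|f\|_{L^\fz(\cx)}$, and by the triangle inequality $\fint_Q|f-f_Q|\le 2\fint_Q|f|$; translating by an arbitrary $c\in\cc$ then gives $\fint_Q|f-f_Q|=\fint_Q|(f+c)-(f+c)_Q|\le 2\|f+c\|_{\bmo(\cx)}$. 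Taking the supremum over $Q$ and then the infimum over $c$ simultaneously delivers both \eqref{2.11w} and \eqref{Bb8}.

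For the strictness in \eqref{8bB}, on the one hand the function $g(x):=\log(1/|x|)\,\mathbf{1}_{B(\mathbf{0},1/2)}(x)$ belongs to $\bmo(\rn)\setminus[L^\fz(\rn)/\cc]$: the classical log-BMO estimate controls oscillation on cubes with $\ell(Q)<c_0$, while for $\ell(Q)\ge c_0$ one has $\fint_Q|g|\le c_0^{-n}\int_{B(\mathbf{0},1/2)}|\log|x||\,dx<\fz$, and yet $g$ remains unbounded near $\mathbf{0}$ modulo any constant. On the other hand $g(x):=\log|x|\in\BMO(\rn)$, but on $Q_R:=[-R,R]^n$ direct computation gives $\fint_{Q_R}\log|x|\,dx=\log R+O_n(1)$ as $R\to\fz$; hence for every fixed $c\in\cc$, $\fint_{Q_R}|\log|x|+c|\,dx\to\fz$, precluding $g\in[\bmo(\rn)/\cc]$.

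For \eqref{8bBe}, the inclusions $[L^\fz(Q_0)/\cc]\subset[\bmo(Q_0)/\cc]\subset\BMO(Q_0)\subset[L_{\exp}(Q_0)/\cc]$ and the strictness $\BMO(Q_0)\subsetneqq[L_{\exp}(Q_0)/\cc]$ are given by \eqref{2.11w} and Proposition \ref{LfzBMOLexp}, while the strict inclusion $[L^\fz(Q_0)/\cc]\subsetneqq[\bmo(Q_0)/\cc]$ is witnessed by $\log|\cdot-x_0|$ with $x_0$ the center of $Q_0$, which lies in $\BMO(Q_0)$ yet is unbounded modulo constants. The heart of the argument is the reverse inclusion $\BMO(Q_0)\subset[\bmo(Q_0)/\cc]$: given $f\in\BMO(Q_0)$, I would set $c:=-f_{Q_0}$ and bound $\|f+c\|_{\bmo(Q_0)}$ by splitting sub-cubes $Q\subset Q_0$ into those with $\ell(Q)<c_0$, where the $\bmo$ integrand is $|f-f_Q|$ whose average is at most $\|f\|_{\BMO(Q_0)}$, and those with $\ell(Q)\ge c_0$, where it reduces to $\fint_Q|f-f_{Q_0}|\le(|Q_0|/|Q|)\|f\|_{\BMO(Q_0)}$ and is controlled since $|Q|\ge c_0^n$. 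The hard part will be sharpening this to the claimed constant $2$ in $\inf_c\|f+c\|_{\bmo(Q_0)}\le 2\|f\|_{\BMO(Q_0)}$, since the naive large-cube estimate produces a multiple of $(\ell(Q_0)/c_0)^n$; I expect to resolve this either by a specific calibration of $c_0$ (so that this ratio is at most $2$) or by refining the bound via $\fint_Q|f-f_{Q_0}|\le\fint_Q|f-f_Q|+|f_Q-f_{Q_0}|$ together with a BMO chain-of-averages estimate linking $f_Q$ to $f_{Q_0}$ through dyadic sub-cubes of $Q_0$.
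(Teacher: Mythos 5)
Your treatment of \eqref{2.11w}, \eqref{Bb8}, and \eqref{8bB} is correct and essentially the paper's own argument: the same case split of cubes according to $\ell(Q)<c_0$ or $\ell(Q)\ge c_0$, and the same two logarithmic examples ($\log|x|$ globally for a function in $\BMO(\rn)\setminus[\bmo(\rn)/\cc]$, and a compactly supported logarithmic singularity for $\bmo(\rn)\setminus[L^\fz(\rn)/\cc]$). For \eqref{8bBe}, your reduction to Proposition \ref{LfzBMOLexp} and the example $\log|\cdot-x_0|$ also match the paper; only note that for the strictness $[L^\fz(Q_0)/\cc]\subsetneqq[\bmo(Q_0)/\cc]$ you should verify membership in $\bmo(Q_0)$ itself (not merely $\BMO(Q_0)$), which follows as in your $\rn$ example from $\fint_Q|\log|x-x_0||\,dx\le c_0^{-n}\int_{Q_0}|\log|x-x_0||\,dx$ whenever $\ell(Q)\ge c_0$.

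The genuine gap is exactly where you flagged it: the bound $\inf_{c\in\cc}\|f+c\|_{\bmo(Q_0)}\le2\|f\|_{\BMO(Q_0)}$ is not proven, and neither of your proposed repairs can deliver the constant $2$. Choosing $c=-f_{Q_0}$ and chaining $f_Q$ to $f_{Q_0}$ through dyadic subcubes gives at best a constant of order $1+\log(\ell(Q_0)/c_0)$ (your cruder estimate gives $(\ell(Q_0)/c_0)^n$), while ``calibrating $c_0$'' is not available because $c_0\in(0,\ell(Q_0))$ is fixed beforehand and the claim is asserted for that $c_0$. In fact no constant independent of $c_0$ and $Q_0$ can work: take $Q_0=(0,1)$, $f(x)=\log x$, and $c_0$ small; the intervals $(0,c_0)$ and $(1-c_0,1)$ have side length $c_0$, so for every $c\in\cc$ one has $\|f+c\|_{\bmo(Q_0)}\ge\max\{|\log c_0-1+c|,\,|c|-2c_0\}\gtrsim|\log c_0|$, whereas $\|f\|_{\BMO(Q_0)}$ is an absolute constant. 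This also locates the weakness in the paper's own argument: its displayed case distinction takes the infimum over $c$ inside the supremum over $Q$, i.e.\ lets the constant depend on the cube, which the $\bmo$ norm does not permit. What your argument (and the paper's, after this correction) does establish is the set equality $[\bmo(Q_0)/\cc]=\BMO(Q_0)$ with equivalence constants depending on $n$ and $\ell(Q_0)/c_0$; the two-sided bound with the absolute constants $2$ and $4$ should not be expected, so do not spend further effort trying to reach it.
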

\begin{proof}
First, we prove \eqref{Bb8}.
To this end, let $f\in L^1_\loc(\cx)$.
Then, for any $c\in\cc$ and any cube $Q$ of $\cx$,
\begin{align*}
\fint_{Q}\lf|f(x)-f_{Q}\r|\,dx
&=\fint_{Q}\lf|[f(x)+c]-(f+c)_{Q}\r|\,dx\\
&\le2\fint_{Q}\lf|f(x)+c\r|\,dx
\le2\|f+c\|_{L^\fz(Q)}.
\end{align*}
From this and the definitions of $\|\cdot\|_{\BMO(\cx)}$
and $\|\cdot\|_{\bmo(\cx)}$,
it follows that \eqref{Bb8} holds true, which further implies \eqref{2.11w}.

We now show \eqref{8bB}.
Indeed, let
\begin{align*}
	g_1(x):=
	\begin{cases}
		\log(|x|) &{\rm if}\ x\in\rn\setminus\{\mathbf{0}\},\\
		0 &{\rm if}\  x=\mathbf{0}.
	\end{cases}
\end{align*}
From \cite[Example 3.1.3]{GTM250}, we deduce that
$g_1\in\BMO(\rn)$.
However, $g_1\notin \bmo(\rn)$ because, for any $M>\max\{c_0,1\}$,
by the sphere coordinate changing method, we have
$$\fint_{B(\mathbf{0},M)}\lf|\log(|x|) \r|\,dx
\sim\log(M),$$
which tends to infinity as $M\to \fz$.
Thus, $g_1\in \BMO(\rn)\setminus [\bmo(\rn)/\cc]$
and hence we have $[\bmo(\rn)/\cc]\subsetneqq \BMO(\rn)$.
Moreover, define
\begin{align*}
g_2(x):=
\begin{cases}
\log(|x|) &{\rm if}\ |x|\in(0,1),\\
0 &{\rm if}\  |x|\in\{0\}\bigcup[1,\fz).
\end{cases}
\end{align*}
Notice that $g_2\notin L^\fz(\rn)$ and $g_2=\max\{g_1,0\}\in\BMO(\rn)$.
Then, for any cube $Q\subset\rn$, if $\ell(Q)\in(0,c_0)$, then
$$\fint_{Q}\lf|g_2(x)-(g_2)_Q \r|\,dx\le\|g_2\|_{\BMO(\rn)};$$
if $\ell(Q)\in [c_0,\fz)$, then
\begin{align*}
\fint_{Q}\lf|g_2(x) \r|\,dx
&\le\fint_{B(\mathbf{0},1)}\log(|x|)\,dx\sim\|g_2\|_{L^1(\rn)}\sim1.
\end{align*}
To sum up, $\|g_2\|_{\bmo(\rn)}\ls 1+\|g_2\|_{\BMO(\rn)}$
which implies that $g_2\in\bmo(\rn)$ and hence $L^\fz(\rn)\subsetneqq \bmo(\rn)$.
This shows \eqref{8bB}.

We next prove \eqref{8bBe}.
By the above example $g_2$, we conclude that
$L^\fz(Q_0)\subsetneqq\bmo(Q_0)$.
Meanwhile, $\BMO(Q_0)\subsetneqq [L_{\exp}(Q_0)/\cc]$
was obtained in Proposition \ref{LfzBMOLexp}.
Moreover, for any given $f\in\BMO(Q_0)$, we have $f\in L^1(Q_0)$ and hence
\begin{align*}
&\inf_{c\in\cc}\|f-c\|_{\bmo(Q_0)}\\
&\quad=
\begin{cases}
\displaystyle{\fint_{Q}\lf|f(x)-f_Q \r|\,dx\le\|f\|_{\BMO(Q_0)} }
&{\rm if\ } \ell(Q)\in(0,c_0),\\
\displaystyle{\inf_{c\in\cc}\fint_{Q}|f(x)-c|\,dx\le 2\|f\|_{\BMO(Q_0)} }
&{\rm if\ } \ell(Q)\in [c_0,\ell(Q_0)),
\end{cases}\\
&\quad\le2 \|f\|_{\BMO(Q_0)}.
\end{align*}
Combining this with the observations that $[\bmo(Q_0)/\cc]\subset\BMO(Q_0)$
and that, for any $c\in\cc$,
$$\|f\|_{\BMO(Q_0)}=\|f+c\|_{\BMO(Q_0)}\le2\|f+c\|_{\bmo(Q_0)},$$
we find that $[\bmo(Q_0)/\cc]=\BMO(Q_0)$ and
$$\|f\|_{\BMO(Q_0)}\le2\inf_{c\in\cc}\|f+c\|_{\bmo(Q_0)}
\le4\|f\|_{\BMO(Q_0)}.$$
To sum up, we obtain \eqref{8bBe}.
This finishes the proof of Proposition \ref{bmoBMO}.
\end{proof}

Let $f\in L^1_\loc(\cx)$. Similarly to Proposition \ref{BMO-exp}, let
\begin{align}\label{bmo1-Orlicz}
\|f\|_{{\rm bmo}_1(\cx)}:=\sup_{{\rm cube}\,Q\subset\cx}
\lf\|f-f_{Q,c_0}\r\|_{L_{\exp}(Q)}
\end{align}
and
\begin{align}\label{bmo2-Orlicz}
\|f\|_{{\rm bmo}_2(\cx)}:=\inf\lf\{\lz\in(0,\fz):\,\,\sup_{{\rm cube}\,Q\subset\cx}
\fint_Q\lf[e^{\frac{|f(x)-f_{Q,c_0}|}{\lz}}-1\r]\,dx\le1\r\},
\end{align}
where $c_0\in(0,\ell(\cx))$ and $f_{Q,c_0}$ is as in \eqref{fQc0}.
To show that they are equivalent norms of $\bmo(\cx)$,
we first establish the following John--Nirenberg inequality for $\bmo(\cx)$,
namely, Proposition \ref{JNineq-bmo} below.
In what follows, for any given cube $Q$ of $\rn$,
$(a_1,\dots,a_n)$ denotes the \emph{left and lower vertex} of $Q$,
which means that, for any $(x_1,\dots,x_n)\in Q$,
$x_i\ge a_i$ for any $i\in\{1,\dots,n\}$.
Recall that, for any given cube $Q$ of $\rn$,
the \emph{dyadic system} $\mathscr{D}_{Q}$ of $Q$
is defined by setting
\begin{align}\label{dyadicQ0}
\mathscr{D}_{Q}:=\bigcup_{j=0}^\fz\mathscr{D}_{Q}^{(j)},
\end{align}
where, for any $j\in\{0,1,\dots\}$,
$\mathscr{D}_{Q}^{(j)}$ denotes the set of all
$(x_1,\dots,x_n)\in Q$ such that, for any $i\in\{1,\dots,n\}$, either
$$x_i\in\lf[a_i+k_i2^{-j}\ell(Q),a_i+(k_i+1)2^{-j}\ell(Q)\r)$$
for some $k_i\in\{0,1,\dots,2^j-2\}$, or
$$x_i\in\lf[a_i+(1-2^{-j})\ell(Q),a_i+\ell(Q)\r].$$
\begin{proposition}\label{JNineq-bmo}
Let $f\in\bmo(\cx)$ and $c_0\in(0,\ell(\cx))$.
Then there exist positive constants
$C_5$ and $C_6$ such that, for any given cube $Q\subset\cx$,
and any $t\in(0,\fz)$,
\begin{align}\label{JN-bmo}
	\lf|\lf\{x\in Q:\ |f(x)-f_{Q,c_0}|>t \r\} \r|
	\le C_5 e^{-\frac{C_6}{\|f\|_{\bmo(\cx)}}t}|Q|.
\end{align}
\end{proposition}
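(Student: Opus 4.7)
The plan is to split by scale and reduce both regimes to the classical John--Nirenberg inequality \eqref{JN-BMO} for $\BMO(\cx)$, using the embedding $\bmo(\cx)\subset\BMO(\cx)$ with $\|f\|_{\BMO(\cx)}\le2\|f\|_{\bmo(\cx)}$ supplied by Proposition \ref{bmoBMO}. When $\ell(Q)\in(0,c_0)$, the definition \eqref{fQc0} gives $f_{Q,c_0}=f_Q$, so the desired estimate is immediate from \eqref{JN-BMO} applied to $f$, after absorbing the factor $2$ into the constants $C_5$ and $C_6$.

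The substantive case is $\ell(Q)\in[c_0,\ell(\cx))$, where $f_{Q,c_0}=0$ and we must control $|\{x\in Q:\,|f(x)|>t\}|$. The key idea is to partition $Q$ by the dyadic subcubes $\{Q_k\}_k\subset\mathscr{D}_Q^{(j_0)}$ at the unique level $j_0\in\nn$ for which $2^{-j_0}\ell(Q)<c_0\le 2^{-(j_0-1)}\ell(Q)$, so that $\ell(Q_k)\in[c_0/2,c_0)$. Each $Q_k$ is small enough that $f_{Q_k,c_0}=f_{Q_k}$, while its dyadic parent $\widehat{Q}_k$ at level $j_0-1$ has $\ell(\widehat{Q}_k)=2\ell(Q_k)\ge c_0$ and hence $f_{\widehat{Q}_k,c_0}=0$. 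Feeding the latter into the defining inequality of $\bmo(\cx)$ on the parent yields the uniform bound
$$|f_{Q_k}|\le\fint_{Q_k}|f(x)|\,dx\le\frac{|\widehat{Q}_k|}{|Q_k|}\fint_{\widehat{Q}_k}|f(x)|\,dx=2^n\fint_{\widehat{Q}_k}|f(x)|\,dx\le2^n\|f\|_{\bmo(\cx)},$$
which is crucially independent of both $k$ and $\ell(Q)$. For any $t>2^{n+1}\|f\|_{\bmo(\cx)}$ one then has the inclusion $\{x\in Q_k:\,|f(x)|>t\}\subset\{x\in Q_k:\,|f(x)-f_{Q_k}|>t/2\}$ on each piece, so \eqref{JN-BMO} applied on $Q_k$ (together with $\|f\|_{\BMO(\cx)}\le2\|f\|_{\bmo(\cx)}$) gives exponential decay on each $Q_k$, and summing over the pairwise disjoint family with $\sum_k|Q_k|=|Q|$ yields \eqref{JN-bmo} for such $t$; the complementary small-$t$ range $t\le2^{n+1}\|f\|_{\bmo(\cx)}$ is handled by the trivial estimate $|\{\cdot\}|\le|Q|$ once $C_5$ is chosen sufficiently large relative to $C_6$ so that $C_5 e^{-C_6\cdot 2^{n+1}}\ge1$.

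The main point requiring care is the dyadic-parent trick producing the uniform bound $|f_{Q_k}|\le2^n\|f\|_{\bmo(\cx)}$: the distinguished scale $c_0$ separates the child $Q_k$, where the localized mean $f_{Q_k}$ is a priori not controlled by the $\bmo$-norm, from its parent $\widehat{Q}_k$, on which the $\bmo$ condition does furnish a direct $L^1$-average bound for $|f|$ itself. This two-scale comparison, combined with the classical John--Nirenberg inequality on each $Q_k$, is what removes any dependence of $C_5$ and $C_6$ on $\ell(Q)$ and delivers the uniform exponential decay of \eqref{JN-bmo}.
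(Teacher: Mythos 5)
Your proof is correct, but it takes a genuinely different route from the paper's in the essential case $\ell(Q)\ge c_0$. The paper does not invoke the classical inequality \eqref{JN-BMO} there at all: it tiles $Q$ by dyadic subcubes of side length just \emph{above} $c_0$ (level $m_0$ with $2^{-(m_0+1)}\ell(Q)<c_0\le 2^{-m_0}\ell(Q)$), on which the $\bmo$ norm directly bounds $\fint_{Q_0}|f|$, and then reruns the Calder\'on--Zygmund iteration of the original John--Nirenberg proof from scratch, using the mean-oscillation bound at all finer generations (whose side lengths fall below $c_0$); this yields explicit constants $C_5=\sqrt2$ and $C_6=2^{-(n+2)}\log2$. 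You instead tile $Q$ one level finer, by dyadic cubes of side length just \emph{below} $c_0$, exploit the parent at scale $\ge c_0$ to get the uniform bound $|f_{Q_k}|\le 2^n\|f\|_{\bmo(\cx)}$, and then apply \eqref{JN-BMO} as a black box on each tile together with the inclusion $\{|f|>t\}\subset\{|f-f_{Q_k}|>t/2\}$ for $t>2^{n+1}\|f\|_{\bmo(\cx)}$, absorbing small $t$ into $C_5$. Your two-scale comparison is sound (the threshold $t\ge 2^{n+1}\|f\|_{\bmo(\cx)}$ is exactly what makes $t-2^n\|f\|_{\bmo(\cx)}\ge t/2$, and the tiles partition $Q$ so the estimates sum), and it buys a shorter, more modular argument that avoids repeating the CZ iteration; the price is slightly worse constants (a factor-of-two loss in the exponent from the triangle inequality and the factor $2$ from $\|f\|_{\BMO(\cx)}\le2\|f\|_{\bmo(\cx)}$, plus an enlarged $C_5$), whereas the paper's self-contained iteration keeps the constants explicit. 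Incidentally, your remark that $f_{Q_k,c_0}=f_{Q_k}$ on the small tiles is not needed, since \eqref{JN-BMO} applies to any subcube of $\cx$ once $f\in\BMO(\cx)$.
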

\begin{proof}
Indeed, this proof is a slight modification of
the proof of \cite[Lemma 1]{JN61} or \cite[Theorem 6.11]{Duo}.
We give some details here again for the sake of completeness.

Let $f\in\bmo(\cx)$.
Then, from Proposition \ref{bmoBMO}, we deduce that
$f\in\BMO(\cx)$ and $\|f\|_{\BMO(\cx)}\le2\|f\|_{\bmo(\cx)}$,
which further implies that, for any cube $Q\subset\cx$ with $\ell(Q)<c_0$,
and any $t\in(0,\fz)$,
\begin{align*}
	\mathfrak{D}\lf(f-f_{Q,c_0};Q\r)(t)
	&=\mathfrak{D}\lf(f-f_Q;Q\r)(t)
	\le C_1 e^{-\frac{C_2}{\|f\|_{\BMO(\cx)}}t}|Q|\\
	&\le C_1 e^{-\frac{C_2}{2\|f\|_{\bmo(\cx)}}t}|Q|,
\end{align*}
where $C_1$ and $C_2$ are as in \eqref{JN-BMO}, and
the distribution function $\mathfrak{D}$ is defined as in \eqref{distr}.
Therefore, to show \eqref{JN-bmo}, it remains to prove that,
for any given cube $Q$ with $\ell(Q)\ge c_0$,
and any $t\in(0,\fz)$,
\begin{align*}
	\lf|\lf\{x\in Q:\ |f(x)|>t \r\} \r|
	\le C_5 e^{-\frac{C_6}{\|f\|_{\bmo(\cx)}}t}|Q|.
\end{align*}
Notice that, in this case, there exists a unique $m_0\in\zz_+$
such that $2^{-(m_0+1)}\ell(Q)<c_0\le2^{-m_0}\ell(Q)$.
Moreover, since inequality \eqref{JN-bmo} is not altered
when we multiply both $f$ and $t$ by the same constant,
without loss of generality, we may assume that $\|f\|_{\bmo(\cx)}=1$.
Let $Q_0$ be any given dyadic subcube of $Q$ with level $m_0$,
namely, $Q_0\in\mathscr{D}_{Q}^{(m_0)}$.
Then, by $c_0\le2^{-m_0}\ell(Q)=\ell(Q_0)$ and the definition of $\|f\|_{\bmo(\cx)}$,
we have
\begin{align}\label{bmo<1}
	\fint_{Q_0}|f(x)|\,dx\le\|f\|_{\bmo(\cx)}=1.
\end{align}
From the Calder\'on--Zygmund decomposition (namely, Theorem \ref{CZ-decom}) of $f$
with height $\lz:=2$, we deduce that
there exists a family $\{Q_{1,j}\}_j\subset\mathscr{D}_{Q_0}^{(1)}$ such that,
for any $j$,
$$2<\fint_{Q_{1,j}}|f(x)|\,dx\le2^{n+1}$$
and $|f(x)|\le2$ when $x\in Q\setminus\bigcup_j Q_{1,j}$.
By this and \eqref{bmo<1}, we conclude that
$$\sum_j \lf|Q_{1,j}\r|\le\frac12\sum_j\int_{Q_{1,j}}|f(x)|\,dx
\le\frac12\int_{Q_0}|f(x)|\,dx\le\frac{1}2|Q_0|$$
and, for any $j$,
$$\lf|f_{Q_{1,j}} \r|\le\lf|\fint_{Q_{1,j}}f(x)\,dx\r|\le2^{n+1}.$$

Moreover, for any $j$, from the Calder\'on--Zygmund decomposition of $f-f_{Q_{1,j}}$
with height $2$, we deduce that there exists a family
$\{Q_{1,j,k}\}_{k}\subset\mathscr{D}_{Q_{1,j}}^{(1)}$ such that, for any $k$,
$$2<\fint_{Q_{1,j,k}}|f(x)-f_{Q_{1,j}}|\,dx\le2^{n+1}$$
and $|f(x)-f_{Q_{1,j}}|\le2$ when $x\in Q\setminus\bigcup_k Q_{1,j,k}$.
Meanwhile, by the construction of $\{Q_{1,j}\}_j$,
we know that $\ell(Q_{1,j})=\frac12\ell(Q_0)=2^{-(m_0+1)}\ell(Q)$ which,
combined with the facts $\|f\|_{\bmo(\cx)}=1$ and $2^{-(m_0+1)}\ell(Q)<c_0$,
further implies that
$$\fint_{Q_{1,j}}\lf|f(x)-f_{Q_{1,j}}\r|\,dx\le\|f\|_{\bmo(\cx)}=1.$$
Thus, we  obtain, for any $j$,
\begin{align*}
	\sum_k \lf|Q_{1,j,k}\r|
	&\le\frac12\sum_j\int_{Q_{1,j,k}}|f(x)-f_{Q_{1,j}}|\,dx\\
	&\le\frac12\int_{Q_{1,j}}|f(x)-f_{Q_{1,j}}|\,dx\le\frac{1}2|Q_{1,j}|
\end{align*}
and, for any $k$,
$$\lf|f_{Q_{1,j,k}}-f_{Q_{1,j}} \r|\le
\fint_{Q_{1,j,k}}|f(x)-f_{Q_{1,j}}|\,dx\le2^{n+1}.$$
Rewrite $\bigcup_{j,k}\{Q_{1,j,k}\}=:\bigcup_j\{Q_{2,j}\}$.
Then we have
$$\sum_j\lf|Q_{2,j}\r|\le\frac12\sum_j\lf|Q_{1,j}\r|\le\frac14|Q_0|$$
and, for any $x\in Q\setminus\bigcup_j Q_{2,j}$,
$$|f(x)|\le\lf|f(x)-f_{Q_{1,j}}\r|+\lf|f_{Q_{1,j}}\r|\le2+2^{n+1}\le 2\cdot2^{n+1}.$$

Repeating this process, then, for any $T\in\nn$,
we obtain a family $\{Q_{T,j}\}_j\subset\mathscr{D}_{Q_0}$
of disjoint dyadic cubes such that
$$\sum_j\lf|Q_{T,j} \r|\le2^{-T}|Q_0|$$
and, for any $x\in Q_0\setminus\bigcup_j Q_{T,j}$,
$$|f(x)|\le T2^{n+1}.$$
Notice that, for any $t\in[2^{n+1},\fz)$,
there exists a unique $T\in\nn$ such that
$T2^{n+1}\le t< (T+1)2^{n+1}\le T2^{n+2}$.
Therefore, we obtain
\begin{align}\label{t-large}
	|\{x\in Q_0:\,\,|f(x)|>t\}|
	&\le\sum_j \lf|Q_{T,j} \r|\le 2^{-T}|Q_0|\noz\\
	&=e^{-T\log2}|Q_0|\le e^{-C_6 t}|Q_0|,
\end{align}
where $C_6:=2^{-(n+2)}\log2$. Also, observe that, if $t\in(0,2^{n+1})$,
then $C_6 t<2^{-1}\log2$ and hence
$$|\{x\in Q_0:\,\,|f(x)|>t\}|\le|Q_0|
\le e^{2^{-1}\log2-C_6 t}|Q_0|=C_5 e^{-C_6 t}|Q_0|,$$
where $C_5:=\sqrt2$.
By this, \eqref{t-large}, and the arbitrariness of $Q_0\in\mathscr{D}_{Q}^{(m_0)}$,
we conclude that, for any $t\in(0,\fz)$,
\begin{align*}
|\{x\in Q:\,\,|f(x)|>t\}|
&=\sum_{Q_0\in\mathscr{D}_{Q}^{(m_0)}}
|\{x\in Q_0:\,\,|f(x)|>t\}|\\
&\le C_5 e^{-C_6 t}\sum_{Q_0\in\mathscr{D}_{Q}^{(m_0)}}|Q_0|
=C_5 e^{-C_6 t}|Q|
\end{align*}
and hence \eqref{JN-bmo} holds true.
This finishes the proof of Proposition \ref{JNineq-bmo}.
\end{proof}

As a corollary of Proposition \ref{JNineq-bmo},
we have the following result, namely, $\|\cdot\|_{{\rm bmo}_1(\cx)}$ in \eqref{bmo1-Orlicz}
and $\|\cdot\|_{{\rm bmo}_2(\cx)}$ in \eqref{bmo2-Orlicz}
are equivalent norms of $\bmo(\cx)$.
The proof of Corollary \ref{bmo-exp} is just a repetition
of the proof of Proposition \ref{BMO-exp}
with \eqref{JN-BMO} replaced by \eqref{JN-bmo};
we omit the details here.

\begin{corollary}\label{bmo-exp}
The following three statements are mutually equivalent:
\begin{itemize}
\item [{\rm(i)}] $f\in\bmo(\cx)$;
	
\item [{\rm(ii)}] $f\in L^1_\loc(\cx)$ and $\|f\|_{{\rm bmo}_1(\cx)}<\fz$;
	
\item [{\rm(iii)}] $f\in L^1_\loc(\cx)$ and $\|f\|_{{\rm bmo}_2(\cx)}<\fz$.
\end{itemize}
Moreover, $\|\cdot\|_{\bmo(\cx)}\sim\|\cdot\|_{{\rm bmo}_1(\cx)}
\sim\|\cdot\|_{{\rm bmo}_2(\cx)}$.
\end{corollary}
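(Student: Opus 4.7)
The plan is to mirror the proof of Proposition \ref{BMO-exp} line-by-line, replacing the centered quantity $f-f_Q$ by $f-f_{Q,c_0}$ throughout, and invoking the localized John--Nirenberg inequality \eqref{JN-bmo} from Proposition \ref{JNineq-bmo} in place of \eqref{JN-BMO}. Since the three statements are equivalent exactly when the three norms are comparable (up to the convention that a nonintegrable or unbounded $f$ gives $\|f\|=\infty$), it suffices to show
$$\|f\|_{\bmo(\cx)}\sim\|f\|_{{\rm bmo}_1(\cx)}\sim\|f\|_{{\rm bmo}_2(\cx)}$$
for every $f\in L^1_\loc(\cx)$.

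For the easy direction, I would use the elementary bound $t\le e^t-1$ on $[0,\infty)$ applied pointwise to $t=|f(x)-f_{Q,c_0}|/\lambda$ and integrated over $Q$. For ${\rm bmo}_1$, this yields $\fint_Q|f-f_{Q,c_0}|\,dx\le\|f-f_{Q,c_0}\|_{L_{\exp}(Q)}$ for every cube $Q\subset\cx$, and taking suprema gives $\|f\|_{\bmo(\cx)}\le\|f\|_{{\rm bmo}_1(\cx)}$. For ${\rm bmo}_2$, choosing for each $n\in\nn$ some $\lambda_n\in(\|f\|_{{\rm bmo}_2(\cx)},\|f\|_{{\rm bmo}_2(\cx)}+1/n)$ realizing the defining supremum bound, dividing through by $\lambda_n$, integrating, and then letting $n\to\infty$ gives $\|f\|_{\bmo(\cx)}\le\|f\|_{{\rm bmo}_2(\cx)}$, exactly as in the BMO case.

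For the reverse direction, I would assume $f\in\bmo(\cx)$ (else the target norms are vacuously infinite), fix any cube $Q\subset\cx$ and any $\lambda\in(C_6^{-1}\|f\|_{\bmo(\cx)},\infty)$, and apply the layer-cake identity as in \eqref{C3C4} together with the localized John--Nirenberg inequality \eqref{JN-bmo} to estimate
\begin{align*}
\fint_Q e^{|f(x)-f_{Q,c_0}|/\lambda}\,dx
&\le 1+\frac{1}{|Q|}\int_1^\fz\lf|\lf\{x\in Q:\ |f(x)-f_{Q,c_0}|>\lambda\log t\r\}\r|\,dt\\
&\le 1+C_5\int_1^\fz t^{-C_6\lambda/\|f\|_{\bmo(\cx)}}\,dt=1+C_5'
\end{align*}
with a finite constant $C_5'$ independent of $Q$. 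Then invoking Lemma \ref{type1}(i) with $s=1/(1+C_5')$ rescales $\lambda\mapsto(1+C_5')\lambda$ and brings the right-hand side below $1$, uniformly in $Q$. Taking the infimum over admissible $\lambda$ yields both $\|f-f_{Q,c_0}\|_{L_{\exp}(Q)}\lesssim\|f\|_{\bmo(\cx)}$ (hence the bound on $\|f\|_{{\rm bmo}_1(\cx)}$) and $\|f\|_{{\rm bmo}_2(\cx)}\lesssim\|f\|_{\bmo(\cx)}$.

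No step here presents a genuine obstacle; the only bookkeeping point is that the averaging constant $f_{Q,c_0}$ depends on whether $\ell(Q)<c_0$, but this dependence is completely absorbed into Proposition \ref{JNineq-bmo}, which was established uniformly in $Q$. Consequently the entire argument from the BMO setting transfers verbatim, with the single substitution of the pair $(C_1,C_2)$ by $(C_5,C_6)$, and the equivalence $\|\cdot\|_{\bmo(\cx)}\sim\|\cdot\|_{{\rm bmo}_1(\cx)}\sim\|\cdot\|_{{\rm bmo}_2(\cx)}$ follows.
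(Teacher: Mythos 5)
Your proposal is correct and is exactly the argument the paper intends: the paper's proof of Corollary \ref{bmo-exp} is declared to be a verbatim repetition of the proof of Proposition \ref{BMO-exp} with \eqref{JN-BMO} replaced by the localized John--Nirenberg inequality \eqref{JN-bmo}, which is precisely the substitution $f-f_Q\mapsto f-f_{Q,c_0}$, $(C_1,C_2)\mapsto(C_5,C_6)$ that you carry out.
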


\subsection{John--Nirenberg space $JN_p$}\label{Subsec-JNp}

Although there exist a lot of fruitful studies of the space BMO in recent years,
but, as was mentioned before, the structure of $JN_p$ is largely a mystery
and there still exist many unsolved problems on $JN_p$.
The first well-known property of $JN_p$ is the following \emph{John--Nirenberg inequality}
obtained in \cite[Lemma 3]{JN61} which says that $JN_p(Q_0)$ is embedded into
the weak Lebesgue space $L^{p,\infty}(Q_0)$ (see Definition \ref{def-weak}).
\begin{theorem}[John--Nirenberg]\label{JN61Lem3}
Let $p\in(1,\infty)$ and $Q_0$ be a cube of $\rn$.
If $f\in JN_p(Q_0)$, then
$f-f_{Q_0}\in L^{p,\infty}(Q_0)$ and there exists a positive constant
$C_{(n,p)}$, depending only on $n$ and $p$, but independent of $f$, such that
$$\lf\|f-f_{Q_0}\r\|_{L^{p,\infty}(Q_0)}
\le C_{(n,p)}\|f\|_{JN_p(Q_0)}.$$
\end{theorem}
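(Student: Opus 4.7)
The plan is to reduce the weak-type bound to the distributional estimate $|\{x \in Q_0 : |f(x) - f_{Q_0}| > t\}| \leq C_{(n,p)} \|f\|_{JN_p(Q_0)}^p/t^p$ for every $t > 0$, which is equivalent to the claim, and then to establish it via an iterated Calder\'on--Zygmund stopping-time argument that couples the $JN_p$ condition to H\"older's inequality. Both $\|f\|_{JN_p(Q_0)}$ and $\|f - f_{Q_0}\|_{L^{p,\fz}(Q_0)}$ scale as $|Q_0|^{n/p}$ under dilations, so one may normalize $|Q_0| = 1$; for small $t$ (say $t \leq c\|f\|_{JN_p}$) the trivial bound $|\{\cdots\}| \leq |Q_0| = 1$ already gives the claim, so the work lies in handling large $t$.

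For large $t$, Jensen's inequality gives $\fint_{Q_0}|f - f_{Q_0}| \leq \|f\|_{JN_p} < t$, so Theorem \ref{CZ-decom} applies: first to $|f - f_{Q_0}|$ on $Q_0$ at a level $M$ (with $\fint_{Q_0}|f-f_{Q_0}| < M \leq t$, to be chosen), extracting a disjoint family $\{Q^{(1)}_i\}$ of maximal dyadic subcubes on which $\fint_{Q^{(1)}_i}|f - f_{Q_0}| > M$; and then iteratively at stage $k \geq 2$ to $|f - f_{Q^{(k-1)}_\beta}|$ on each stage-$(k-1)$ cube $Q^{(k-1)}_\beta$ at the same level $M$, producing the stage-$k$ family $\{Q^{(k)}_\alpha\}$. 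Writing $E_k := \bigcup_\alpha Q^{(k)}_\alpha$, two quantitative ingredients drive the argument. The first is a telescoping pointwise bound: on $E_k \setminus E_{k+1}$ one has $|f - f_{Q_0}| \leq M + 2^n kM$, obtained by combining the off-stopping-cube bound $|f - f_{Q^{(k)}_\beta}| \leq M$ with the mean-jump estimate $|f_{Q^{(j+1)}_\alpha} - f_{Q^{(j)}_\beta}| \leq 2^n M$ between dyadic parent and child. The second is a recursive measure estimate: Chebyshev's inequality on each stage-$k$ cube, summed over cubes sharing a stage-$(k-1)$ parent, followed by H\"older's inequality with exponent $p$ and the defining $JN_p$ bound applied to the disjoint family $\{Q^{(k-1)}_\beta\}$, yields
\[
|E_k| \leq \frac{\|f\|_{JN_p(Q_0)}}{M}\, |E_{k-1}|^{1/p'}.
\]

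Unrolling this subcritical recursion from $|E_0| = 1$ gives $|E_k| \leq (\|f\|_{JN_p}/M)^{p(1 - 1/{p'}^{k})}$, which converges to the fixed point $(\|f\|_{JN_p}/M)^p$ as $k \to \fz$. Choosing $M$ and $k$ so that the threshold $M(1 + 2^n k)$ reaches $t$ while the contraction has had time to approach this fixed point yields
\[
\lf|\lf\{x \in Q_0 : |f(x) - f_{Q_0}| > t\r\}\r| \leq |E_{k+1}| \leq C_{(n,p)}\frac{\|f\|_{JN_p(Q_0)}^p}{t^p},
\]
and taking the supremum over $t$ gives the claim. The main obstacle is the parameter tuning: $M$ must be small enough to push $M(1 + 2^n k)$ past $t$ with only finitely many iterations, yet large enough that the fixed point $(\|f\|_{JN_p}/M)^p$ matches $(\|f\|_{JN_p}/t)^p$ up to a constant depending only on $n$ and $p$. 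This careful bookkeeping, in particular keeping the constant free of any $|Q_0|$ dependence after the scaling normalization, constitutes the technical heart of the argument.
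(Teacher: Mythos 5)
Your decomposition machinery is sound: the telescoping bound $|f-f_{Q_0}|\le M+2^nkM$ on $E_k\setminus E_{k+1}$ and the recursion $|E_k|\le \frac{\|f\|_{JN_p(Q_0)}}{M}|E_{k-1}|^{1/p'}$ (via Chebyshev, H\"older, and the disjointness of the stage-$(k-1)$ cubes) are both correct, even in the degenerate case where a stopping cube has oscillation mean exceeding $M$ and re-selects itself. The genuine gap is the final ``parameter tuning'', which is not a bookkeeping detail but an impossibility when the \emph{same} level $M$ is used at every stage. Unrolling the recursion gives $|E_{k+1}|\le(\|f\|_{JN_p(Q_0)}/M)^{p}\,(M/\|f\|_{JN_p(Q_0)})^{p(1/p')^{k+1}}$, while after $k$ stages the threshold you have reached is only $t=M(1+2^nk)$, growing \emph{linearly} in $k$. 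Comparing with the target $C(\|f\|_{JN_p(Q_0)}/t)^p$, what you need is precisely
\begin{align*}
\bigl(1+2^{n}k\bigr)^{p}\,\bigl(M/\|f\|_{JN_p(Q_0)}\bigr)^{p(1/p')^{k+1}}\le C_{(n,p)}
\quad\text{with}\quad M\bigl(1+2^{n}k\bigr)=t.
\end{align*}
Both factors are at least $1$ in the useful regime $M\ge\|f\|_{JN_p(Q_0)}$; the first tends to infinity with $k$, and the second tends to infinity as $t/\|f\|_{JN_p(Q_0)}\to\infty$ whenever $k$ stays bounded (to tame it you need $k\gtrsim\log\log(M/\|f\|_{JN_p(Q_0)})$). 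Optimizing over $M$ therefore yields at best a bound of the order $\bigl[\|f\|_{JN_p(Q_0)}\log\log(t/\|f\|_{JN_p(Q_0)})/t\bigr]^{p}$, i.e.\ your scheme inherently loses an iterated-logarithm factor and cannot produce a constant depending only on $n$ and $p$.

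The fix is to let the stopping level grow geometrically with the stage: run stage $j$ at level $\theta^{-j}M$ for a fixed $\theta\in(0,2^{-n})$, so the accumulated threshold is comparable to the \emph{last} level used, while the identical Chebyshev/H\"older step gives $|E_j|\le\frac{\|f\|_{JN_p(Q_0)}}{\theta^{-j}M}|E_{j-1}|^{1/p'}$; unrolling this recursion loses only a constant depending on $\theta$ and $p$. This is exactly the good-$\lambda$ structure that the survey points to for Theorem \ref{JohnNirenberg} (which contains the present statement as the case $\alpha=0=s$): one proves the single-scale inequality of Lemma \ref{goodlambda} for the dyadic maximal function of $f-f_{Q_0}$, comparing level $\lambda$ with level $\theta\lambda$, and then either iterates it over the geometric sequence of levels or absorbs it via $\sup_{\lambda}\lambda^{p}|\{\mathcal{M}^{(\mathrm{d})}_{Q_0}(f-f_{Q_0})>\lambda\}|$ (after a truncation or approximation to guarantee this supremum is finite a priori). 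So your two ingredients survive essentially verbatim; what must change is the fixed-level iteration, which is the BMO-type scheme where linear threshold growth suffices, but does not for $JN_p$.
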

It should be mentioned that the proof of Theorem \ref{JN61Lem3}
relies on the Calder\'on--Zygmund decomposition
(namely, Theorem \ref{CZ-decom}) as well.
Moreover, as an application of Theorem \ref{JN61Lem3},
Dafni et al. recently showed in \cite[Proposition 5.1]{DHKY18} that,
for any given $p\in(1,\fz)$ and $q\in[1,p)$,
$f\in JN_p(Q_0)$ if and only if $f\in L^1(Q_0)$ and
$$\|f\|_{JN_{p,q}(Q_0)}:=\sup \lf[\sum_i|Q_i|
\lf(\fint_{Q_i}\lf|f(x)-f_{Q_i}\r|^q\,dx\r)^{\frac pq} \r]^\frac1p<\fz,$$
where the supremum is taken in the same way as in \eqref{jnp};
meanwhile, $\|\cdot\|_{JN_p(Q_0)}\sim\|\cdot\|_{JN_{p,q}(Q_0)}$.
Furthermore, in \cite[Proposition 5.1]{DHKY18}, Dafni et al. also showed that,
for any given $p\in(1,\fz)$ and $q\in [p,\infty)$,
the spaces $JN_{p,q}(Q_0)$ and $L^q(Q_0)$ coincide as sets.
\begin{remark}\label{LpJNpWLp}
\begin{itemize}
\item[{\rm(i)}]
As a counterpart of Proposition \ref{LfzBMOLexp},
for any given $p\in(1,\fz)$ and any given cube $Q_0$ of $\rn$,
we have
$$L^p(Q_0)\subsetneqq JN_p(Q_0)\subsetneqq L^{p,\fz}(Q_0).$$
Indeed, $L^p(Q_0)\subset JN_p(Q_0)$ is obvious from their definitions;
$JN_p(Q_0)\subset L^{p,\fz}(Q_0)$ is just Theorem \ref{JN61Lem3};
$JN_p(Q_0)\subsetneqq L^{p,\fz}(Q_0)$ was showed in \cite[Example 3.5]{ABKY11}
and the desired function is just $x^{-1/p}$ on $[0,2]$.
However, the fact $L^p(Q_0)\subsetneqq JN_p(Q_0)$ is extremely non-trivial
and was obtained in \cite[Proposition 3.2 and Corollary 4.2]{DHKY18}
via constructing a nice fractal function based on skillful dyadic techniques.
Moreover, in \cite[Theorem 1.1 and Remark 2.4]{DHKY18},
Dafni et al. showed that,
for any given $p\in(1,\fz)$ and any given interval $I_0\subset\rr$
which is no matter bounded or not,
monotone functions are in $JN_p(I_0)$ if and only if they are also in $L^p(I_0)$.
Thus, $JN_p(\cx)$ may be very ``close'' to $L^p(\cx)$ for any given $p\in(1,\fz)$.

\item[{\rm(ii)}]
$JN_1(Q_0)$ coincides with $L^1(Q_0)$.
To be precise, let $Q_0$ be any given cube of $\rn$, and
$$JN_1(Q_0):=\lf\{f\in L^1(Q_0): \|f\|_{JN_1(Q_0)}<\fz\r\},$$
where $\|f\|_{JN_1(Q_0)}$ is defined as in \eqref{jnp}
with $p$ replaced by $1$.
Then we claim that $JN_1(Q_0)= [L^1(Q_0)/\cc]$
with equivalent norms.
Indeed, for any $f\in JN_1(Q_0)$,
by the definition of $\|f\|_{JN_1(Q_0)}$,
we have
$$\|f\|_{JN_1(Q_0)}
\ge \lf\|f-f_{Q_0}\r\|_{L^1(Q_0)}
\ge \inf_{c\in\cc}\|f+c\|_{L^1(Q_0)}
=:\|f\|_{L^1(Q_0)/\cc}.$$
Conversely, for any given $f\in L^1(Q_0)$ and any $c\in\cc$, we have
\begin{align*}
\|f\|_{JN_1(Q_0)}
&=\sup\sum_i \int_{Q_i}\lf|f(x)-f_{Q_i}\r|\,dx\\
&\le2\sup\sum_i \int_{Q_i}\lf|f(x)+c\r|\,dx \\
&\le2\|f+c\|_{L^1(Q_0)},
\end{align*}
which implies that $\|f\|_{JN_1(Q_0)}\le\|f\|_{L^1(Q_0)/\cc}$
and hence the above claim holds true.
Moreover, the relation between $JN_1(\rr)$ and $L^1(\rr)$
was studied in \cite[Proposition 2]{BE20}.

\item[{\rm(iii)}]
Garsia and Rodemich in \cite[Theorem 7.4]{gr74}
showed that, for any given $p\in(1,\fz)$,
$f\in L^{p,\fz}(Q_0)$ if and only if $f\in L^1(Q_0)$ and
$$\|f\|_{{\rm GaRo}_p(Q_0)}
:=\sup \frac1{(\sum_{i}|Q_i|)^{1/p'}}
\sum_i \frac1{|Q_i|}\int_{Q_i}\int_{Q_i}
\lf|f(x)-f(y)\r|\,dx\,dy<\fz,$$
where the supremum is taken in the same way as in \eqref{jnp};
meanwhile,
$$\|\cdot\|_{L^{p,\fz}(Q_0)}\sim\|\cdot\|_{{\rm GaRo}_p(Q_0)};$$
see also \cite[Theorem 5(ii)]{m16} for this equivalence.
Moreover, in \cite[Theorem 5(i)]{m16}, Milman showed that
$\|\cdot\|_{{\rm GaRo}_p(Q_0)}\le2\|\cdot\|_{JN_p(Q_0)}$.
\end{itemize}
\end{remark}

Recall that the predual space of $\BMO(\cx)$ is the Hardy space $H^1(\cx)$;
see, for instance, \cite[Theorem B]{CW77}.
Similarly to this duality,
Dafni et al. \cite{DHKY18} also obtained the predual space of $JN_p(Q_0)$
for any given $p\in(1,\fz)$,
which is denoted by the Hardy kind space $HK_{p'}(Q_0)$,
here and thereafter, $1/p+1/p'=1$.
Later, these properties, including equivalent norms and duality,
were further studied on several John--Nirenberg-type spaces,
such as John--Nirenberg--Campanato spaces,
localized John--Nirenberg--Campanato spaces, and
congruent John--Nirenberg--Campanato spaces
(see Section \ref{Sec-(L)JNC} for more details),
and Riesz-type spaces (see Section \ref{Sec-Riesz} for more details).

Finally, let us briefly recall some other related studies concerning
the John--Nirenberg space $JN_p$,
which would not be stated in details in this survey
while all of them are quite instructive:
\begin{enumerate}
\item[$\bullet$]
Stampacchia \cite{S65} introduced the space $N^{(p,\lambda)}$,
which coincides with $JN_{(p,1,0)_\alpha}(Q_0)$ in Definitions \ref{JNpqa}
if we write $\lambda=p\az$ with $p\in(1,\fz)$ and $\az\in(-\fz,\fz)$,
and applied them to the context of interpolation of operators.

\item[$\bullet$]
Campanato \cite{Campanato66} also used the John--Nirenberg spaces
to study the interpolation of operators.

\item[$\bullet$]
In the context of doubling metric spaces,
$JN_p$ and median-type $JN_p$ were studied, respectively, by
Aalto et al. in \cite{ABKY11} and Myyryl\"ainen in \cite{m21}.

\item[$\bullet$]
Hurri-Syrj\"anen et al. \cite{hmv14} established a local-to-global result for the
space $JN_p(\Omega)$ on an open subset $\Omega$ of $\rn$.
More precisely, it was proved that the norm
$\|\cdot\|_{JN_p(\Omega)}$ is dominated by its local version
$\|\cdot\|_{JN_{p,\tau}(\Omega)}$ modulus constants;
here, $\tau\in[1,\fz)$, for any open subset $\Omega$ of $\rn$,
the related ``norm" $\|\cdot\|_{JN_p(\Omega)}$ is
defined in the same way as $\|\cdot\|_{JN_p(Q_0)}$ in \eqref{jnp}
with $Q_0$ replaced by $\Omega$,
and $\|\cdot\|_{JN_{p,\tau}(\Omega)}$ is defined in the same way as  $\|\cdot\|_{JN_p(\Omega)}$
with an additional requirement $\tau Q\subset\Omega$ for all chosen cubes $Q$ in the definition
of $\|\cdot\|_{JN_p(\Omega)}$.

\item[$\bullet$]
Marola and  Saari \cite{ms16} studied the corresponding results
of Hurri-Syrj\"anen et al. \cite{hmv14} on metric measure spaces,
and obtained the equivalence between the local and the global $JN_p$ norms.
Moreover, in both articles \cite{hmv14,ms16},
a global John--Nirenberg inequality for $JN_p(\Omega)$ was established.

\item[$\bullet$]
Berkovits et al. \cite{BKM16} applied the dyadic variant of $JN_p(Q_0)$
in the study of self-improving properties of some Poincar\'e-type inequalities.
Later, the dyadic $JN_p(Q_0)$ was further studied by
Kinnunen and Myyryl\"ainen in \cite{km21}.

\item[$\bullet$]
A. Brudnyi and Y. Brudnyi \cite{BB20} introduced
a class of function spaces $V_\kappa([0,1]^n)$
which coincides with $JN_{(p,q,s)_\az}([0,1]^n)$ defined below
for suitable range of indices;
see \cite[Proposition 2.9]{tyyNA} for more details.
Very recently, Dom\'inguez and Milman \cite{dm21} further introduced
and studied sparse Brudnyi and John--Nirenberg spaces.

\item[$\bullet$]
Blasco and Espinoza-Villalva \cite{BE20}
computed the concrete value of $\|\mathbf{1}_{A}\|_{JN_p(\rr)}$
for any given $p\in[1,\fz]$ and any measurable set $A\subset\rr$ of
positive and finite Lebesgue measure, where $JN_\fz(\rr):=\BMO(\rr)$.

\item[$\bullet$]
The $JN_p(Q_0)$-type norm $\|\cdot\|_{{\rm GaRo}_p(Q_0)}$ in Remark \ref{LpJNpWLp}(iii)
was further generalized and studied in Astashkin and Milman \cite{am20}
via the Str\"omberg--Jawerth--Torchinsky local maximal operator.
\end{enumerate}

\section{John--Nirenberg--Campanato space}\label{Sec-(L)JNC}

The main target of this section  is to summarize the main
results of John--Nirenberg--Campanato spaces,
localized John--Nirenberg--Campanato spaces,
and congruent John--Nirenberg--Campanato spaces
obtained, respectively, in \cite{tyyNA,SXY19,jtyyz21}.
Moreover, at the end of each part,
we list some open questions which are still unsolved so far.
Now, we first recall some definitions
of some basic function spaces.

\begin{enumerate}
\item[$\bullet$]
For any $s\in\zz_+$ (the set of all non-negative integers),
let $\mathcal{P}_s(Q)$ denote the set of all polynomials
of degree not greater than $s$ on the cube $Q$, and
$P_Q^{(s)}(f)$ the unique polynomial of degree not greater than $s$ such that
\begin{align}\label{PQsf}
\int_Q \lf[f(x)-P_Q^{(s)}(f)(x)\r]x^\gz\,dx=0,\quad \forall\,|\gz|\le s,
\end{align}
where $\gz:=(\gz_1,\ldots,\gz_n)\in \zz_+^n:=(\zz_+)^n$,
$|\gz|:=\gz_1+\cdots+\gz_n$, and
$x^\gz:=x_1^{\gz_1}\cdots x_n^{\gz_n}$ for any $x:=(x_1,\ldots,x_n)\in\rn$.

\item[$\bullet$]
Let $q\in[1,\fz]$ and $Q_0$ be a cube of $\rn$.
For any measurable function $f$, let
$$\|f\|_{L^q(Q_0,|Q_0|^{-1}dx)}:=\lf[\fint_{Q_0}|f(x)|^q\,dx\r]^\frac1q.$$

\item[$\bullet$]
Let $q\in(1,\fz)$, $s\in\zz_+$, and $Q_0$ be a cube of $\rn$.
The \emph{space $L^q(Q_0,|Q_0|^{-1}dx)/\mathcal{P}_s(Q_0)$} is defined by setting
$$L^q(Q_0,|Q_0|^{-1}dx)/\mathcal{P}_s(Q_0)
:=\lf\{f\in L^q(Q_0):\,\,\|f\|_{L^q(Q_0,|Q_0|^{-1}dx)/\mathcal{P}_s(Q_0)}<\fz \r\},$$
where
$$\|f\|_{L^q(Q_0,|Q_0|^{-1}dx)/\mathcal{P}_s(Q_0)}
:=\inf_{m\in\mathcal{P}_s(Q_0)}\|f+m\|_{L^q(Q_0,|Q_0|^{-1}dx)}.$$

\item[$\bullet$]
For any given $v\in[1,\infty]$ and $s\in\zz_+$,
and any measurable subset $E\subset\rn$, let
\begin{align*}
L_s^v(E)&:=\lf\{f\in L^v(E):\
  \int_E f(x)x^\gz\,dx=0,\ \forall\,\gz\in\zz_+^n,\ |\gz|\le s\r\}.
\end{align*}
\end{enumerate}

Let $Q$ be any given cube of $\rn$.
It is well known that $P_Q^{(0)}(f)=f_Q$ and, for any $s\in\zz_+$,
there exists a constant $C_{(s)}\in[1,\infty)$, independent of $f$ and $Q$, such that
\begin{align}\label{Cs}
  \lf|P_Q^{(s)}(f)(x)\r|\le C_{(s)} \fint_Q |f(x)|\,dx,\quad \forall\,x\in Q.
\end{align}
Indeed, let $\{\varphi_Q^{(\gz)}:\ \gz\in\zz^n_+,\,|\gz|\le s\}$ denote the Gram--Schmidt
orthonormalization of $\{x^\gz:\ \gz\in\zz^n_+,\,|\gz|\le s\}$ on the cube $Q$ with respect to
the weight $1/|Q|$, namely, for any $\gz$, $\nu$, $\mu\in\zz^n_+$
with $|\gz|\le s$, $|\nu|\le s$, and $|\mu|\le s$,
$\varphi_Q^{(\gz)}\in \mathcal{P}_s(Q)$ and
$$\langle\varphi_Q^{(\nu)},\varphi_Q^{(\mu)} \rangle
:=\frac1{|Q|}\int_Q \varphi_Q^{(\nu)}(x)\varphi_Q^{(\mu)}(x)\,dx=
\begin{cases}
  1,&\nu=\mu,\\
  0,&\nu\neq\mu.
\end{cases}$$
Then
$$P_Q^{(s)}(f)(x):=\sum_{\{\gz\in\zz^n_+:\ |\gz|\le s\}}\langle\varphi_Q^{(\gz)},f \rangle
\varphi_Q^{(\gz)}(x),\quad\forall\,x\in Q,$$
and we can choose
$C_{(s)}:=\sum_{\{\gz\in\zz^n_+:\ |\gz|\le s\}}\|\varphi_Q^{(\gz)}\|^2_{L^\fz (Q)}$
satisfying \eqref{Cs}; see \cite[p.\,83]{TaiblesonWeiss80} and
\cite[p.\,54, Lemma 4.1]{Lu95} for more details.

\subsection{John--Nirenberg--Campanato spaces}\label{Subsec-JNC}

In this subsection, we recall the definitions of Campanato spaces,
John--Nirenberg--Campanato spaces (for short, JNC spaces),
and Hardy-type spaces, respectively, in
Definitions \ref{Campanato}, \ref{JNpqa}, and \ref{HKuvb} below.
Moreover, we review some properties of JNC spaces and Hardy-type spaces,
including their limit spaces (Proposition \ref{ptofz} and Corollary \ref{p=8} below),
relations with the Lebesgue space (Propositions \ref{JNpq=Lq} and \ref{HKqq=Lq} below),
the dual result (Theorem \ref{duality} below),
the monotonicity over the first sub-index (Proposition \ref{mono} below),
the John--Nirenberg-type inequality (Theorem \ref{JohnNirenberg} below),
and the equivalence over the second sub-index
(Propositions \ref{JNpqa=JNp1a} and \ref{HKuvb=HKu8b} below).

A general dual result for Hardy spaces was given by
Coifman and Weiss \cite{CW77} who proved that,
for any given $p\in(0,1]$, $q\in[1,\infty]$, and $s$ being the non-negative
integer not smaller than $n(\frac1p-1)$,
the dual space of the Hardy space $H^p(\rn)$ is
the Campanato space $\mathcal{C}_{\frac1p-1,\,q,\,s}(\rn)$
which was introduced by Campanato \cite{Campanato64} and coincides with
$\BMO(\rn)$ when $p=1$.

\begin{definition}\label{Campanato}
Let $\alpha\in[0,\infty)$, $q\in[1,\infty)$, and $s\in\zz_+$.
\begin{itemize}
\item[{\rm(i)}]
The \emph{Campanato space} $\mathcal{C}_{\alpha,q,s}(\cx)$
is defined by setting
$$\mathcal{C}_{\alpha,q,s}(\cx):=\lf\{f\in L^q_{\loc}(\cx):\,\,\|f\|_{\mathcal{C}_{\alpha,q,s}(\cx)}<\infty\r\},$$
where $$\|f\|_{\mathcal{C}_{\alpha,q,s}(\cx)}:=
\sup|Q|^{-\alpha}\lf[\fint_{Q}\lf|f-P_{Q}^{(s)}(f)\r|^q\r]
 ^{\frac 1q}$$
and the supremum is taken over all cubes $Q$ of $\cx$.
In addition, the ``norm'' $\|\cdot\|_{\mathcal{C}_{\alpha,q,s}(\cx)}$
is defined modulo polynomials and,
for simplicity, the space $\mathcal{C}_{\alpha,q,s}(\cx)$
is regarded as the quotient space
$\mathcal{C}_{\alpha,q,s}(\cx)/\mathcal{P}_s(\cx)$.

\item[{\rm(ii)}]
The \emph{dual space $(\mathcal{C}_{\alpha,q,s}(\cx))^\ast$} of $\mathcal{C}_{\alpha,q,s}(\cx)$
is defined to be the set of all continuous linear functionals on $\mathcal{C}_{\alpha,q,s}(\cx)$
equipped with the weak-$\ast$ topology.
\end{itemize}
\end{definition}

In what follows, for any $\ell\in(0,\fz)$,
$Q(\mathbf{0},\ell)$ denotes the cube centered at the origin $\mathbf{0}$
with side length $\ell$.

\begin{remark}\label{rem-Morrey}
	Let $0<q\le p\le \fz$.
	The \emph{Morrey space} $ M^p_q(\rn)$,
	introduced by Morrey in \cite{m38}, is defined by setting
	$$ M^p_q(\rn):=\lf\{f\in L^q_\loc(\rn):\,\,
	\|f\|_{ M^p_q(\rn)}<\fz\r\},$$
	where, for any $f\in L^q_\loc(\rn)$,
	$$\|f\|_{ M^p_q(\rn)}:=\sup_{{\rm cube}\,Q\subset\rn}
	|Q|^{\frac1p}\lf[\fint_Q |f(y)|^q\,dy\r]^\frac1q.$$
	From Campanato \cite[Theorem 6.II]{Campanato64}, it follows that,
	for any given $q\in[1,\infty)$ and $\alpha\in[-\frac1q,0)$,
	and any $f\in\mathcal{C}_{q,\az,0}(\cx)$,
	\begin{align}\label{Mor=Cam}
	\|f\|_{\mathcal{C}_{q,\az,0}(\cx)}
	\sim\lf\|f-\sigma(f)\right\|_{M_{q}^{-1/\az}(\cx)},
	\end{align}
	where the positive equivalence constants are independent of $f$, and
	$$\sigma(f):=
	\begin{cases}
		\displaystyle{\lim_{\ell\to\fz}\frac1{|Q({\mathbf 0},\ell)|}
			\int_{Q({\mathbf 0},\ell)}f(x)\,dx}
		& {\rm if\ }\cx=\rn,\\
		\displaystyle{\frac{1}{|Q_0|}\int_{Q_0}f(x)\,dx}
		&{\rm if\ }\cx=Q_0;
	\end{cases}$$
	see also Nakai \cite[Theorem 2.1 and Corollary 2.3]{n16}
	for this conclusion on spaces of homogeneous type.
	In addition, a surprising result says that,
	in the definition of supremum $\|\cdot\|_{ M^p_q(\rn)}$,
	if ``cubes'' were changed into ``measurable sets'',
	then the Morrey norm $\|\cdot\|_{ M^p_q(\rn)}$
	becomes an equivalent norm of the weak Lebesgue space
	(see Definition \ref{def-weak}). To be precise,
	for any given $0<q<p<\fz$,
	$f\in L^{p,\fz}(\rn)$ if and only if $f\in L^q_\loc(\rn)$ and
	$$\|f\|_{\widetilde{M^p_q}(\rn)}:=\sup_{A\subset\rn,\,|A|\in(0,\fz)}
	|A|^{\frac1p}\lf[\fint_A |f(y)|^q\,dy\r]^\frac1q<\fz;$$
	moreover,
	\begin{align*}
		\|\cdot\|_{L^{p,\fz}(\rn)}\le \|\cdot\|_{\widetilde{M^p_q}(\rn)}
		\le\lf(\frac{p}{p-q}\r)^\frac1q \|\cdot\|_{L^{p,\fz}(\rn)};
	\end{align*}
	see, for instance, \cite[p.\,485, Lemma 2.8]{GR85}.
	Another interesting $JN_p$-type equivalent norm of the weak Lebesgue space
	was presented in Remark \ref{LpJNpWLp}(iii).
\end{remark}

Inspired by the relation between BMO and the Campanato space,
as well as the relation between BMO and $JN_p$,
Tao el al. \cite{tyyNA} introduced a Campanato-type space $JN_{(p,q,s)_\alpha}(\cx)$
in the spirit of the John--Nirenberg space
$JN_p(Q_0)$, which contains $JN_p(Q_0)$ as a special case.
This John--Nirenberg--Campanato space is defined not only on any cube $Q_0$
but also on the whole space $\rn$.

\begin{definition}\label{JNpqa}
Let $p$, $q\in[1,\infty)$, $s\in\zz_+$, and $\alpha\in\rr$.
\begin{itemize}
\item[{\rm(i)}]
The \emph{John--Nirenberg--Campanato space} (for short, JNC \emph{space})
$JN_{(p,q,s)_\alpha}(\cx)$
is defined by setting
$$JN_{(p,q,s)_\alpha}(\cx):=\lf\{f\in L^q_{\loc}(\cx):\,\,
\|f\|_{JN_{(p,q,s)_\alpha}(\cx)}<\infty\r\},$$
where
$$\|f\|_{JN_{(p,q,s)_\alpha}(\cx)}:=
\sup\lf\{\sum_i|Q_i|\lf[|Q_i|^{-\alpha}\lf\{\fint_{Q_i}\lf|f(x)-P_{Q_i}^{(s)}(f)(x)\r|^q\,dx\r\}
   ^{\frac 1q}\r]^p\r\}^{\frac 1p},$$
$P_{Q_i}^{(s)}(f)$ for any $i$ is as in \eqref{PQsf} with $Q$ replaced by $Q_i$,
and the supremum is taken over all collections of interior pairwise disjoint cubes
$\{Q_i\}_i$ of $\cx$.
Also, the ``norm'' $\|\cdot\|_{JN_{(p,q,s)_\alpha}(\cx)}$
is defined modulo polynomials and,
for simplicity, the space $JN_{(p,q,s)_\alpha}(\cx)$ is regarded as the quotient space
$JN_{(p,q,s)_\alpha}(\cx)/\mathcal{P}_s(\cx)$.

\item[{\rm(ii)}]
The \emph{dual space $(JN_{(p,q,s)_\alpha}(\cx))^\ast$} of $JN_{(p,q,s)_\alpha}(\cx)$
is defined to be the set of all continuous linear functionals on $JN_{(p,q,s)_\alpha}(\cx)$
equipped with the weak-$\ast$ topology.
\end{itemize}
\end{definition}

\begin{remark}\label{rem-ainrr}
In \cite{tyyNA}, the JNC space was introduced only for any given $\az\in[0,\fz)$
to study its relation with the Campanato space in Definition \ref{Campanato},
and for any given $p\in(1,\fz)$ due to Remark \ref{LpJNpWLp}(ii).
However, many results in \cite{tyyNA} also hold true when $\az\in\rr$ and $p=1$
just with some slight modifications of their proofs.
Thus, in this survey, we introduce the JNC space for any given $\az\in\rr$ and $p\in[1,\fz)$,
and naturally extend some related results with some identical proofs omitted.
\end{remark}

The following proposition,
which is just \cite[Proposition 2.6]{tyyNA},
means that the classical Campanato space serves as
a limit space of $JN_{(p,q,s)_\alpha}(\cx)$,
similarly to the Lebesgue spaces
$L^\fz(\cx)$ and $L^p(\cx)$ when $p\to\fz$.

\begin{proposition}\label{ptofz}
Let $\alpha\in[0,\infty)$, $q\in[1,\infty)$, and $s\in\zz_+$. Then
$$\lim_{p\to\fz}JN_{(p,q,s)_\alpha}(\cx)=\mathcal{C}_{\alpha,q,s}(\cx)$$
in the following sense: for any $f\in\bigcup_{r\in[1,\infty)}\bigcap_{p\in[r,\infty)}
JN_{(p,q,s)_\alpha}(\cx)$,
$$\lim_{p\to\infty}\|f\|_{JN_{(p,q,s)_\alpha}(\cx)}
=\|f\|_{\mathcal{C}_{\alpha,q,s}(\cx)}.$$
\end{proposition}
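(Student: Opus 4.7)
The plan is to prove the two one-sided bounds
$$\limsup_{p\to\infty}\|f\|_{JN_{(p,q,s)_\alpha}(\cx)}\le\|f\|_{\mathcal{C}_{\alpha,q,s}(\cx)}\le\liminf_{p\to\infty}\|f\|_{JN_{(p,q,s)_\alpha}(\cx)},$$
from which the claimed limit follows at once. Throughout I write $a_Q:=|Q|^{-\alpha}[\fint_Q|f(x)-P_Q^{(s)}(f)(x)|^q\,dx]^{1/q}$ for any cube $Q\subset\cx$, so that $\|f\|_{\mathcal{C}_{\alpha,q,s}(\cx)}=\sup_Q a_Q$ and the JNC expression is exactly the supremum of weighted $\ell^p$ sums of the quantities $\{a_{Q_i}\}_i$.

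The $\ge$ direction is obtained by testing against single-cube collections. For any fixed cube $Q\subset\cx$, the one-element family $\{Q\}$ is admissible in Definition \ref{JNpqa}(i), so $\|f\|_{JN_{(p,q,s)_\alpha}(\cx)}\ge|Q|^{1/p}a_Q$. Since $|Q|\in(0,\infty)$, I have $|Q|^{1/p}\to1$ as $p\to\infty$, and therefore $\liminf_{p\to\infty}\|f\|_{JN_{(p,q,s)_\alpha}(\cx)}\ge a_Q$ for every cube $Q$; taking the supremum over $Q$ finishes this direction.

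For the $\le$ direction I would interpolate between $\ell^{p_0}$ and $\ell^\infty$. By hypothesis there exists some $p_0\in[1,\infty)$ with $\|f\|_{JN_{(p_0,q,s)_\alpha}(\cx)}<\infty$. For any $p\in(p_0,\infty)$ and any admissible collection $\{Q_i\}_i$ of interior pairwise disjoint cubes of $\cx$, the elementary inequality $a_{Q_i}^p\le(\sup_j a_{Q_j})^{p-p_0}a_{Q_i}^{p_0}$ combined with $\sup_j a_{Q_j}\le\|f\|_{\mathcal{C}_{\alpha,q,s}(\cx)}$ yields
$$\sum_i|Q_i|a_{Q_i}^p\le\|f\|_{\mathcal{C}_{\alpha,q,s}(\cx)}^{p-p_0}\|f\|_{JN_{(p_0,q,s)_\alpha}(\cx)}^{p_0}.$$
Taking the $p$-th root and then the supremum over all admissible collections gives $\|f\|_{JN_{(p,q,s)_\alpha}(\cx)}\le\|f\|_{\mathcal{C}_{\alpha,q,s}(\cx)}^{1-p_0/p}\|f\|_{JN_{(p_0,q,s)_\alpha}(\cx)}^{p_0/p}$, and letting $p\to\infty$ supplies the desired limsup bound in the non-degenerate case $\|f\|_{\mathcal{C}_{\alpha,q,s}(\cx)}\in(0,\infty)$. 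The corner cases are harmless: $\|f\|_{\mathcal{C}_{\alpha,q,s}(\cx)}=0$ forces $a_Q=0$ for every $Q$, so both quantities vanish; and $\|f\|_{\mathcal{C}_{\alpha,q,s}(\cx)}=\infty$ is already delivered by the $\ge$ direction.

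The main obstacle to watch for is the case $\cx=\rn$: there one cannot appeal to any uniform bound $\sum_i|Q_i|\le|\cx|$, so the seemingly natural estimate $\|f\|_{JN_{(p,q,s)_\alpha}}\le\|f\|_{\mathcal{C}_{\alpha,q,s}}|\cx|^{1/p}$ is unavailable. The interpolation step above is designed precisely to circumvent this difficulty, trading the possibly infinite sum $\sum_i|Q_i|$ for the finiteness of $\|f\|_{JN_{(p_0,q,s)_\alpha}(\cx)}$ granted by the hypothesis $f\in\bigcap_{p\in[r,\infty)}JN_{(p,q,s)_\alpha}(\cx)$. When $\cx=Q_0$ a bounded cube, the direct estimate works, the hypothesis reduces to $f\in L^q(Q_0)$, and the two-parameter interpolation is no longer necessary.
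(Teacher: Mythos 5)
Your proof is correct; note that the survey itself gives no proof of Proposition \ref{ptofz} (it simply cites \cite[Proposition 2.6]{tyyNA}), and your argument is essentially the standard one used for this fact: testing single-cube collections yields $\liminf_{p\to\fz}\|f\|_{JN_{(p,q,s)_\alpha}(\cx)}\ge\|f\|_{\mathcal{C}_{\alpha,q,s}(\cx)}$, while the interpolation bound $\|f\|_{JN_{(p,q,s)_\alpha}(\cx)}\le\|f\|_{\mathcal{C}_{\alpha,q,s}(\cx)}^{1-p_0/p}\|f\|_{JN_{(p_0,q,s)_\alpha}(\cx)}^{p_0/p}$, which uses the finiteness of $\|f\|_{JN_{(p_0,q,s)_\alpha}(\cx)}$ guaranteed by the hypothesis, gives the matching limsup bound, with the degenerate cases $\|f\|_{\mathcal{C}_{\alpha,q,s}(\cx)}\in\{0,\fz\}$ handled appropriately. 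The only inaccuracy is the closing aside that on a bounded cube the hypothesis reduces to $f\in L^q(Q_0)$: this is false as a description of $\bigcup_{r\in[1,\fz)}\bigcap_{p\in[r,\fz)}JN_{(p,q,s)_\alpha}(Q_0)$, since $L^q(Q_0)\not\subset JN_{(p,q,s)_\alpha}(Q_0)$ in general (for instance $L^1(Q_0)\not\subset JN_p(Q_0)$ because $JN_p(Q_0)\subset L^{p,\fz}(Q_0)$ by Theorem \ref{JN61Lem3}), but this remark plays no role in your argument, which in any case works uniformly for $\cx=\rn$ and $\cx=Q_0$.
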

In Proposition \ref{ptofz}, if we take $\cx=Q_0$,
we then have following corollary which is just \cite[Corollary 2.8]{tyyNA}.
\begin{corollary}\label{p=8}
Let $q\in[1,\infty)$, $\alpha\in[0,\infty)$, $s\in\zz_+$,
and $Q_0$ be a cube of $\rn$. Then
$$\mathcal{C}_{\alpha,q,s}(Q_0)=
 \lf\{ f\in\bigcap_{p\in[1,\infty)}JN_{(p,q,s)_\alpha}(Q_0):\,\,
      \lim_{p\to\infty}\|f\|_{JN_{(p,q,s)_\alpha}(Q_0)}<\infty\r\}$$
and, for any $f\in \mathcal{C}_{\alpha,q,s}(Q_0)$,
$$\|f\|_{\mathcal{C}_{\alpha,q,s}(Q_0)}=\lim_{p\to\fz}\|f\|_{JN_{(p,q,s)_\alpha}(Q_0)}.$$
\end{corollary}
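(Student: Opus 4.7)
The plan is to deduce this corollary directly from Proposition \ref{ptofz} specialized to $\cx = Q_0$, once we verify the crucial fact that membership in the Campanato space $\mathcal{C}_{\alpha,q,s}(Q_0)$ already forces membership in every $JN_{(p,q,s)_\alpha}(Q_0)$. The decisive feature is that $Q_0$ has finite measure, which is what prevents the $p \to \infty$ passage from diverging.

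First I would prove the quantitative inclusion
$$\|f\|_{JN_{(p,q,s)_\alpha}(Q_0)} \le |Q_0|^{1/p}\, \|f\|_{\mathcal{C}_{\alpha,q,s}(Q_0)}$$
for every $f \in \mathcal{C}_{\alpha,q,s}(Q_0)$ and every $p \in [1,\infty)$. Indeed, for any collection $\{Q_i\}_i$ of interior pairwise disjoint cubes of $Q_0$, each inner factor
$$|Q_i|^{-\alpha}\left[\fint_{Q_i}\left|f(x) - P_{Q_i}^{(s)}(f)(x)\right|^q dx\right]^{1/q}$$
is bounded by $\|f\|_{\mathcal{C}_{\alpha,q,s}(Q_0)}$ by the very definition of the Campanato norm. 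Factoring this constant out of the $\ell^p$ sum and using $\sum_i |Q_i| \le |Q_0|$ yields the claim. In particular, $\mathcal{C}_{\alpha,q,s}(Q_0) \subset \bigcap_{p \in [1,\infty)} JN_{(p,q,s)_\alpha}(Q_0)$, and since $|Q_0|^{1/p} \to 1$ as $p \to \infty$, we also obtain $\limsup_{p \to \infty}\|f\|_{JN_{(p,q,s)_\alpha}(Q_0)} \le \|f\|_{\mathcal{C}_{\alpha,q,s}(Q_0)} < \infty$.

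Next I would invoke Proposition \ref{ptofz} on $\cx = Q_0$. Since the inclusion just proved places $\mathcal{C}_{\alpha,q,s}(Q_0)$ inside $\bigcup_{r \in [1,\infty)} \bigcap_{p \in [r,\infty)} JN_{(p,q,s)_\alpha}(Q_0)$, Proposition \ref{ptofz} immediately gives $\lim_{p \to \infty} \|f\|_{JN_{(p,q,s)_\alpha}(Q_0)} = \|f\|_{\mathcal{C}_{\alpha,q,s}(Q_0)}$ for every $f \in \mathcal{C}_{\alpha,q,s}(Q_0)$. This proves the forward set inclusion together with the norm identity.

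For the reverse set inclusion, take any $f$ in $\bigcap_{p \in [1,\infty)} JN_{(p,q,s)_\alpha}(Q_0)$ whose limit $\lim_{p\to\infty}\|f\|_{JN_{(p,q,s)_\alpha}(Q_0)}$ is finite. Such an $f$ automatically lies in the union of intersections appearing in the hypothesis of Proposition \ref{ptofz}, so that proposition forces $\|f\|_{\mathcal{C}_{\alpha,q,s}(Q_0)} = \lim_{p\to\infty}\|f\|_{JN_{(p,q,s)_\alpha}(Q_0)} < \infty$, whence $f \in \mathcal{C}_{\alpha,q,s}(Q_0)$. I do not anticipate a genuine obstacle: the only subtle point is the first-step bound $\|\cdot\|_{JN_{(p,q,s)_\alpha}(Q_0)} \le |Q_0|^{1/p}\|\cdot\|_{\mathcal{C}_{\alpha,q,s}(Q_0)}$, which crucially uses both the disjointness of the $Q_i$ and the finiteness of $|Q_0|$; this is exactly why the statement is formulated on a cube rather than on $\rn$.
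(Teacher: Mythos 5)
Your proposal is correct and follows essentially the same route as the paper: the corollary is obtained by specializing Proposition \ref{ptofz} to $\cx=Q_0$, with the only additional ingredient being the elementary bound $\|f\|_{JN_{(p,q,s)_\alpha}(Q_0)}\le |Q_0|^{1/p}\|f\|_{\mathcal{C}_{\alpha,q,s}(Q_0)}$, which you state and justify correctly using the disjointness of the cubes and the finiteness of $|Q_0|$. Both directions of the set equality and the norm identity then follow exactly as you describe, so there is nothing to add.
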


\begin{remark}
\begin{itemize}
\item[{\rm(i)}]
Let $p\in(1,\fz)$ and $Q_0$ be a cube of $\rn$.
It is easy to show that
$$\BMO(Q_0)\subset JN_p(Q_0).$$
However, we claim that
$$\BMO(\rn)\nsubseteq JN_p(\rn).$$
Indeed,  for the simplicity of the presentation,
without loss of generality, we may show this claim only in $\rr$.
Let $g(x):=\log(|x|)$ for any $x\in\rr\setminus\{0\}$, and $g(0):=0$.
Then $g\in\BMO(\rr)$ due to \cite[Example 3.1.3]{GTM250},
and hence it suffices to prove that
$g\notin JN_p(\rr)$ for any given $p\in(1,\fz)$.
To do this, let $I_t:=(0,t)$ for any $t\in(0,\fz)$.
Then, by some simple calculations, we obtain
$$g_{I_t}=\fint_{I_t}g(x)\,dx
=\frac1t\int_{0}^t \log(x)\,dx=\log(t)-1$$
and hence
\begin{align*}
&\lf| \lf\{x\in I_t:\ \lf|g(x)-g_{I_t}\r|>\frac12 \r\}\r|\\
&\quad=\lf| \lf\{x\in (0,t):\ \lf|\log(x)-\lf[\log(t)-1\r]\r|>\frac12 \r\}\r|\\
&\quad\ge t-te^{-\frac12}=t\lf(1-e^{-\frac12}\r)\to\fz
\end{align*}
as $t\to\fz$. But, the John--Nirenberg inequality of $JN_p(I_t)$
in Theorem \ref{JN61Lem3} implies that, for any $t\in(0,\fz)$,
$$\lf| \lf\{x\in I_t:\ \lf|g(x)-g_{I_t}\r|>\frac12 \r\}\r|
\ls\lf[\frac{\|g\|_{JN_p(I_t)}}{\frac12}\r]^p
\ls\|g\|_{JN_p(\rr)}^p$$
with the implicit positive constants depending only on $p$.
Thus, $g\notin JN_p(\rr)$ and hence the above claim holds true.

\item[{\rm(ii)}]
The predual counterpart of Corollary \ref{p=8} is still unclear so far;
see Question \ref{openQ-HK} below for more details.
\end{itemize}
\end{remark}

Obviously, $JN_{(p,q,0)_0}(Q_0)$ is just $JN_{p,q}(Q_0)$.
From this and \cite[Proposition 5.1]{DHKY18},
we deduce that, when $p\in(1,\fz)$ and $q\in [1,p)$,  $JN_{(p,q,0)_0}(Q_0)$
coincides with $JN_p(Q_0)$ in the sense of equivalent norms and,
when $p\in(1,\fz)$ and $q\in [p,\infty)$,
$JN_{(p,q,0)_0}(Q_0)$ and $L^q(Q_0)$ coincide as sets.
Moreover, by adding a particular weight of $|Q_0|$,
the authors of this article showed that aforementioned coincidence (as sets)
can be modified into equivalent norms;
see Proposition \ref{JNpq=Lq} below, which is just \cite[Proposition 2.5]{tyyNA}.
In what follows, for any given positive constant $A$ and
any given function space $(\mathbb{X},\|\cdot\|_{\mathbb{X}})$,
we write $A\mathbb{X}:=\{Af:\,\,f\in\mathbb{X}\}$
with its norm defined by setting, for any $Af\in A\mathbb{X}$,
$\|Af\|_{A\mathbb{X}}:=A\|f\|_{\mathbb{X}}$.

\begin{proposition}\label{JNpq=Lq}
Let $p\in[1,\fz)$, $q\in[p,\fz)$, $s\in\zz_+$, $\alpha=0$,
and $Q_0$ be a cube of $\rn$.
Then
$$\lf[|Q_0|^{-\frac1p} JN_{(p,q,s)_\alpha}(Q_0)\r]
=\lf[L^q(Q_0,|Q_0|^{-1}dx)/\mathcal{P}_s(Q_0)\r]$$
with equivalent norms, namely,
\begin{align*}
\|f\|_{L^q(Q_0,|Q_0|^{-1}dx)/\mathcal{P}_s(Q_0)}
&\le|Q_0|^{-\frac1p}\|f\|_{JN_{(p,q,s)_0}(Q_0)}\\
&\le2^{p-\frac pq}\lf[1+C_{(s)}\r]^\frac pq
\|f\|_{L^q(Q_0,|Q_0|^{-1}dx)/\mathcal{P}_s(Q_0)},
\end{align*}
where $C_{(s)}$ is as in \eqref{Cs}.
\end{proposition}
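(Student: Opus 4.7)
The plan is to prove the two inequalities separately, using only the definitions, the polynomial-projection estimate \eqref{Cs}, and some elementary convex-analytic manipulations. For the lower bound, the idea is to test the supremum defining $\|f\|_{JN_{(p,q,s)_0}(Q_0)}$ against the trivial single-element collection $\{Q_0\}$. Since $P_{Q_0}^{(s)}(f)\in\mathcal{P}_s(Q_0)$ is a competitor in the quotient infimum, this immediately yields
$$\|f\|_{JN_{(p,q,s)_0}(Q_0)}\ge|Q_0|^{1/p}\lf[\fint_{Q_0}\lf|f(x)-P_{Q_0}^{(s)}(f)(x)\r|^q\,dx\r]^{1/q}\ge|Q_0|^{1/p}\|f\|_{L^q(Q_0,|Q_0|^{-1}dx)/\mathcal{P}_s(Q_0)}.$$

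For the upper bound, I would fix an arbitrary interior pairwise disjoint family $\{Q_i\}_i$ of subcubes of $Q_0$ together with any $m\in\mathcal{P}_s(Q_0)$, and set $g:=f+m$. The first key observation is that since $m|_{Q_i}\in\mathcal{P}_s(Q_i)$, the orthogonality relations \eqref{PQsf} force $P_{Q_i}^{(s)}(g)=P_{Q_i}^{(s)}(f)+m$, so that $f-P_{Q_i}^{(s)}(f)=g-P_{Q_i}^{(s)}(g)$ on $Q_i$. Then I would combine the pointwise bound \eqref{Cs}, H\"older's inequality (valid since $q\ge 1$), and the convexity inequality $(a+b)^q\le 2^{q-1}(a^q+b^q)$ to derive the local estimate
$$\lf[\fint_{Q_i}\lf|g-P_{Q_i}^{(s)}(g)\r|^q\,dx\r]^{p/q}\le 2^{p-p/q}\lf[1+C_{(s)}\r]^{p/q}\lf[\fint_{Q_i}|g(x)|^q\,dx\r]^{p/q}$$
on each $Q_i$, from which the advertised constant falls out verbatim.

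To assemble these local estimates into a bound on the full $JN$-sum, I would rewrite $|Q_i|[\fint_{Q_i}|g|^q\,dx]^{p/q}=|Q_i|^{1-p/q}[\int_{Q_i}|g|^q\,dx]^{p/q}$ and apply H\"older's inequality in the summation index $i$ with the conjugate pair $(\frac{q}{q-p},\frac{q}{p})$; this step is legitimate precisely because $q\ge p$, and it is the only place where the hypothesis $q\ge p$ is used. Combined with $\sum_i|Q_i|\le|Q_0|$ and $\sum_i\int_{Q_i}|g|^q\,dx\le\int_{Q_0}|g|^q\,dx$ (both immediate from the fact that the $Q_i$ have pairwise disjoint interiors and lie inside $Q_0$), this produces
$$\sum_i|Q_i|\lf[\fint_{Q_i}|g(x)|^q\,dx\r]^{p/q}\le|Q_0|^{1-p/q}\lf[\int_{Q_0}|g(x)|^q\,dx\r]^{p/q}=|Q_0|\lf[\fint_{Q_0}|g(x)|^q\,dx\r]^{p/q}.$$
Taking the supremum over $\{Q_i\}_i$, then the $p$-th root, and finally the infimum over $m\in\mathcal{P}_s(Q_0)$ delivers the claimed upper bound. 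The only delicate point is the H\"older interchange between local and global summation in the index $i$, which dictates both the restriction $p\le q$ and the exact normalization $|Q_0|^{-1/p}$ on the $JN$-side; every other step is a quantitative unpacking of the quotient invariance under $\mathcal{P}_s(Q_0)$.
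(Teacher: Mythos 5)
Your overall strategy is correct, and it is the standard argument for this fact (the survey itself gives no proof, quoting \cite[Proposition 2.5]{tyyNA}): the lower bound by testing the supremum with the single collection $\{Q_0\}$ and using $-P^{(s)}_{Q_0}(f)\in\mathcal{P}_s(Q_0)$ as a competitor in the quotient norm; the upper bound by the invariance $P^{(s)}_{Q_i}(f+m)=P^{(s)}_{Q_i}(f)+m$ (linearity of the projection plus its reproducing property from \eqref{PQsf}), a single-cube estimate via \eqref{Cs} and H\"older, and then H\"older in the index $i$ with exponents $(\tfrac{q}{q-p},\tfrac{q}{p})$ combined with $\sum_i|Q_i|\le|Q_0|$ and $\sum_i\int_{Q_i}|g|^q\le\int_{Q_0}|g|^q$; the degenerate case $q=p$ reduces to the trivial estimate, as you indicate. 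All of these steps are sound, and they do establish the asserted identification of the two spaces with equivalent norms.

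The one inaccuracy is the claim that the advertised constant ``falls out verbatim.'' From $(a+b)^q\le 2^{q-1}(a^q+b^q)$, the pointwise bound \eqref{Cs}, and H\"older you get $\fint_{Q_i}|g-P^{(s)}_{Q_i}(g)|^q\,dx\le 2^{q-1}\lf[1+C_{(s)}^q\r]\fint_{Q_i}|g|^q\,dx$, i.e.\ the factor $\lf[1+C_{(s)}^q\r]^{p/q}$ (or $\lf[1+C_{(s)}\r]^{p}$ if one uses Minkowski instead), not $\lf[1+C_{(s)}\r]^{p/q}$; moreover, after taking the $p$-th root of the summed inequality the norm-level constant is the $1/p$-th power of your local one. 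When $C_{(s)}>1$ and $q>1$ (for instance $p=1<q$), the constant your steps actually yield is strictly larger than the one displayed in the proposition, so strictly speaking you prove the two-sided comparison with a slightly worse constant rather than the stated one. This is only a bookkeeping issue about the explicit constant and does not affect the substance of the proposition, but the sentence claiming the exact constant should be corrected or the local estimate adjusted accordingly.
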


It is a very interesting open question to find a counterpart of
Proposition \ref{JNpq=Lq} when $\alpha\in\rr\setminus\{0\}$;
see Question \ref{openQ-JN} below for more details.

Now, we review the predual of the John--Nirenberg--Campanato space
via introducing atoms, polymers, and Hardy-type spaces in order,
which coincide with the same notation as in \cite{DHKY18}
when $u\in(1,\fz)$, $v\in(u,\fz]$, and $\az=0=s$;
see \cite[Remarks 3.4 and 3.8]{tyyNA} for more details.
In particular, when $\az=0$, the $(u,v,s)_0$-atom below is just
the classic atom of the Hardy space; see \cite[Remark 3.2]{tyyNA}.

\begin{definition}\label{uvas-atom}
Let $u$, $v\in[1,\infty]$, $s\in\zz_+$, and $\alpha\in\rr$.
A function $a$ is called a \emph{$(u,v,s)_\alpha$-atom} on a cube $Q$ if
\begin{enumerate}
\item[{\rm(i)}]
$\supp (a):=\{x\in\rn:\,\,a(x)\neq0\}\subset Q$;
\item[{\rm(ii)}]
$\|a\|_{L^v(Q)}\le |Q|^{\frac1v-\frac1u-\alpha}$;
\item[{\rm(iii)}]
$\int_Q a(x)x^\gz\,dx=0$ for any $\gz\in\zz_+^n$ with $|\gz|\le s$.
\end{enumerate}
\end{definition}

In what follows, for any $u\in[1,\infty]$, let $u'$ denote its \emph{conjugate index},
namely, $1/u+1/{u'}=1$, and, for any $\{\lz_j\}_j\subset\cc$, let
\begin{align}\label{lp-norm}
\lf\|\{\lz_j\}_j\r\|_{\ell^u}:=
\begin{cases}
	\lf(\dis\sum_j|\lz_j |^u\right)^\frac1u &{\rm when}\ u\in[1,\fz),\\
	\dis\sup_j |\lz_j | &{\rm when}\ u=\fz.
\end{cases}
\end{align}

\begin{definition}\label{uvas-polymer}
Let $u$, $v\in[1,\infty]$, $s\in\zz_+$, and $\alpha\in\rr$.
The \emph{space of $(u,v,s)_\alpha$-polymers},
denoted by $\widetilde{HK}_{(u,v,s)_\alpha}(\cx)$,
is defined to be the set of all $g\in(JN_{(u',v',s)_\alpha}(\cx))^\ast$
satisfying that there exist $(u,v,s)_\alpha$-atoms $\{a_j\}_j$ supported,
respectively, in interior pairwise disjoint cubes $\{Q_j\}_j$ of $\cx$,
and $\{\lz_j\}_j\subset\cc$ with $|\lambda_j|^u<\fz$ such that
$$g=\sum_j \lambda_j a_j$$
in $(JN_{(u',v',s)_\alpha}(\cx))^\ast$.
Moreover, any $g\in\widetilde{HK}_{(u,v,s)_\alpha}(\cx)$ is called a
\emph{$(u,v,s)_\az$-polymer} with its norm
$\|g\|_{\widetilde{HK}_{(u,v,s)_\alpha}(\cx)}$ defined by setting
$$\|g\|_{\widetilde{HK}_{(u,v,s)_\alpha}(\cx)}
:=\inf\lf\|\{\lz_j\}_j\r\|_{\ell^u},$$
where the infimum is taken over all decompositions of $g$ as above.
\end{definition}

\begin{definition}\label{HKuvb}
Let $u$, $v\in[1,\infty]$, $s\in\zz_+$, and $\alpha\in\rr$.
The \emph{Hardy-type space} $HK_{(u,v,s)_\alpha}(\cx)$ is defined by setting
\begin{align*}
HK_{(u,v,s)_\alpha}(\cx):=&\lf\{g\in(JN_{(u',v',s)_\alpha}(\cx))^\ast:\,\,
g=\sum_i g_i \,\,\mathrm{in}\,\, (JN_{(u',v',s)_\alpha}(\cx))^\ast,\,\,
\r.\\
&\quad\quad\quad\lf.\{g_i\}_i\subset\widetilde{HK}_{(u,v,s)_\alpha}(\cx),\ \ \mathrm{and\,\,}
\sum_i\lf\|g_i\r\|_{\widetilde{HK}_{(u,v,s)_\alpha}(\cx)}<\infty\r\}
\end{align*}
and, for any $g\in HK_{(u,v,s)_\alpha}(\cx)$, let
$$\|g\|_{HK_{(u,v,s)_\alpha}(\cx)}:=\inf\sum_i
\|g_i\|_{\widetilde{HK}_{(u,v,s)_\alpha}(\cx)},$$
where the infimum is taken over all decompositions of $g$ as above.
Moreover, the \emph{finite atomic Hardy-type space
$HK_{(u,v,s)_\alpha}^{\mathrm{fin}}(\mathcal{X})$} is defined to be
the set of all finite summations $\sum_{m=1}^M \lambda_{m}a_{m}$,
where $M\in\nn$, $\{\lambda_{m}\}_{m=1}^M\subset\cc$, and
$\{a_{m}\}_{m=1}^M$ are $(u,v,s)_\alpha$-atoms.
\end{definition}

The significant dual relation between $JN_{(p,q,s)_\alpha}(\cx)$ and
$HK_{(p',q',s)_\alpha}(\cx)$ reads as follows, which is just
\cite[Theorem 3.9]{tyyNA} with $\az\in[0,\fz)$ replaced by $\az\in\rr$
(this makes sense because the crucial lemma, \cite[Lemma 3.12]{tyyNA},
still holds true with the corresponding replacement).

\begin{theorem}\label{duality}
Let $p$, $q\in (1,\fz)$, $1/p=1/p'=1=1/q+1/q'$, $s\in\zz_+$, and $\alpha\in\rr$.
Then $(HK_{(p',q',s)_\alpha}(\cx))^\ast=JN_{(p,q,s)_\alpha}(\cx)$
in the following sense:
\begin{enumerate}
\item[{\rm(i)}]If $f\in JN_{(p,q,s)_\alpha}(\cx)$, then $f$ induces a linear
functional $\mathcal{L}_f$ on $HK_{(p',q',s)_\alpha}(\cx)$ and
$$\|\mathcal{L}_f\|_{(HK_{(p',q',s)_\alpha}(\cx))^\ast}
  \le C\|f\|_{JN_{(p,q,s)_\alpha}(\cx)},$$
where $C$ is a positive constant independent of $f$.
\item[{\rm(ii)}] If $\mathcal{L}\in(HK_{(p',q',s)_\alpha}(\cx))^\ast$,
then there exists an $f\in JN_{(p,q,s)_\alpha}(\cx)$ such that,
for any $g\in HK_{(p',q',s)_\alpha}^{\mathrm{fin}}(\cx)$,
$$\mathcal{L}(g)=\int_{\cx}f(x)g(x)\,dx,$$
and
$$\|\mathcal{L}\|_{(HK_{(p',q',s)_\alpha}(\cx))^\ast}
\sim\|f\|_{JN_{(p,q,s)_\alpha}(\cx)}$$
with the positive equivalence constants independent of $f$.
\end{enumerate}
\end{theorem}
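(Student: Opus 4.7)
The plan is to establish the two directions via the pairing induced by atomic decomposition, with the sufficiency direction (i) reducing to Hölder's inequality on individual atoms and the necessity direction (ii) built from Riesz-type representation on each cube followed by a gluing procedure.

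For direction (i), fix $f\in JN_{(p,q,s)_\alpha}(\cx)$. Given any $(p',q',s)_\alpha$-polymer $g=\sum_j\lz_j a_j$ with atoms $a_j$ supported in interior pairwise disjoint cubes $\{Q_j\}_j$, I would use the vanishing-moment condition on $a_j$ to rewrite $\int_{Q_j}f a_j=\int_{Q_j}[f-P_{Q_j}^{(s)}(f)]a_j$, then apply H\"older's inequality together with the atomic size estimate $\|a_j\|_{L^{q'}(Q_j)}\le|Q_j|^{\frac1{q'}-\frac1{p'}-\az}$ to obtain
$$\lf|\int_{Q_j}fa_j\r|\le |Q_j|^{\frac1p}|Q_j|^{-\az}\lf[\fint_{Q_j}\lf|f-P_{Q_j}^{(s)}(f)\r|^q\,dx\r]^\frac1q.$$
Summing in $j$ via the $\ell^{p'}$--$\ell^p$ duality yields
$$\lf|\sum_j\lz_j\int_{Q_j}fa_j\r|\le\|\{\lz_j\}_j\|_{\ell^{p'}}\,\|f\|_{JN_{(p,q,s)_\alpha}(\cx)}.$$
Taking the infimum over decompositions of $g$ and then extending additively to $HK_{(p',q',s)_\alpha}(\cx)$ gives $\|\mathcal{L}_f\|_{(HK_{(p',q',s)_\alpha}(\cx))^\ast}\ls\|f\|_{JN_{(p,q,s)_\alpha}(\cx)}$, after checking that $\mathcal{L}_f$ is well defined on the quotient (which is immediate because atoms annihilate $\mathcal{P}_s(\cx)$).

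For direction (ii), given $\mathcal{L}\in(HK_{(p',q',s)_\alpha}(\cx))^\ast$, I would first localize: for each cube $Q\subset\cx$, observe that any $h\in L_s^{q'}(Q)$ with $\|h\|_{L^{q'}(Q)}\le|Q|^{\frac1{q'}-\frac1{p'}-\az}$ is itself a $(p',q',s)_\az$-atom, hence a polymer of $\widetilde{HK}$-norm at most $1$, so
$$|\mathcal{L}(h)|\le\|\mathcal{L}\|_{(HK_{(p',q',s)_\alpha}(\cx))^\ast}|Q|^{\frac1{q'}-\frac1{p'}-\az}\|h\|_{L^{q'}(Q)}.$$
Thus $\mathcal{L}$ restricts to a bounded functional on the closed subspace $L_s^{q'}(Q)$ of $L^{q'}(Q)$, which is reflexive since $q'\in(1,\fz)$. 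The Hahn--Banach theorem together with the standard $L^{q'}(Q)$ Riesz representation produces an $f_Q\in L^q(Q)$, unique modulo $\mathcal{P}_s(Q)$, with $\mathcal{L}(h)=\int_Q f_Q h\,dx$ for every $h\in L_s^{q'}(Q)$.

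The next step is to glue the local representatives. For cubes $Q_1\subset Q_2$, extending $h\in L_s^{q'}(Q_1)$ by zero shows $\int_{Q_1}(f_{Q_1}-f_{Q_2})h\,dx=0$ for all such $h$, so $f_{Q_1}-f_{Q_2}\in\mathcal{P}_s(Q_1)$. Choosing an exhausting increasing sequence of cubes and correcting by polynomials at each step, I can build a single $f\in L_\loc^q(\cx)$ (well defined modulo $\mathcal{P}_s(\cx)$) such that $\mathcal{L}(a)=\int_\cx f a\,dx$ for every atom, and hence for every $g\in HK_{(p',q',s)_\alpha}^{\mathrm{fin}}(\cx)$ by linearity. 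To conclude $\|f\|_{JN_{(p,q,s)_\alpha}(\cx)}\ls\|\mathcal{L}\|$, I would, for any collection of interior pairwise disjoint cubes $\{Q_i\}_i$, choose $h_i\in L_s^{q'}(Q_i)$ of unit $L^{q'}$-norm nearly achieving the quotient norm $\|f-P_{Q_i}^{(s)}(f)\|_{L^q(Q_i)}\sim\inf_{P\in\mathcal{P}_s(Q_i)}\|f-P\|_{L^q(Q_i)}$ (equivalence uses \eqref{Cs}), rescale to atoms $a_i:=|Q_i|^{\frac1{q'}-\frac1{p'}-\az}h_i$, and then test $\mathcal{L}$ against $g=\sum_i\lz_i a_i$ with $\{\lz_i\}_i$ chosen to realize the $\ell^p$ dual of the $JN$-target quantity. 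The main obstacle I anticipate is the compatibility across scales, namely ensuring that the correcting polynomials used in the gluing stay within $\mathcal{P}_s$ uniformly so that the final $f$ really lies in the quotient $JN_{(p,q,s)_\alpha}(\cx)/\mathcal{P}_s(\cx)$; this is exactly where the cited \cite[Lemma 3.12]{tyyNA} (which, as noted, remains valid for $\az\in\rr$) is needed.
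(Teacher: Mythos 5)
Your proposal follows essentially the same route as the proof in \cite{tyyNA} that the paper cites for this theorem: part (i) via the moment condition, H\"older's inequality on each atom, and $\ell^{p}$--$\ell^{p'}$ duality, and part (ii) via restricting $\mathcal{L}$ to $L^{q'}_s(Q)$, representing it by some $f_Q\in L^q(Q)$ unique modulo $\mathcal{P}_s(Q)$, gluing the local representatives over an exhaustion, and testing against near-extremal atoms. One small slip: the scaling you describe gives the localization bound $|\mathcal{L}(h)|\le\|\mathcal{L}\|_{(HK_{(p',q',s)_\alpha}(\cx))^\ast}\,|Q|^{\frac1{p'}+\az-\frac1{q'}}\|h\|_{L^{q'}(Q)}$, i.e.\ the exponent should be the negative of the one you wrote, but this does not affect the rest of the argument.
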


When $\cx:=Q_0$, $\az=0=s$, and $q\in[1,p)$,
by \cite[Remark 3.10]{tyyNA} and Proposition \ref{JNpqa=JNp1a},
we know that Theorem \ref{duality} in this case coincides with \cite[Theorem 6.6]{DHKY18}.
As an application of Theorem \ref{duality},
the authors obtained the following atomic characterization of $L^{q'}_s(Q_0)$
for any given $q'\in(1,\fz)$ and $s\in\zz_+$,
which is just \cite[Corollary 3.13]{tyyNA}.

\begin{proposition}\label{HKqq=Lq}
Let $p\in(1,\fz)$, $q\in[p,\fz)$, $1/p=1/p'=1=1/q+1/q'$, $s\in\zz_+$,
and $Q_0$ be a cube of $\rn$. Then
$$L^{q'}_s(Q_0,|Q_0|^{q'-1}dx)
=|Q_0|^\frac1p HK_{(p',q',s)_0 }(Q_0)$$
with equivalent norms.
\end{proposition}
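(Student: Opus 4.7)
My plan is to prove the equivalence directly from the atomic structure of $HK_{(p',q',s)_0}(Q_0)$, rather than routing through the duality in Theorem \ref{duality}; the cube $Q_0$ serves as a single ``super-cube'' that reduces one inclusion to a single-atom representation and the other to a single H\"older estimate, with the hypothesis $q\ge p$ appearing precisely to make the H\"older step work. For the inclusion $L^{q'}_s(Q_0,|Q_0|^{q'-1}dx)\subset|Q_0|^{1/p}HK_{(p',q',s)_0}(Q_0)$, given nonzero $g\in L^{q'}_s(Q_0)$, I would set $\lambda:=|Q_0|^{1/q-1/p}\|g\|_{L^{q'}(Q_0)}$ and $a:=g/\lambda$. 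By construction $a$ is supported in $Q_0$, satisfies $\|a\|_{L^{q'}(Q_0)}=|Q_0|^{1/q'-1/p'}$, and inherits vanishing moments from $g$, so it is a $(p',q',s)_0$-atom on $Q_0$ by Definition \ref{uvas-atom}; thus $g=\lambda a$ is a single-atom polymer, giving $\|g\|_{HK_{(p',q',s)_0}(Q_0)}\le|\lambda|=|Q_0|^{1/q-1/p}\|g\|_{L^{q'}(Q_0)}$. Multiplying by $|Q_0|^{1/p}$ and using $\|g\|_{L^{q'}(Q_0,|Q_0|^{q'-1}dx)}=|Q_0|^{1/q}\|g\|_{L^{q'}(Q_0)}$ produces $|Q_0|^{1/p}\|g\|_{HK_{(p',q',s)_0}(Q_0)}\le\|g\|_{L^{q'}(Q_0,|Q_0|^{q'-1}dx)}$.

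For the reverse inclusion, given a polymer $g=\sum_j\lambda_j a_j$ with atoms $a_j$ supported in interior pairwise disjoint cubes $\{Q_j\}_j$ of $Q_0$, the disjoint-support hypothesis gives
\[
\|g\|_{L^{q'}(Q_0)}^{q'}=\sum_j|\lambda_j|^{q'}\|a_j\|_{L^{q'}(Q_j)}^{q'}\le\sum_j|\lambda_j|^{q'}|Q_j|^{1-q'/p'}.
\]
H\"older's inequality with exponents $p'/q'$ and $p'/(p'-q')$, whose validity requires $p'/q'\ge1$ (equivalently $q\ge p$), together with $\sum_j|Q_j|\le|Q_0|$, bounds the right-hand side by $\|\{\lambda_j\}_j\|_{\ell^{p'}}^{q'}|Q_0|^{1-q'/p'}$. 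Taking $q'$-th roots and the infimum over atomic decompositions of the polymer yields $\|g\|_{L^{q'}(Q_0)}\le|Q_0|^{1/p-1/q}\|g\|_{\widetilde{HK}_{(p',q',s)_0}(Q_0)}$; summing over polymers in an atomic decomposition of a general element of $HK_{(p',q',s)_0}(Q_0)$ then gives $\|g\|_{L^{q'}(Q_0,|Q_0|^{q'-1}dx)}\le|Q_0|^{1/p}\|g\|_{HK_{(p',q',s)_0}(Q_0)}$, as required.

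The main subtlety I expect is identifying an abstract element of $HK_{(p',q',s)_0}(Q_0)\subset(JN_{(p,q,s)_0}(Q_0))^\ast$, defined only as a weak-$\ast$ limit of polymer sums, with a concrete $L^{q'}_s(Q_0)$-function. This will be resolved by the polymer estimate above: it forces the partial sums of the atomic decomposition to be Cauchy in $L^{q'}(Q_0)$, and the resulting $L^{q'}$-limit must coincide with the weak-$\ast$ limit in $(JN_{(p,q,s)_0}(Q_0))^\ast$ because $L^{q'}(Q_0)$ embeds continuously into $(JN_{(p,q,s)_0}(Q_0))^\ast$ via the pairing $(f,g)\mapsto\int_{Q_0}fg\,dx$, the continuity being a consequence of Proposition \ref{JNpq=Lq}; the vanishing-moment condition is preserved in the limit by the continuity of the pairing against the monomials $x^\gamma$ with $|\gamma|\le s$.
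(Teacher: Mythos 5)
Your proof is correct, and it takes a genuinely different route from the paper's. The survey obtains this proposition by citing \cite[Corollary 3.13]{tyyNA}, i.e., as an application of the duality $(HK_{(p',q',s)_0}(Q_0))^\ast=JN_{(p,q,s)_0}(Q_0)$ (Theorem \ref{duality}) combined with the identification $[|Q_0|^{-1/p}JN_{(p,q,s)_0}(Q_0)]=[L^q(Q_0,|Q_0|^{-1}dx)/\mathcal{P}_s(Q_0)]$ for $q\ge p$ (Proposition \ref{JNpq=Lq}); that derivation is short given the machinery, but it works at the level of dual spaces and still has to descend to the predual. You instead argue directly from the atomic structure: the inclusion $L^{q'}_s\subset |Q_0|^{1/p}HK$ via a single-atom representation (your normalization is exact, since $1/q'-1/p'=1/p-1/q$), and the reverse inclusion via disjoint supports plus H\"older with exponent $p'/q'\ge1$, which is precisely where $q\ge p$ enters (the case $q=p$ degenerates harmlessly). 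This buys an elementary, self-contained proof with constant exactly $1$ in both directions, and your treatment of the identification issue—partial sums of atoms are Cauchy in $L^{q'}$, the $L^{q'}$-limit induces the same functional as the weak-$\ast$ limit, and moments pass to the limit by pairing against bounded monomials—is the right way to close the only real gap in such an argument. One small imprecision: it is $L^{q'}_s(Q_0)$, not all of $L^{q'}(Q_0)$, that embeds into $(JN_{(p,q,s)_0}(Q_0))^\ast$, since $JN_{(p,q,s)_0}(Q_0)$ is a quotient modulo $\mathcal{P}_s(Q_0)$; this is harmless because every partial sum in your argument has vanishing moments up to order $s$, and the embedding is injective because $L^q(Q_0)\subset JN_{(p,q,s)_0}(Q_0)$, so the identification of an abstract element of $HK_{(p',q',s)_0}(Q_0)$ with its representing function is unambiguous.
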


From Theorem \ref{JN61Lem3} and \cite[p.\,14, Exercise 1.1.11]{GTM249},
we deduce that, for any $1<p_1<p_2<\fz$,
\begin{align*}
	JN_{p_2}(Q_0)\subset L^{p_2,\fz}(Q_0)\subset L^{p_1}(Q_0)\subset JN_{p_1}(Q_0).
\end{align*}
Moreover, it is easy to show following monotonicity over the first sub-index
of both $JN_{(p,q,s)_\alpha}(Q_0)$ and $HK_{(u,v,s)_\az}(Q_0)$.

\begin{proposition}\label{mono}
	Let $s\in\zz_+$ and $Q_0$ be a cube of $\rn$.
	\begin{enumerate}
		\item[{\rm(i)}] Let $1<u_1<u_2<\fz$. If $v\in(1,\fz)$ and  $\alpha\in\rr$,
		or $v=\fz$ and $\az\in[0,\fz)$, then
		$$HK_{(u_2,v,s)_\az}(Q_0)\subset HK_{(u_1,v,s)_\az}(Q_0)$$
		and
		$$\|\cdot\|_{HK_{(u_1,v,s)_\az}(Q_0)}\le
		|Q_0|^{\frac1{u_1}-\frac1{u_2}}\|\cdot\|_{HK_{(u_2,v,s)_\az}(Q_0)}.$$
		
		\item[{\rm(ii)}] Let $1<p_1<p_2<\fz$. If $q\in(1,\fz)$ and $\az\in\rr$,
		or $q=1$ and $\az\in[0,\fz)$, then
		$$JN_{(p_2,q,s)_\az}(Q_0)\subset JN_{(p_1,q,s)_\az}(Q_0)$$
		and there exists some positive constant $C$ such that
		$$\|\cdot\|_{JN_{(p_1,q,s)_\az}(Q_0)}\le C
		|Q_0|^{\frac1{p_1}-\frac1{p_2}}\|\cdot\|_{JN_{(p_2,q,s)_\az}(Q_0)}.$$
	\end{enumerate}
\end{proposition}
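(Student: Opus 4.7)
The plan is to prove both parts by H\"older's inequality, exploiting the finite total measure $\sum_i|Q_i|\le|Q_0|$ that any admissible collection of interior pairwise disjoint subcubes of $Q_0$ must satisfy. In both parts, the proof should yield the explicit constant $C=1$, since the only ``loss'' comes from the above inequality applied to the ambient cube.

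For part (ii), the key observation is that the quantity
$$b_i(f):=|Q_i|^{-\az}\lf[\fint_{Q_i}\lf|f(x)-P_{Q_i}^{(s)}(f)(x)\r|^q\,dx\r]^{1/q}$$
depends on $Q_i$, $\az$, $q$, $s$, and $f$, but not on the first sub-index. Hence, for each admissible collection $\{Q_i\}_i$, I would apply H\"older's inequality with exponents $p_2/p_1$ and $p_2/(p_2-p_1)$ to the weighted sum $\sum_i|Q_i|b_i(f)^{p_1}$ to obtain
\begin{align*}
\lf[\sum_i|Q_i|b_i(f)^{p_1}\r]^{1/p_1}
&\le\lf[\sum_i|Q_i|b_i(f)^{p_2}\r]^{1/p_2}\lf[\sum_i|Q_i|\r]^{1/p_1-1/p_2}\\
&\le|Q_0|^{1/p_1-1/p_2}\lf[\sum_i|Q_i|b_i(f)^{p_2}\r]^{1/p_2}.
\end{align*}
Taking the supremum over all admissible collections on both sides then yields (ii). Notice that this argument is insensitive to whether $q=1$ or $q\in(1,\fz)$ and to the sign of $\az$.

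For part (i), I would first work at the level of atoms: if $a$ is a $(u_2,v,s)_\az$-atom supported on a cube $Q\subset Q_0$, then $\tilde a:=|Q|^{1/u_2-1/u_1}a$ satisfies
$$\lf\|\tilde a\r\|_{L^v(Q)}=|Q|^{1/u_2-1/u_1}\|a\|_{L^v(Q)}\le|Q|^{1/v-1/u_1-\az},$$
and so is a $(u_1,v,s)_\az$-atom, since its support and vanishing moments are manifestly unchanged. Any $(u_2,v,s)_\az$-polymer $g=\sum_j\lambda_j a_j$ with atoms $a_j$ on interior pairwise disjoint cubes $\{Q_j\}_j$ can thus be rewritten as $g=\sum_j\tilde\lambda_j\tilde a_j$, where $\tilde\lambda_j:=\lambda_j|Q_j|^{1/u_1-1/u_2}$. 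Applying H\"older's inequality with exponents $u_2/u_1$ and $u_2/(u_2-u_1)$ to
$$\sum_j|\tilde\lambda_j|^{u_1}=\sum_j|\lambda_j|^{u_1}|Q_j|^{1-u_1/u_2}$$
and using $\sum_j|Q_j|\le|Q_0|$ yields $\|\{\tilde\lambda_j\}_j\|_{\ell^{u_1}}\le|Q_0|^{1/u_1-1/u_2}\|\{\lambda_j\}_j\|_{\ell^{u_2}}$. Passing to the infimum over decompositions of $g$ gives the analogous estimate for the polymer norm $\|\cdot\|_{\widetilde{HK}_{(\cdot,v,s)_\az}(Q_0)}$; summing this polymer-level bound over $i$ in any decomposition $g=\sum_ig_i$ of an element of $HK_{(u_2,v,s)_\az}(Q_0)$ and taking the infimum completes the proof of (i). I do not expect any step to present a real obstacle; the only mildly subtle points are checking that $\tilde a$ inherits the atom conditions (trivial) and choosing the H\"older exponents correctly (well-defined precisely because $1<u_1<u_2<\fz$).
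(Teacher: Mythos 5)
Your proof is correct and follows essentially the same route as the paper: part (i) rests on exactly the same key fact (that $|Q|^{1/u_2-1/u_1}a$ is a $(u_1,v,s)_\az$-atom, with the coefficient estimate then handled at the polymer level), and part (ii) is the same computation the paper attributes to the Jensen inequality, since your H\"older argument with $\sum_i|Q_i|\le|Q_0|$ is equivalent to applying Jensen to the normalized measure $|Q_i|/\sum_i|Q_i|$. Both parts go through as you describe, with the explicit constant $|Q_0|^{1/p_1-1/p_2}$ (i.e., $C=1$).
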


\begin{proof}
	(i) is a direct corollary of the fact that, for any $(u_2,v,s)_\az$-atom
	$a$ on the cube $Q$,
	$$|Q|^{\frac1{v_2}-\frac1{v_1}}a$$
	is a $(u_1,v,s)_\az$-atom;
	see \cite[Remark 5.5]{SXY19} for more details.
	
	(ii) is a direct consequence of the Jensen inequality;
	see, for instance, \cite[Remark 4.2(ii)]{tyyNA}.
	This finishes the proof of Proposition \ref{mono}.
\end{proof}

Now, we consider the independence over the second sub-index,
which strongly relies on the John--Nirenberg inequality as in the BMO case.
The following John--Nirenberg-type inequality is just \cite[Theorem 4.3]{tyyNA},
which coincides with Theorem \ref{JN61Lem3} when $\az=0=s$.

\begin{theorem}\label{JohnNirenberg}
Let $p\in(1,\infty)$, $s\in\zz_+$, $\alpha\in[0,\infty)$, and $Q_0$
be a cube of $\rn$.
If $f\in JN_{(p,1,s)_\alpha}(Q_0)$, then
$f-P_{Q_0}^{(s)}(f)\in L^{p,\infty}(Q_0)$ and there exists a positive constant
$C_{(n,p,s)}$, depending only on $n$, $p$, and $s$, but independent of $f$, such that
\begin{align*}
\lf\|f-P_{Q_0}^{(s)}(f)\r\|_{L^{p,\infty}(Q_0)}\le C_{(n,p,s)} \lf| Q_0 \r|^\alpha
 \|f\|_{JN_{(p,1,s)_\alpha}(Q_0)}.
\end{align*}
\end{theorem}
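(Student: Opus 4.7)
The plan is a Calder\'on--Zygmund decomposition combined with the summability structure of the JNC norm. I would first normalize by replacing $f$ with $g := f - P_{Q_0}^{(s)}(f)$; since the JNC norm is taken modulo $\mathcal{P}_s$, one has $\|g\|_{JN_{(p,1,s)_\alpha}(Q_0)} = \|f\|_{JN_{(p,1,s)_\alpha}(Q_0)} =: A$ and $P_{Q_0}^{(s)}(g)=0$, so applying the JNC supremum to the singleton family $\{Q_0\}$ yields $\fint_{Q_0}|g|\leq A|Q_0|^{\alpha - 1/p}$. Fix $t>0$. For $t$ below a suitable threshold of the form $c_{(n,s)}A|Q_0|^{\alpha-1/p}$ the desired estimate is immediate from the trivial bound $|\{|g|>t\}|\leq|Q_0|$. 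Above that threshold $t$ exceeds a multiple of $\fint_{Q_0}|g|$, and I would apply Theorem \ref{CZ-decom} to $|g|$ at a height of the form $\lambda = t / \bigl[2^n(1+C_{(s)})\bigr]$, with $C_{(s)}$ as in \eqref{Cs}. This produces pairwise interior-disjoint dyadic cubes $\{Q_j\}_j\subset Q_0$ with $|g|\leq\lambda$ almost everywhere outside $\bigcup_j Q_j$, $\lambda < \fint_{Q_j}|g|\leq 2^n\lambda$, and $\fint_{\widehat{Q_j}}|g|\leq\lambda$ on each parent. Consequently $\{|g|>t\}\subset\bigcup_j Q_j$ modulo a null set, so it suffices to bound $\sum_j|Q_j|$ by a constant multiple of $A^p |Q_0|^{\alpha p}/t^p$.

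The crux is the uniform lower bound $\fint_{Q_j}|g - P_{Q_j}^{(s)}(g)|\gtrsim t$. Granting this, applying the definition of $\|g\|_{JN_{(p,1,s)_\alpha}(Q_0)}$ to the family $\{Q_j\}$ gives
$$
\sum_j|Q_j|\Bigl[|Q_j|^{-\alpha}\fint_{Q_j}|g-P_{Q_j}^{(s)}(g)|\Bigr]^p\leq A^p.
$$
Since $\alpha\geq 0$ and $|Q_j|\leq|Q_0|$ imply $|Q_j|^{-\alpha p}\geq|Q_0|^{-\alpha p}$, this yields $t^p|Q_0|^{-\alpha p}\sum_j|Q_j|\lesssim A^p$ with implicit constant depending only on $n$, $p$, and $s$. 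Hence $|\{|g|>t\}|\leq\sum_j|Q_j|\lesssim A^p|Q_0|^{\alpha p}/t^p$, and by the definition of the weak $L^p$ quasi-norm this delivers the theorem.

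The hard part will be precisely this lower bound, because in principle $g$ could be close to a polynomial of degree at most $s$ on an individual $Q_j$ even while $|g|$ has large $L^1$-average there. To overcome it I would exploit the parent $\widehat{Q_j}$: since $\fint_{\widehat{Q_j}}|g|\leq\lambda$, estimate \eqref{Cs} gives $\|P_{\widehat{Q_j}}^{(s)}(g)\|_{L^\infty(\widehat{Q_j})}\leq C_{(s)}\lambda\leq t/2^n$. Thus on $\{|g|>t\}\cap Q_j$ one has $|g - P_{\widehat{Q_j}}^{(s)}(g)|\gtrsim t$ pointwise. The identity $P_{Q_j}^{(s)}(g) - P_{\widehat{Q_j}}^{(s)}(g) = P_{Q_j}^{(s)}\bigl(g - P_{\widehat{Q_j}}^{(s)}(g)\bigr)$, valid because $P_{\widehat{Q_j}}^{(s)}(g)\in\mathcal{P}_s$ is fixed by $P_{Q_j}^{(s)}$, combined again with \eqref{Cs}, should convert this pointwise information into the required $L^1$-mean estimate on $Q_j$. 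If the resulting bound on $Q_j$ itself loses too much due to the constant $C_{(s)}$, the same reasoning applies on $\widehat{Q_j}$, and the bounded multiplicity (at most $2^n$ children per parent) absorbs the overlap when the JNC norm is applied to the family $\{\widehat{Q_j}\}$ in place of $\{Q_j\}$.
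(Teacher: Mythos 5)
Your reduction to bounding $\sum_j|Q_j|$ is sound, but the step you yourself call the crux --- that every Calder\'on--Zygmund cube $Q_j$ selected at height $\lambda\sim t$ satisfies $\fint_{Q_j}|g-P^{(s)}_{Q_j}(g)|\gtrsim t$ --- is false, and it is not repaired by passing to the parent or by discarding cubes that miss $\{|g|>t\}$. Already for $n=1$, $s=0$, $\alpha=0$ (so $C_{(s)}=1$ and $\lambda=t/4$), pick a dyadic interval $Q\subset Q_0$ of generation at least two, with dyadic sibling $Q'$, let $E\subset Q$ with $|E|=\varepsilon|Q|$, and set $g:=(\lambda+\delta)\mathbf{1}_{Q}+t\mathbf{1}_{E}+(\lambda-\delta-\varepsilon t)\mathbf{1}_{Q'}$ and $g:=0$ elsewhere, with $\delta,\varepsilon$ small (this $g$ is bounded, hence in $JN_p(Q_0)$). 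Then $\fint_{Q}|g|=\lambda+\delta+\varepsilon t\in(\lambda,2\lambda]$, the parent $\widehat{Q}$ has average exactly $\lambda$, and all larger dyadic ancestors have smaller averages, so $Q$ is one of the cubes produced by Theorem \ref{CZ-decom} at height $\lambda$, and $Q\cap\{|g|>t\}=E$ has positive measure; nevertheless $\fint_{Q}|g-g_{Q}|=2\varepsilon(1-\varepsilon)t$ and $\fint_{Q}|g-g_{\widehat{Q}}|=\delta+\varepsilon t$, both $o(t)$ as $\varepsilon,\delta\to0^+$. Your pointwise bound $|g-P^{(s)}_{\widehat{Q_j}}(g)|\ge t/2$ is correct but holds only on $\{|g|>t\}\cap Q_j$, and a one-scale decomposition gives no control on the relative measure of that set inside $Q_j$ --- which is exactly the content of the theorem. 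Two further problems with the fallback: the conversion via \eqref{Cs} runs the wrong way (it yields $\fint_{Q_j}|g-P^{(s)}_{Q_j}(g)|\le(1+C_{(s)})\fint_{Q_j}|g-P^{(s)}_{\widehat{Q_j}}(g)|$, an upper bound, and the reverse triangle inequality is vacuous because $C_{(s)}\ge1$), and the family $\{\widehat{Q_j}\}_j$ is not an admissible competitor in the $JN$ norm, since parents of distinct maximal cubes can be strictly nested, hence not interior pairwise disjoint; counting at most $2^n$ children per parent does not address this.

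This is why the proof the paper relies on is multi-scale rather than a single decomposition: Theorem \ref{JohnNirenberg} is deduced from the good-$\lambda$ inequality of Lemma \ref{goodlambda} for the dyadic maximal function $\mathcal{M}^{({\rm d})}_{Q_0}(f-P^{(s)}_{Q_0}(f))$, which compares the super-level sets at heights $\lambda$ and $\theta\lambda$ with the exponent $1/p'$ on the right-hand side, and this estimate is then iterated over a geometric sequence of levels down to the range where the trivial bound $|Q_0|$ applies; the fixed point of that recursion produces the $t^{-p}$ decay, and the factor $|Q_0|^{\alpha}$ for $\alpha\in[0,\infty)$ comes from the same observation $|Q_i|^{-\alpha}\ge|Q_0|^{-\alpha}$ that you already use. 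In effect one must decompose again, at a higher level, inside each selected cube; it is precisely this iteration, absent from your argument, that captures the smallness of $\{|g|>t\}$ within each $Q_j$.
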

It should be mentioned that the main tool used
in the proof of Theorem \ref{JohnNirenberg}
is the following \emph{good-$\lambda$ inequality}
(namely, Lemma \ref{goodlambda} below)
which is just \cite[Lemma 4.6]{tyyNA};
see also \cite[Lemma 4.5]{ABKY11} when $s=0$.
Recall that, for any given cube $Q_0$ of $\rn$,
the \emph{dyadic maximal operator} $\mathcal{M}_{Q_0}^{({\rm d})}$
is defined by setting, for any given $g\in L^1(Q_0)$ and any $x\in Q_0$,
$$\mathcal{M}_{Q_0}^{({\rm d})}(g)(x)
:=\sup_{Q\in\cd_{Q_0},\,Q\ni x}\frac1{|Q|}\int_{Q}|g(x)|\,dx,$$
where $\cd_{Q_0}$ is as in \eqref{dyadicQ0} with $Q$ replaced by $Q_0$,
and the supremum is taken over all dyadic cubes $Q\in\mathcal{D}_{Q_0}$
and $Q\ni x$.
\begin{lemma}\label{goodlambda}
Let $p\in(1,\infty)$, $s\in\zz_+$, $C_{(s)}\in[1,\fz)$ be as in \eqref{Cs},
$\theta\in(0,2^{-n}C_{(s)}^{-1})$,
$Q_0$ be a cube of $\rn$, and $f\in JN_{(p,1,s)_0}(Q_0)$.
Then, for any real number $\lambda>\frac1\theta\fint_{Q_0}|f-P_{Q_0}^{(s)}(f)|$,
\begin{align*}
&\lf| \lf\{ x\in Q_0:\,\,\mathcal{M}_{Q_0}^{({\rm d})}\lf(f-P_{Q_0}^{(s)}(f)\r)(x)
>\lambda \r\} \r|\noz\\
&\quad \le \frac{\|f\|_{JN_{(p,1,s)_0}(Q_0)}}{[1-2^n\tz C_{(s)}]\lz}
\lf| \lf\{ x\in Q_0:\,\,\mathcal{M}_{Q_0}^{({\rm d})}\lf(f-P_{Q_0}^{(s)}(f)\r)(x)
>\theta\lambda \r\} \r|^\frac1{p'}.
\end{align*}
\end{lemma}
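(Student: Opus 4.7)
The plan is to perform a Calder\'on--Zygmund stopping-time decomposition at the level $\theta\lambda$, localize the dyadic maximal function to each stopping cube, pass from the polynomial $P_{Q_0}^{(s)}(f)$ to the local polynomial $P_{Q_j}^{(s)}(f)$ using the pointwise estimate \eqref{Cs}, and finally apply the weak-type $(1,1)$ bound for the dyadic maximal operator together with H\"older's inequality.

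In detail, set $g:=f-P_{Q_0}^{(s)}(f)$. Since $\theta\lambda>\fint_{Q_0}|g|$, Theorem \ref{CZ-decom} applied to $|g|$ at height $\theta\lambda$ yields a family $\{Q_j\}_j\subset\mathscr{D}_{Q_0}$ of pairwise interior-disjoint maximal dyadic subcubes satisfying $\theta\lambda<\fint_{Q_j}|g|\le 2^n\theta\lambda$, and
$$\lf\{x\in Q_0:\ \mathcal{M}_{Q_0}^{({\rm d})}(g)(x)>\theta\lambda\r\}=\bigcup_j Q_j$$
up to a set of measure zero, so in particular $\sum_j|Q_j|$ equals the right-hand level-set measure in the conclusion. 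The maximality of each $Q_j$ forces every dyadic ancestor of $Q_j$ inside $Q_0$ to have mean of $|g|$ at most $\theta\lambda<\lambda$; consequently, for any $x\in Q_j$ the condition $\mathcal{M}_{Q_0}^{({\rm d})}(g)(x)>\lambda$ can only be achieved by dyadic cubes contained in $Q_j$, that is, $\mathcal{M}_{Q_0}^{({\rm d})}(g)(x)=\mathcal{M}_{Q_j}^{({\rm d})}(g)(x)$ on the relevant set.

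The next step is the polynomial correction. Using that $P_{Q_0}^{(s)}(f)\in\mathcal{P}_s$ is fixed by $P_{Q_j}^{(s)}$ (linearity plus $P_Q^{(s)}m=m$ for $m\in\mathcal{P}_s(Q)$), one has
$$g-P_{Q_j}^{(s)}(g)=f-P_{Q_j}^{(s)}(f),\qquad P_{Q_j}^{(s)}(g)=P_{Q_j}^{(s)}(f)-P_{Q_0}^{(s)}(f).$$
Applying \eqref{Cs} to $g$ on $Q_j$ gives $|P_{Q_j}^{(s)}(g)|\le C_{(s)}\fint_{Q_j}|g|\le 2^n C_{(s)}\theta\lambda$ pointwise on $Q_j$. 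Hence for any dyadic $Q\subset Q_j$,
$$\fint_Q|g|\le\fint_Q\lf|f-P_{Q_j}^{(s)}(f)\r|+2^n C_{(s)}\theta\lambda,$$
so $\fint_Q|g|>\lambda$ forces $\fint_Q|f-P_{Q_j}^{(s)}(f)|>\lambda(1-2^n C_{(s)}\theta)$, where the factor is strictly positive by the hypothesis on $\theta$. Combining this with the observation in the previous paragraph and the weak-type $(1,1)$ inequality for $\mathcal{M}_{Q_j}^{({\rm d})}$ yields
$$\lf|\lf\{x\in Q_j:\ \mathcal{M}_{Q_0}^{({\rm d})}(g)(x)>\lambda\r\}\r|\le\frac{|Q_j|}{[1-2^n\theta C_{(s)}]\lambda}\fint_{Q_j}\lf|f-P_{Q_j}^{(s)}(f)\r|.$$

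Summing over $j$ and applying H\"older's inequality with exponents $p'$ and $p$ produces
$$\sum_j|Q_j|\fint_{Q_j}\lf|f-P_{Q_j}^{(s)}(f)\r|\le\lf(\sum_j|Q_j|\r)^{1/p'}\lf\{\sum_j|Q_j|\lf[\fint_{Q_j}\lf|f-P_{Q_j}^{(s)}(f)\r|\r]^p\r\}^{1/p},$$
and the second factor is bounded by $\|f\|_{JN_{(p,1,s)_0}(Q_0)}$ by the very definition of the JNC norm on the disjoint collection $\{Q_j\}_j$. Together with $\sum_j|Q_j|=|\{\mathcal{M}_{Q_0}^{({\rm d})}(g)>\theta\lambda\}|$, this delivers the stated inequality. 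The only delicate point in carrying this out carefully is the polynomial correction in the third paragraph---one must verify that $P_{Q_j}^{(s)}$ commutes appropriately with subtraction of $P_{Q_0}^{(s)}(f)$ and that \eqref{Cs} is applied to the right function on the right cube so that the constant $2^n C_{(s)}\theta$ (and hence the hypothesis on $\theta$) arises naturally; everything else is a routine assembly of the Calder\'on--Zygmund decomposition, localization, weak $(1,1)$, and H\"older.
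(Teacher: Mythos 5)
Your proof is correct and takes essentially the same route as the argument the paper cites for this lemma (\cite[Lemma 4.6]{tyyNA}; see also \cite[Lemma 4.5]{ABKY11} for $s=0$): a Calder\'on--Zygmund stopping-time decomposition at height $\theta\lambda$, localization of $\mathcal{M}_{Q_0}^{({\rm d})}$ to the stopping cubes, the polynomial correction via \eqref{Cs} (using $P_{Q_j}^{(s)}(g)=P_{Q_j}^{(s)}(f)-P_{Q_0}^{(s)}(f)$) which produces the factor $1-2^n\theta C_{(s)}$, and then the weak-$(1,1)$ bound for the dyadic maximal operator together with H\"older's inequality against $\|f\|_{JN_{(p,1,s)_0}(Q_0)}$. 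The details you flag as delicate are handled correctly, so there is no gap.
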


Moreover, based on Theorem \ref{JohnNirenberg}, in \cite[Proposition 4.1]{tyyNA},
Tao et al. further obtained the following independence
over the second sub-index of $JN_{(p,q,s)_\alpha}(\cx)$.

\begin{proposition}\label{JNpqa=JNp1a}
Let $1\le q<p<\infty$, $s\in\zz_+$, and $\alpha\in[0,\infty)$.
Then
$$JN_{(p,q,s)_\alpha}(\cx)=JN_{(p,1,s)_\alpha}(\cx)$$
with equivalent norms.
\end{proposition}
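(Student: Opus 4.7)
The plan is to prove the two inclusions separately. The easy direction,
$\|f\|_{JN_{(p,1,s)_\alpha}(\cx)}\le\|f\|_{JN_{(p,q,s)_\alpha}(\cx)}$, follows from Jensen's inequality applied to the innermost average: since $q\ge 1$, for any cube $Q_i$ one has $\fint_{Q_i}|f-P_{Q_i}^{(s)}(f)|\le(\fint_{Q_i}|f-P_{Q_i}^{(s)}(f)|^q)^{1/q}$. Raising to the $p$th power, multiplying by $|Q_i|^{1-\alpha p}$, summing over an arbitrary collection of interior pairwise disjoint cubes, and taking the supremum yields the inclusion.

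For the reverse direction, the main engine is the John--Nirenberg inequality (Theorem \ref{JohnNirenberg}). I would fix an arbitrary collection $\{Q_i\}_i$ of interior pairwise disjoint cubes in $\cx$, and apply Theorem \ref{JohnNirenberg} on each $Q_i$ (with the roles of $Q_0$ played by $Q_i$) to obtain
\begin{equation*}
\lf\|f-P_{Q_i}^{(s)}(f)\r\|_{L^{p,\fz}(Q_i)}
\le C_{(n,p,s)}|Q_i|^\alpha\|f\|_{JN_{(p,1,s)_\alpha}(Q_i)}.
\end{equation*}
Since $1\le q<p$, a direct distribution-function truncation at the level $t_0:=(\|g\|_{L^{p,\fz}(Q_i)}^p/|Q_i|)^{1/p}$ gives the standard embedding
\begin{equation*}
\|g\|_{L^q(Q_i)}\le\lf(\frac{p}{p-q}\r)^{1/q}|Q_i|^{1/q-1/p}\|g\|_{L^{p,\fz}(Q_i)}
\end{equation*}
for any measurable $g$. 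I would apply this to $g:=f-P_{Q_i}^{(s)}(f)$, divide through by $|Q_i|^{1/q}$, and combine with the previous display to get
\begin{equation*}
|Q_i|^{-\alpha}\lf[\fint_{Q_i}\lf|f-P_{Q_i}^{(s)}(f)\r|^q\,dx\r]^{1/q}
\ls |Q_i|^{-1/p}\|f\|_{JN_{(p,1,s)_\alpha}(Q_i)},
\end{equation*}
whence $|Q_i|\bigl[|Q_i|^{-\alpha}(\fint_{Q_i}|f-P_{Q_i}^{(s)}(f)|^q)^{1/q}\bigr]^p\ls\|f\|_{JN_{(p,1,s)_\alpha}(Q_i)}^p$.

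The step I expect to require the most care is the passage from these local estimates to a global one, namely $\sum_i\|f\|_{JN_{(p,1,s)_\alpha}(Q_i)}^p\le\|f\|_{JN_{(p,1,s)_\alpha}(\cx)}^p$. The plan is: for any $\varepsilon>0$ and each $i$, choose an interior pairwise disjoint family $\{R_{i,j}\}_j\subset Q_i$ with
\begin{equation*}
\sum_j|R_{i,j}|\lf[|R_{i,j}|^{-\alpha}\fint_{R_{i,j}}\lf|f-P_{R_{i,j}}^{(s)}(f)\r|\,dx\r]^p
\ge(1-\varepsilon)\|f\|_{JN_{(p,1,s)_\alpha}(Q_i)}^p.
\end{equation*}
Because $\{Q_i\}_i$ is itself interior pairwise disjoint, the combined family $\bigcup_i\{R_{i,j}\}_j$ is admissible in the supremum defining $\|f\|_{JN_{(p,1,s)_\alpha}(\cx)}$. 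Summing in $i$ and then letting $\varepsilon\to0$ produces the desired subadditivity. Finally, taking the supremum over the original $\{Q_i\}_i$ gives $\|f\|_{JN_{(p,q,s)_\alpha}(\cx)}\ls\|f\|_{JN_{(p,1,s)_\alpha}(\cx)}$ with constant $C_{(n,p,q,s)}$ depending only on $n,p,q,s$, completing the proof.
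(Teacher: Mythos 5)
Your proof is correct and follows essentially the same route as the one the paper cites: the trivial direction via the H\"older (Jensen) inequality, and the reverse direction by applying the John--Nirenberg-type inequality (Theorem \ref{JohnNirenberg}) on each cube $Q_i$, using the embedding $L^{p,\infty}(Q_i)\hookrightarrow L^q(Q_i)$ for $q<p$, and then summing via the natural subadditivity $\sum_i\|f\|_{JN_{(p,1,s)_\alpha}(Q_i)}^p\le\|f\|_{JN_{(p,1,s)_\alpha}(\cx)}^p$ over interior pairwise disjoint cubes. No gaps; the near-optimal selection argument you give for the subadditivity step is exactly the point that needs (and receives) care.
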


Furthermore, the following independence over the second sub-index of
$HK_{(u,v,s)_\alpha}(\cx)$ is just \cite[Proposition 4.7]{tyyNA},
whose proof is based on Theorem \ref{duality} and Proposition \ref{JNpqa=JNp1a}.

\begin{proposition}\label{HKuvb=HKu8b}
Let $1<u<v\le\infty$, $s\in\zz_+$, and $\alpha\in[0,\fz)$.
Then
$$HK_{(u,v,s)_\alpha}(\cx)=HK_{(u,\infty,s)_\alpha}(\cx)$$
with equivalent norms.
\end{proposition}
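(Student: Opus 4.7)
The plan splits into an easy inclusion, a dualization, and a closed-range argument.

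For the easy direction, I note that every $(u,\fz,s)_\az$-atom $a$ on a cube $Q$ obeys
$$\|a\|_{L^v(Q)}\le|Q|^{\frac1v}\|a\|_{L^\fz(Q)}\le|Q|^{\frac1v-\frac1u-\az}$$
by H\"older's inequality, and hence is also a $(u,v,s)_\az$-atom on $Q$. Consequently, any polymer decomposition of $g\in HK_{(u,\fz,s)_\az}(\cx)$ is simultaneously a polymer decomposition in $HK_{(u,v,s)_\az}(\cx)$, which yields the continuous inclusion
$$T\colon HK_{(u,\fz,s)_\az}(\cx)\hookrightarrow HK_{(u,v,s)_\az}(\cx)$$
with operator norm at most $1$.

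For the reverse inclusion, the strategy is to dualize. From $1<u<v\le\fz$ one gets $1\le v'<u'<\fz$, so that Proposition \ref{JNpqa=JNp1a} applied with first index $u'$ and second index $v'$ gives
$$JN_{(u',v',s)_\az}(\cx)=JN_{(u',1,s)_\az}(\cx)$$
with equivalent norms. On the other hand, Theorem \ref{duality} applied to the two index pairs $(u,v)$ and $(u,\fz)$ identifies
$$\lf(HK_{(u,v,s)_\az}(\cx)\r)^\ast\cong JN_{(u',v',s)_\az}(\cx),\quad\lf(HK_{(u,\fz,s)_\az}(\cx)\r)^\ast\cong JN_{(u',1,s)_\az}(\cx),$$
both pairings being realized by integration $\int_\cx f\,g$ of a $JN$-representative $f$ against an $HK$-element $g$. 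Unpacking the definition of the adjoint, the operator $T^\ast$, transported through these two isomorphisms, is precisely the natural identification map from $JN_{(u',v',s)_\az}(\cx)$ onto $JN_{(u',1,s)_\az}(\cx)$, which is a topological isomorphism by Proposition \ref{JNpqa=JNp1a}.

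To close the argument, I would invoke standard Banach-space duality: since $T^\ast$ is an isomorphism, $T^\ast$ injective forces $T$ to have dense range, and $T^\ast$ surjective forces $T$ to have closed range by the closed range theorem, so $T$ is surjective; being also injective, $T$ is a bijection, and the open mapping theorem promotes it to a topological isomorphism, whence
$$HK_{(u,v,s)_\az}(\cx)=HK_{(u,\fz,s)_\az}(\cx)$$
with equivalent norms. The delicate point is the verification that the adjoint $T^\ast$, read through the duality of Theorem \ref{duality}, really is the natural identification between the two $JN$ spaces: one must unwind both duality pairings and check that for $f\in JN_{(u',v',s)_\az}(\cx)$ and $g\in HK_{(u,\fz,s)_\az}(\cx)\subset HK_{(u,v,s)_\az}(\cx)$, the scalar $\int_\cx fg$ is unambiguously determined, independently of which Hardy-type space houses $g$. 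This compatibility, though routine, is the engine that powers the duality argument.
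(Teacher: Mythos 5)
Your easy inclusion (H\"older on $(u,\fz,s)_\az$-atoms) is correct, and the abstract closed-range/open-mapping scaffolding would indeed transfer an isomorphism from $T^\ast$ back to $T$, granted that both $HK$ spaces are Banach spaces. The fatal problem is the dualization step itself: Theorem \ref{duality} is stated only for $p,q\in(1,\fz)$, so it identifies $(HK_{(p',q',s)_\az}(\cx))^\ast$ with $JN_{(p,q,s)_\az}(\cx)$ only when the second index $q'$ lies in $(1,\fz)$. Applying it to the index pair $(u,\fz)$ is exactly the excluded endpoint $q'=\fz$, i.e., $q=1$. In this paper the identification $(HK_{(u,\fz,s)_\az}(\cx))^\ast=JN_{(u',1,s)_\az}(\cx)$ appears only as Corollary \ref{dual-q=1}, and that corollary is obtained by combining Theorem \ref{duality} and Proposition \ref{JNpqa=JNp1a} with Proposition \ref{HKuvb=HKu8b} -- the very statement you are proving. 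So the key input to your argument (that $T^\ast$, read through the two dualities, is the natural identification of $JN_{(u',v',s)_\az}(\cx)$ with $JN_{(u',1,s)_\az}(\cx)$) is unavailable without circularity; Hahn--Banach extension cannot substitute for it, since a functional bounded for the $HK_{(u,\fz,s)_\az}$ norm is not known to be bounded for the a priori smaller $HK_{(u,v,s)_\az}$ norm, which is essentially the inequality in dispute.

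The survey records that the original proof of \cite[Proposition 4.7]{tyyNA} is indeed based on Theorem \ref{duality} and Proposition \ref{JNpqa=JNp1a}, so your strategy is in the right spirit; but duality may only be invoked where it is actually proved, namely for second indices in $(1,\fz)$, and the endpoint space $HK_{(u,\fz,s)_\az}(\cx)$ must be handled by other means. Concretely, what is missing is the genuine content that the duality shortcut hides: either a direct proof of the representation theorem for $(HK_{(u,\fz,s)_\az}(\cx))^\ast$ (which cannot be run verbatim, since $(L^{\fz}(Q))^\ast\neq L^1(Q)$ on cubes), or a constructive argument showing that every $(u,v,s)_\az$-atom, and hence every $(u,v,s)_\az$-polymer with $\ell^u$-control of its coefficients, can be re-decomposed into $(u,\fz,s)_\az$-atoms with comparable norm, for instance via a Calder\'on--Zygmund-type decomposition on the supporting cube. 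Until one of these is supplied, the reverse inclusion $HK_{(u,v,s)_\az}(\cx)\subset HK_{(u,\fz,s)_\az}(\cx)$ remains unproved. A further, minor, point: completeness of the $HK$ spaces, which your closed-range argument needs, should be cited rather than assumed.
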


In particular, when $\az=0=s$, Propositions \ref{JNpqa=JNp1a} and \ref{HKuvb=HKu8b}
were obtained, respectively, in \cite[Propositions 5.1 and 6.4]{DHKY18}.

Combining Theorem \ref{duality} and Propositions \ref{JNpqa=JNp1a} and \ref{HKuvb=HKu8b},
we immediately have the following corollary; we omit the details here.

\begin{corollary}\label{dual-q=1}
Let $p\in(1,\fz)$, $s\in\zz_+$, and $\alpha\in[0,\infty)$.
Then $(HK_{(p',\fz,s)_\alpha}(\cx))^\ast=JN_{(p,1,s)_\alpha}(\cx)$.
\end{corollary}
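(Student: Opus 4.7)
The plan is to derive Corollary~\ref{dual-q=1} as a direct concatenation of the three results just reviewed: the duality Theorem~\ref{duality}, together with the two index-independence statements (Propositions~\ref{JNpqa=JNp1a} and~\ref{HKuvb=HKu8b}). The only real ingredient I need to supply is an auxiliary intermediate exponent that lets both independence results apply simultaneously.

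First, since $p\in(1,\fz)$, I would fix any auxiliary $q\in(1,p)$; the strict inequalities $q>1$ and $q<p$ will each be exploited below on opposite sides of the duality. With this $q$ in hand, Theorem~\ref{duality} is directly applicable (its hypothesis $p,q\in(1,\fz)$ is met) and yields
$$\bigl(HK_{(p',q',s)_\alpha}(\cx)\bigr)^\ast = JN_{(p,q,s)_\alpha}(\cx),$$
where $q'\in(1,\fz)$ denotes the conjugate of $q$.

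Next, I would collapse each side of this identity down to the target sub-indices. On the $JN$ side, the chain $1\le q<p<\fz$, together with $s\in\zz_+$ and $\alpha\in[0,\fz)$, matches the hypotheses of Proposition~\ref{JNpqa=JNp1a} exactly, producing $JN_{(p,q,s)_\alpha}(\cx)=JN_{(p,1,s)_\alpha}(\cx)$ with equivalent norms. On the $HK$ side, conjugating $q<p$ reverses the inequality to $p'<q'<\fz$, so the pair $u:=p'$ and $v:=q'$ satisfies $1<u<v\le\fz$; Proposition~\ref{HKuvb=HKu8b} then delivers $HK_{(p',q',s)_\alpha}(\cx)=HK_{(p',\fz,s)_\alpha}(\cx)$ with equivalent norms. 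Since two normed spaces that coincide with equivalent norms have identical continuous dual spaces, the corresponding dual identification follows automatically.

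Splicing the three identifications along the duality then yields the asserted $\bigl(HK_{(p',\fz,s)_\alpha}(\cx)\bigr)^\ast = JN_{(p,1,s)_\alpha}(\cx)$. There is no substantive obstacle beyond selecting the intermediate exponent $q$ and tracking the conjugation; the analytic content has already been invested upstream, namely in Theorem~\ref{duality} (through the atomic/polymer machinery), in Proposition~\ref{JNpqa=JNp1a} (through the John--Nirenberg-type inequality Theorem~\ref{JohnNirenberg}), and in Proposition~\ref{HKuvb=HKu8b}. This is precisely why the statement is advertised as a corollary rather than a theorem, and I would simply present the proof as a three-step identity chain with the choice of $q$ flagged at the outset.
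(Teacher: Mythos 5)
Your argument is correct and is exactly the route the paper intends: the text states that the corollary follows by combining Theorem \ref{duality} with Propositions \ref{JNpqa=JNp1a} and \ref{HKuvb=HKu8b}, with details omitted, and your choice of an auxiliary $q\in(1,p)$ (so that $1<p'<q'<\fz$) together with the observation that equivalent norms yield identical dual spaces is precisely how those details are filled in. Nothing is missing.
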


Finally, we list some open questions.
\begin{question}\label{openQ-JN}
For any given cube $Q_0$ of $\rn$, by \cite[Remark 4.2(ii)]{tyyNA}
with slight modifications, we know that
\begin{itemize}
\item[{\rm(i)}] for any given $p\in[1,\fz)$ and $s\in\zz_+$,
$$JN_{(p,q,s)_0}(Q_0)=
\begin{cases}
JN_{(p,1,s)_0}(Q_0),& q\in [1,p),\\
JN_{(q,q,s)_0}(Q_0),& q\in [p,\fz);
\end{cases}$$

\item[{\rm(ii)}] for any given $p\in[1,\fz)$, $q\in [p,\fz)$,
$s\in\zz_+$, and $\az\in\rr$,
$$JN_{(q,q,s)_\az}(Q_0)\subset JN_{(p,q,s)_\az}(Q_0)$$
and
\begin{align*}
\lf[|Q_0|^{-\frac1p}\|f\|_{JN_{(p,q,s)_\az}(Q_0)}\r]
\le \lf[|Q_0|^{-\frac1q}\|f\|_{JN_{(q,q,s)_\az}(Q_0)}\r];
\end{align*}

\item[{\rm(iii)}] for any given $p\in[1,\fz)$, $q\in [p,\fz)$, $s\in\zz_+$, and
$\alpha\in (\frac{s+1}n,\infty)$,
$$JN_{(q,q,s)_\az}(Q_0)=\mathcal{P}_s(Q_0)=JN_{(p,q,s)_\az}(Q_0).$$
\end{itemize}
However, letting $RM_{p,q,\az}(\cx)$ denote the Riesz--Morrey space
in Definition \ref{def-Riesz}, it is still unknown whether or not
\begin{itemize}
\item[{\rm(i)}]for any given $p\in[1,\fz)$, $q\in [p,\fz)$, $s\in\zz_+$, and
$\alpha\in(-\fz,\frac{s+1}n]\setminus\{0\}$,
$$JN_{(p,q,s)_\alpha}(Q_0)=JN_{(q,q,s)_\alpha}(Q_0)
\ {\rm or}\
JN_{(p,q,s)_\alpha}(Q_0)=\lf[RM_{p,q,\az}(Q_0)/\mathcal{P}_s(Q_0)\r]$$
holds true;

\item[{\rm(ii)}] for any given $p\in[1,\fz)$, $q\in [p,\fz)$, $s\in\zz_+$, and
$\alpha\in\rr$,
$$JN_{(p,q,s)_\alpha}(\rn)=JN_{(q,q,s)_\alpha}(\rn)
\ {\rm or}\
JN_{(p,q,s)_\alpha}(\rn)=\lf[RM_{p,q,\az}(\rn)/\mathcal{P}_s(\rn)\r]$$
holds true, where $\mathcal{P}_s(\rn)$ denotes the set of all
polynomials of degree not greater than $s$ on $\rn$.
\end{itemize}
\end{question}

\begin{question}\label{openQ-HK}
Let $1<u_1<u_2<\fz$, $v\in(1,\fz]$, $s\in\zz_+$,
and $Q_0$ be a cube of $\rn$.
From Proposition \ref{mono}(i), we deduce that
$$HK_{(u_2,v,s)_0}(Q_0)\subset HK_{(u_1,v,s)_0}(Q_0)$$
and
$$\|\cdot\|_{HK_{(u_1,v,s)_0}(Q_0)}\le
\lf[|Q_0|^{\frac1{u_1}-\frac1{u_2}}\|\cdot\|_{HK_{(u_2,v,s)_0}(Q_0)}\r].$$
Moreover, by \cite[Remark 4.2(iii)]{tyyNA} and \cite[Proposition 5.7]{SXY19},
we find that, for any $u\in[1,\fz)$,
$$HK_{(u,v,s)_0}(Q_0)\subset H^{1,v,s}_{\mathrm{at}}(Q_0)$$
and, for any $g\in\bigcup_{u\in[1,\fz)}HK_{(u,v,s)_0}(Q_0)$,
\begin{align*}
\|g\|_{H^{1,v,s}_{\mathrm{at}}(Q_0)}
\le \liminf_{u\to1^+} \|g\|_{HK_{(u,v,s)_0}(Q_0)},
\end{align*}
where $H^{1,v,s}_\at(\cx)$ denotes the \emph{atomic Hardy space};
see Coifman and Weiss \cite{CW77}, and also \cite[Remark 3.2(ii)]{tyyNA},
for its definition.
Here and thereafter, $u\to1^+$ means $u\in(1,\fz)$ and $u\to1$.
However, for any given $v\in(1,\fz]$, $s\in\zz_+$, $\alpha\in[0,\fz)$, and
any given cube $Q_0$ of $\rn$,
\begin{itemize}
\item[\rm{(i)}] it is still unknown whether or not,
for any $g\in\bigcup_{u\in[1,\fz)}HK_{(u,v,s)_\az}(Q_0)$,
$$\|g\|_{H^{\frac1{\az+1},v,s}_{\mathrm{at}}(Q_0)}
=\lim_{u\to1^+} \|g\|_{HK_{(u,v,s)_\az}(Q_0)}$$
holds true;

\item[\rm{(ii)}]
it is interesting to clarify the relation between
$\bigcup_{u\in[1,\fz)}HK_{(u,v,s)_\az}(Q_0)$
and $H^{\frac1{\az+1},v,s}_{\mathrm{at}}(Q_0)$.
\end{itemize}
\end{question}

The last question in this subsection is on an interpolation result in \cite{S65}.
We first recall some notation in \cite{S65}.
Let $p\in(1,\fz)$, $\lambda\in\rr$, and $Q_0$ be a cube of $\rn$.
The \emph{space} $N^{(p,\lambda)}(Q_0)$ is defined by setting
$$N^{(p,\lambda)}(Q_0):=\lf\{u\in L^1(Q_0):\,\,[u]_{N^{(p,\lambda)}(Q_0)}<\fz\r\},$$
where
$$[u]_{N^{(p,\lambda)}}(Q_0):=\sup \lf\{\sum_i\lf|\int_{Q_i}|u(x)-u_{Q_i}|\,dx \r|^p
|Q_i|^{1-p-\lambda} \r\}^{1/p}$$
and the supremum is taken over all collections of
interior pairwise disjoint cubes $\{Q_i\}_i$ of $Q_0$,
and $u_{Q_i}$ is the mean of $u$ over $Q_i$ for any $i$.
Let $\mathcal{F}(Q_0)$ denote the set of all simple functions on $Q_0$.
\begin{definition}(\cite[Definition 3.1]{S65})\label{st}
A linear operator $T$ defined on $\mathcal{F}(Q_0)$ is said to be of \emph{strong type}
$N[p,(q,\mu)]$ if there exists a positive constant $K$ such that,
for any $u\in \mathcal{F}(Q_0)$,
$$[Tu]_{N^{(q,\mu)}(Q_0)}\le K\|u\|_{L^p(Q_0)};$$
the smallest of the constant $K$ for which the above inequality holds true is called the
\emph{strong $N[p,(q,\mu)]$-norm}.
\end{definition}

\begin{theorem}(\cite[Theorem 3.1]{S65})\label{interpolation-thm}
Let $[p_i,q_i,\mu_i]$ be real numbers such that $p_i$, $q_i\in[1,\fz)$ for any $i\in\{1,2\}$.
If $T$ is a linear operator which is simultaneously of strong type $N[p_i,(q_i,\mu_i)]$
with respective norms $K_i$ $(i\in\{1,2\})$, then $T$ is of strong type $N[p_t,(q_t,\mu)]$ where
$$\begin{cases}
\displaystyle{\frac1{p_t}:=\frac{1-t}{p_1}+\frac{t}{p_2},\quad \frac1{q_t}:=\frac{1-t}{q_1}+\frac{t}{q_2}}\\
\displaystyle{\frac{\mu}{q}=(1-t)\frac{\mu_1}{q_1} \quad\mathrm{for}\quad t\in[0,1].}
\end{cases}$$
Moreover, for any $t\in[0,1]$,
$$[Tu]_{N[p_t,(q_t,\mu)]}\le K_1^{1-t}K_2^t \|u\|_{L^p(Q_0)}.$$
The theorem holds true also in the limit case $p_1=\fz$ and $\frac1{q_1}=\mu_1=0$.
\end{theorem}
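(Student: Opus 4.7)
The plan is to attack this by complex interpolation in the style of Riesz--Thorin, after first linearizing the target seminorm by duality. The key observation is that, for any locally integrable $v$ on $Q_0$, one has the dual representation
\begin{align*}
[v]_{N^{(q,\mu)}(Q_0)} = \sup \sum_i c_i |Q_i|^{(1-q-\mu)/q} \int_{Q_i} v(x)\bigl(\psi_i(x) - (\psi_i)_{Q_i}\bigr)\,dx,
\end{align*}
where the supremum runs over all interior pairwise disjoint collections $\{Q_i\}$, all nonnegative sequences $\{c_i\}$ with $\|(c_i)\|_{\ell^{q'}}\le1$, and all measurable $\{\psi_i\}$ with $|\psi_i|\le1$ on $Q_i$; this rewriting (obtained from $\int_{Q_i}(v-v_{Q_i})\psi_i = \int_{Q_i}v\,(\psi_i-(\psi_i)_{Q_i})$ and from the $\ell^q$--$\ell^{q'}$ duality) turns the oscillation seminorm into a single linear pairing, which is what makes the linearity of $T$ available.

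Having fixed $u\in\mathcal{F}(Q_0)$ written as $u=\sum_j b_j e^{i\theta_j}\mathbf{1}_{E_j}$, together with a partition $\{Q_i\}$ and dual data $\{c_i\}$ with $\|(c_i)\|_{\ell^{q_t'}}\le1$ and $\{\psi_i\}$ with $|\psi_i|\le1$ that nearly extremize the dual description for $[Tu]_{N^{(q_t,\mu)}(Q_0)}$, I would set up three analytic families in the strip $S=\{z:\,0\le\mathrm{Re}\,z\le1\}$:
\begin{align*}
u_z := \sum_j b_j^{P(z)}e^{i\theta_j}\mathbf{1}_{E_j}, \qquad
c_i(z) := |c_i|^{Q(z)}, \qquad
W_i(z) := |Q_i|^{\alpha(z)},
\end{align*}
where $P(z) := p_t\bigl(\tfrac{1-z}{p_1}+\tfrac{z}{p_2}\bigr)$, $Q(z) := q_t'\bigl(\tfrac{1-z}{q_1'}+\tfrac{z}{q_2'}\bigr)$, and $\alpha(z) := (1-z)\tfrac{1-q_1-\mu_1}{q_1}+z\tfrac{1-q_2-\mu_2}{q_2}$ is the affine interpolant arranged so that $\alpha(t)=(1-q_t-\mu)/q_t$ (this forces the indicial relation $\mu/q_t=(1-t)\mu_1/q_1+t\mu_2/q_2$, which is the natural reading of the identity displayed in the theorem). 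Then I would define
\begin{align*}
F(z) := \sum_i c_i(z)\,W_i(z)\int_{Q_i} Tu_z(x)\,\bigl(\psi_i(x)-(\psi_i)_{Q_i}\bigr)\,dx.
\end{align*}
Since $u$ is simple and the partition may be truncated to finitely many cubes at the cost of an arbitrarily small error, $F$ is entire in $z$, bounded on $S$, and satisfies $F(t)=\sum_i c_i\,|Q_i|^{(1-q_t-\mu)/q_t}\int_{Q_i}(Tu-(Tu)_{Q_i})\psi_i\,dx$, which is exactly the quantity we need to bound.

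On the line $\mathrm{Re}\,z=0$ one computes $|u_z|^{p_1}=\sum_j b_j^{p_t}\mathbf{1}_{E_j}$, so $\|u_z\|_{L^{p_1}(Q_0)}=\|u\|_{L^{p_t}(Q_0)}^{p_t/p_1}$; likewise $\|(|c_i(z)|)\|_{\ell^{q_1'}}=\|(c_i)\|_{\ell^{q_t'}}^{q_t'/q_1'}\le1$ and $|W_i(z)|=|Q_i|^{(1-q_1-\mu_1)/q_1}$. H\"older in $\ell^{q_1'}$--$\ell^{q_1}$ followed by the hypothesis that $T$ is of strong type $N[p_1,(q_1,\mu_1)]$ then yields $|F(z)|\le K_1\|u\|_{L^{p_t}(Q_0)}^{p_t/p_1}$, and the symmetric computation on $\mathrm{Re}\,z=1$ gives $|F(z)|\le K_2\|u\|_{L^{p_t}(Q_0)}^{p_t/p_2}$. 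Hadamard's three-lines lemma then produces
\begin{align*}
|F(t)|\le K_1^{1-t}K_2^t\,\|u\|_{L^{p_t}(Q_0)}^{(1-t)p_t/p_1+tp_t/p_2} = K_1^{1-t}K_2^t\,\|u\|_{L^{p_t}(Q_0)},
\end{align*}
since the exponent collapses to $p_t\cdot(1/p_t)=1$. Taking the supremum over the partition and over $\{c_i\},\{\psi_i\}$ then completes the argument. The limit case $p_1=\infty$, $1/q_1=\mu_1=0$ is recovered by the usual modification of the three-lines lemma, replacing the strong type $p_1$ bound by $\|u_z\|_{L^\infty}\le\max_j b_j^{\mathrm{Re}\,P(z)}$ estimates on $\mathrm{Re}\,z=0$.

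The main obstacle is neither the endpoint estimates nor the three-lines step, both of which are mechanical once the setup is correct; rather, it is the bookkeeping needed to make the analytic family genuinely linearize the problem. One has to be careful that the ``subtract the mean'' operation remains compatible with the analytic extension (this is automatic because it is linear in $\psi_i$ and $\psi_i$ can be kept fixed in $z$), that $u_z$ remains in $\mathcal{F}(Q_0)$ so that $Tu_z$ is defined, that $F$ is bounded on the closed strip uniformly in $|\mathrm{Im}\,z|$ (which is why truncating to a finite partition matters), and that the affine interpolation $\alpha(z)$ of the weight exponents produces the correct value $(1-q_t-\mu)/q_t$ at $z=t$ --- this last condition is precisely the parameter identity for $\mu$ stated in the theorem.
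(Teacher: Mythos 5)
First, a point of context: the survey itself gives no proof of this statement --- it is quoted from \cite{S65}, and in Question \ref{openQ-interpolation} the authors explicitly flag the original argument as problematic (the auxiliary function $\widetilde v$ there must simultaneously be of the analytic-family form \eqref{interpolation-2} generated by a simple function and realize the sign condition \eqref{interpolation-1} for $T(\widetilde u)-(T\widetilde u)_{Q_i}$ on every $Q_i$, which cannot be guaranteed), leaving open whether the theorem is even true. So the comparison is with Stampacchia's argument, and your proposal is a genuinely different route which, as far as I can check, is sound and sidesteps exactly the flaw the survey identifies. The decisive structural observation is that in $[\cdot]_{N^{(q,\mu)}(Q_0)}$ the inner quantity is always the $L^1$ mean oscillation $\int_{Q_i}|v-v_{Q_i}|\,dx$, independently of $q$ and $\mu$; only the outer $\ell^q$ exponent and the weight power vary. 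Hence, after your dualization, the functions $\psi_i$ sit in the $L^\infty$ unit ball on every line of the strip and never need to be deformed, to be simple, or to belong to an analytic family --- the analytic dependence is carried entirely by the scalars $c_i^{Q(z)}$, the weights $|Q_i|^{\alpha(z)}$, and the simple function $u_z$. Your endpoint computations ($\mathrm{Re}\,P(iy)=p_t/p_1$, $\mathrm{Re}\,Q(iy)=q_t'/q_1'$, $\mathrm{Re}\,\alpha(iy)=(1-q_1-\mu_1)/q_1$, then H\"older in $\ell^{q_1'}$--$\ell^{q_1}$ and the strong type $N[p_1,(q_1,\mu_1)]$ hypothesis applied to the simple function $u_{iy}$) are correct, and $F(t)$ is precisely the dual pairing whose supremum over partitions, $\{c_i\}$, and $\{\psi_i\}$ recovers $[Tu]_{N^{(q_t,\mu)}(Q_0)}$.

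Before treating this as a resolution of Question \ref{openQ-interpolation}, you should nail down four points. (1) What your choice of $\alpha(z)$ actually yields is the relation $\mu/q_t=(1-t)\mu_1/q_1+t\mu_2/q_2$; the identity displayed in the statement above is garbled, so state explicitly that you prove the scaling-consistent version (which is the one intended in \cite{S65}). (2) The endpoint hypotheses must be applicable to the complex-valued simple functions $u_{iy}$ and $u_{1+iy}$; if $T$ and the strong-type bounds are formulated for real simple functions, extend by complex linearity (at the cost of a harmless constant, which would then appear in the conclusion). (3) You need $Tu_z\in L^1(Q_0)$ so that the means $(Tu_z)_{Q_i}$ exist; this follows since $Tu_z$ is a finite linear combination of $T\mathbf{1}_{E_j}\in N^{(q_1,\mu_1)}(Q_0)\subset L^1(Q_0)$, but it should be said, together with the reduction to finitely many cubes (which is what makes $F$ bounded on the closed strip, and is legitimate because the seminorm is the supremum over finite subfamilies). (4) The limit case $p_1=\infty$, $1/q_1=\mu_1=0$ is a BMO-type endpoint: it does go through with $q_1'=1$ ($\ell^1$--$\ell^\infty$ H\"older) and $\|u_{iy}\|_{L^\infty(Q_0)}\le\|u\|_{L^{p_t}(Q_0)}^{0}=1$, but it deserves to be written out rather than waved at. With these details supplied, your argument would affirm the theorem and answer the question the survey raises, so it is worth writing up carefully.
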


\begin{question}\label{openQ-interpolation}
In the proof of Theorem \ref{interpolation-thm}, lines 1-3 of \cite[p.\,454]{S65},
the author applied \cite[Lemma 2.3]{S65} with
$$F[u,v,S]:=\sum_i\int_{Q_i}\lf[u(y)-u_{Q_i}\r]v\,dy |Q_i|^{-\lambda/p_t}$$
replaced by
$$\Phi(S,t):=\sum_i\int_{Q_i}\lf[ T(\widetilde{u}(y,t))
-\lf(T\widetilde{u}\r)_{Q_i}\r]\widetilde{v}(y,t)\,dy
\lf| Q_i\r|^{-\mu(t)\beta(t)}.$$
Therefore, by the proof of \cite[Lemma 2.3]{S65},
we need to choose a function $\widetilde{v}$
satisfying that, for any $i$, there exists some constant $c_i$ such that
\begin{align}\label{interpolation-1}
\widetilde{v}(y,t)=c_i\overline{\lf\{\mathrm{sign} \lf[T(\widetilde{u}(y,t))
-\lf(T\widetilde{u}\r)_{Q_i}\r]\r\}}
\end{align}
in $Q_i$.
Meanwhile, from the definition of $\widetilde{v}$
(see line -3 of \cite[p.\,452]{S65}),
it follows that
\begin{align}\label{interpolation-2}
  \widetilde{v}(y,t)=|v(y)|^{[1-\beta(t)]q_t'}e^{i\mathrm{arg}v(y)}
\end{align}
for some simple function $v\in\mathcal{F}(Q_0)$, where $1/q_t+1/q_t'=1$.
To sum up, we need to find a simple function $v$ such that
both \eqref{interpolation-1} and \eqref{interpolation-2} hold true,
which seems unreasonable because
$T\widetilde{u}$ may behave so bad even though both $u$ and
$\widetilde{u}$ are simple functions.
Thus, the proof of Theorem \ref{interpolation-thm} in \cite{S65}
seems problematic.
It is interesting to check whether or not
Theorem \ref{interpolation-thm} is really true.
\end{question}

\subsection{Localized John--Nirenberg--Campanato spaces}\label{Subsec-LJNC}

As a combination of the JNC space and
the localized BMO space in Subsection \ref{Subsec-BMO},
Sun et al. \cite{SXY19} studied the localized
John--Nirenberg--Campanato space,
which is new even in a special case: localized John--Nirenberg spaces.
Now, we recall the definition of the localized Campanato space,
which was first introduced by Goldberg in \cite[Theorem 5]{G79}.
In what follows, for any $s\in\zz_{+}$ and $c_0\in(0,\ell(\cx))$, let
$$ P^{(s)}_{Q,c_0}(f):=
\begin{cases}
P^{(s)}_{Q}(f),&\ell(Q)< c_0,\\
0,&\ell(Q)\ge c_0,
\end{cases}$$
where $P^{(s)}_{Q}(f)$ is as in \eqref{PQsf}.

\begin{definition}
\label{def.cam}
Let $q\in[1,\fz)$, $s\in\zz_{+}$, and $\az\in[0,\fz)$.
Fix $c_0\in (0,\ell(\cx))$.
The \emph{local Campanato space}
$\Lambda_{(\az,q,s)}(\cx)$ is defined to be the set of all functions
$f \in L^q_{\loc}(\cx)$ such that
$$\|f\|_{\Lambda_{(\az,q,s)}(\cx)}
:=\sup\lf|Q\r|^{-\az}\lf[\fint_Q\lf|f(x)
-P^{(s)}_{Q,c_0}(f)(x)\r|^q\,dx\r]^{\frac{1}{q}}<\fz,$$
where the supremum is taken over all cubes $Q$ of $\cx$.
\end{definition}

Fix the constant $c_0\in(0,\ell(\cx))$.
In Definition \ref{JNpqa},
if $P^{(s)}_{Q_j}(f)$ were replaced by $P^{(s)}_{Q_j,c_0}(f)$,
then we obtain the following localized John--Nirenberg--Campanato space.
As was mentioned in Remark \ref{rem-ainrr},
we naturally extend the ranges of $\az$ and $p$ similarly to
Subsection \ref{Subsec-JNC}; we omit some identical proofs.

\begin{definition}\label{def.jnpqs}
Let $p$, $q\in[1,\fz)$, $s\in\zz_{+}$, and $\az\in\rr$.
Fix the constant $c_0\in(0,\ell(\cx))$.
The \emph{local John--Nirenberg--Campanato space}
$jn_{(p,q,s)_{\az,c_0}}(\cx)$ is defined to be the set of all functions $f\in L^q_{\loc}(\cx)$ such that
$$\|f\|_{jn_{(p,q,s)_{\az,c_0}}(\cx)}
:=\sup\lf[\sum_{j\in\nn}\lf|Q_j\r|\lf\{\lf|Q_j\r|^{-\az}
\lf[\fint_{Q_j}\lf|f(x)-P^{(s)}_{Q_j,c_0}(f)(x)\r|^q\,dx\r]^{\frac{1}{q}}\r\}^p\r]^{\frac{1}{p}}$$
is finite,
where the supremum is taken over all collections of interior pairwise disjoint cubes
$\{Q_j\}_{j\in\nn}$ of $\cx$.
Moreover, the \emph{dual space $(jn_{(p,q,s)_{\az,c_0}}(\cx))^\ast$}
of $jn_{(p,q,s)_{\az,c_0}}(\cx)$
is defined to be the set of all continuous linear functionals on
$jn_{(p,q,s)_{\az,c_0}}(\cx)$
equipped with the weak-$\ast$ topology.
\end{definition}

\begin{remark}\label{quotient}
Notice that the Campanato space and the John--Nirenberg--Campanato space
are quotient spaces, while their localized versions are not.
\end{remark}

Also, in \cite[Proposition 2.5]{SXY19}, Sun et al. showed that
$jn_{(p,q,s)_{\az,c_0}}(\cx)$ in Definition \ref{def.jnpqs}
is independent of the choice of the positive constant $c_0$.
Therefore, in what follows, we write $$jn_{(p,q,s)_{\az}}(\cx):=jn_{(p,q,s)_{\az,c_0}}(\cx).$$
Especially, if $q=1$ and $s=0=\az$, then $jn_{(p,q,s)_{\az}}(\cx)$
becomes the \emph{local  John--Nirenberg space}
$$jn_{p}(\cx):=jn_{(p,1,0)_0}(\cx).$$
The following Banach structure of $jn_{(p,q,s)_{\az}}(\cx)$
is just \cite[Proposition 2.7]{SXY19}.
\begin{proposition}\label{prop.jncom}
Let $p$, $q\in[1,\fz)$, $s\in\zz_{+}$, and $\az\in\rr$.
Then $jn_{(p,q,s)_{\az}}(\cx)$ is a Banach space.
\end{proposition}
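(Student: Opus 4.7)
The plan is to verify the three norm axioms and then establish completeness via a local $L^q$ argument. The crucial structural feature to exploit is that for any cube $Q\subset\cx$ with $\ell(Q)\ge c_0$ the localized projection $P^{(s)}_{Q,c_0}$ is identically zero, so the norm controls $\fint_Q|f|^q\,dx$ directly on such cubes; this is precisely what makes $jn_{(p,q,s)_{\az}}(\cx)$ a genuine function space rather than a quotient (cf.\ Remark~\ref{quotient}).

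First, to check the norm axioms, I set
$$a_j^{(f)}:=|Q_j|^{1/p-\az}\left[\fint_{Q_j}\left|f-P^{(s)}_{Q_j,c_0}(f)\right|^q\,dx\right]^{1/q},$$
so that $\|f\|_{jn_{(p,q,s)_{\az}}(\cx)}=\sup(\sum_j[a_j^{(f)}]^p)^{1/p}$. Homogeneity is immediate from the linearity of $f\mapsto P^{(s)}_{Q_j,c_0}(f)$; the triangle inequality follows from that same linearity combined with Minkowski's inequality in $L^q(Q_j,|Q_j|^{-1}dx)$ to yield $a_j^{(f+g)}\le a_j^{(f)}+a_j^{(g)}$, then Minkowski in $\ell^p$, and finally the supremum over collections. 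For positive definiteness, if $\|f\|_{jn_{(p,q,s)_{\az}}(\cx)}=0$, then testing against the single-cube collection $\{Q\}$ for any cube $Q\subset\cx$ with $\ell(Q)\ge c_0$ (such cubes exist since $c_0<\ell(\cx)$) forces $\fint_Q|f|^q\,dx=0$, and exhausting $\cx$ by an increasing sequence of such cubes gives $f=0$ a.e.\ on $\cx$.

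For completeness, let $\{f_k\}_k$ be Cauchy in $jn_{(p,q,s)_{\az}}(\cx)$. Single-cube testing on any cube $Q\subset\cx$ with $\ell(Q)\ge c_0$ yields
$$\left[\fint_Q|f_k-f_m|^q\,dx\right]^{1/q}\le |Q|^{\az-1/p}\|f_k-f_m\|_{jn_{(p,q,s)_{\az}}(\cx)},$$
so $\{f_k\}_k$ is Cauchy in $L^q(Q)$, and exhausting $\cx$ by such cubes produces some $f\in L^q_{\loc}(\cx)$ with $f_k\to f$ in $L^q$ on each such cube. To show $f\in jn_{(p,q,s)_{\az}}(\cx)$ and $f_k\to f$ in norm, I would fix an arbitrary collection $\{Q_j\}_{j\in\nn}$ of interior pairwise disjoint cubes of $\cx$ and a truncation level $N\in\nn$. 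For each $k$ (respectively each pair $k,m\ge K$ with $K$ chosen from the Cauchy condition for a given $\ez>0$) the truncated partial sum $\sum_{j=1}^N[a_j^{(f_k)}]^p$ (respectively $\sum_{j=1}^N[a_j^{(f_k-f_m)}]^p$) is bounded by $\|f_k\|^p_{jn_{(p,q,s)_{\az}}(\cx)}$ (respectively $\ez^p$), and the local $L^q$ convergence combined with the $L^q(Q_j)$-boundedness of $P^{(s)}_{Q_j}$ supplied by \eqref{Cs} permits term-by-term passage to the limit in $m$ inside this finite sum. Letting $N\to\fz$ and taking the supremum over all collections then delivers both $\|f\|_{jn_{(p,q,s)_{\az}}(\cx)}\le\sup_k\|f_k\|_{jn_{(p,q,s)_{\az}}(\cx)}<\fz$ and $\|f_k-f\|_{jn_{(p,q,s)_{\az}}(\cx)}\le\ez$ for all $k\ge K$.

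The main obstacle I anticipate is justifying the interchange of the limit in $k$ (or $m$) with the possibly infinite sum and the supremum over collections that sit inside the norm. The clean workaround is always to truncate to a finite partial sum first — where term-by-term $L^q$ convergence on each $Q_j$ together with \eqref{Cs} is all that is needed — and only after passing to the limit then let $N\to\fz$ and take the outer supremum; no uniform summability is required at the intermediate stage, while the Cauchy property supplies the uniform bound at the end.
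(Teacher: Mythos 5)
Your proposal is correct: the norm axioms follow as you say (positive definiteness coming from the cubes with $\ell(Q)\ge c_0$, on which $P^{(s)}_{Q,c_0}=0$), and the completeness argument via single-cube testing on such large cubes, local $L^q$ limits, the $L^q(Q_j)$-boundedness of $P^{(s)}_{Q_j}$ from \eqref{Cs}, and the finite-truncation passage to the limit before taking $N\to\fz$ and the outer supremum is sound. The survey does not reproduce a proof but cites \cite[Proposition 2.7]{SXY19}, and your argument is essentially the standard one used there, so no further comparison is needed.
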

In what follows, the \emph{space $jn_{(p,q,s)_{\az}}(Q_0)/\mathcal{P}_s(Q_0)$} is defined by setting
$$jn_{(p,q,s)_{\az}}(Q_0)/\mathcal{P}_s(Q_0):=\lf\{f\in jn_{(p,q,s)_{\az}}(Q_0):\,\, \|f\|_{jn_{(p,q,s)_{\az}}(Q_0)/\mathcal{P}_s(Q_0)}<\fz\r\},$$
where
$$\|f\|_{jn_{(p,q,s)_{\az}}(Q_0)/\mathcal{P}_s(Q_0)}:=\inf_{a\in \mathcal{P}_s(Q_0)}\|f+a\|_{jn_{(p,q,s)_\az}(Q_0)};$$
the \emph{space $JN_{(p,q,s)_{\az}}(\cx)\cap L^p(\cx)$}
is defined by setting
$$JN_{(p,q,s)_{\az}}(\cx)\cap L^p(\cx)
:=\lf\{f\in L^1_\loc(\cx):\
\|f\|_{JN_{(p,q,s)_{\az}}(\cx)\cap L^p(\cx)}<\fz\r\},$$
where
$$\|f\|_{JN_{(p,q,s)_{\az}}(\cx)\cap L^p(\cx)}
:=\max\lf\{\|f\|_{JN_{(p,q,s)_{\az}}(\cx)},\|f\|_{L^p(\cx)}\r\}.$$
Moreover, the relations between $jn_{(p,q,s)_{\az}}(\cx)$
and $JN_{(p,q,s)_{\az}}(\cx)$,
namely, the following Propositions \ref{rem.jnandJN} and \ref{p.a},
are just \cite[Propositions 2.9 and 2.10]{SXY19}, respectively.

\begin{proposition}\label{rem.jnandJN}
Let $p$, $q\in[1,\fz)$, $s\in\zz_{+}$, and $\az\in\rr$. Then
\begin{itemize}
\item[\rm{(i)}]
$jn_{(p,q,s)_{\az}}(\cx)\subset JN_{(p,q,s)_{\az}}(\cx)$;
\item[\rm{(ii)}] if $Q_0$ is a cube of $\rn$, then
$JN_{(p,q,s)_{\az}}(Q_0)=jn_{(p,q,s)_{\az}}(Q_0)/\mathcal{P}_s(Q_0)$ with equivalent norms;
\item[\rm{(iii)}]
$L^p(\rr)\subsetneqq jn_p(\rr)\subsetneqq JN_p(\rr)$ if $p\in(1,\fz)$.
\end{itemize}
\end{proposition}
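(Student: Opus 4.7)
The plan is to treat the three assertions in order, using as a common tool the dichotomy that splits any collection of interior pairwise disjoint cubes $\{Q_j\}_j$ into the ``small'' subfamily with $\ell(Q_j)<c_0$ and the ``large'' subfamily with $\ell(Q_j)\ge c_0$. For (i), on every small cube one has $P^{(s)}_{Q_j,c_0}(f)=P^{(s)}_{Q_j}(f)$, so the $jn$-integrand and the $JN$-integrand coincide pointwise; on every large cube, $P^{(s)}_{Q_j,c_0}(f)=0$, and the pointwise bound \eqref{Cs} together with Jensen's inequality gives
$$\lf[\fint_{Q_j}\lf|f-P^{(s)}_{Q_j}(f)\r|^q\,dx\r]^{1/q}\le\lf[1+C_{(s)}\r]\lf[\fint_{Q_j}|f|^q\,dx\r]^{1/q}.$$
Adding the small and large contributions then yields $\|f\|_{JN_{(p,q,s)_{\az}}(\cx)}\le[1+C_{(s)}]\|f\|_{jn_{(p,q,s)_{\az}}(\cx)}$, proving (i).

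For (ii), the inclusion $jn_{(p,q,s)_{\az}}(Q_0)/\mathcal{P}_s(Q_0)\subset JN_{(p,q,s)_{\az}}(Q_0)$ is immediate from (i) together with the invariance of the $JN$-norm under adding polynomials of degree $\le s$. For the reverse inclusion I propose, given $f\in JN_{(p,q,s)_{\az}}(Q_0)$, the canonical representative $g:=f-P^{(s)}_{Q_0}(f)$, and then verify $g\in jn_{(p,q,s)_{\az}}(Q_0)$ via the same small/large split. On small cubes, the identity $P^{(s)}_{Q_j}(P)=P|_{Q_j}$ for every $P\in\mathcal{P}_s$ gives $g-P^{(s)}_{Q_j}(g)=f-P^{(s)}_{Q_j}(f)$ on $Q_j$, so this portion is dominated directly by $\|f\|_{JN_{(p,q,s)_{\az}}(Q_0)}^p$. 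On large cubes $P^{(s)}_{Q_j,c_0}(g)=0$, so the integrand becomes $|g|=|f-P^{(s)}_{Q_0}(f)|$; I would reduce to $q=1$ via Proposition \ref{JNpqa=JNp1a}, then apply Theorem \ref{JohnNirenberg} to place $g$ in $L^{p,\infty}(Q_0)$ with norm controlled by $|Q_0|^{\az}\|f\|_{JN_{(p,1,s)_{\az}}(Q_0)}$, and finally use the standard weak-strong embedding $L^{p,\infty}(Q_0)\subset L^q(Q_0)$ for $q<p$ on a bounded domain. Because every large cube satisfies $|Q_j|\ge c_0^n$ and any disjoint family contains at most $(\ell(Q_0)/c_0)^n$ members, the large-cube total is finite and controlled by a constant, depending on $n$, $p$, $q$, $s$, $\az$, $c_0$, and $\ell(Q_0)$, times $\|f\|_{JN_{(p,q,s)_{\az}}(Q_0)}^p$.

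For (iii), the containment $jn_p(\rr)\subset JN_p(\rr)$ is the special case of (i) with $q=1$, $s=0$, and $\az=0$. The inclusion $L^p(\rr)\subset jn_p(\rr)$ follows from the same small/large dichotomy: on small cubes $|Q_j|[\fint_{Q_j}|f-f_{Q_j}|\,dx]^p\le 2^p\int_{Q_j}|f|^p\,dx$ by the triangle inequality and Jensen, while on large cubes $|Q_j|[\fint_{Q_j}|f|\,dx]^p\le\int_{Q_j}|f|^p\,dx$ by Jensen alone, and summing over disjoint $Q_j$ yields $\|f\|_{jn_p(\rr)}\le 2\|f\|_{L^p(\rr)}$. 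The strict inclusion $jn_p(\rr)\subsetneqq JN_p(\rr)$ is witnessed by any nonzero constant $f\equiv c$: since $f-f_{Q_j}\equiv 0$ its $JN_p$-``norm'' equals zero, but any infinite disjoint family of cubes of side length $\ge c_0$ forces $\|f\|_{jn_p(\rr)}=\infty$. The strict inclusion $L^p(\rr)\subsetneqq jn_p(\rr)$ is more delicate and requires an explicit example; I would extend the Dafni et al.\ fractal function $h\in JN_p([0,1])\setminus L^p([0,1])$ from \cite{DHKY18} by zero to $\rr$, then verify that straddling and far-out cubes contribute only finitely by using $h\in L^1([0,1])$, which makes the large-cube contributions decay like $|Q_j|^{1-p}$ and hence summable for $p>1$.

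The principal technical hurdle will be the large-cube estimate in (ii), where the absence of any polynomial subtraction on the $jn$-side must be compensated by the \emph{a priori} integrability of $f-P^{(s)}_{Q_0}(f)$ provided by the John--Nirenberg-type inequality. A secondary obstacle is checking the boundary behaviour of the extended Dafni et al.\ fractal to secure the strict inclusion $L^p(\rr)\subsetneqq jn_p(\rr)$.
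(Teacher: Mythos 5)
Your part (i), the forward direction of (ii), the embedding $L^p(\rr)\subset jn_p(\rr)$, and the constant-function witness for $jn_p(\rr)\subsetneqq JN_p(\rr)$ are all correct (the last is indeed the only available witness: the survey records as open whether the inclusion stays strict modulo constants). The first genuine gap is in the reverse direction of (ii). The proposition is asserted for all $p,q\in[1,\fz)$ and $\az\in\rr$, but your large-cube estimate for the representative $g:=f-P^{(s)}_{Q_0}(f)$ goes through Proposition \ref{JNpqa=JNp1a} and Theorem \ref{JohnNirenberg}, both of which require $\az\in[0,\fz)$ and $p\in(1,\fz)$, and then through the embedding $L^{p,\fz}(Q_0)\subset L^q(Q_0)$, which requires $q<p$; so your argument says nothing when $q\ge p$, when $p=1$, or when $\az<0$, all of which are covered by the statement. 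The detour is also unnecessary: testing the $JN_{(p,q,s)_\az}(Q_0)$ norm on the single-cube collection $\{Q_0\}$ gives at once
$$\lf[\fint_{Q_0}\lf|f(x)-P^{(s)}_{Q_0}(f)(x)\r|^q\,dx\r]^{\frac1q}\le|Q_0|^{\az-\frac1p}\|f\|_{JN_{(p,q,s)_\az}(Q_0)},$$
so $g\in L^q(Q_0)$ with controlled norm; since an interior pairwise disjoint family contains at most $(\ell(Q_0)/c_0)^n$ cubes of side length $\ge c_0$, each of measure between $c_0^n$ and $|Q_0|$, the large-cube portion of $\|g\|_{jn_{(p,q,s)_\az}(Q_0)}$ is bounded by a constant depending only on $n,p,q,\az,c_0,\ell(Q_0)$ times $\|f\|_{JN_{(p,q,s)_\az}(Q_0)}$, for every $p,q\in[1,\fz)$ and $\az\in\rr$. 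Your small-cube identity $g-P^{(s)}_{Q_j}(g)=f-P^{(s)}_{Q_j}(f)$ is fine. (The survey itself only cites [SXY19] for this proposition, so I am judging the argument on its own terms.)

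The second gap is the one you flag yourself in (iii): the strictness $L^p(\rr)\subsetneqq jn_p(\rr)$ is not proved, because you never verify that the zero extension $\tilde h$ of the Dafni et al.\ function $h\in JN_p([0,1])\setminus L^p([0,1])$ belongs to $jn_p(\rr)$; the problematic terms are the small intervals straddling the endpoints of $[0,1]$, which are not controlled by $\|h\|_{JN_p([0,1])}$. This can be closed without any structural knowledge of $h$: in any interior pairwise disjoint family at most two small intervals straddle $\{0,1\}$, their oscillation is at most $2\fint_Q|\tilde h|$, and the John--Nirenberg embedding $JN_p([0,1])\subset L^{p,\fz}([0,1])$ from Theorem \ref{JN61Lem3} yields $\int_{Q\cap[0,1]}|h|\ls \ell(Q)^{1/p'}$, so each such term $|Q|^{1-p}\lf(\int_{Q\cap[0,1]}|h|\r)^p$ is uniformly bounded. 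Likewise only finitely many (about $c_0^{-1}$) disjoint intervals of length at least $c_0$ can meet $[0,1]$, each contributing at most $c_0^{1-p}\|h\|_{L^1([0,1])}^p$; note that your phrase about the terms ``decaying like $|Q_j|^{1-p}$ and hence being summable'' is not the right mechanism --- the sum is finite because only boundedly many terms are nonzero and each is bounded. With these two observations your construction does work, but as written the key strict inclusion remains unverified.
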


\begin{proposition}\label{p.a}
Let $p\in[1,\fz)$, $q\in[1,p]$, $s\in\zz_{+}$, and $\az\in(0,\fz)$. Then
\begin{align}\label{jnJNLp}
jn_{(p,q,s)_{\az}}(\cx)=\lf[JN_{(p,q,s)_{\az}}(\cx)\cap L^p(\cx)\r]
\end{align}
with equivalent norms.
\end{proposition}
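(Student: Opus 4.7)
The plan is to establish the identity by proving both inclusions with matching norm inequalities, so that $\|\cdot\|_{jn_{(p,q,s)_{\az}}(\cx)}\sim\max\{\|\cdot\|_{JN_{(p,q,s)_{\az}}(\cx)},\,\|\cdot\|_{L^p(\cx)}\}$. For the inclusion $JN_{(p,q,s)_{\az}}(\cx)\cap L^p(\cx)\subset jn_{(p,q,s)_{\az}}(\cx)$, I would fix $f$ in the intersection, take any collection $\{Q_j\}_j$ of interior pairwise disjoint cubes of $\cx$, and split the index set into $I_1:=\{j:\ell(Q_j)<c_0\}$ and $I_2:=\{j:\ell(Q_j)\ge c_0\}$. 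On $I_1$, $P^{(s)}_{Q_j,c_0}(f)=P^{(s)}_{Q_j}(f)$, so the $I_1$-sum is automatically dominated by $\|f\|_{JN_{(p,q,s)_{\az}}(\cx)}^p$. On $I_2$, $P^{(s)}_{Q_j,c_0}(f)=0$; using the hypothesis $\az>0$ together with $|Q_j|\ge c_0^n$ yields $|Q_j|^{-p\az}\le c_0^{-np\az}$, while $q\le p$ and Jensen give $(\fint_{Q_j}|f|^q)^{p/q}\le\fint_{Q_j}|f|^p$. Combining these bounds and summing over $I_2$ controls that part by $c_0^{-np\az}\|f\|_{L^p(\cx)}^p$, which after taking the supremum over $\{Q_j\}_j$ closes this direction.

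For the reverse inclusion, I would prove $\|f\|_{JN_{(p,q,s)_{\az}}(\cx)}\lesssim\|f\|_{jn_{(p,q,s)_{\az}}(\cx)}$ by term-by-term comparison in any partition: on small cubes the integrands agree, while on large cubes the triangle inequality together with the pointwise control $|P^{(s)}_{Q_j}(f)|\le C_{(s)}\fint_{Q_j}|f|$ from \eqref{Cs} and H\"older give $(\fint_{Q_j}|f-P^{(s)}_{Q_j}(f)|^q)^{1/q}\le(1+C_{(s)})(\fint_{Q_j}|f|^q)^{1/q}$, so the JN-sum is dominated by $(1+C_{(s)})^p$ times the jn-sum. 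This gives $\|f\|_{JN_{(p,q,s)_{\az}}(\cx)}\le(1+C_{(s)})\|f\|_{jn_{(p,q,s)_{\az}}(\cx)}$.

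The $L^p$-bound comes from a special test configuration: tile $\cx$ with congruent cubes $\{Q_j\}_j$ of side length exactly $c_0$ (modifying the construction slightly when $\cx=Q_0$ has side not commensurable with $c_0$). Then $P^{(s)}_{Q_j,c_0}(f)=0$ on every tile, so the jn-definition delivers $c_0^{n(1-p\az)}\sum_j(\fint_{Q_j}|f|^q)^{p/q}\le\|f\|_{jn_{(p,q,s)_{\az}}(\cx)}^p$. In the case $q=p$ this simplifies to $c_0^{-np\az}\|f\|_{L^p(\cx)}^p\le\|f\|_{jn_{(p,q,s)_{\az}}(\cx)}^p$, which is the desired bound.

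The genuinely hard step is the $L^p$-bound when $q<p$, since the tiling argument alone yields only $L^q$-mean control and Jensen runs the wrong way. The additional information is extracted by \emph{refining} each tile $Q_j$: any simultaneous subdivision $\{R_{j,k}\}_k$ of each $Q_j$ into subcubes of side less than $c_0$ assembles into a valid test partition of $\cx$, and the supremum over these refinements decouples tile-by-tile, producing a summable Carleson-type estimate $\sum_j \|f|_{Q_j}\|_{JN_{(p,q,s)_{\az}}(Q_j)}^p\le\|f\|_{jn_{(p,q,s)_{\az}}(\cx)}^p$. The remaining task — to fuse this tile-wise JN-control with the tile-wise $L^q$-mean control into strong $L^p$-summability — I plan to carry out by invoking the local John--Nirenberg inequality (Theorem \ref{JohnNirenberg}) together with Proposition \ref{JNpqa=JNp1a} on each $Q_j$, using the polynomial bound from \eqref{Cs} to transfer the $L^q$-data into an $\ell^p$-summable $L^\infty$-estimate for the projections $P^{(s)}_{Q_j}(f)$, and exploiting the positivity $\az>0$ (which forces favorable scaling in $|Q_j|^{-p\az}$) to combine these ingredients into the estimate $\|f\|_{L^p(\cx)}\lesssim_{c_0,n,p,\az,s}\|f\|_{jn_{(p,q,s)_{\az}}(\cx)}$. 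This fusion step is the principal obstacle and the place where all the structural features of $jn$ — large-scale $L^q$-averages, refined JN-oscillations, and the weight $|Q|^{-p\az}$ — are used simultaneously.
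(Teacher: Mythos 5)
Your first steps are essentially correct: the small/large splitting with Jensen and $|Q_j|\ge c_0^n$ gives $JN_{(p,q,s)_{\az}}(\cx)\cap L^p(\cx)\subset jn_{(p,q,s)_{\az}}(\cx)$; the term-by-term comparison using \eqref{Cs} and H\"older gives $\|f\|_{JN_{(p,q,s)_{\az}}(\cx)}\le(1+C_{(s)})\|f\|_{jn_{(p,q,s)_{\az}}(\cx)}$; and the tiling by cubes of side length $c_0$ gives the $L^p$ bound when $q=p$, as well as (for all $q$) the $\ell^p$-summable control of the projections $P^{(s)}_{Q_j}(f)$ and the Carleson-type estimate $\sum_j\|f\|^p_{JN_{(p,q,s)_{\az}}(Q_j)}\ls\|f\|^p_{jn_{(p,q,s)_{\az}}(\cx)}$ (with the harmless extra factor $(1+C_{(s)})^p$ coming from the tiles themselves, whose side length equals $c_0$). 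Note that the survey itself does not prove this proposition but cites \cite[Proposition 2.10]{SXY19}, so there is no internal proof to compare against; the assessment below is of your argument on its own terms.

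The decisive step, however --- $\|f\|_{L^p(\cx)}\ls\|f\|_{jn_{(p,q,s)_{\az}}(\cx)}$ when $q<p$ --- is precisely what you leave open, and the toolkit you name does not close it as described. Theorem \ref{JohnNirenberg} on a tile $Q_j$ yields only $\|f-P^{(s)}_{Q_j}(f)\|_{L^{p,\fz}(Q_j)}\ls|Q_j|^{\az}\|f\|_{JN_{(p,1,s)_{\az}}(Q_j)}$, and a weak $L^p$ bound combined with an $L^q$ average, $q<p$, never yields strong $L^p$, even on a cube of finite measure (interpolation only reaches $L^r$ with $r<p$). That this is not mere bookkeeping is shown by the case $\az=0$: one has $JN_p(Q_0)\not\subset L^p(Q_0)$ by \cite{DHKY18}, and correspondingly $L^p(\rr)\subsetneqq jn_p(\rr)$ in Proposition \ref{rem.jnandJN}(iii); hence any proof must exploit $\az>0$ at small scales inside each tile, not merely through the fixed factor $|Q_j|^{-p\az}=c_0^{-np\az}$ that your sketch invokes. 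Two ways to supply the missing mechanism: (a) a Campanato-type telescoping along the dyadic subcubes $Q_j=R_0\supset R_1\supset\cdots\ni x$, using $|P^{(s)}_{R_{k+1}}(f)(x)-P^{(s)}_{R_k}(f)(x)|\le C|R_k|^{\az}a_k(x)$ with $\|a_k\|_{L^p(\cx)}\ls\|f\|_{jn_{(p,q,s)_{\az}}(\cx)}$ uniformly in $k$, so that Minkowski's inequality and the geometric series $\sum_{k}2^{-kn\az}$ (this is where $\az>0$ enters) give $\sum_j\|f-P^{(s)}_{Q_j}(f)\|^p_{L^p(Q_j)}\ls\|f\|^p_{jn_{(p,q,s)_{\az}}(\cx)}$; or (b) an exponent-raising embedding: from the single-cube bound $|Q_i|^{-\az}\fint_{Q_i}|f-P^{(s)}_{Q_i}(f)|\le\|f\|_{JN_{(p,1,s)_{\az}}(Q_j)}|Q_i|^{-1/p}$ one deduces $\|f\|_{JN_{(p_1,1,s)_{\az_1}}(Q_j)}\le\|f\|_{JN_{(p,1,s)_{\az}}(Q_j)}$ for $p_1>p$ and $\az_1=\az-\frac1p+\frac1{p_1}\ge0$, and then Theorem \ref{JohnNirenberg} at exponent $p_1$ together with $L^{p_1,\fz}(Q_j)\hookrightarrow L^p(Q_j)$ (finite measure) gives $\|f-P^{(s)}_{Q_j}(f)\|_{L^p(Q_j)}\ls|Q_j|^{\az}\|f\|_{JN_{(p,1,s)_{\az}}(Q_j)}$, which your Carleson estimate then sums over $j$. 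Without one of these ingredients (or an equivalent), your outline stops short of a proof of the hard inclusion.
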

Also, observe that Proposition \ref{p.a} is the counterpart of
\cite[Theorem 4.1]{JSW84} which says that,
for any $\az\in(0,\fz)$, $q\in [1,\fz)$, and $s\in\zz_{+}$,
\begin{align*}
\Lambda_{(\az,q,s)}(\cx)=\lf[\mathcal{C}_{(\az,q,s)}(\cx)\cap L^{\fz}(\cx)\r].
\end{align*}
However, the case $q\in[p,\fz)$ in Proposition \ref{p.a} is unclear so far;
see Question \ref{openQ-loc-q>p} below.

As an application of Propositions \ref{rem.jnandJN}(ii) and \ref{p.a},
we have the following result.
\begin{proposition}\label{JN<Lp}
Let $p\in[1,\fz)$, $q\in[1,p]$, $s\in\zz_{+}$, $\az\in(0,\fz)$,
and $Q_0$ be a cube of $\rn$.  Then
$$JN_{(p,q,s)_{\az}}(Q_0)\subset \lf[L^p(Q_0)/\mathcal{P}_s(Q_0)\r].$$
\end{proposition}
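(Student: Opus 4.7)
The plan is to chain together Propositions \ref{rem.jnandJN}(ii) and \ref{p.a}, both of which are already available in the excerpt. The key observation is that the quotient identification in Proposition \ref{rem.jnandJN}(ii) converts any $f\in JN_{(p,q,s)_{\az}}(Q_0)$ into an element of the localized space $jn_{(p,q,s)_{\az}}(Q_0)$ modulo a polynomial of degree at most $s$, and then Proposition \ref{p.a} embeds the localized space into $L^p(Q_0)$.

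More precisely, given $f\in JN_{(p,q,s)_{\az}}(Q_0)$, I would first invoke Proposition \ref{rem.jnandJN}(ii) to deduce that
$$JN_{(p,q,s)_{\az}}(Q_0)=jn_{(p,q,s)_{\az}}(Q_0)/\mathcal{P}_s(Q_0)$$
with equivalent norms. Therefore there exists some $P\in\mathcal{P}_s(Q_0)$ such that $f+P\in jn_{(p,q,s)_{\az}}(Q_0)$, with a norm estimate $\|f+P\|_{jn_{(p,q,s)_{\az}}(Q_0)}\lesssim\|f\|_{JN_{(p,q,s)_{\az}}(Q_0)}$.

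Next, since the hypotheses $p\in[1,\fz)$, $q\in[1,p]$, $s\in\zz_+$, and $\az\in(0,\fz)$ match those of Proposition \ref{p.a}, I would apply the identification \eqref{jnJNLp} to conclude that $f+P\in L^p(Q_0)$ with
$$\|f+P\|_{L^p(Q_0)}\le\|f+P\|_{JN_{(p,q,s)_{\az}}(Q_0)\cap L^p(Q_0)}\lesssim\|f+P\|_{jn_{(p,q,s)_{\az}}(Q_0)}\lesssim\|f\|_{JN_{(p,q,s)_{\az}}(Q_0)}.$$
Hence $f$ represents an element of $L^p(Q_0)/\mathcal{P}_s(Q_0)$ whose quotient norm is controlled by $\|f\|_{JN_{(p,q,s)_{\az}}(Q_0)}$, which is exactly the claimed inclusion.

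There is essentially no obstacle here beyond verifying that the propositions are being applied in their correct ranges: the quotient identification in Proposition \ref{rem.jnandJN}(ii) is stated for all the relevant parameters, and the hypothesis $\az>0$ (together with $q\le p$) needed to feed into Proposition \ref{p.a} is exactly what is assumed in the present statement. So the proof is really just a one-line combination, and the only thing to double-check is that the polynomial $P$ furnished by the quotient representation in Proposition \ref{rem.jnandJN}(ii) lies in $\mathcal{P}_s(Q_0)$, which it does by construction.
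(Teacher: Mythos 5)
Your proposal is correct and follows essentially the same route as the paper: the paper's proof also just combines Proposition \ref{rem.jnandJN}(ii) with Proposition \ref{p.a} to identify $JN_{(p,q,s)_{\az}}(Q_0)$ with $JN_{(p,q,s)_{\az}}(Q_0)\cap[L^p(Q_0)/\mathcal{P}_s(Q_0)]$ and obtain the quotient-norm bound. The only cosmetic remark is that the infimum defining the quotient norm need not be attained, so the polynomial $P$ should be chosen as a near-infimizer (say within a factor of $2$), which changes nothing in the estimates.
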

\begin{proof}
Let $p,\ q,\ s,\ \az$, and $Q_0$ be as in this proposition.
Then, by Propositions \ref{rem.jnandJN}(ii) and \ref{p.a},
we obtain
\begin{align*}
JN_{(p,q,s)_{\az}}(Q_0)
&=\lf[jn_{(p,q,s)_{\az}}(Q_0)/\mathcal{P}_s(Q_0)\r]\\
&=\lf\{JN_{(p,q,s)_{\az}}(Q_0)\cap \lf[L^p(Q_0)/\mathcal{P}_s(Q_0)\r]\r\}
\end{align*}
and
\begin{align*}
\|\cdot\|_{JN_{(p,q,s)_{\az}}(Q_0)}
&\sim\inf_{a\in \mathcal{P}_s(Q_0)}\|\cdot+a\|_{jn_{(p,q,s)_\az}(Q_0)}\\
&\sim\max\lf\{\|\cdot\|_{JN_{(p,q,s)_{\az}}(Q_0)},
\inf_{a\in \mathcal{P}_s(Q_0)}\|\cdot+a\|_{L^p(Q_0)}\r\}.
\end{align*}
This implies that
$JN_{(p,q,s)_{\az}}(Q_0)\subset [L^p(Q_0)/\mathcal{P}_s(Q_0)]$ with
$$\inf_{a\in \mathcal{P}_s(Q_0)}\|\cdot+a\|_{L^p(Q_0)}
\ls\|\cdot\|_{JN_{(p,q,s)_{\az}}(Q_0)},$$
which completes the proof of Proposition \ref{JN<Lp}.
\end{proof}

Propositions \ref{prop.jncam} and \ref{prop.jncam2} below
are just, respectively, \cite[Propositions 2.12 and 2.13]{SXY19}
which show that the localized Campanato space is the limit of
the localized John--Nirenberg--Campanato space.

\begin{proposition}\label{prop.jncam}
Let $q\in[1,\fz)$, $s\in\zz_{+}$, $\az\in[0,\fz)$,
and $Q_0$ be a cube of $\rn$. Then,
for any $f\in L^1(Q_0)$,
$$\|f\|_{\Lambda_{(\az,q,s)}(Q_0)}=\lim_{p\to\fz}\|f\|_{jn_{(p,q,s)_{\az}}(Q_0)}.$$
Moreover,
$${\Lambda_{(\az,q,s)}(Q_0)}=\lf\{f\in\bigcap_{p\in[1,\fz)}jn_{(p,q,s)_{\az}}(Q_0):
\,\,\lim_{p\to\fz}\|f\|_{jn_{(p,q,s)_{\az}}(Q_0)}<\fz\r\}.$$
\end{proposition}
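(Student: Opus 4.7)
My plan is to mirror the strategy behind Proposition \ref{ptofz} and Corollary \ref{p=8} in the localized setting, with $P^{(s)}_{Q,c_0}$ replacing $P^{(s)}_Q$ throughout. The argument splits into two sandwich inequalities plus a short set-theoretic corollary.

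First I would prove the easy upper bound $\|f\|_{jn_{(p,q,s)_\az}(Q_0)}\le|Q_0|^{1/p}\|f\|_{\Lambda_{(\az,q,s)}(Q_0)}$. Given any collection $\{Q_j\}_j$ of interior pairwise disjoint subcubes of $Q_0$, the definition of the localized Campanato norm gives, for each $j$,
$$
|Q_j|^{-\az}\lf[\fint_{Q_j}\lf|f(x)-P^{(s)}_{Q_j,c_0}(f)(x)\r|^q\,dx\r]^{1/q}\le\|f\|_{\Lambda_{(\az,q,s)}(Q_0)}.
$$
Raising this to the $p$-th power, multiplying by $|Q_j|$, summing over $j$, and using $\sum_j|Q_j|\le|Q_0|$, the upper bound follows after taking the supremum over such collections. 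Since $|Q_0|^{1/p}\to 1$ as $p\to\fz$, this yields $\limsup_{p\to\fz}\|f\|_{jn_{(p,q,s)_\az}(Q_0)}\le\|f\|_{\Lambda_{(\az,q,s)}(Q_0)}$.

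For the matching lower bound, I would test the localized JNC norm against the trivial single-cube collection $\{Q\}$ for an arbitrary cube $Q\subset Q_0$, which yields
$$
\|f\|_{jn_{(p,q,s)_\az}(Q_0)}\ge|Q|^{1/p}|Q|^{-\az}\lf[\fint_{Q}\lf|f(x)-P^{(s)}_{Q,c_0}(f)(x)\r|^q\,dx\r]^{1/q}.
$$
Since $|Q|>0$ is fixed and finite, $|Q|^{1/p}\to 1$ as $p\to\fz$, so $\liminf_{p\to\fz}\|f\|_{jn_{(p,q,s)_\az}(Q_0)}$ dominates the right-hand side with the factor $|Q|^{1/p}$ removed. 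Taking the supremum over all cubes $Q\subset Q_0$ then produces $\liminf_{p\to\fz}\|f\|_{jn_{(p,q,s)_\az}(Q_0)}\ge\|f\|_{\Lambda_{(\az,q,s)}(Q_0)}$. Combined with the earlier $\limsup$-bound, this forces the limit to exist and to equal $\|f\|_{\Lambda_{(\az,q,s)}(Q_0)}$.

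The set equality is then automatic. If $f\in\Lambda_{(\az,q,s)}(Q_0)$, the first step puts $f$ in every $jn_{(p,q,s)_\az}(Q_0)$ with $\lim_{p\to\fz}\|f\|_{jn_{(p,q,s)_\az}(Q_0)}=\|f\|_{\Lambda_{(\az,q,s)}(Q_0)}<\fz$. Conversely, if $f\in\bigcap_{p\in[1,\fz)}jn_{(p,q,s)_\az}(Q_0)$ and $\lim_{p\to\fz}\|f\|_{jn_{(p,q,s)_\az}(Q_0)}<\fz$, then the second step yields $\|f\|_{\Lambda_{(\az,q,s)}(Q_0)}\le\lim_{p\to\fz}\|f\|_{jn_{(p,q,s)_\az}(Q_0)}<\fz$, so $f\in\Lambda_{(\az,q,s)}(Q_0)$. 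I do not foresee any real obstacle; the mild delicacy is the interchange of ``supremum over $Q$'' with ``limit in $p$'', but this is handled painlessly by the liminf since the single-cube lower bound holds simultaneously for all $Q\subset Q_0$ and all $p\in[1,\fz)$.
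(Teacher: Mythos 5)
Your proof is correct, and it is essentially the standard argument for this statement (the paper itself only cites \cite[Proposition 2.12]{SXY19}, where the proof proceeds by the same two-sided comparison): the upper bound $\|f\|_{jn_{(p,q,s)_\az}(Q_0)}\le|Q_0|^{1/p}\|f\|_{\Lambda_{(\az,q,s)}(Q_0)}$ from $\sum_j|Q_j|\le|Q_0|$, the lower bound from testing with a single cube, and the observation that $|Q|^{1/p}\to1$ squeezes the limit. Your handling of the supremum--limit interchange via the liminf, and the reduction of the set equality to these two bounds (using that $L^q_{\loc}(Q_0)=L^q(Q_0)\subset L^1(Q_0)$ on a bounded cube), are both sound.
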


\begin{proposition}\label{prop.jncam2}
Let $q\in[1,\fz)$, $s\in\zz_{+}$, and $\az\in[0,\fz)$.
Then
$$\lim_{p\to\fz}jn_{(p,q,s)_{\az}}(\rn)=\Lambda_{(\az,q,s)}(\rn)$$
in the following sense:
if $f\in jn_{(p,q,s)_{\az}}(\rn)\cap\Lambda_{(\az,q,s)}(\rn)$, then
$$f\in\bigcap_{r\in[p,\fz)}jn_{(r,q,s)_{\az}}(\rn)$$
and
$$\|f\|_{\Lambda_{(\az,q,s)}(\rn)}=\lim_{r\to\fz}\|f\|_{jn_{(r,q,s)_{\az}}(\rn)}.$$
\end{proposition}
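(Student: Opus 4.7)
The plan is to bound $\|f\|_{jn_{(r,q,s)_\az}(\rn)}$ from above and below as $r\to\fz$ by $\|f\|_{\Lambda_{(\az,q,s)}(\rn)}$, using the assumption $f\in jn_{(p,q,s)_\az}(\rn)$ as a finite anchor. For any cube $Q\subset\rn$, abbreviate
$$A_Q(f):=|Q|^{-\az}\lf[\fint_Q\lf|f(x)-P^{(s)}_{Q,c_0}(f)(x)\r|^q\,dx\r]^{1/q},$$
so that $\|f\|_{\Lambda_{(\az,q,s)}(\rn)}=\sup_Q A_Q(f)$ and
$$\|f\|_{jn_{(r,q,s)_\az}(\rn)}=\sup_{\{Q_j\}_j}\lf[\sum_j|Q_j|A_{Q_j}(f)^r\r]^{1/r},$$
with the supremum taken over all collections $\{Q_j\}_j$ of interior pairwise disjoint cubes of $\rn$.

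For the upper bound, fix $r\in[p,\fz)$ and an arbitrary collection $\{Q_j\}_j$ as above. Using the pointwise estimate $A_{Q_j}(f)\le\|f\|_{\Lambda_{(\az,q,s)}(\rn)}$ applied to the factor $A_{Q_j}(f)^{r-p}$, I would derive
$$\sum_j|Q_j|A_{Q_j}(f)^r\le\|f\|_{\Lambda_{(\az,q,s)}(\rn)}^{r-p}\sum_j|Q_j|A_{Q_j}(f)^p\le\|f\|_{\Lambda_{(\az,q,s)}(\rn)}^{r-p}\|f\|_{jn_{(p,q,s)_\az}(\rn)}^p,$$
which, after taking the supremum over $\{Q_j\}_j$ and then the $r$-th root, yields
$$\|f\|_{jn_{(r,q,s)_\az}(\rn)}\le\|f\|_{\Lambda_{(\az,q,s)}(\rn)}^{1-p/r}\|f\|_{jn_{(p,q,s)_\az}(\rn)}^{p/r}.$$
This shows that $f\in jn_{(r,q,s)_\az}(\rn)$ for every $r\in[p,\fz)$; letting $r\to\fz$ and using $\|f\|_{jn_{(p,q,s)_\az}(\rn)}^{p/r}\to 1$ (with the degenerate case $\|f\|_{jn_{(p,q,s)_\az}(\rn)}=0$ handled separately, in which case $A_Q(f)\equiv 0$ and both sides vanish), I obtain $\limsup_{r\to\fz}\|f\|_{jn_{(r,q,s)_\az}(\rn)}\le\|f\|_{\Lambda_{(\az,q,s)}(\rn)}$.

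For the matching lower bound, I would test the supremum defining $\|f\|_{jn_{(r,q,s)_\az}(\rn)}$ against each singleton collection $\{Q\}$, obtaining $\|f\|_{jn_{(r,q,s)_\az}(\rn)}\ge|Q|^{1/r}A_Q(f)$. Since $|Q|^{1/r}\to 1$ as $r\to\fz$ for each fixed cube $Q$ of positive finite measure, this gives $\liminf_{r\to\fz}\|f\|_{jn_{(r,q,s)_\az}(\rn)}\ge A_Q(f)$, and taking the supremum over $Q\subset\rn$ yields $\liminf_{r\to\fz}\|f\|_{jn_{(r,q,s)_\az}(\rn)}\ge\|f\|_{\Lambda_{(\az,q,s)}(\rn)}$. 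Combining the two bounds produces the asserted equality.

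The main obstacle, though minor, is that the whole-space setting blocks the simpler strategy available on a bounded cube $Q_0$ in Proposition \ref{prop.jncam}, where $|Q|^{1/r}$ is uniformly close to $1$ for all sub-cubes $Q\subset Q_0$ when $r$ is large; on $\rn$ the factor $|Q|^{1/r}$ can be arbitrarily large for cubes $Q$ of large side length, which is exactly why the hypothesis $f\in jn_{(p,q,s)_\az}(\rn)$ is essential as a finite anchor to control the sums in the interpolation step. All remaining manipulations are routine.
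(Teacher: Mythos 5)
Your proof is correct: the interpolation estimate $\|f\|_{jn_{(r,q,s)_\az}(\rn)}\le\|f\|_{\Lambda_{(\az,q,s)}(\rn)}^{1-p/r}\,\|f\|_{jn_{(p,q,s)_\az}(\rn)}^{p/r}$ (using the pointwise bound $A_{Q_j}(f)\le\|f\|_{\Lambda_{(\az,q,s)}(\rn)}$ on the excess power $r-p$ and the $jn_{(p,q,s)_\az}$-norm as the finite anchor), combined with the singleton-collection lower bound $|Q|^{1/r}A_Q(f)$ and $|Q|^{1/r}\to1$, yields both the membership in $\bigcap_{r\in[p,\fz)}jn_{(r,q,s)_\az}(\rn)$ and the two-sided limit, with the degenerate case treated properly. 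The survey itself gives no proof, deferring to \cite[Proposition 2.13]{SXY19}, and your argument is essentially the same standard one used for such limit results (compare Proposition \ref{ptofz}).
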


As in Proposition \ref{JNpqa=JNp1a},
the following invariance of $jn_{(p,q,s)_{\az}}(\cx)$
on its indices in the appropriate range is just \cite[Proposition 3.1]{SXY19}.
\begin{proposition}\label{prop.jneq1}
Let $p\in(1,\fz)$, $q\in[1,p)$, $s\in\zz_{+}$, and $\az\in[0,\fz)$. Then
$$jn_{(p,q,s)_{\az}}(\cx)=jn_{(p,1,s)_{\az}}(\cx)$$ with equivalent norms.
\end{proposition}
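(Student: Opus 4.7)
The inclusion ``$\|f\|_{jn_{(p,1,s)_\az}(\cx)} \le \|f\|_{jn_{(p,q,s)_\az}(\cx)}$'' is immediate from Jensen's inequality (applied with the probability measure $|Q_j|^{-1}\,dx$ on each $Q_j$), because $q\in[1,p)\subset[1,\fz)$. Thus the task reduces to establishing the reverse inequality. Fix any collection $\{Q_j\}_j$ of interior pairwise disjoint cubes of $\cx$ admissible in the definition of $\|f\|_{jn_{(p,q,s)_\az}(\cx)}$, and split it by size into $\cA:=\{j:\ell(Q_j)<c_0\}$ and $\cB:=\{j:\ell(Q_j)\ge c_0\}$. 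The two groups will be handled separately, each ultimately controlled by $\|f\|_{jn_{(p,1,s)_\az}(\cx)}^p$.

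For $j\in\cA$ we have $P_{Q_j,c_0}^{(s)}(f)=P_{Q_j}^{(s)}(f)$, so Theorem \ref{JohnNirenberg} applies directly on $Q_j$ and, combined with the layer-cake identity for $L^q$-norms and the fact that $q<p$, gives
$$\left[\fint_{Q_j}\lf|f-P_{Q_j}^{(s)}(f)\r|^q\,dx\right]^{1/q}
\lesssim |Q_j|^{-1/p+\az}\|f\|_{JN_{(p,1,s)_\az}(Q_j)},$$
hence $|Q_j|\{|Q_j|^{-\az}[\fint_{Q_j}|f-P_{Q_j,c_0}^{(s)}(f)|^q]^{1/q}\}^p\lesssim \|f\|_{JN_{(p,1,s)_\az}(Q_j)}^p$. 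Because every subcube of a cube $Q_j\in\cA$ has side length less than $c_0$, the projections $P_{Q_\ell^j}^{(s)}$ and $P_{Q_\ell^j,c_0}^{(s)}$ coincide inside $Q_j$; using this, and that sum-of-suprema equals supremum-of-sums over independently chosen admissible subcollections $\{Q_\ell^j\}_\ell\subset Q_j$ for disjoint $\{Q_j\}_j$, I obtain $\sum_{j\in\cA}\|f\|_{JN_{(p,1,s)_\az}(Q_j)}^p\le \|f\|_{jn_{(p,1,s)_\az}(\cx)}^p$.

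The harder case is $j\in\cB$, where $P_{Q_j,c_0}^{(s)}(f)=0$, so Theorem \ref{JohnNirenberg} cannot be applied to $f$ itself. The plan is to reintroduce an integral mean: writing $|f|^q\lesssim|f-f_{Q_j}|^q+|f_{Q_j}|^q$, I handle the two pieces differently. For the mean piece, $|f_{Q_j}|\le\fint_{Q_j}|f|$ and the one-cube collection $\{Q_j\}$ yields $|f_{Q_j}|\le|Q_j|^{\az-1/p}\|f\|_{jn_{(p,1,s)_\az}(\cx)}$, whose contribution summed over $\cB$ is $\lesssim\|f\|_{jn_{(p,1,s)_\az}(\cx)}^p$ by the disjointness of $\{Q_j\}_{j\in\cB}$. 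For the oscillatory piece, apply Theorem \ref{JohnNirenberg} with $s=0=\az$ to get $\fint_{Q_j}|f-f_{Q_j}|^q\lesssim |Q_j|^{-q/p}\|f\|_{JN_p(Q_j)}^q$, and then control $\|f\|_{JN_p(Q_j)}$ by the local norm through the key two-case comparison
$$\fint_{Q_i}\lf|f-f_{Q_i}\r|\,dx\le 2\fint_{Q_i}\lf|f-P_{Q_i,c_0}^{(s)}(f)\r|\,dx,$$
valid uniformly in $Q_i$: when $\ell(Q_i)<c_0$ this follows from the classical ``best constant approximation'' inequality applied to $f-P_{Q_i}^{(s)}(f)$, and when $\ell(Q_i)\ge c_0$ it reduces to $\fint_{Q_i}|f-f_{Q_i}|\le 2\fint_{Q_i}|f|$. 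Combined with $|Q_i^j|^{\az p}\le|Q_j|^{\az p}$ for $Q_i^j\subset Q_j$ (using $\az\ge 0$), this yields $|Q_j|^{-\az p}\|f\|_{JN_p(Q_j)}^p\lesssim N_j$, where $N_j$ is the supremum of $\sum_i|Q_i^j|\{|Q_i^j|^{-\az}\fint_{Q_i^j}|f-P_{Q_i^j,c_0}^{(s)}(f)|\}^p$ over admissible $\{Q_i^j\}_i\subset Q_j$. The same sum-of-suprema principle then gives $\sum_{j\in\cB}N_j\le\|f\|_{jn_{(p,1,s)_\az}(\cx)}^p$, finishing the estimate for large cubes.

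The main obstacle is the $\cB$ case: the vanishing of $P_{Q_j,c_0}^{(s)}(f)$ on large cubes blocks a direct application of the John--Nirenberg-type inequality, and my workaround hinges on the uniform comparison between oscillation about $f_{Q_i}$ and oscillation about $P_{Q_i,c_0}^{(s)}(f)$ displayed above, which bridges the non-localized $JN_p(Q_j)$ norm with the local $jn_{(p,1,s)_\az}(\cx)$ norm without losing more than a constant factor. Once the small-cube and large-cube estimates are combined and the supremum over $\{Q_j\}_j$ is taken, the desired inequality $\|f\|_{jn_{(p,q,s)_\az}(\cx)}\lesssim\|f\|_{jn_{(p,1,s)_\az}(\cx)}$ follows.
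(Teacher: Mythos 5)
Your easy direction (Jensen) and your treatment of the small cubes $\mathcal{A}$ are fine: there every subcube of $Q_j$ has side length $<c_0$, so Theorem \ref{JohnNirenberg} together with the Kolmogorov-type inequality for $q<p$ gives the right per-cube bound, and the sum-of-suprema observation is legitimate. The genuine gap is in the large-cube case $\mathcal{B}$, precisely at your ``key two-case comparison'' $\fint_{Q_i}|f-f_{Q_i}|\,dx\le 2\fint_{Q_i}|f-P^{(s)}_{Q_i,c_0}(f)|\,dx$ for $\ell(Q_i)<c_0$. The best-constant-approximation inequality $\fint_Q|f-f_Q|\le 2\inf_{c\in\cc}\fint_Q|f-c|$ only allows \emph{constants}; when $s\ge1$, $P^{(s)}_{Q_i}(f)$ is a genuine polynomial and the claimed inequality is false: take $f$ to be any nonconstant polynomial of degree $\le s$, so that the right-hand side vanishes while the left-hand side is strictly positive. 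Consequently the bound $|Q_j|^{-\az p}\|f\|^p_{JN_p(Q_j)}\ls N_j$ is unjustified, and more structurally, the classical norm $\|f\|_{JN_p(Q_j)}$ measures oscillation about means over \emph{all} (in particular arbitrarily small) subcubes, which the localized $(p,1,s)_\az$ norm cannot see on small cubes, since there it only records the distance to the degree-$s$ projection and carries no information about the oscillation of the polynomial part itself. So, as written, your argument proves the proposition only for $s=0$ (where $P^{(0)}_{Q_i}(f)=f_{Q_i}$ and the comparison is trivial); note also that the paper does not reprove this statement but cites it from the reference for Proposition \ref{prop.jneq1}.

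A workable repair of the $\mathcal{B}$ case keeps the same $s$ throughout instead of dropping to $s=0$: tile each $Q_j$ with $\ell(Q_j)\ge c_0$ by interior pairwise disjoint subcubes $\{R_{j,k}\}_k$ with side lengths in $[c_0,2c_0)$; on each $R_{j,k}$ estimate $|f|^q\ls|f-P^{(s)}_{R_{j,k}}(f)|^q+|P^{(s)}_{R_{j,k}}(f)|^q$, control the second term through \eqref{Cs} by $\fint_{R_{j,k}}|f|$, which the localized norm bounds directly because $\ell(R_{j,k})\ge c_0$ and $\{R_{j,k}\}_{j,k}$ is an admissible disjoint family, and control the first term by Theorem \ref{JohnNirenberg} on $R_{j,k}$ (same $s$ and $\az$); to dominate $\|f\|_{JN_{(p,1,s)_\az}(R_{j,k})}$ by the localized norm one only needs the one-sided comparison $\fint_Q|f-P^{(s)}_Q(f)|\le(1+C_{(s)})\fint_Q|f|$ for the subcubes $Q\subset R_{j,k}$ with $\ell(Q)\ge c_0$, and this direction \emph{is} true; one must then sum in $k$ with the exponent $p/q>1$ and track the powers of $|Q_j|$ and $|R_{j,k}|$ using $\az\ge0$. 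Finally, a wording point on your mean piece: summing the per-cube bound $|f_{Q_j}|\le|Q_j|^{\az-1/p}\|f\|_{jn_{(p,1,s)_\az}(\cx)}$ over $j\in\mathcal{B}$ only yields $\#\mathcal{B}$ copies of $\|f\|^p_{jn_{(p,1,s)_\az}(\cx)}$; what you need (and what ``disjointness'' should be invoked for) is to use $\{Q_j\}_{j\in\mathcal{B}}$ as a single admissible family, giving $\sum_{j\in\mathcal{B}}|Q_j|^{1-\az p}(\fint_{Q_j}|f|)^p\le\|f\|^p_{jn_{(p,1,s)_\az}(\cx)}$ directly.
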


In other range of indices, namely, $q\ge p$,
the following relation between $jn_{(p,q,s)_{\az}}(\cx)$
and the Lebesgue space is just \cite[Proposition 3.4]{SXY19}.

\begin{proposition}\label{prop.jneq2}
Let $s\in\zz_{+}$ and $Q_0$ be a cube of $\rn$.
\begin{itemize}
\item[\rm{(i)}]
If $1\le p\le q<\fz$, then
 $[|Q_0|^{\frac{1}{q}-\frac{1}{p}}jn_{(p,q,s)_{0}}(Q_0)]=L^q(Q_0)$
 with equivalent norms.
\item[\rm{(ii)}]
 If $p\in[1,\fz)$, then $jn_{(p,p,s)_{0}}(\rn)=L^p(\rn)$ with equivalent norms.
\item[\rm{(iii)}]
 If $p$, $q\in[1,\fz)$, $\az\in(-\fz,\frac{1}{p}-\frac{1}{q})$,
 and $f\in jn_{(p,q,s)_{\az}}(\rn)$, then $f=0$ almost everywhere.
 \end{itemize}
\end{proposition}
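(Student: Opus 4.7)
My plan is to attack all three parts by testing the supremum in the definition of $\|\cdot\|_{jn_{(p,q,s)_{\az}}}$ against carefully chosen collections of cubes, exploiting that, by \cite[Proposition 2.5]{SXY19}, the space is independent of the auxiliary constant $c_0$. This flexibility is crucial because the truncated projection $P^{(s)}_{Q,c_0}(f)$ vanishes precisely when $\ell(Q)\ge c_0$. For part (i), I would shrink $c_0$ so that $c_0\le\ell(Q_0)$ and test the supremum against the one-cube collection $\{Q_0\}$; since then $P^{(s)}_{Q_0,c_0}(f)=0$, this immediately yields
$$\|f\|_{jn_{(p,q,s)_0}(Q_0)}\ge|Q_0|^{\frac{1}{p}-\frac{1}{q}}\|f\|_{L^q(Q_0)}.$$
For the reverse inequality, I combine \eqref{Cs} with H\"older's inequality to get $\|f-P^{(s)}_{Q_j,c_0}(f)\|_{L^q(Q_j)}\le(1+C_{(s)})\|f\|_{L^q(Q_j)}$ uniformly in $j$, and then (when $p<q$) apply H\"older's inequality with conjugate exponents $q/p$ and $q/(q-p)$ to the sum $\sum_j|Q_j|^{1-p/q}(\int_{Q_j}|f|^q)^{p/q}$. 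This produces a factor $(\sum_j|Q_j|)^{(q-p)/q}\le|Q_0|^{1-p/q}$ alongside $\|f\|_{L^q(Q_0)}^p$, and extracting the $1/p$-th power yields the matching upper bound; the case $p=q$ is even simpler, since only interior pairwise disjointness is needed.

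For part (ii) I use the same oscillation estimate as in (i) to obtain the upper bound $\|f\|_{jn_{(p,p,s)_0}(\rn)}\ls\|f\|_{L^p(\rn)}$. For the reverse inequality I partition $\rn$ into a grid of disjoint cubes of common side length $N\ge c_0$; this choice forces $P^{(s)}_{Q_j,c_0}(f)=0$ on every cube of the grid, so the resulting sum collapses to exactly $\|f\|_{L^p(\rn)}^p$, yielding $\|f\|_{jn_{(p,p,s)_0}(\rn)}\ge\|f\|_{L^p(\rn)}$.

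For part (iii), I would argue by contradiction. Suppose $f$ is not almost everywhere zero on $\rn$; then there is a bounded cube $Q_*$ with $\|f\|_{L^q(Q_*)}>0$. For every integer $N>\max\{c_0,\ell(Q_*)\}$, choose a cube $Q^{(N)}$ of side length $N$ containing $Q_*$; then $P^{(s)}_{Q^{(N)},c_0}(f)=0$, and testing the supremum against the one-cube collection $\{Q^{(N)}\}$ gives
$$\|f\|_{jn_{(p,q,s)_{\az}}(\rn)}\ge|Q^{(N)}|^{\frac{1}{p}-\az-\frac{1}{q}}\|f\|_{L^q(Q^{(N)})}\ge N^{n(\frac{1}{p}-\az-\frac{1}{q})}\|f\|_{L^q(Q_*)}.$$
The hypothesis $\az<1/p-1/q$ makes the exponent strictly positive, so the right-hand side tends to $\fz$ as $N\to\fz$, contradicting $f\in jn_{(p,q,s)_{\az}}(\rn)$. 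I do not foresee any real obstacle; the only mild care needed is in part (i), where the H\"older splitting with exponents $q/p$ and $q/(q-p)$ must produce exactly the claimed power of $|Q_0|$ with no residual dimensional factor.
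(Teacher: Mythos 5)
Your proof is correct. Note that the survey itself contains no proof of this proposition: it is simply quoted as \cite[Proposition 3.4]{SXY19}, so there is no argument in the paper to compare against; your route (testing the supremum with the single cube $Q_0$ in (i), with a grid of congruent large cubes covering $\rn$ in (ii), and with one large cube containing a fixed cube on which $f$ is nontrivial in (iii), combined with the estimate $\|f-P^{(s)}_{Q_j,c_0}(f)\|_{L^q(Q_j)}\le(1+C_{(s)})\|f\|_{L^q(Q_j)}$ coming from \eqref{Cs} and H\"older's inequality, plus the H\"older splitting with exponents $q/p$ and $q/(q-p)$ on the sum) is exactly the natural argument, with clean constants $1$ and $1+C_{(s)}$. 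One small simplification: you never actually need the $c_0$-independence from \cite[Proposition 2.5]{SXY19}, since by definition $c_0\in(0,\ell(\cx))$, so $P^{(s)}_{Q_0,c_0}(f)=0$ holds automatically in (i), and in (ii) and (iii) any fixed admissible $c_0$ permits cubes of side length at least $c_0$.
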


Using the localized atom, Sun el al. \cite{SXY19} introduced
the localized Hardy-type space and showed that this space is the predual of
the localized John--Nirenberg--Campanato space.
First, recall the definitions of localized atoms, localized polymers, and
localized Hardy-type spaces in order as follows.
\begin{definition}\label{def.atom}
Let $v$, $w\in[1,\fz]$, $s\in \zz_{+}$, and $\az\in\rr$. Fix $c_0\in(0,\ell(\cx))$ and
let $Q$ denote a cube of $\rn$. Then a function
$a$ on $\rn$ is called a \emph{local $(v,w,s)_{\az,c_0}$-atom} supported in $Q$ if
\begin{itemize}
\item[{\rm(i)}]
$\supp(a):=\{x\in\rn:\,\,a(x)\neq 0\}\subset Q$;
\item[{\rm(ii)}]
$\| a\|_{L^w(Q)}\le |Q|^{\frac{1}{w}-\frac{1}{v}-\az}$;
\item[{\rm(iii)}]
when $\ell(Q)<c_0$,
$\int_Qa(x)x^{\beta}dx=0$ for any $\beta\in\zz_{+}^n$ and $|\beta|\le s$.
\end{itemize}
\end{definition}

\begin{definition}\label{def.poly}
Let $v$, $w\in[1,\fz]$, $s\in \zz_{+}$, $\az\in\rr$,
and $c_0\in (0,\ell(\cx))$. The \emph{space}
$\widetilde{hk}_{(v,w,s)_{\az,c_0}}(\cx)$
is defined to be the set of all $g\in (jn_{(v',w',s)_{\az,c_0}}(\cx))^{\ast}$ such that
$$g=\sum_{j\in\nn}\lz_ja_j$$
in  $(jn_{(v',w',s)_{\az,c_0}}(\cx))^{\ast}$,
where $1/v+1/v'=1=1/w+1/w'$,
$\{a_j\}_{j\in\nn}$ are local $(v,w,s)_{\az,c_0}$-atoms supported, respectively,
in interior pairwise disjoint subcubes $\{Q_j\}_{j\in\nn}$ of $\cx$, and
$\{\lambda_j\}_{j\in\nn}\subset\mathbb{C}$ with
$\|\{\lz_j\}_{j\in\nn}\|_{\ell^v}<\fz$
[see \eqref{lp-norm} for the definition of $\|\cdot\|_{\ell^v}$].
Any $g\in\widetilde{hk}_{(v,w,s)_{\az,c_0}}(\cx)$
is called a \emph{local} $(v,w,s)_{\az,c_0}$\emph{-polymer} on $\cx$ and let
$$\| g\|_{\widetilde{hk}_{(v,w,s)_{\az,c_0}}(\cx)}
:=\inf\lf\|\{\lz_j\}_{j\in\nn}\r\|_{\ell^v},$$
where the infimum is taken over all decompositions of $g$ as above.
\end{definition}

\begin{definition}
\label{def.hkvws}
Let $v$, $w\in[1,\fz]$, $s\in \zz_{+}$, $\az\in\rr$,
and $c_0\in(0,\ell(\cx))$.
The \emph{local Hardy-type space $hk_{(v,w,s)_{\az,c_0}}(\cx)$}
is defined to be the set of all
$g\in (jn_{(v',w',s)_{\az,c_0}}(\cx))^{\ast}$ such that
there exists a sequence
$\{g_i\}_{i\in\nn}\subset \widetilde{hk}_{(v,w,s)_{\az,c_0}}(\cx)$ such that
$\sum_{i\in\nn}\|g_i\|_{\widetilde{hk}_{(v,w,s)_{\az,c_0}}(\cx)}<\fz$ and
\begin{align}\label{hc}
g=\sum_{i\in\nn}g_i
\end{align}
in $(jn_{(v',w',s)_{\az,c_0}}(\cx))^{\ast}$.
For any $g\in{hk_{(v,w,s)_{\az,c_0}}(\cx)}$, let
$$\|g\|_{hk_{(v,w,s)_{\az,c_0}}(\cx)}:=\inf \sum_{i\in\nn}\|g_i\|_{\widetilde{hk}_{(v,w,s)_{\az,c_0}}(\cx)},$$
where the infimum is taken over all decompositions of $g$ as in \eqref{hc}.
\end{definition}

Correspondingly, $hk_{(v,w,s)_{\az,c_0}}(\cx)$ is independent of
the choice of the positive constant $c_0$ as well,
which is just \cite[Proposition 4.7]{SXY19}.

\begin{proposition}
\label{prop.hkc}
Let $v\in(1,\fz)$, $w\in(1,\fz]$, $s\in \zz_{+}$, $\az\in\rr$,
and $0<c_1<c_2<\ell(\cx)$.
Then $hk_{(v,w,s)_{\az,c_1}}(\cx)=hk_{(v,w,s)_{\az,c_2}}(\cx)$
with equivalent norms.
\end{proposition}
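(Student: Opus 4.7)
The plan is to prove the two set-theoretic inclusions separately, with each giving one direction of the norm equivalence. Throughout, I would use that the ambient dual space $(jn_{(v',w',s)_{\az,c_0}}(\cx))^\ast$ is independent of $c_0$ by \cite[Proposition 2.5]{SXY19} (already quoted before the statement of Proposition \ref{prop.jncom}), so only the atomic structure depends on $c_0$.

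First I would show that every local $(v,w,s)_{\az,c_2}$-atom is automatically a local $(v,w,s)_{\az,c_1}$-atom. The support and size conditions in Definition \ref{def.atom} do not depend on $c_0$, so only the moment condition needs checking. If $\ell(Q)<c_1$, then $\ell(Q)<c_2$ and both classes require identical vanishing moments; if $c_1\le\ell(Q)<c_2$, the $c_2$-condition requires vanishing moments while the $c_1$-condition is vacuous; if $\ell(Q)\ge c_2>c_1$, neither requires them. Hence every $(v,w,s)_{\az,c_2}$-polymer is a $(v,w,s)_{\az,c_1}$-polymer with the same admissible $\ell^v$ sequence, yielding
$$hk_{(v,w,s)_{\az,c_2}}(\cx)\subset hk_{(v,w,s)_{\az,c_1}}(\cx)\quad\text{and}\quad \|\cdot\|_{hk_{(v,w,s)_{\az,c_1}}(\cx)}\le \|\cdot\|_{hk_{(v,w,s)_{\az,c_2}}(\cx)}.$$

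For the reverse inclusion I would decompose at the atomic level. The only obstruction is a $(v,w,s)_{\az,c_1}$-atom $a$ supported on a cube $Q$ with $c_1\le\ell(Q)<c_2$, which need not satisfy the $c_2$-moment condition. I would split
$$a=\lf[a-P_Q^{(s)}(a)\mathbf{1}_Q\r]+P_Q^{(s)}(a)\mathbf{1}_Q=:a_1+a_2.$$
By \eqref{PQsf}, the piece $a_1$ has vanishing moments up to order $s$, is supported in $Q$ with $\ell(Q)<c_2$, and a H\"older estimate together with \eqref{Cs} gives $\|a_1\|_{L^w(Q)}\le(1+C_{(s)})|Q|^{1/w-1/v-\az}$; hence $(1+C_{(s)})^{-1}a_1$ is a genuine $(v,w,s)_{\az,c_2}$-atom on $Q$. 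For $a_2$, I would choose a cube $\wz Q\supset Q$ with $\ell(\wz Q)\in[c_2,2c_2]$; since $c_1\le\ell(Q)<c_2$, the volumes satisfy $|\wz Q|\sim|Q|$ with constants depending only on $n,c_1,c_2$, and \eqref{Cs} gives $\|a_2\|_{L^w(\wz Q)}=\|P_Q^{(s)}(a)\|_{L^w(Q)}\ls|\wz Q|^{1/w-1/v-\az}$. Since $\ell(\wz Q)\ge c_2$, no moment condition is required, and $a_2$ becomes a $(v,w,s)_{\az,c_2}$-atom on $\wz Q$ after dividing by a constant depending only on $n,s,v,w,\az,c_1,c_2$.

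The last step is to assemble these atomic decompositions into polymers whose total $\ell^v$-cost is controlled by the original cost. Given a $(v,w,s)_{\az,c_1}$-polymer $g_i=\sum_j\lambda_{i,j}a_{i,j}$ on interior pairwise disjoint $\{Q_{i,j}\}_j$, I would partition the indices into $A:=\{j:\ell(Q_{i,j})<c_1\ \text{or}\ \ell(Q_{i,j})\ge c_2\}$ and $B:=\{j:c_1\le\ell(Q_{i,j})<c_2\}$. The atoms indexed by $A$, together with the rescaled $a_{i,j,1}$ for $j\in B$ produced above, are all supported in the pairwise disjoint cubes $\{Q_{i,j}\}_j$ and thus constitute one $(v,w,s)_{\az,c_2}$-polymer with $\ell^v$-cost $\ls\|\{\lambda_{i,j}\}_j\|_{\ell^v}$. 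The rescaled $a_{i,j,2}$ for $j\in B$ sit on the enlarged cubes $\{\wz Q_{i,j}\}_{j\in B}$, which may overlap; however, since each $\wz Q_{i,j}$ has volume $\sim c_2^n$ and contains one of the pairwise disjoint $Q_{i,j}$ of volume $\ge c_1^n$, a direct volume-counting argument bounds the pointwise overlap of $\{\wz Q_{i,j}\}_{j\in B}$ by some integer $N=N(n,c_1,c_2)$. Consequently this family can be partitioned into $N$ pairwise disjoint subfamilies, each yielding a $(v,w,s)_{\az,c_2}$-polymer of $\ell^v$-cost $\ls\|\{\lambda_{i,j}\}_j\|_{\ell^v}$. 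Summing over $i$ produces $\|g\|_{hk_{(v,w,s)_{\az,c_2}}(\cx)}\ls\|g\|_{hk_{(v,w,s)_{\az,c_1}}(\cx)}$. The hard part will be the bounded-overlap/coloring step for $\{\wz Q_{i,j}\}$; once that combinatorial estimate is in hand, the remainder reduces to routine bookkeeping using \eqref{Cs} and the definitions.
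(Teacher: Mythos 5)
The survey itself offers no proof of this proposition (it is quoted verbatim from \cite[Proposition 4.7]{SXY19}), so your argument has to be judged on its own terms; on those terms it is essentially correct, and it is the natural atomic-level argument. The easy inclusion is fine: every $(v,w,s)_{\az,c_2}$-atom is a $(v,w,s)_{\az,c_1}$-atom, so $\|\cdot\|_{hk_{(v,w,s)_{\az,c_1}}(\cx)}\le\|\cdot\|_{hk_{(v,w,s)_{\az,c_2}}(\cx)}$, and your appeal to the $c_0$-independence of $jn_{(v',w',s)_{\az,c_0}}(\cx)$ correctly puts both spaces inside one and the same dual, so the weak-$\ast$ convergence in the two polymer definitions is the same notion. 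For the converse, note first that the splitting $a=a_1+a_2$ is not actually needed: on the enlarged cube $\wz Q$ with $\ell(\wz Q)=c_2$ the moment condition is vacuous, and since $c_1^n\le|Q|\le|\wz Q|\le(2c_2)^n$, the whole atom $a$ is already a constant multiple (constant depending only on $n,v,w,\az,c_1,c_2$) of a $(v,w,s)_{\az,c_2}$-atom on $\wz Q$. Your splitting is harmless and slightly reduces what must be transplanted, but it does not remove the overlap problem, which is the real issue either way; also, when $\cx=Q_0$ one should say explicitly that $\wz Q$ can be chosen inside $Q_0$, which is possible precisely because $c_2<\ell(\cx)$.

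The one genuine soft spot is the step you yourself flag: bounded \emph{pointwise overlap} of $\{\wz Q_{i,j}\}_{j\in B}$ does not, as a matter of logic, imply that the family can be partitioned into $N$ pairwise disjoint subfamilies — that is a coloring statement about the intersection graph, not about overlap. The fix is the same volume counting applied to the \emph{degree} instead: if $\wz Q_{i,j}\cap\wz Q_{i,j_0}\neq\emptyset$, then, all side lengths lying in $[c_2,2c_2]$, one has $\wz Q_{i,j}\subset 5\wz Q_{i,j_0}$; since each $\wz Q_{i,j}$ contains its private cube $Q_{i,j}$ with $|Q_{i,j}|\ge c_1^n$ and the $Q_{i,j}$ are pairwise disjoint, at most $(10c_2/c_1)^n$ indices $j$ can interact with $j_0$. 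Hence the intersection graph has bounded degree and a greedy coloring gives the partition into boundedly many disjoint subfamilies, with the bound depending only on $n$, $c_1$, $c_2$, which is all you need. With that substitution, plus the one-line verification (via the usual H\"older estimate for atoms on pairwise disjoint cubes, exactly as in the proof that polymers act boundedly on $jn$) that the regrouped series still converge to $g_i$ in the dual so that the rearrangement is legitimate, your proof is complete, and the constants depend only on $n,s,v,w,\az,c_1,c_2$ as required for the norm equivalence.
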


Henceforth, we simply write
$${\rm local}\ {(v,w,s)_{\az,c_0}}{\rm -atoms},\ \
\widetilde{hk}_{(v,w,s)_{\az,c_0}}(\cx),\ \
{\rm and}\ \ hk_{(v,w,s)_{\az,c_0}}(\cx),$$
respectively, as
$${\rm local}\ {(v,w,s)_{\az}}{\rm -atoms},\ \
\widetilde{hk}_{(v,w,s)_{\az}}(\cx),\ \
{\rm and}\ \ hk_{(v,w,s)_{\az}}(\cx).$$

The corresponding dual theorem (namely, Theorem \ref{theo.dual} below)
is just \cite[Theorem 4.11]{SXY19}.
In what follows, the \emph{space $hk^{\mathrm{fin}}_{(v,w,s)_{\az}}(\cx)$}
is defined to be the set of all finite linear combinations of local
$(v,w,s)_{\az}$-atoms supported, respectively, in cubes of $\cx$.

\begin{theorem}\label{theo.dual}
Let $v$, $w\in(1,\fz)$, $1/v+{1}/{v'}=1={1}/{w}+{1}/{w'}=1$, $s\in \zz_{+}$, and $\az\in\rr$.
Then $ jn_{(v',w',s)_{\az}}(\cx)=(hk_{(v,w,s)_{\az}}(\cx))^{\ast}$ in the following sense:
\begin{itemize}
\item[\rm{(i)}]
For any given $f\in jn_{(v',w',s)_{\az}}(\cx)$, the linear functional
$$\cl_f:\ g\longmapsto\lf\langle \cl_f,g\r\rangle
:=\int_{\cx}f(x)g(x)\,dx, \qquad\forall\,g\in hk^{\mathrm{fin}}_{(v,w,s)_{\az}}(\cx)$$
can be extended to a bounded linear functional on $hk_{(v,w,s)_{\az}}(\cx)$.
Moreover, it holds true that
$\| \cl_f\|_{(hk_{(v,w,s)_{\az}}(\cx))^{\ast}}
\le \|f\|_{jn_{(v',w',s)_{\az}}(\cx)}$.
\item[\rm{(ii)}]
Any bounded linear functional $ \cl$ on $hk_{(v,w,s)_{\az}}(\cx)$ can be represented by a function $f\in jn_{(v',w',s)_{\az}}(\cx)$
in the following sense:
\begin{align*}
\lf\langle \cl,g\r\rangle =\int_{\cx}f(x)g(x)\,dx,\quad
\forall\, g\in hk^{\mathrm{fin}}_{(v,w,s)_{\az}}(\cx).
\end{align*}
Moreover, there exists a positive constant $C$, depending only on $s$, such that
$\|f\|_{jn_{(v',w',s)_{\az}}(\cx)}\le C\|\cl \|_{(hk_{(v,w,s)_{\az}}(\cx))^{\ast}}$.
\end{itemize}
\end{theorem}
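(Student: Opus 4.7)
The plan is to adapt the non-localized duality proof of Theorem \ref{duality} to the current setting, exploiting the convention that $P^{(s)}_{Q,c_0}(f)=0$ when $\ell(Q)\ge c_0$ to unify the two regimes.

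For part (i), fix $f\in jn_{(v',w',s)_\az}(\cx)$ and define $\cl_f(g):=\int_\cx f g\,dx$ for $g\in hk^{\mathrm{fin}}_{(v,w,s)_\az}(\cx)$. For any local $(v,w,s)_\az$-atom $a$ supported in a cube $Q$, the moment condition of Definition \ref{def.atom}(iii) (when $\ell(Q)<c_0$) or the trivial identity $P^{(s)}_{Q,c_0}(f)=0$ (when $\ell(Q)\ge c_0$) gives $\int_Q fa\,dx=\int_Q[f-P^{(s)}_{Q,c_0}(f)]a\,dx$, so H\"older's inequality combined with Definition \ref{def.atom}(ii) yields
\begin{align*}
\lf|\int_Q f a\,dx\r|
\le \lf|Q\r|^{\frac{1}{v'}-\az}\lf[\fint_Q\lf|f-P^{(s)}_{Q,c_0}(f)\r|^{w'}\,dx\r]^{\frac{1}{w'}}.
\end{align*}
Summing over the atoms of a polymer $g_i=\sum_j\lz_{i,j}a_{i,j}$ (with atoms on interior pairwise disjoint cubes $\{Q_{i,j}\}_j$), applying H\"older in $\ell^v$--$\ell^{v'}$, and identifying the $\ell^{v'}$-factor with $\|f\|_{jn_{(v',w',s)_\az}(\cx)}$ gives $|\cl_f(g_i)|\le\|\{\lz_{i,j}\}_j\|_{\ell^v}\|f\|_{jn_{(v',w',s)_\az}(\cx)}$; taking infima over polymer decompositions and summing over polymer components of $g\in hk_{(v,w,s)_\az}(\cx)$ yields $\|\cl_f\|\le\|f\|_{jn_{(v',w',s)_\az}(\cx)}$.

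For part (ii), I would construct $f$ cube by cube via Riesz representation. For each cube $Q\subset\cx$, let $X_Q:=L^w_s(Q)$ when $\ell(Q)<c_0$ and $X_Q:=L^w(Q)$ when $\ell(Q)\ge c_0$; any $\varphi\in X_Q$ with $\|\varphi\|_{L^w(Q)}\le|Q|^{\frac1w-\frac1v-\az}$ is a local $(v,w,s)_\az$-atom, so $\cl$ restricts to a bounded linear functional on $X_Q$. Riesz representation produces $f_Q\in L^{w'}(Q)$ representing $\cl$ on $X_Q$, uniquely when $\ell(Q)\ge c_0$ and modulo $\mathcal{P}_s(Q)$ otherwise. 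Since $c_0<\ell(\cx)$, there exist cubes $Q$ with $\ell(Q)\ge c_0$ (for $\cx=Q_0$, take $Q_0$ itself; for $\cx=\rn$, exhaust by an increasing sequence), and uniqueness on such cubes eliminates the polynomial ambiguity on all sub-cubes, producing a single $f\in L^{w'}_{\loc}(\cx)$ representing $\cl$ on $hk^{\mathrm{fin}}_{(v,w,s)_\az}(\cx)$.

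The main obstacle will be the reverse estimate $\|f\|_{jn_{(v',w',s)_\az}(\cx)}\ls\|\cl\|$. Given interior pairwise disjoint cubes $\{Q_j\}_j$, on each $Q_j$ I would set $\varphi_j:=|f-P^{(s)}_{Q_j,c_0}(f)|^{w'-1}\overline{\mathrm{sign}(f-P^{(s)}_{Q_j,c_0}(f))}$, subtract $P^{(s)}_{Q_j}(\varphi_j)$ to enforce the vanishing moments (only when $\ell(Q_j)<c_0$), and normalize to obtain a local atom $a_j$ with coefficient $\lz_j$ arranged so that $\sum_j\lz_j\int_{Q_j}f a_j$ equals (up to a universal constant) the raw $jn$-summand. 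The delicate step is controlling the moment projection: \eqref{Cs} bounds $\|P^{(s)}_{Q_j}(\varphi_j)\|_{L^\fz(Q_j)}$ by a universal multiple of $\fint_{Q_j}|\varphi_j|$, which preserves the atomic size estimate up to an overall constant $C_{(s)}$; the $\ell(Q_j)\ge c_0$ case requires no moment adjustment and is handled directly. Choosing $\{\lz_j\}$ to saturate the $\ell^v$--$\ell^{v'}$ duality against the $jn$-summation and invoking $|\cl(\sum_j\lz_j a_j)|\le\|\cl\|\|\{\lz_j\}\|_{\ell^v}$ then yields the desired bound after taking the supremum over $\{Q_j\}_j$; Proposition \ref{prop.hkc} guarantees the final statement is independent of the auxiliary $c_0$.
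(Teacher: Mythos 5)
Your proposal is correct and follows essentially the same route as the proof of this theorem in the cited source \cite[Theorem 4.11]{SXY19} (testing against local atoms plus two applications of the H\"older inequality for (i); cube-by-cube Riesz representation on $L^w_s(Q)$ or $L^w(Q)$, patching through cubes with $\ell(Q)\ge c_0$, and a norming-function construction corrected by the projection $P^{(s)}_{Q_j}$ for (ii)), exploiting precisely the convention $P^{(s)}_{Q,c_0}(f)=0$ when $\ell(Q)\ge c_0$ to unify the two regimes. The only step you should make explicit is that, by \eqref{PQsf}, $\int_{Q_j}[f-P^{(s)}_{Q_j}(f)(x)]P^{(s)}_{Q_j}(\varphi_j)(x)\,dx=0$, so subtracting the moment projection leaves the pairing $\int_{Q_j}f(x)a_j(x)\,dx$ bounded below exactly as needed; a naive triangle-inequality estimate using \eqref{Cs} alone would not yield this lower bound, since the error term is of the same size as the main term.
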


As a corollary of Theorem \ref{theo.dual}
as well as a counterpart of Proposition \ref{prop.jneq1},
for any admissible $(v,s,\az)$,
Proposition \ref{prop.hkeq1}, which is just \cite[Proposition 5.1]{SXY19},
shows that $hk_{(v,w,s)_{\az}}(\cx)$
is invariant on $w\in(v,\fz]$.

\begin{proposition}\label{prop.hkeq1}
Let $v\in(1,\fz)$, $w\in(v,\fz]$, $s\in \zz_{+}$, and $\az\in[0,\fz)$. Then
$$hk_{(v,w,s)_{\az}}(\cx)=hk_{(v,\fz,s)_{\az}}(\cx)$$ with equivalent norms.
\end{proposition}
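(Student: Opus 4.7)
The plan is to mirror the proof of the non-localized version, Proposition \ref{HKuvb=HKu8b}, combining the duality of Theorem \ref{theo.dual} and the index-invariance of Proposition \ref{prop.jneq1} with a closed-range argument. First I would establish the easy inclusion $hk_{(v,\fz,s)_\az}(\cx) \subset hk_{(v,w,s)_\az}(\cx)$: for any local $(v,\fz,s)_\az$-atom $a$ supported in a cube $Q$, H\"older's inequality gives $\|a\|_{L^w(Q)} \le |Q|^{1/w}\|a\|_{L^\fz(Q)} \le |Q|^{1/w - 1/v - \az}$, while the vanishing-moment condition (when $\ell(Q)<c_0$) is unchanged, so $a$ is also a local $(v,w,s)_\az$-atom. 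Summing over atomic decompositions then yields a bounded injection $\iota: hk_{(v,\fz,s)_\az}(\cx) \to hk_{(v,w,s)_\az}(\cx)$ with operator norm at most one.

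For the reverse inclusion the interesting case is $w \in (v,\fz)$, since $w=\fz$ is tautological. Theorem \ref{theo.dual} identifies $(hk_{(v,w,s)_\az}(\cx))^\ast = jn_{(v',w',s)_\az}(\cx)$ under the integration pairing on the dense subspace $hk^{\mathrm{fin}}_{(v,w,s)_\az}(\cx)$, and $w \in (v,\fz)$ forces $w' \in [1,v')$, so Proposition \ref{prop.jneq1} collapses the right-hand side to $jn_{(v',1,s)_\az}(\cx)$ with equivalent norms. The same argument applied to any auxiliary $w_0 \in (v,\fz)$ realizes $jn_{(v',1,s)_\az}(\cx)$ as the dual of $hk_{(v,w_0,s)_\az}(\cx)$, and Definition \ref{def.hkvws} embeds $hk_{(v,\fz,s)_\az}(\cx)$ into the same dual space via the same pairing. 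Under these identifications, the Banach adjoint $\iota^\ast$ acts (up to the equivalent-norm constants) as the identity on $jn_{(v',1,s)_\az}(\cx)$; in particular it is a Banach-space isomorphism. The closed-range theorem then forces $\iota$ to have closed range, while injectivity of $\iota^\ast$ gives it dense range, so $\iota$ is surjective, and the open-mapping theorem upgrades the continuous bijection $\iota$ to a Banach isomorphism, producing the equivalence of norms asserted by the proposition.

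The main obstacle will be the compatibility bookkeeping needed to identify $\iota^\ast$ with the identity: one must verify that a single $f \in jn_{(v',1,s)_\az}(\cx)$ genuinely represents $\iota^\ast\cl_f$ via the literal pairing $\int_\cx fg$, and that the representative furnished by Theorem \ref{theo.dual}(ii) applied to $(v,w,s)_\az$ and to $(v,\fz,s)_\az$ separately is the same function. This should reduce to the density of finite atomic sums, which is already part of the setup. A robust backup plan, in case the $w=\fz$ endpoint of Theorem \ref{theo.dual} causes friction, is the direct atomic route: perform a Calder\'on--Zygmund level-set decomposition of a generic local $(v,w,s)_\az$-atom $a$ into an $\ell^v$-summable sum of constant multiples of local $(v,\fz,s)_\az$-atoms, restoring the moment-vanishing condition on each Calder\'on--Zygmund subcube by subtracting the corresponding polynomial projection $P^{(s)}_{\cdot,c_0}$.
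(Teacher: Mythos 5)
Your first inclusion is fine, and your overall instinct (duality via Theorem \ref{theo.dual} plus Proposition \ref{prop.jneq1}) matches the route the survey attributes to the cited proof. The problem is the step where you declare $\iota^\ast$ to be a Banach-space isomorphism. The adjoint $\iota^\ast$ maps $(hk_{(v,w,s)_\az}(\cx))^\ast\cong jn_{(v',1,s)_\az}(\cx)$ into $(hk_{(v,\fz,s)_\az}(\cx))^\ast$, and to say it "acts as the identity" you must identify the target with $jn_{(v',1,s)_\az}(\cx)$ as well. That is exactly the endpoint duality $(hk_{(v,\fz,s)_\az}(\cx))^\ast=jn_{(v',1,s)_\az}(\cx)$, which Theorem \ref{theo.dual} does not give (it requires $w\in(1,\fz)$, i.e., excludes $w'=1$), and which in this circle of papers is deduced \emph{from} the proposition you are proving (compare Corollary \ref{dual-q=1} in the non-localized setting, obtained by combining the duality theorem with Proposition \ref{HKuvb=HKu8b}). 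The embedding of $hk_{(v,\fz,s)_\az}(\cx)$ into $(jn_{(v',1,s)_\az}(\cx))^\ast$ from Definition \ref{def.hkvws} only gives a bounded map $jn_{(v',1,s)_\az}(\cx)\to(hk_{(v,\fz,s)_\az}(\cx))^\ast$; it says nothing about surjectivity. Worse, surjectivity of $\iota^\ast$ is \emph{equivalent}, by the standard closed-range dictionary, to $\iota$ being bounded below, i.e., to the inequality $\|g\|_{hk_{(v,\fz,s)_\az}(\cx)}\ls\|g\|_{hk_{(v,w,s)_\az}(\cx)}$, which is the hard half of the proposition. So the closed-range machinery does not generate new information here; asserting the isomorphism begs the question. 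A second, smaller gap: injectivity of $\iota^\ast$ amounts to density of $hk_{(v,\fz,s)_\az}(\cx)$ in $hk_{(v,w,s)_\az}(\cx)$, which is \emph{not} "already part of the setup" — the setup gives density of finite sums of $(v,w,s)_\az$-atoms, not of bounded atoms; you would need the truncation argument (cut $a$ at height $N$, subtract $P^{(s)}_{Q,c_0}$ of the truncation to restore moments, and check the error is a small multiple of a $(v,w,s)_\az$-atom).

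The viable path is essentially your backup plan, and it cannot be avoided by duality alone: membership in an atomically defined space like $hk_{(v,\fz,s)_\az}(\cx)$ has to be produced by an explicit decomposition. Concretely, one shows that each local $(v,w,s)_\az$-atom (more precisely, each $(v,w,s)_\az$-polymer) decomposes into countably many $(v,\fz,s)_\az$-polymers with geometrically decaying norms: run the Calder\'on--Zygmund decomposition at heights $2^k\lz$ inside each cube, restore the moment conditions on the stopping cubes when their side length is below $c_0$, and, crucially, organize the resulting bounded atoms \emph{generation by generation across the disjoint cubes of the polymer}, so that each generation forms a single polymer and the $\ell^v$ (not $\ell^1$) bookkeeping of the coefficients is respected; a naive triangle inequality atom-by-atom loses the $\ell^v$ structure. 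With that decomposition in hand (plus the truncation density noted above), the norm equivalence follows, and the endpoint duality of Corollary-\ref{dual-q=1} type then comes out as a corollary — the logical order opposite to the one your main argument relies on. As written, the proposal therefore has a genuine gap.
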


The following proposition, which is just \cite[Proposition 5.6]{SXY19},
might be viewed as a counterpart of Proposition \ref{prop.jneq2}.

\begin{proposition}\label{prop.hkeq2}
Let $v\in(1,\fz)$ and $s\in\zz_{+}$.
\begin{itemize}
\item[{\rm(i)}]
If $w\in(1,v]$ and $Q_0$ is a cube of $\rn$, then
$hk_{(v,w,s)_{0}}(Q_0)=|Q_0|^{\frac{1}{v}-\frac{1}{w}}L^w(Q_0)$
with equivalent norms.
\item[{\rm{(ii)}}]
$L^v(\rn)=hk_{(v,v,s)_{0}}(\rn)$ with equivalent norms.
\end{itemize}
\end{proposition}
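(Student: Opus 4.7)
The plan is to argue directly from the atomic definition (Definition \ref{def.hkvws}), exploiting the fact that by Proposition \ref{prop.hkc} the constant $c_0\in(0,\ell(\cx))$ may be chosen at will, so the supporting cube in each atomic decomposition can be arranged to satisfy $\ell(Q)\ge c_0$ and thus dispense with the vanishing-moment condition in Definition \ref{def.atom}(iii). With that freedom in hand, both parts reduce to elementary computations with the $L^w$ or $L^v$ norm of a sum of atoms on interior pairwise disjoint cubes.

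For part (i), in the direction $hk_{(v,w,s)_0}(Q_0)\hookrightarrow|Q_0|^{1/v-1/w}L^w(Q_0)$, take a polymer $g=\sum_j\lambda_j a_j$ with atoms $a_j$ supported in interior pairwise disjoint cubes $\{Q_j\}_j\subset Q_0$. The interior disjointness gives
\begin{align*}
\|g\|_{L^w(Q_0)}^w=\sum_j|\lambda_j|^w\|a_j\|_{L^w(Q_j)}^w\le\sum_j|\lambda_j|^w|Q_j|^{1-w/v},
\end{align*}
and H\"older's inequality with conjugate exponents $v/w$ and $v/(v-w)$, together with $\sum_j|Q_j|\le|Q_0|$, yields $\|g\|_{L^w(Q_0)}\le|Q_0|^{1/w-1/v}\|\{\lambda_j\}_j\|_{\ell^v}$. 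Taking the infimum over polymer decompositions and then summing over polymers by subadditivity of $\|\cdot\|_{L^w(Q_0)}$ delivers $|Q_0|^{1/v-1/w}\|g\|_{L^w(Q_0)}\lesssim\|g\|_{hk_{(v,w,s)_0}(Q_0)}$. For the reverse inclusion, given a nonzero $f\in L^w(Q_0)$, use Proposition \ref{prop.hkc} to fix $c_0\in(0,\ell(Q_0))$; then $\ell(Q_0)\ge c_0$ so $Q_0$ itself carries no moment constraint, and with $\lambda:=|Q_0|^{1/v-1/w}\|f\|_{L^w(Q_0)}$ the function $a:=f/\lambda$ is a local $(v,w,s)_0$-atom on $Q_0$, so $f=\lambda a$ is a single-term polymer and $\|f\|_{hk_{(v,w,s)_0}(Q_0)}\le|\lambda|=|Q_0|^{1/v-1/w}\|f\|_{L^w(Q_0)}$.

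Part (ii) follows the same template with $w=v$: the atomic inequality collapses to $\|g\|_{L^v(\rn)}^v\le\sum_j|\lambda_j|^v$, giving $\|g\|_{L^v(\rn)}\le\|g\|_{hk_{(v,v,s)_0}(\rn)}$; for the converse, partition $\rn$ into interior pairwise disjoint unit cubes $\{Q^{(k)}\}_{k\in\nn}$, pick $c_0\in(0,1)$ so that none of the $Q^{(k)}$ incurs a moment constraint, and set $\lambda_k:=\|f\mathbf{1}_{Q^{(k)}}\|_{L^v(\rn)}$ together with $a_k:=f\mathbf{1}_{Q^{(k)}}/\lambda_k$ (discarding any $k$ with $\lambda_k=0$); each $a_k$ is a local $(v,v,s)_0$-atom on $Q^{(k)}$, and $\sum_k\lambda_k a_k=f$ with $\|\{\lambda_k\}_k\|_{\ell^v}=\|f\|_{L^v(\rn)}$, so $\|f\|_{hk_{(v,v,s)_0}(\rn)}\le\|f\|_{L^v(\rn)}$.

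The main subtlety I anticipate is verifying that the representation of $f$ in part (ii) converges in the weak-$\ast$ topology on $(jn_{(v',v',s)_0}(\rn))^\ast$ required by Definition \ref{def.poly}, not merely pointwise or in $L^v(\rn)$. Here Proposition \ref{prop.jneq2}(ii) becomes essential: since $jn_{(v',v',s)_0}(\rn)=L^{v'}(\rn)$ with equivalent norms, the required convergence $\langle f-\sum_{k\le N}\lambda_k a_k,h\rangle\to 0$ for every $h\in jn_{(v',v',s)_0}(\rn)$ reduces via H\"older's inequality to $\|f\mathbf{1}_{\bigcup_{k>N}Q^{(k)}}\|_{L^v(\rn)}\|h\|_{L^{v'}(\rn)}\to 0$, which follows from absolute continuity of the $L^v$ norm. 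Part (i) only invokes a single-term polymer, so no convergence issue arises there.
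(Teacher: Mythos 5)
Your argument is correct; note that the survey itself gives no proof of this proposition (it is quoted from \cite[Proposition 5.6]{SXY19}), so there is no proof here to compare with line by line. What can be said is that your route is the direct, elementary one: the analogous non-localized identification (Proposition \ref{HKqq=Lq}) is obtained in this paper as an application of the duality theorem, whereas you exploit exactly what localization buys, namely that cubes of side length at least $c_0$ carry no moment condition, so a normalized copy of $f$ is itself an atom on $Q_0$, and on $\rn$ a partition into congruent large cubes turns any $L^v$ function into a single polymer whose $\ell^v$ coefficient norm equals $\|f\|_{L^v(\rn)}$; no duality or cancellation machinery is needed. Two small points should be made explicit, though neither is a gap in substance. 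First, for your candidate polymers to be admissible at all they must be elements of $(jn_{(v',w',s)_{0}}(\cx))^\ast$: on a cube this follows from the continuous embedding $jn_{(v',w',s)_0}(Q_0)\subset L^{w'}(Q_0)$ (take the one-cube collection $\{Q_0\}$ and use $\ell(Q_0)\ge c_0$, so $P^{(s)}_{Q_0,c_0}=0$), and on $\rn$ from Proposition \ref{prop.jneq2}(ii); with this, H\"older's inequality shows $h\mapsto\int_{\cx}f(x)h(x)\,dx$ is bounded on $jn$. Second, in the forward direction you implicitly identify the weak-$\ast$ sum $\sum_i\sum_j\lambda_{i,j}a_{i,j}$ with its $L^w(Q_0)$ (resp. $L^v(\rn)$) limit; your disjoint-support plus H\"older estimate shows the partial sums are Cauchy in $L^w$, and since $jn$ embeds into $L^{w'}$ the two limits agree as functionals on $jn$ --- the same convergence argument you spelled out for part (ii) should be invoked here as well. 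Finally, in part (i) the appeal to Proposition \ref{prop.hkc} is not actually needed, since for every admissible $c_0\in(0,\ell(Q_0))$ the cube $Q_0$ automatically satisfies $\ell(Q_0)>c_0$ and hence carries no moment constraint.
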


Finally, the following relation between $hk_{(v,w,s)_{\az}}(\cx)$
and the atomic localized Hardy space is just \cite[Proposition 5.7]{SXY19}.

\begin{proposition}\label{prop.hkandhp}
Let $w\in(1,\fz]$ and $Q_0$ be a cube of $\rn$.
Then
$$\bigcup_{v\in[1,\fz)}hk_{(v,w,0)_{0}}(Q_0)\subset h^{1,w}_{at}(Q_0).$$
Moreover, if $g\in \bigcup_{v\in[1,\fz)}hk_{(v,w,0)_0}(Q_0)$, then
$$\|g\|_{h^{1,w}_{at}(Q_0)}\le \liminf_{v\to 1^+}\|g\|_{hk_{(v,w,0)_0}(Q_0)},$$
where $v\to 1^+$ means that $v\in(1,\fz)$ and $v\to 1$.
\end{proposition}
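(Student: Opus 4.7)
The plan is to show that every polymer decomposition in $hk_{(v,w,0)_{0}}(Q_0)$ can be rescaled into an atomic decomposition of $h^{1,w}_{\at}(Q_0)$, with the price of the rescaling absorbed by H\"older's inequality and the bound $|Q_0|^{1-1/v}\to 1$ as $v\to 1^+$.

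First I would compare the size conditions of the two atom classes. A local $(v,w,0)_{0}$-atom $a_j$ supported in a cube $Q_j\subset Q_0$ satisfies
$$\|a_j\|_{L^w(Q_j)}\le |Q_j|^{\frac1w-\frac1v},$$
whereas a local $(1,w,0)_{0}$-atom (which is essentially an $h^{1,w}_{\at}(Q_0)$-atom) must satisfy the stronger bound $|Q_j|^{\frac1w-1}$. Setting $\widetilde{a}_j:=|Q_j|^{\frac1v-1}a_j$, one checks immediately that $\widetilde{a}_j$ has the same support, the correct $L^w$-size, and (when $\ell(Q_j)<c_0$) the same vanishing-moment condition, so it is a local $(1,w,0)_0$-atom. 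Thus any polymer $g=\sum_j\lambda_ja_j\in\widetilde{hk}_{(v,w,0)_{0}}(Q_0)$ can be rewritten as $g=\sum_j\widetilde\lambda_j\widetilde{a}_j$ with $\widetilde\lambda_j:=\lambda_j|Q_j|^{1-\frac1v}$.

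Next, since the cubes $\{Q_j\}_j$ are interior pairwise disjoint subcubes of $Q_0$, H\"older's inequality with exponents $v$ and $v'=v/(v-1)$ gives
$$\sum_j\lf|\widetilde\lambda_j\r|
=\sum_j|\lambda_j||Q_j|^{1-\frac1v}
\le\lf(\sum_j|\lambda_j|^v\r)^{\frac1v}\lf(\sum_j|Q_j|\r)^{1-\frac1v}
\le|Q_0|^{1-\frac1v}\lf\|\{\lambda_j\}_j\r\|_{\ell^v}.$$
Because $\|\widetilde{a}_j\|_{L^1(Q_j)}\le1$, the series $\sum_j\widetilde\lambda_j\widetilde{a}_j$ converges absolutely in $L^1(Q_0)$ and thus represents $g$ as an atomic element of $h^{1,w}_{\at}(Q_0)$. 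Taking the infimum over polymer representations of $g$ yields
$$\|g\|_{h^{1,w}_{\at}(Q_0)}\le|Q_0|^{1-\frac1v}\|g\|_{\widetilde{hk}_{(v,w,0)_{0}}(Q_0)}.$$

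Finally, for a general $g\in hk_{(v,w,0)_{0}}(Q_0)$, I would write $g=\sum_ig_i$ with $g_i\in\widetilde{hk}_{(v,w,0)_{0}}(Q_0)$ as in \eqref{hc}, apply the polymer estimate to each $g_i$, and sum to obtain
$$\|g\|_{h^{1,w}_{\at}(Q_0)}\le|Q_0|^{1-\frac1v}\|g\|_{hk_{(v,w,0)_{0}}(Q_0)}.$$
Sending $v\to 1^+$, the factor $|Q_0|^{1-1/v}$ tends to $1$, so taking the $\liminf$ on the right-hand side gives the stated inequality and in particular the inclusion. The estimate is entirely elementary; the only mildly delicate point is verifying that the rearranged series $\sum_j\widetilde\lambda_j\widetilde{a}_j$ truly represents $g$ in the $h^{1,w}_{\at}(Q_0)$ sense (as opposed to only in the dual of $jn_{(v',w',0)_{0}}$), which is handled by the $L^1$-convergence guaranteed by $\sum_j|\widetilde\lambda_j|<\infty$ and $\|\widetilde{a}_j\|_{L^1}\le1$.
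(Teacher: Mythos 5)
Your argument is correct: rescaling each local $(v,w,0)_0$-atom $a_j$ to $|Q_j|^{1/v-1}a_j$ (which is a local $(1,w,0)_0$-atom, i.e.\ an $h^{1,w}_{at}(Q_0)$-atom), applying H\"older's inequality together with the disjointness bound $\sum_j|Q_j|\le|Q_0|$ to obtain the factor $|Q_0|^{1-1/v}$, summing over the chains, and letting $v\to1^+$ gives both the inclusion and the liminf estimate, and the delicate point you flag (identifying the polymer with its absolutely convergent $L^1$ atomic sum) is handled adequately. The survey itself does not reproduce a proof but simply quotes \cite[Proposition 5.7]{SXY19}, and your rescaling-plus-H\"older route is precisely the standard argument behind that result (and its non-localized analogue in \cite[Remark 4.2(iii)]{tyyNA}), so there is nothing genuinely different to compare.
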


We also list some open questions at the end of this subsection.

\begin{question}
	There still exist something \emph{unclear} in Proposition \ref{rem.jnandJN}(iii).
	Precisely, let $p\in(1,\fz)$,
	$$jn_p(\rr)/\cc:=\lf\{f\in L^1_\loc(\rr):\
	\|f\|_{jn_p(\rr)/\cc}:=\inf_{c\in\cc}\|f+c\|_{jn_p(\rr)}
	<\fz\r\}$$
	and
	$$L^p(\rr)/\cc:=\lf\{f\in L^1_\loc(\rr):\
	\|f\|_{L^p(\rr)/\cc}:=\inf_{c\in\cc}\|f+c\|_{L^p(\rr)}
	<\fz\r\}.$$
	Then it is still unknown whether or not
	$$\lf[jn_p(\rr)/\cc\r]\subsetneqq JN_p(\rr)$$
	holds true, namely, it is still unknown whether or not
	there exists some \emph{non-constant} function $h$ such that
	$h\in JN_p(\rr)$ but $h\notin jn_p(\rr)$.
	Moreover, it is still unknown whether or not
	$$\lf[L^p(\rn)/\cc\r]\subsetneqq \lf[jn_p(\rn)/\cc\r]\subsetneqq JN_p(\rn)$$
	holds true.
\end{question}
The following question is on the case $q>p$ corresponding to Proposition \ref{p.a}.
\begin{question}\label{openQ-loc-q>p}
Let $p\in[1,\fz)$, $q\in(p,\fz)$, $s\in\zz_{+}$, and $\az\in(0,\fz)$. Then
it is still unknown whether or not
$$jn_{(p,q,s)_{\az}}(\cx)=\lf[JN_{(p,q,s)_{\az}}(\cx)\cap L^p(\cx)\r]$$
still holds true.
\end{question}

Also, the corresponding localized cases of
Questions \ref{openQ-JN} and \ref{openQ-HK}
are listed as follows.
The following Question \ref{openQ-jn} is a modification of \cite[Remark 3.5]{SXY19},
and Question \ref{openQ-hk} is just \cite[Remark 5.8]{SXY19}.

\begin{question}\label{openQ-jn}
Let $p\in[1,\fz)$, $q\in[1,\fz)$, $s\in\zz_{+}$, and
$\az\in[\frac{1}{p}-\frac{1}{q},\fz)$.
Then the relation between
$jn_{(p,q,s)_{\az}}(\rn)$ and the Riesz--Morrey space
$RM_{p,q,\az}(\rn)$
(see Subsection \ref{Subsec-RM} for its definition)
is still \emph{unclear} so far,
except the identity
$$jn_{(p,p,s)_0}(\rn)=L^p(\rn)=RM_{p,p,0}(\rn)$$
due to Proposition \ref{prop.jneq2}(ii)
and Theorem \ref{thm-ClasRM}(ii),
and the inclusion
$$jn_{(p,q,s)_\az}(\rn)\supset RM_{p,q,\az}(\rn)
\quad{\rm with}\quad
\|\cdot\|_{jn_{(p,q,s)_\az}(\rn)}
\ls \|\cdot\|_{RM_{p,q,\az}(\rn)}$$
due to \eqref{Cs} and their definitions,
where the implicit positive constant
is independent of functions under consideration.
\end{question}

\begin{question}\label{openQ-hk}
Let $v\in(1,\fz)$, $w\in(1,\fz]$, and $Q_0$ be a cube of $\rn$.
\begin{itemize}
\item[\rm{(i)}]
It is interesting to clarify the relation between $\bigcup_{v\in(1,\fz)}hk_{(v,w,0)_{0}}(Q_0)$ and $ h^{1,w}_{at}(Q_0)$, and
to find the condition on $g$ such that $\|g\|_{h^{1,w}_{at}(Q_0)}=\lim_{v\to 1^+}\|g\|_{hk_{(v,w,0)_0}(Q_0)}$.
\item[\rm{(ii)}]
Let $\az\in(0,\fz)$ and $s\in\zz_{+}$.
As $v\to 1^+$,
the relation between the atomic localized Hardy space (see \cite{G79} for the definition)
and $hk_{(v,w,s)_{\az}}(Q_0)$ is still unknown.
\end{itemize}
\end{question}

\subsection{Congruent John--Nirenberg--Campanato spaces}\label{Subsec-ConJNC}

Inspired by the JNC space (see Subsection \ref{Subsec-JNC})
and the space $\mathcal{B}$ (introduced and studied by Bourgain et al. \cite{BBM15}),
Jia et al. \cite{jtyyz21} introduced the special John--Nirenberg--Campanato
spaces via congruent cubes, which are of some amalgam features.
This subsection is devoted to the main properties and some applications
of congruent JNC spaces.

In what follows, for any $m\in \zz$, $\mathcal{D}_m(\rn)$
denotes the set of all subcubes of $\rn$ with side length $2^{-m}$,
$\mathcal{D}_m(Q_0)$ the set of all subcubes of $Q_0$
with side length $2^{-m}\ell(Q_0)$ for any given $m\in\zz_+$,
and $\mathcal{D}_{m}(Q_0):=\emptyset$ for any given $m\in\zz\setminus\zz_+$;
here and thereafter, $\ell(Q_0)$ denotes the side length of $Q_0$.

\begin{definition}\label{Defin.jncc}
	Let $p$, $q\in[1,\infty)$, $s\in\zz_+$, and $\alpha\in \rr$.
	The \emph{special John--Nirenberg--Campanato space via congruent cubes}
	(for short, \emph{congruent $\mathrm{JNC}$ space})
	$JN_{(p, q, s)_{\alpha}}^{\mathrm{con}}(\mathcal{X})$ is defined to be the
	set of all $f\in L_{\mathrm{loc}}^1(\mathcal{X})$ such that
	$$\|f\|_{JN_{(p,q,s)_{\alpha}}^{\mathrm{con}}(\mathcal{X})}
	:=\sup_{m\in\zz}
	\lf\{[f]^{(m)}_{(p,q,s)_{\alpha},\mathcal{X}}\r\}<\infty,$$
	where, for any $m\in\zz$, $[f]^{(m)}_{(p,q,s)_{\alpha},\mathcal{X}}$
	is defined to be
	$$\sup_{\{Q_j\}_j\subset \mathcal{D}_m(\mathcal{X})}
	\lf[\sum_{j}\lf|Q_{j}\r|\lf\{\lf|Q_{j}\r|^{-\alpha}\lf[\fint_{Q_{j}}
	\lf|f(x)-P_{Q_{j}}^{(s)}(f)(x)\r|^{q}\,dx\r]^{\frac{1}{q}}\r\}^{p} \r]^{\frac{1}{p}}$$
	with $P_{Q_j}^{(s)}(f)$ for any $j$ as in \eqref{PQsf} via $Q$ replaced by $Q_j$
	and the supremum taken over all collections of interior pairwise disjoint
	cubes $\{Q_j\}_j\subset\mathcal{D}_m(\mathcal{X})$.
	In particular, let
	$$JN_{p,q}^{\mathrm{con}}(\mathcal{X}):=JN_{(p,q,0)_{0}}^{\mathrm{con}}(\mathcal{X}).$$
\end{definition}

\begin{remark}\label{2eqNORM}
	Let $p$, $q\in[1,\infty)$, $s\in\zz_+$, and $\alpha\in \rr$.
	There exist some useful equivalent norms
	on $JN_{(p,q,s)_\alpha}^{\mathrm{con}}(\cx)$ as follows.
	\begin{enumerate}
		\item[\rm{(i)}] (non-dyadic side length)
		$f\in JN_{(p,q,s)_\alpha}^{\mathrm{con}}(\cx)$ if and only if
		$f\in L^1_\loc(\cx)$ and
		\begin{align*}
			\|f\|_{\widetilde{JN}_{(p,q,s)_{\alpha}}^{\mathrm{con}}(\mathcal{X})}
			:=\sup\lf[\sum_{j}\lf|Q_{j}\r|\lf\{\lf|Q_{j}\r|^{-\alpha}\lf[\fint_{Q_{j}}
			\lf|f(x)-P_{Q_{j}}^{(s)}(f)(x)\r|^{q}\,dx\r]^{\frac{1}{q}}\r\}^{p} \r]^{\frac{1}{p}}
			<\fz
		\end{align*}
	    if and only if $f\in L^1_\loc(\cx)$ and
	    \begin{align}\label{inff-P}
	    	\|f\|_{\widehat{JN}_{(p,q,s)_{\alpha}}^{\mathrm{con}}(\mathcal{X})}
	    	:=\sup\lf[\sum_{j}\lf|Q_{j}\r|\lf\{\lf|Q_{j}\r|^{-\alpha}
	    	\inf_{P\in \mathcal{P}_{s}(Q_j)}\lf[\fint_{Q_{j}}
	    	\lf|f(x)-P(x)\r|^{q}\,dx\r]^{\frac{1}{q}}\r\}^{p} \r]^{\frac{1}{p}}
	    	<\fz,
	    \end{align}
		where the suprema are taken over all collections of interior pairwise disjoint
		cubes $\{Q_j\}_j$ of $\cx$ with the same side length;
		moreover,
		$\|\cdot\|_{JN_{(p,q,s)_\alpha}^{\mathrm{con}}(\cx)}
		\sim\|\cdot\|_{\widetilde{JN}_{(p,q,s)_{\alpha}}^{\mathrm{con}}(\mathcal{X})}
		\sim\|\cdot\|_{\widehat{JN}_{(p,q,s)_{\alpha}}^{\mathrm{con}}(\mathcal{X})}$;
		see \cite[Remark 1.6(ii) and Propositions 2.6 and 2.7]{jtyyz21}.
		
		\item[\rm{(ii)}] (integral representation)
		In what follows, for any $y\in\rn$ and $r\in(0,\fz)$, let 
        $$B(y,r):=\{x\in\rn:\ |x-y|<r\}.$$
		Then
		$f\in JN_{(p,q,s)_\alpha}^{\mathrm{con}}(\rn)$ if and only if
		$f\in L^1_\loc(\rn)$ and
		$$\|f\|_{\ast}:=
		\sup_{r\in(0,\fz)}\lf[\int_{\rn}\lf\{|B(y,r)|^{-\alpha}
		\lf[\fint_{B(y,r)}\lf|f(x)-P_{B(y,r)}^{(s)}(f)(x)\r|^q
		\,dx\r]^{\frac{1}{q}}\r\}^p\,dy\r]^{\frac{1}{p}}<\fz;$$
		moreover,
		$\|\cdot\|_{JN_{(p,q,s)_\alpha}^{\mathrm{con}}(\rn)}
		\sim\|\cdot\|_{\ast}$;
		see \cite[Proposition 2.2]{jtyyz21} for this equivalence
		which plays an essential role when establishing the boundedness
		of operators on congruent JNC spaces;
		see \cite{JTYYZ2,JYYZ} for more details.
	\end{enumerate}
\end{remark}

The following proposition is just \cite[Proposition 2.10]{jtyyz21}.
\begin{proposition}\label{p2.1}
	Let $s\in\zz_+$, $\alpha\in\rr$, and $Q_0$ be any given cube of $\rn$.
	\begin{enumerate}
		\item[\rm (i)] For any given $p\in [1,\infty)$ and $q\in [1,\infty)$,
		$$JN_{(p,q,s)_{\alpha}}^{\mathrm{con}}(Q_0)
		\subset\lf[|Q_0|^{\frac{1}{p}-\frac{1}{q}-\alpha}L^q(Q_0)/\mathcal{P}_s(Q_0)\r].$$
		Moreover, for any $f\in JN_{(p,q,s)_{\alpha}}^{\mathrm{con}}(Q_0)$,
		$$\|f\|_{|Q_0|^{\frac{1}{p}-\frac{1}{q}-\alpha}L^q(Q_0)/\mathcal{P}_s(Q_0)}
		\leq\|f\|_{JN_{(p,q,s)_{\alpha}}^{\mathrm{con}}(Q_0)}.$$
		\item[\rm (ii)] If $\alpha\in(-\infty,0]$, then, for any given $p\in [1,\infty)$ and $q\in[p,\infty)$,
		$$JN_{(p,q,s)_{\alpha}}^{\mathrm{con}}(Q_0)
		=\lf[|Q_0|^{\frac{1}{p}-\frac{1}{q}-\alpha}L^q(Q_0)/\mathcal{P}_s(Q_0)\r]$$
		with equivalent norms.
		\item[\rm (iii)] If $q\in[1,\fz)$ and $1\leq p_1\leq p_2<\fz$, then
		$JN_{(p_2,q,s)_{\alpha}}^{\mathrm{con}}(Q_0)
		\subset JN_{(p_1,q,s)_{\alpha}}^{\mathrm{con}}(Q_0)$. Moreover, for any $f\in JN_{(p_2,q,s)_{\alpha}}^{\mathrm{con}}(Q_0)$,
		$$|Q_0|^{-\frac{1}{p_1}}\|f\|_{JN_{(p_1,q,s)_{\alpha}}^{\mathrm{con}}(Q_0)}
		\leq|Q_0|^{-\frac{1}{p_2}}\|f\|_{JN_{(p_2,q,s)_{\alpha}}^{\mathrm{con}}(Q_0)}.$$
		\item[\rm (iv)] If $p\in[1,\fz)$ and $1\leq q_1\leq q_2<\fz$, then
		$JN_{(p,q_2,s)_{\alpha}}^{\mathrm{con}}(\mathcal{X})\subset JN_{(p,q_1,s)_{\alpha}}^{\mathrm{con}}(\mathcal{X})$.
		Moreover, for any $f\in JN_{(p,q_2,s)_{\alpha}}^{\mathrm{con}}(\mathcal{X})$,
		$$\|f\|_{JN_{(p,q_1,s)_{\alpha}}^{\mathrm{con}}(\mathcal{X})}
		\leq\|f\|_{JN_{(p,q_2,s)_{\alpha}}^{\mathrm{con}}(\mathcal{X})}.$$
	\end{enumerate}
\end{proposition}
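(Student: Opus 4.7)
My plan is to treat the four items in the order (iv), (i), (iii), (ii), since each subsequent claim uses slightly more machinery. Throughout, I will work with the equivalent form of the norm from Remark \ref{2eqNORM}(i) and exploit the fact that, for any fixed $m\in\zz_+$, all cubes in $\mathcal{D}_m(Q_0)$ have the same volume $|Q_0|/2^{mn}$, and at most $2^{mn}$ of them can be chosen in a pairwise disjoint family. For item (iv), I will simply apply Jensen's inequality cube by cube: since $q_1\le q_2$, for every cube $Q$ and every $f\in L^{q_2}_{\loc}$, $[\fint_Q |f-P_Q^{(s)}(f)|^{q_1}]^{1/q_1}\le [\fint_Q |f-P_Q^{(s)}(f)|^{q_2}]^{1/q_2}$; raising to the $p$, summing over any disjoint family in $\mathcal{D}_m(\mathcal{X})$, and taking the supremum over $m$ gives the conclusion. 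For item (i), I will test the defining supremum against the trivial family $\{Q_0\}\in\mathcal{D}_0(Q_0)$; this single-term sum immediately gives $\|f\|_{JN_{(p,q,s)_\alpha}^{\mathrm{con}}(Q_0)}\ge |Q_0|^{1/p-1/q-\alpha}\|f-P_{Q_0}^{(s)}(f)\|_{L^q(Q_0)}$, and since $P_{Q_0}^{(s)}(f)\in\mathcal{P}_s(Q_0)$ the right-hand side dominates $|Q_0|^{1/p-1/q-\alpha}\inf_{P\in\mathcal{P}_s(Q_0)}\|f-P\|_{L^q(Q_0)}$, which is precisely the target norm.

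For item (iii), fix $m\in\zz_+$ and a disjoint family $\{Q_j\}_j\subset \mathcal{D}_m(Q_0)$, with the common volume $|Q_j|=|Q_0|/2^{mn}$ and cardinality $N\le 2^{mn}=|Q_0|/|Q_j|$. Writing
$$
b_j:=|Q_j|^{-\alpha}\lf[\fint_{Q_j}\lf|f-P_{Q_j}^{(s)}(f)\r|^q\,dx\r]^{1/q},
$$
the $m$-level quantity factors as $[f]^{(m)}_{(p,q,s)_\alpha,Q_0}=|Q_j|^{1/p}\|\{b_j\}\|_{\ell^p}$. Since $p_1\le p_2$, Hölder's inequality on the counting measure with $N$ atoms yields $\|\{b_j\}\|_{\ell^{p_1}}\le N^{1/p_1-1/p_2}\|\{b_j\}\|_{\ell^{p_2}}\le (|Q_0|/|Q_j|)^{1/p_1-1/p_2}\|\{b_j\}\|_{\ell^{p_2}}$. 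Multiplying by $|Q_j|^{1/p_1}$ collapses the volume factors and produces $[f]^{(m)}_{(p_1,q,s)_\alpha,Q_0}\le |Q_0|^{1/p_1-1/p_2}[f]^{(m)}_{(p_2,q,s)_\alpha,Q_0}$. Taking the supremum over $m$ gives the inequality in (iii) after dividing both sides by $|Q_0|^{1/p_1}$.

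For item (ii), the upper bound is the main obstacle and requires both the $\alpha\le 0$ assumption and the quasi-optimality of the projection $P_Q^{(s)}(f)$. From \eqref{Cs} one gets, for any $R\in\mathcal{P}_s(Q)$, $\|R-P_Q^{(s)}(f)\|_{L^q(Q)}=\|P_Q^{(s)}(R-f)\|_{L^q(Q)}\le C_{(s)}\|R-f\|_{L^q(Q)}$ (via $\|f\|_{L^1(Q)}\le |Q|^{1-1/q}\|f\|_{L^q(Q)}$), hence by the triangle inequality
$$
\|f-P_Q^{(s)}(f)\|_{L^q(Q)}\le (1+C_{(s)})\inf_{R\in\mathcal{P}_s(Q)}\|f-R\|_{L^q(Q)}.
$$
Fix $m\in\zz_+$, a disjoint family $\{Q_j\}_j\subset\mathcal{D}_m(Q_0)$, and any $P\in\mathcal{P}_s(Q_0)$; since $P|_{Q_j}\in\mathcal{P}_s(Q_j)$, the displayed inequality combined with disjointness yields $\sum_j\|f-P_{Q_j}^{(s)}(f)\|_{L^q(Q_j)}^q\le (1+C_{(s)})^q\|f-P\|_{L^q(Q_0)}^q$. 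To pass from $\ell^q$ to $\ell^p$ I will again invoke Hölder: since $p\le q$ and $N\le |Q_0|/|Q_j|$,
$$
\lf[\sum_j\|f-P_{Q_j}^{(s)}(f)\|_{L^q(Q_j)}^p\r]^{1/p}\le (|Q_0|/|Q_j|)^{1/p-1/q}(1+C_{(s)})\|f-P\|_{L^q(Q_0)}.
$$
Assembling the volume factors, the $m$-level quantity is bounded by $|Q_j|^{-\alpha}|Q_0|^{1/p-1/q}(1+C_{(s)})\|f-P\|_{L^q(Q_0)}$, and the hypothesis $\alpha\le 0$ produces $|Q_j|^{-\alpha}\le |Q_0|^{-\alpha}$, which is the only point at which $\alpha\le 0$ is truly used. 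Taking the infimum over $P$ and the supremum over $m$ gives $\|f\|_{JN_{(p,q,s)_\alpha}^{\mathrm{con}}(Q_0)}\le (1+C_{(s)})\|f\|_{|Q_0|^{1/p-1/q-\alpha}L^q(Q_0)/\mathcal{P}_s(Q_0)}$, which together with (i) finishes (ii). The genuinely delicate point is the interplay between the Hölder exponent loss $N^{1/p-1/q}$ and the sign of $\alpha$; if $\alpha>0$ the estimate $|Q_j|^{-\alpha}\le |Q_0|^{-\alpha}$ reverses and the argument breaks down, which is exactly why the hypothesis $\alpha\le 0$ in (ii) appears indispensable.
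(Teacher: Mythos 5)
Your proposal is correct, and all four steps---testing the defining supremum with the single cube $Q_0$ for (i), Jensen's inequality for (iv), H\"older's inequality together with the cardinality bound $N\le |Q_0|/|Q_j|$ for (iii), and the quasi-optimality of $P^{(s)}_Q$ coming from \eqref{Cs} combined with H\"older and the estimate $|Q_j|^{-\alpha}\le |Q_0|^{-\alpha}$ (the only place $\alpha\le 0$ is used) for (ii)---are exactly the expected ingredients. The survey states this proposition without proof, citing \cite[Proposition 2.10]{jtyyz21}, and your argument follows essentially the same standard route as the original one.
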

The relation of congruent JNC spaces and Campanato spaces
is similar to Proposition \ref{ptofz} and Corollary \ref{p=8},
and hence we omit the statement here;
see \cite[Proposition 2.11]{jtyyz21} for the details.
The relation of congruent JNC spaces and the space $\cb$
was discussed in \cite[Proposition 2.20 and Remark 2.21]{jtyyz21}.
Recall that the \emph{local Sobolev space}
$W_{\mathrm{loc}}^{1,p}(\rn)$ is defined by setting
$$W_{\mathrm{loc}}^{1,p}(\rn):=
\lf\{f\in L_{\mathrm{loc}}^p(\rn):\ |\nabla f|\in L_{\mathrm{loc}}^p(\rn)\r\},$$
here and thereafter, $\nabla f:=(\partial_1f,\ldots,\partial_nf)$,
where, for any $i\in\{1,\ldots,n\}$,
$\partial_i f$ denotes the \emph{weak derivative} of $f$,
namely, a locally integrable function on $\rn$ such that,
for any $\varphi\in C_{\mathrm{c}}^{\fz}(\rn)$
(the set of all infinitely differentiable functions on $\rn$ with
compact support),
$$\int_{\rn}f(x)\partial_i \varphi(x)\,dx=-\int_{\rn}\varphi(x)\partial_i f(x)\,dx.$$
The following proposition is just \cite[Proposition 2.13]{jtyyz21}.
\begin{proposition}\label{W1p}
Let $p\in(1,\infty)$ and $f\in L_{\mathrm{loc}}^p(\rn)$. Then
$|\nabla f|\in L^p(\rn)$ if and only if
$$\liminf_{m\to \fz}\,
[f]_{(p,p,0)_{1/n},\rn}^{(m)}<\infty,$$
where $[f]_{(p,p,0)_{1/n},\rn}^{(m)}$ is as in Definition \ref{Defin.jncc}.
Moreover, for any given $p\in[1,\infty)$,
there exists a constant $C_{(n,p)}\in[1,\infty)$ such that,
for any $f\in W_{\mathrm{loc}}^{1,p}(\rn)$,
$$\frac{1}{C_{(n,p)}}\lf[\int_{\rn}|\nabla f(x)|^p\,dx\r]^{\frac{1}{p}}
\leq\liminf_{m\to\fz}\,
[f]_{(p,p,0)_{1/n},\rn}^{(m)}
\leq C_{(n,p)}\lf[\int_{\rn}|\nabla f(x)|^p\,dx\r]^{\frac{1}{p}}.$$
\end{proposition}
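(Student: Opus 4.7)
The plan is to prove the biconditional via an easy Poincar\'e-based direction and a harder direction using the Bourgain--Brezis--Mironescu (BBM) characterization of $W^{1,p}(\rn)$; the quantitative two-sided estimate then follows by tracking constants in each step. The preliminary observation is that $\mathcal{D}_m(\rn)$ is the set of \emph{all} axis-aligned cubes of side $r:=2^{-m}$, so any interior-disjoint family in $\mathcal{D}_m(\rn)$ can be completed to a tiling of $\rn$ without decreasing the defining sum. Combined with $s=0$ (giving $P_Q^{(0)}(f)=f_Q$) and $|Q|=r^n$, this reduces the supremum in Definition \ref{Defin.jncc} to
$$\lf([f]_{(p,p,0)_{1/n},\rn}^{(m)}\r)^{p}=r^{-p}\sup_{\tau\in[0,r)^n}\sum_{Q\in T_\tau}\int_{Q}\lf|f(x)-f_{Q}\r|^{p}\,dx,$$
where $T_\tau$ denotes the tiling of $\rn$ by cubes of side $r$ with corners on $\tau+r\zz^n$.

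For the easy direction, assume $|\nabla f|\in L^{p}(\rn)$. The Poincar\'e inequality on each $Q\in T_\tau$ of side $r$ yields $\int_Q|f-f_Q|^{p}\,dx\le C_{n,p}r^{p}\int_Q|\nabla f|^{p}\,dx$. Summing over $Q\in T_\tau$ and dividing by $r^p$ gives, uniformly in $m$ and $\tau$, the bound $([f]^{(m)})^{p}\le C_{n,p}\|\nabla f\|_{L^p(\rn)}^{p}$. Taking $\liminf_{m\to\fz}$ yields both the finiteness of $\liminf_m[f]^{(m)}$ and the right-hand inequality in the quantitative conclusion.

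For the hard direction, suppose $M:=\liminf_m[f]^{(m)}<\fz$ and pick a subsequence $m_k\to\fz$ with $[f]^{(m_k)}\le 2M$; write $r_k:=2^{-m_k}$. From the reformulation, $\sum_{Q\in T_\tau}\int_Q|f-f_Q|^{p}\,dx\le(2Mr_k)^{p}$ for every $\tau\in[0,r_k)^n$. Averaging this inequality over $\tau\in[0,r_k)^n$ and invoking Fubini---noting that, for each fixed $x$, the map sending $\tau$ to the center of the unique cube in $T_\tau$ containing $x$ is a Lebesgue measure-preserving bijection of $[0,r_k)^n$ onto the cube $Q(x,r_k)$ of side $r_k$ centered at $x$---yields
$$\int_{\rn}\fint_{Q(u,r_k)}\lf|f(x)-f_{Q(u,r_k)}\r|^{p}\,dx\,du\le(2Mr_k)^{p}.$$
Replacing cubes by balls of comparable radii and applying the BBM characterization of Sobolev spaces (valid for $p>1$) then gives $|\nabla f|\in L^{p}(\rn)$ together with $\|\nabla f\|_{L^{p}(\rn)}\le C_{n,p}M$, yielding the left-hand inequality. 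The principal subtlety is the translation-averaging step, which bridges the grid-anchored oscillation $\sum_{Q}\int_Q|f-f_Q|^{p}$ in $[f]^{(m)}$ and the translation-invariant quantity to which the BBM characterization applies; the Poincar\'e direction is elementary by comparison, and once the BBM quantity is at hand the deeper characterization is invoked as a black box.
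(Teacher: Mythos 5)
The survey states Proposition \ref{W1p} without proof (it is quoted from \cite[Proposition 2.13]{jtyyz21}), so I can only judge your argument on its own merits; as a route it is plausibly quite different from the difference-quotient arguments behind the related results of \cite{fms18} recalled in Remark \ref{SobolevBMO}, and its two central ingredients are correct. Since $\alpha=1/n$, $s=0$ and all cubes in $\mathcal{D}_m(\rn)$ have side $r=2^{-m}$, one indeed has $([f]^{(m)}_{(p,p,0)_{1/n},\rn})^p=r^{-p}\sup\sum_j\int_{Q_j}|f-f_{Q_j}|^p\,dx$; the Poincar\'e inequality on each cube plus interior disjointness gives the uniform bound $[f]^{(m)}_{(p,p,0)_{1/n},\rn}\le C_{(n,p)}\|\nabla f\|_{L^p(\rn)}$ for every $m$, and your translation-averaging identity (average over $\tau\in[0,r_k)^n$ of the lattice-tiling sums, Fubini, and the measure-preserving reparametrization $\tau\mapsto$ center of the cube of $T_\tau$ containing $x$) correctly yields $\int_{\rn}\fint_{Q(u,r_k)}|f(x)-f_{Q(u,r_k)}|^p\,dx\,du\le(2Mr_k)^p$, using only that lattice tilings are admissible families.

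Three points need repair. First, your ``preliminary observation'' is false: an interior-disjoint family of congruent axis-parallel cubes need not extend to any tiling (in $\rr^2$, $[0,1]^2$ and $[1.5,2.5]\times[0,1]$ leave a gap no unit square can cover without overlap), so the claimed equality of $[f]^{(m)}_{(p,p,0)_{1/n},\rn}$ with the supremum over lattice tilings is unjustified; this is harmless, because the Poincar\'e bound should simply be applied to an arbitrary admissible family, and the converse direction uses only the trivial inequality that lattice tilings do not exceed the supremum. Second, ``replacing cubes by balls and applying BBM'' compresses the one step that genuinely needs care: from the averaged oscillation bound one passes, via $\fint_Q\fint_Q|f(x)-f(y)|^p\le 2^p\fint_Q|f-f_Q|^p$ and the fact that the set of centers $u$ with $x,y\in Q(u,r_k)$ has measure $\gtrsim r_k^n$ when $|x-y|\lesssim r_k$, to $\iint_{\{|x-y|\le cr_k\}}|f(x)-f(y)|^p\,dx\,dy\le C M^p r_k^{n+p}$, but this holds only at the subsequence scales $r_k$; the naive mollifiers $r_k^{-n}\mathbf{1}_{\{|z|<r_k\}}$ then fail, since controlling the factor $|x-y|^{-p}$ would require the same bound at all smaller scales, so you must take mollifiers supported in annuli $\{r_k/2\le|z|<r_k\}$ (for which the single-scale bound suffices), and, because $f$ is only in $L^p_{\loc}(\rn)$, apply the BBM liminf theorem on an exhaustion by balls $B_R$ and let $R\to\fz$. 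Third, the ``moreover'' estimate is asserted for all $p\in[1,\fz)$, while you invoke BBM only for $p>1$: for $p=1$ you need the BV variant, which gives $|Df|(B_R)\le C_{(n,1)}M$, and then the hypothesis $f\in W^{1,1}_{\mathrm{loc}}(\rn)$ identifies $Df$ with $\nabla f\,dx$ so that the left-hand inequality still follows. With these adjustments your proposal gives a complete proof.
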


\begin{remark}\label{SobolevBMO}
Fusco et al. studied BMO-type seminorms and Sobolev functions
in \cite{fms18}. Indeed, in \cite[Theorem 2.2]{fms18},
Fusco et al. showed that Proposition \ref{W1p} still holds true with cubes $\{Q_j\}_j$,
in the supremum of $[f]_{(p,p,0)_{1/n},\rn}^{(m)}$,
having the same side length but \emph{arbitrary orientation}.
Later, the main results of \cite{fms18} were further
extended by Di Fratta and Fiorenza in \cite{df20},
via replacing a family of open cubes by a broader class of \emph{tessellations}
(from pentagonal and hexagonal tilings to space-filling polyhedrons
and creative tessellations).
\end{remark}

The following nontriviality is just \cite[Propositions 2.16 and 2.19]{jtyyz21}.
\begin{proposition}\label{t2.5}
	Let $p\in(1,\infty)$ and $q\in[1,p)$.
	\begin{itemize}
		\item[{\rm(i)}]
		Let $I_0$ be any given bounded interval of $\rr$.
		Then
		$$
		JN_{p,q}(I_0)\subsetneqq JN_{p,q}^{\mathrm{con}}(I_0)
		\quad\text{and}\quad
		JN_{p,q}(\rr)\subsetneqq JN_{p,q}^{\mathrm{con}}(\rr).$$
		
		\item[{\rm(ii)}]
		Let $Q_0$ be any given cube of $\rn$. Then
		$$JN_{p,q}(Q_0)\subsetneqq JN_{p,q}^{\mathrm{con}}(Q_0).$$
	\end{itemize}
\end{proposition}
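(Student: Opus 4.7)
The plan is to verify the inclusion directly from the definitions and then to exhibit a concrete function that witnesses strictness in (i) and (ii) simultaneously. For the inclusion $JN_{p,q}(\cx)\subset JN_{p,q}^{\mathrm{con}}(\cx)$ with $\cx\in\{Q_0,I_0,\rr,\rn\}$, observe that any collection $\{R_j\}_j\subset\mathcal{D}_m(\cx)$ of interior pairwise disjoint cubes of common side length $2^{-m}$ is also an admissible collection in the definition of $\|\cdot\|_{JN_{p,q}(\cx)}$ in \eqref{jnp}. Hence, for any $f\in L^1_{\loc}(\cx)$ and any $m\in\zz$, $[f]^{(m)}_{(p,q,0)_0,\cx}\le \|f\|_{JN_{p,q}(\cx)}$; passing to the supremum over $m$ yields $\|f\|_{JN_{p,q}^{\mathrm{con}}(\cx)}\le\|f\|_{JN_{p,q}(\cx)}$ and hence the desired inclusion.

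For strictness in (ii), by scaling we may take $Q_0:=[0,1]^n$. For each $k\in\nn$, let $Q_k:=[2^{-k},2^{-k+1}]\times[0,2^{-k}]^{n-1}$, a dyadic subcube of $Q_0$ of side length $2^{-k}$; the cubes $\{Q_k\}_{k\in\nn}$ are pairwise disjoint, as their first-coordinate projections are. Let $Q_k^{\pm}$ denote the two halves of $Q_k$ obtained by bisection in the first coordinate, and set
\begin{equation*}
h_k:=2^{kn/p}\bigl(\mathbf{1}_{Q_k^+}-\mathbf{1}_{Q_k^-}\bigr)\quad\text{and}\quad f:=\sum_{k\in\nn}h_k.
\end{equation*}
Since $q<p$, one computes $\int_{Q_0}|f|^q=\sum_{k\in\nn}2^{-kn(1-q/p)}<\fz$, so $f\in L^q(Q_0)\subset L^1(Q_0)$.

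Applying $\|\cdot\|_{JN_{p,q}(Q_0)}$ to the disjoint collection $\{Q_k\}_{k\in\nn}$: antisymmetry gives $(h_k)_{Q_k}=0$ and $|h_k|^q\equiv 2^{knq/p}$ on $Q_k$, so each term equals $|Q_k|[\fint_{Q_k}|f-f_{Q_k}|^q]^{p/q}=2^{-kn}\cdot 2^{kn}=1$, and summing over $k\in\nn$ forces $\|f\|_{JN_{p,q}(Q_0)}^p=\fz$. Hence $f\notin JN_{p,q}(Q_0)$. To see $f\in JN_{p,q}^{\mathrm{con}}(Q_0)$, fix $m\in\zz_+$ and any interior pairwise disjoint collection $\{R_j\}_j\subset\mathcal{D}_m(Q_0)$, and classify each $R_j$ (of side $2^{-m}$) as follows: (a) $f|_{R_j}$ is essentially constant---either $R_j$ avoids $\bigcup_k Q_k$ or lies within a single $Q_k^{\pm}$---contributing $0$; (b) $R_j$ crosses a jump of $f$ inside some $Q_k$ with $k\le m$, in which case $|f|\le 2^{kn/p}$ on $R_j$ gives contribution at most $|R_j|\cdot 2^{kn}=2^{(k-m)n}$, while disjointness permits only boundedly many such $R_j$ per $k$ so that the sum $\sum_{k=1}^{m}2^{(k-m)n}$ is controlled by $(1-2^{-n})^{-1}$; (c) at most finitely many exceptional $R_j$ cluster near $\mathbf{0}$ and absorb the tail $\bigcup_{j\ge m+1}Q_j$, for which antisymmetry forces $f_{R_j}=0$ and pairwise disjointness of the $Q_j$'s yields
\begin{equation*}
\fint_{R_j}|f|^q=2^{mn}\sum_{j\ge m+1}2^{jn(q/p-1)}\le C_{(n,p,q)}\,2^{mnq/p},
\end{equation*}
since $q<p$ makes the geometric series summable; consequently the contribution is at most $C_{(n,p,q)}^{p/q}$, bounded uniformly in $m$. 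Summing (a)--(c) gives $[f]^{(m)}_{(p,q,0)_0,Q_0}\le C'_{(n,p,q)}$ uniformly in $m$, so $f\in JN_{p,q}^{\mathrm{con}}(Q_0)\setminus JN_{p,q}(Q_0)$.

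The $I_0$ case of (i) is the $n=1$ specialization of the above. For $\cx=\rr$, extend $f$ (with $n=1$) by zero outside $[0,1]$: the collection $\{I_k\}_{k\in\nn}$ still forces $\|f\|_{JN_{p,q}(\rr)}=\fz$, while, for $m\le 0$, the boundedly many cubes in $\mathcal{D}_m(\rr)$ intersecting $[0,1]$ each contribute at most $D^{p/q}\cdot 2^{m(p/q-1)}$ with $D:=\int_0^1|f|^q$, which stays bounded because $p>q$; the analysis for $m\ge 1$ reproduces the $Q_0$ case. The main technical obstacle is type (c): a single cube absorbs infinitely many shrinking bumps, and only the strict inequality $q<p$ renders the governing geometric series summable---the very gap that ultimately separates $JN_{p,q}^{\mathrm{con}}$ from $JN_{p,q}$.
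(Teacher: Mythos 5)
Your overall strategy is sound, and it matches the spirit of the result: the survey itself gives no proof (it only cites \cite[Propositions 2.16 and 2.19]{jtyyz21}), and your construction --- a sum of mean-zero bumps of height $2^{kn/p}$ on dyadic cubes $Q_k$ of side $2^{-k}$ accumulating at a corner --- is exactly the kind of multi-scale example used there. The inclusion $\|f\|_{JN^{\mathrm{con}}_{p,q}(\cx)}\le\|f\|_{JN_{p,q}(\cx)}$ is argued correctly, the divergence of $\|f\|_{JN_{p,q}}$ along the collection $\{Q_k\}_k$ is correct, and the splitting of a congruent family $\{R_j\}_j\subset\mathcal D_m$ into (a) cubes where $f$ is constant, (b) cubes meeting a jump at a scale $k\le m$, and (c) boundedly many cubes near the origin absorbing the tail is the right skeleton.

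However, two justifications are wrong as written, and the first one matters precisely in the new case $n\ge 2$ of part (ii). In case (b) you claim that ``disjointness permits only boundedly many such $R_j$ per $k$''. This is false for $n\ge2$: the jump set attached to $Q_k$ (its internal interface and its boundary faces) is an $(n-1)$-dimensional set of diameter $\sim 2^{-k}$, so up to $\sim 2^{(m-k)(n-1)}$ pairwise disjoint cubes of side $2^{-m}$ can meet it. Your bound $\sum_{k\le m}2^{(k-m)n}\le(1-2^{-n})^{-1}$ therefore does not follow from your counting. The estimate can be repaired: each such cube contributes $\ls 2^{(k-m)n}$, so the total contribution at scale $k$ is $\ls 2^{(m-k)(n-1)}\cdot 2^{(k-m)n}=2^{k-m}$, and $\sum_{k\le m}2^{k-m}\le 2$, uniformly in $m$; but this corrected count (and a word on why a type-(b) cube meets at most two consecutive bump families, so that $|f|\ls 2^{kn/p}$ on it) must replace the assertion you made. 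Second, in case (c) the claim that ``antisymmetry forces $f_{R_j}=0$'' is unjustified: the cubes in $\mathcal D_m(\cx)$ are arbitrarily positioned congruent cubes, and such a cube can cut a bump $Q_k$ asymmetrically, so its average need not vanish. This claim is also unnecessary: bounding $\fint_{R_j}|f-f_{R_j}|^q\le 2^q\fint_{R_j}|f|^q$ and then using your tail estimate (with ``$\le$'' rather than ``$=$'', since $R_j$ may also contain pieces of $Q_m$) gives the same uniform bound, with at most $3^n$ such exceptional cubes. With these two repairs the argument is complete and gives parts (i) and (ii) simultaneously.
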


Similarly to Theorem \ref{duality},
the following dual result is just \cite[Theorem 4.10]{jtyyz21}.
Recall that the congruent Hardy-type space $HK_{(u,v,s)_{\alpha}}^{\mathrm{con}}(\mathcal{X})$
is defined as in Definition \ref{HKuvb} with the additional condition that
all cubes of the polymer have the same side length;
see \cite[Definition 4.7]{jtyyz21} for more details.

\begin{theorem}\label{t3.9}
	Let $p$, $q\in(1,\infty)$, $1/p=1/p'=1=1/q+1/q'$, $s\in\zz_+$, and $\alpha\in\rr$.
	If $JN_{(p,q,s)_{\alpha}}^{\mathrm{con}}(\mathcal{X})$ is equipped with the norm
	$\|\cdot\|_{\widehat{JN}_{(p,q,s)_{\alpha}}^{\mathrm{con}}(\mathcal{X})}$
	in \eqref{inff-P},
	then $$\lf(HK_{(p',q',s)_{\alpha}}^{\mathrm{con}}(\mathcal{X})\r)^*
	=JN_{(p,q,s)_{\alpha}}^{\mathrm{con}}(\mathcal{X})$$
	with equivalent norms in the following sense:
	\begin{enumerate}
		\item[\rm (i)] Any $f\in JN_{(p,q,s)_{\alpha}}^{\mathrm{con}}(\mathcal{X})$
		induces a linear functional $\mathcal{L}_f$ which is given by setting,
		for any $g\in HK_{(p',q',s)_{\alpha}}^{\mathrm{con}}(\mathcal{X})$
		and $\{g_i\}_i\subset \widetilde{HK}_{(p',q',s)_{\alpha}}^{\mathrm{con}}(\mathcal{X})$
		with $g=\sum_i g_i$ in
		$(JN_{(p,q,s)_{\alpha}}^{\mathrm{con}}(\mathcal{X}))^*$,
		\begin{align*}
			\mathcal{L}_f(g):=\langle g,f\rangle
			=\sum_i\langle g_i,f\rangle.
		\end{align*}
		Moreover,
		for any $g\in HK_{(p',q',s)_{\alpha}}^{\mathrm{con-fin}}(\mathcal{X})$,
		\begin{align*}
			\mathcal{L}(g)=\int_{\mathcal{X}}f(x)g(x)\,dx
			\quad {and}\quad
			\lf\|\mathcal{L}_f\r\|_{(HK_{(p',q',s)_{\alpha}}^{\mathrm{con}}(\mathcal{X}))^*}
			\leq \|f\|_{\widehat{JN}_{(p,q,s)_{\alpha}}^{\mathrm{con}}(\mathcal{X})}.
		\end{align*}
		\item[\rm (ii)] Conversely, for any continuous
		linear functional $\mathcal{L}$ on
		$HK_{(p',q',s)_{\alpha}}^{\mathrm{con}}(\mathcal{X})$,
		there exists a unique
		$f\in JN_{(p,q,s)_{\alpha}}^{\mathrm{con}}(\mathcal{X})$ such that,
		for any $g\in HK_{(p',q',s)_{\alpha}}^{\mathrm{con-fin}}(\mathcal{X})$,
		$$\mathcal{L}(g)=\int_{\mathcal{X}}f(x)g(x)\,dx
		\quad {and}\quad
		\|f\|_{\widehat{JN}_{(p,q,s)_{\alpha}}^{\mathrm{con}}(\mathcal{X})}
		\leq\|\mathcal{L}\|_{(HK_{(p',q',s)_{\alpha}}^{\mathrm{con}}(\mathcal{X}))^*}.$$
	\end{enumerate}
\end{theorem}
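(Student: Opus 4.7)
The plan is to mimic the proof of Theorem \ref{duality}, with the key structural adjustment that every atomic decomposition must now be over a family of interior pairwise disjoint cubes of \emph{the same side length}; the norm $\|\cdot\|_{\widehat{JN}^{\mathrm{con}}}$ in \eqref{inff-P} is the one adapted to this symmetry, which is why the statement is formulated in terms of it. Throughout, I will use the integral/approximation-by-polynomial representation and the duality between $L^{q'}(Q)$ and $L^q(Q)$ on a single cube, and the constraint $1/p+1/p'=1=1/q+1/q'$.

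For the direction (i), given $f\in JN_{(p,q,s)_\alpha}^{\mathrm{con}}(\mathcal{X})$ and $g\in HK_{(p',q',s)_\alpha}^{\mathrm{con-fin}}(\mathcal{X})$, I would write $g=\sum_j \lambda_j a_j$ with $\{a_j\}_j$ finitely many $(p',q',s)_\alpha$-atoms supported in interior pairwise disjoint congruent cubes $\{Q_j\}_j$. Using the moment cancellation of each atom against the polynomial $P_{Q_j}^{(s)}(f)$, Hölder's inequality with exponents $q,q'$ on each $Q_j$, and the size bound $\|a_j\|_{L^{q'}(Q_j)}\le|Q_j|^{1/q'-1/p'-\alpha}$, I can rewrite
$$\lf|\int_{\mathcal{X}}fg\,dx\r|\le \sum_j |\lambda_j|\,|Q_j|^{1/p}\lf[|Q_j|^{-\alpha}\lf\{\fint_{Q_j}\lf|f-P_{Q_j}^{(s)}(f)\r|^q\r\}^{1/q}\r].$$
A second application of Hölder's inequality with exponents $p',p$ then produces the factor $\|\{\lambda_j\}_j\|_{\ell^{p'}}$ and, because the cubes $\{Q_j\}_j$ are congruent, the other factor is controlled by $\|f\|_{\widehat{JN}_{(p,q,s)_\alpha}^{\mathrm{con}}(\mathcal{X})}$. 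Taking the infimum over all atomic representations of $g$ gives $|\mathcal{L}_f(g)|\le\|f\|_{\widehat{JN}^{\mathrm{con}}}\|g\|_{HK^{\mathrm{con}}}$, and the extension to all of $HK^{\mathrm{con}}(\mathcal{X})$ follows by summing over polymers and a density argument, as in Theorem \ref{duality}.

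For the converse direction (ii), the strategy is local-to-global. For any fixed cube $Q\subset\mathcal{X}$, every $h\in L_s^{q'}(Q)$ (extended by zero) becomes, after a scaling $|Q|^{-1/p'-\alpha+1/q'}\|h\|_{L^{q'}(Q)}^{-1}$, an atom, and hence a polymer in $\widetilde{HK}_{(p',q',s)_\alpha}^{\mathrm{con}}(\mathcal{X})$ with norm $\le|Q|^{1/p'+\alpha-1/q'}\|h\|_{L^{q'}(Q)}$. Restricting $\mathcal{L}$ to this image yields a continuous linear functional on $L_s^{q'}(Q)$, which by Hahn--Banach extends to $L^{q'}(Q)$ and is represented by some $f_Q\in L^q(Q)/\mathcal{P}_s(Q)$ with $\|f_Q\|_{L^q(Q)/\mathcal{P}_s(Q)}\le|Q|^{1/p'+\alpha-1/q'}\|\mathcal{L}\|$. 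A coherence check on nested cubes (using that on $Q\subset Q'$ any element of $L_s^{q'}(Q)$ lies in $L_s^{q'}(Q')$) shows that $f_Q-f_{Q'}\in\mathcal{P}_s(Q)$, so the $f_Q$ patch together into a well-defined $f\in L^q_{\mathrm{loc}}(\mathcal{X})/\mathcal{P}_s(\mathcal{X})$ representing $\mathcal{L}$ on $HK^{\mathrm{con-fin}}$.

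To finish (ii), the key step is bounding $\|f\|_{\widehat{JN}^{\mathrm{con}}}\le C\|\mathcal{L}\|$. For any collection $\{Q_j\}_j$ of interior pairwise disjoint congruent cubes, the $L^{q'}$--$L^q$ duality on each $Q_j$ modulo $\mathcal{P}_s(Q_j)$ yields norming functions $h_j\in L_s^{q'}(Q_j)$ with $\|h_j\|_{L^{q'}(Q_j)}=1$ and $\int_{Q_j}(f-P_{Q_j}^{(s)}(f))h_j\,dx$ close to $\|f-P_{Q_j}^{(s)}(f)\|_{L^q(Q_j)}$; choosing coefficients $\lambda_j$ dual (in the $\ell^p$--$\ell^{p'}$ sense) to the sequence
$$\lf(|Q_j|\lf\{|Q_j|^{-\alpha}\lf[\fint_{Q_j}|f-P_{Q_j}^{(s)}(f)|^q\r]^{1/q}\r\}^p\r)_j,$$
and forming the test polymer $g=\sum_j\lambda_j a_j$ with $a_j$ the atom built from $h_j$, evaluating $\mathcal{L}(g)$ reproduces the $\widehat{JN}^{\mathrm{con}}$-norm of $f$ up to a constant. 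The \emph{main obstacle} I anticipate is precisely this last step in the congruent setting: in the non-congruent case the atomic representation of $g$ is essentially free, but here all chosen $Q_j$ must share a common side length, so one must verify that the supremum defining $\|\cdot\|_{\widehat{JN}^{\mathrm{con}}}$ is truly attained within this constrained class, and that the normalization of the atoms and the Hölder duality pairing interact correctly with that rigidity. A careful bookkeeping of the exponents $1/p'+\alpha-1/q'$ against the factor $|Q_j|^{1/p}$, together with a standard approximation argument for the finiteness in \eqref{inff-P}, should then close the circle and deliver $\|f\|_{\widehat{JN}^{\mathrm{con}}}\le C\|\mathcal{L}\|$.
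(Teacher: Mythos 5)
Your proposal is correct and follows essentially the same route as the paper's (cited) proof of this theorem: the atom-wise cancellation-plus-H\"older argument for (i) and, for (ii), the local representation of $\mathcal{L}$ on $L^{q'}_s(Q)$ via Hahn--Banach, the patching modulo $\mathcal{P}_s$, and the evaluation on test polymers built from norming functions over an arbitrary family of interior pairwise disjoint congruent cubes, exactly as in Theorem \ref{duality} adapted to the congruent setting. Two small points: in (i), performing the cancellation with an arbitrary $P\in\mathcal{P}_s(Q_j)$ (not just $P^{(s)}_{Q_j}(f)$) and then taking the infimum yields the stated constant-one bound against $\|\cdot\|_{\widehat{JN}_{(p,q,s)_{\alpha}}^{\mathrm{con}}(\mathcal{X})}$, and the ``obstacle'' you anticipate in (ii) is vacuous, since the cube families appearing in the supremum \eqref{inff-P} are precisely the congruent disjoint families allowed in a polymer, so each such family directly produces an admissible test element.
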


Moreover, when $\cx=Q_0$,
we further have the VMO-$H^1$ type duality for the congruent Hardy-type space;
see Theorem \ref{VMO-H1-duality} below.

Recall that Ess\'en et al. \cite{ejpx00} introduced
and studied the \emph{$Q$} space on $\rn$, which generalizes the space $\BMO(\rn)$.
Later, the $Q$ space  proves very useful in harmonic analysis,
potential analysis, partial differential equations as well as
the closely related fields;
see, for instance, \cite{x19,kxzz17,zsyy17}.
Thus, it is natural to consider some ``new $Q$ space'' corresponding to
the John--Nirenberg space $JN_p$.
Based on Remark \ref{2eqNORM}(ii), Tao et al. \cite{tyyJNQ} introduced
and studied the \emph{John--Nirenberg-$Q$ space} on $\rn$ via congruent cubes,
which contains the congruent John--Nirenberg space on $\rn$ as special cases,
and also sheds some light on the mysterious John--Nirenberg space.

\section{Riesz-type space}\label{Sec-Riesz}

Observe that, if we partially subtract integral means
(or polynomials for high order cases) in $\|f\|_{JN_{(p,q,s)_\az}(\cx)}$,
namely, drop $P_{Q_i}^{(s)}(f)$ in
$$\lf\{\sum_i|Q_i|\lf[|Q_i|^{-\alpha}
\lf\{\fint_{Q_i}\lf|f(x)-P_{Q_i}^{(s)}(f)(x)\r|^q\,dx\r\}
^{\frac 1q}\r]^p\r\}^{\frac 1p}$$
for any $i$ satisfying $\ell(Q_i)\ge c_0$,
then we obtain the localized JNC space as in Definition \ref{def.jnpqs}.
Thus, a natural question arises:
what if we thoroughly drop all $\{P_{Q_i}^{(s)}(f)\}_i$ in $\|f\|_{JN_{(p,q,s)_\az}(\cx)}$?
In this section, we study the space with such norm
(subtracting all $\{P_{Q_i}^{(s)}(f)\}_i$ in the norm of the JNC space).
As a bridge connecting Lebesgue and Morrey spaces via Riesz norms,
it was called the ``Riesz--Morrey space''.
For more studies on the well-known Morrey space,
we refer the reader to, for instance, \cite{hns17,mst18,ms19,hs20}
and, in particular, the recent monographs
by Sawano et al. \cite{sfk20i,sfk20ii}.

\subsection{Riesz--Morrey spaces}\label{Subsec-RM}
As a suitable substitute of $L^\fz(\cx)$, the space $\BMO(\cx)$ proves very useful
in harmonic analysis and partial differential equations.
Recall that
$$\|f\|_{\BMO(\cx)}:=\sup_{{\rm cube}\,Q\subset\cx}\fint_Q\lf|f(x)-f_Q\r|\,dx.$$
Indeed, the only difference between them exists in subtracting integral means,
which is just the following proposition.
In what follows, for any measurable function $f$,
let
$$\|f\|_{L^\fz_\ast(\cx)}:=\sup_{{\rm cube}\,Q\subset \cx}\fint_Q |f(x)|\,dx.$$

\begin{proposition}\label{LfzBMO}
$f\in L^\fz(\cx)$ if and only if $f\in L^1_\loc(\cx)$ and $\|f\|_{L^\fz_\ast(\cx)}<\fz$.
Moreover, $\|\cdot\|_{L^\fz(\cx)}=\|\cdot\|_{L^\fz_\ast(\cx)}$.
\end{proposition}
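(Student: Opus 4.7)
The plan is to prove the two-sided inequality $\|f\|_{L^\fz(\cx)}=\|f\|_{L^\fz_\ast(\cx)}$; the equivalence of membership then follows immediately, because each side being finite is obviously equivalent to $f$ being essentially bounded.

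For the easy direction, I would first show that $\|f\|_{L^\fz_\ast(\cx)}\le\|f\|_{L^\fz(\cx)}$. Indeed, if $f\in L^\fz(\cx)$, then $f\in L^1_\loc(\cx)$, and for any cube $Q\subset\cx$,
$$\fint_Q |f(x)|\,dx\le\fint_Q \|f\|_{L^\fz(\cx)}\,dx=\|f\|_{L^\fz(\cx)};$$
taking the supremum over all cubes $Q\subset\cx$ yields the desired inequality.

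For the reverse direction, suppose $f\in L^1_\loc(\cx)$ with $\|f\|_{L^\fz_\ast(\cx)}<\fz$. I would invoke the Lebesgue differentiation theorem: since $|f|\in L^1_\loc(\cx)$, for almost every $x\in\cx$,
$$|f(x)|=\lim_{\substack{Q\ni x\\ \ell(Q)\to0}}\fint_Q|f(y)|\,dy\le\sup_{{\rm cube}\,Q\subset\cx}\fint_Q|f(y)|\,dy=\|f\|_{L^\fz_\ast(\cx)}.$$
Taking the essential supremum over $x\in\cx$ gives $\|f\|_{L^\fz(\cx)}\le\|f\|_{L^\fz_\ast(\cx)}$, which in particular forces $f\in L^\fz(\cx)$.

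Combining these two chains of inequality completes the proof. There is essentially no obstacle here; the only subtlety is that one must ensure that the Lebesgue differentiation theorem applies in the setting $\cx\in\{\rn,Q_0\}$, which it does since $f\in L^1_\loc(\cx)$ and cubes shrinking to a point form an admissible family of sets with bounded eccentricity.
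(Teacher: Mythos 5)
Your proposal is correct and follows essentially the same route as the paper: the trivial bound $\fint_Q|f|\le\|f\|_{L^\fz(\cx)}$ for one direction, and the Lebesgue differentiation theorem at almost every point for the reverse inequality. No gaps.
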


\begin{proof}
On one hand, for any $f\in L^\fz(\cx)$, it is easy to see that
$f\in L^1_\loc(\cx)$ and
$$\|f\|_{L^\fz_\ast(\cx)}=\sup_{Q\subset\cx}\fint_Q|f(x)|\,dx
\le\sup_{Q\subset\cx}\|f\|_{L^\fz(\cx)}
=\|f\|_{L^\fz(\cx)}<\fz.$$

On the other hand, for any $f\in L^1_\loc(\cx)$ and $\|f\|_{L^\fz_\ast(\cx)}<\fz$,
let $x$ be any Lebesgue point of $f$.
Then, from the Lebesgue differentiation theorem, we deduce that
$$|f(x)|=\lim_{|Q|\to 0^+,\,Q\ni x}\fint_Q|f(y)|\,dy
\le \sup_{Q\subset\cx}\fint_Q|f(y)|\,dy=\|f\|_{L^\fz_\ast(\cx)},$$
which, together with the Lebesgue differentiation theorem again, further implies that
$$\|f\|_{L^\fz(\cx)}\le\|f\|_{L^\fz_\ast(\cx)}$$
and hence $f\in L^\fz(\cx)$.
Moreover, we have $\|\cdot\|_{L^\fz(\cx)}=\|\cdot\|_{L^\fz_\ast(\cx)}$.
This finishes the proof of Proposition \ref{LfzBMO}.
\end{proof}

Also, if we remove integral means in the $JN_p(Q_0)$-norm
$$\|f\|_{JN_p(Q_0)}=\sup \lf[ \sum_i|Q_i|\lf(\fint_{Q_i}\lf|f(x)-f_{Q_i}\r|\,dx
\r)^p \r]^\frac1p,$$
where the supremum is taken over all collections of
cubes $\{Q_i\}_i$ of $Q_0$ with pairwise disjoint interiors,
then we obtain
$$\sup \lf[ \sum_i|Q_i|\lf(\fint_{Q_i}\lf|f(x)\r|\,dx
   \r)^p \r]^\frac1p=:\|f\|_{R_p(Q_0)}$$
which coincides with $\|f\|_{L^p(Q_0)}$ due to Riesz \cite{R1910}.
Corresponding to the JNC space,
the following triple index Riesz-type space $R_{p,q,\az}(\cx)$,
called the Riesz--Morrey space,
was introduced and studied in \cite{tyyBJMA}.

\begin{definition}\label{def-Riesz}
Let $p\in[1,\infty]$, $q\in[1,\infty]$, and $\alpha\in\rr$.
The \emph{Riesz--Morrey space} $ RM_{p,q,\alpha}(\cx)$ is defined by setting
$$  RM_{p,q,\alpha}(\cx):=\lf\{f\in L^q_{\loc}(\cx):\,\,
\|f\|_{  RM_{p,q,\alpha}(\cx)}<\fz\right\},$$
where
\begin{align*}
\|f\|_{  RM_{p,q,\alpha}(\cx)}:=
\begin{cases}
\displaystyle{\sup\lf[\sum_i|Q_i|^{1-p\az-\frac pq}\|f\|_{L^q(Q_i)}^p\right]^{\frac 1p}}
&{\rm if\ }p\in[1,\fz),\ q\in[1,\fz],\\
\displaystyle{\sup\sup_i|Q_i|^{-\alpha-\frac1q}\|f\|_{L^q(Q_i)}}
&{\rm if\ }p=\fz,\ q\in[1,\fz]
\end{cases}
\end{align*}
and the suprema are taken over all collections of
subcubes $\{Q_i\}_i$ of $\cx$ with pairwise disjoint interiors.
In addition, $R_{p,q,0}(\cx)=:R_{p,q}(\cx)$.
\end{definition}

Observe that the Riesz--Morrey norm
$\|\cdot\|_{  RM_{p,q,\alpha}(\cx)}$
differs from the John--Nirenberg--Campanato norm
$\|\cdot\|_{JN_{(p,q,s)_\az}(\cx)}$
with $s=0$ only in subtracting mean oscillations;
see \cite[Remark 2]{tyyBJMA} for more details.
It is easy to see that $\|\cdot\|_{R_{p,1,0}(Q_0)}=\|\cdot\|_{R_p(Q_0)}$
and, as a generalization of the above equivalence in Riesz \cite{R1910},
the following proposition is just \cite[Proposition 1]{tyyBJMA}.

\begin{proposition}\label{Riesz}
Let $p\in[1,\fz]$ and $q\in[1,p]$. Then $f\in L^p(\cx)$
if and only if $f\in R_{p,q}(\cx)$.
Moreover,
$L^p(\cx)= R_{p,q}(\cx)$
with equivalent norms, namely, for any $f\in L^q_\loc(\cx)$,
$\|f\|_{L^p(\cx)}=\|f\|_{ R_{p,q}(\cx)}$.
\end{proposition}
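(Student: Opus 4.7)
The plan is to establish the equality $\|f\|_{L^p(\cx)}=\|f\|_{R_{p,q}(\cx)}$ by proving two matching inequalities, separately handling the finite and infinite $p$ cases. Notice that, once the norm identity is proved, the ``iff'' statement follows immediately.

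For the upper bound when $p\in[1,\fz)$, I would fix a collection $\{Q_i\}_i$ of subcubes of $\cx$ with pairwise disjoint interiors and apply H\"older's inequality with conjugate exponents $p/q$ and $(p/q)'$ (here is where $q\le p$ matters) on each cube:
\begin{align*}
\|f\|_{L^q(Q_i)}^q=\int_{Q_i}|f(x)|^q\,dx\le\lf[\int_{Q_i}|f(x)|^p\,dx\r]^{q/p}|Q_i|^{1-q/p}.
\end{align*}
Raising to the $(p/q)$-th power and multiplying by $|Q_i|^{1-p/q}$ yields
$|Q_i|^{1-p/q}\|f\|_{L^q(Q_i)}^p\le\int_{Q_i}|f(x)|^p\,dx$. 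Summing over $i$ and using the pairwise disjointness of the interiors, I obtain $\sum_i|Q_i|^{1-p/q}\|f\|_{L^q(Q_i)}^p\le\int_\cx|f(x)|^p\,dx$, and taking the supremum gives $\|f\|_{R_{p,q}(\cx)}\le\|f\|_{L^p(\cx)}$.

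For the reverse inequality when $p\in[1,\fz)$, the plan is to test the supremum in the $R_{p,q}$-norm against a shrinking sequence of dyadic tilings and then invoke a standard measure-theoretic passage to the limit. Concretely, for each $k\in\nn$ let $\{Q_i^{(k)}\}_i$ be the dyadic subcubes at level $k$ that tile $\cx$ (for $\cx=\rn$ these are all dyadic cubes of side $2^{-k}$; for $\cx=Q_0$ these are $\mathscr{D}_{Q_0}^{(k)}$ from \eqref{dyadicQ0}), and define the simple function $g_k(x):=(\fint_{Q_i^{(k)}}|f(y)|^q\,dy)^{p/q}$ for $x\in Q_i^{(k)}$. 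Since $f\in L^q_\loc(\cx)$, the Lebesgue differentiation theorem applied to $|f|^q$ gives $g_k(x)\to|f(x)|^p$ for almost every $x\in\cx$. Fatou's lemma then yields
\begin{align*}
\int_\cx|f(x)|^p\,dx\le\liminf_{k\to\fz}\int_\cx g_k(x)\,dx
=\liminf_{k\to\fz}\sum_i|Q_i^{(k)}|^{1-p/q}\|f\|_{L^q(Q_i^{(k)})}^p
\le\|f\|_{R_{p,q}(\cx)}^p,
\end{align*}
which is exactly the desired lower bound (and also tells us that $\|f\|_{R_{p,q}(\cx)}<\fz$ forces $f\in L^p(\cx)$).

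The case $p=\fz$ is simpler and can be dispatched in parallel. The $R_{\fz,q}$-norm reduces to $\sup_Q(\fint_Q|f|^q)^{1/q}$, which is clearly dominated by $\|f\|_{L^\fz(\cx)}$; the reverse direction follows from the Lebesgue differentiation theorem applied to $|f|^q$, which shows $(\fint_Q|f|^q)^{1/q}\to|f(x)|$ as $Q\ni x$ shrinks, for a.e.\ $x$, whence $\|f\|_{L^\fz(\cx)}\le\|f\|_{R_{\fz,q}(\cx)}$ by an argument parallel to that in Proposition~\ref{LfzBMO}. The only genuinely delicate step in the whole proof is the lower bound for $p<\fz$: one must choose an admissible family of cubes in the $R_{p,q}$ supremum whose associated sums recover $\int_\cx|f|^p$. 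The dyadic-tiling-plus-Fatou argument above is the cleanest route, and it works uniformly in both $\cx=\rn$ and $\cx=Q_0$ since in either case the level-$k$ dyadic cubes form a countable partition with pairwise disjoint interiors, ensuring admissibility.
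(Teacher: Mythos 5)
Your proposal is correct. Note that the survey itself does not reproduce a proof of Proposition \ref{Riesz}: it simply quotes the result as \cite[Proposition 1]{tyyBJMA}, so there is no in-paper argument to compare against line by line; the closest in-paper relative is Proposition \ref{LfzBMO}, whose Lebesgue-differentiation argument your $p=\fz$ case indeed parallels. Your two halves are sound: H\"older with exponents $p/q$ and $(p/q)'$ on each cube, followed by summation over cubes with pairwise disjoint interiors, gives $\|f\|_{R_{p,q}(\cx)}\le\|f\|_{L^p(\cx)}$; and testing the supremum against the level-$k$ dyadic tilings of $\cx$, using that the averages $\fint_{Q_i^{(k)}}|f|^q$ converge almost everywhere to $|f|^q$ (the dyadic cubes containing a point shrink regularly to it, so the differentiation theorem applies) and then Fatou's lemma, gives $\|f\|_{L^p(\cx)}\le\|f\|_{R_{p,q}(\cx)}$, with both tilings admissible in the supremum since they are countable families with pairwise disjoint interiors. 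Two cosmetic remarks: the ``only if'' direction also needs the (trivial) observation that $L^p(\cx)\subset L^q_{\loc}(\cx)$ when $q\le p$, which your H\"older step implicitly supplies; and when $p=\fz$ you should record separately the degenerate subcase $q=\fz$, where $\|f\|_{R_{\fz,\fz}(\cx)}=\sup_Q\|f\|_{L^\fz(Q)}=\|f\|_{L^\fz(\cx)}$ is immediate, since your averaging argument is written for finite $q$. Neither point affects the validity of the argument.
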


As for the case $1\le p<q\le\fz$, by \cite[Remark 2.3]{tyyBJMA}, we know that
$$R_{p,q}(\rn)=0\neq L^q(\rn)=R_{q,q}(\rn),$$
and
$$\lf[|Q_0|^{-\frac1p} R_{p,q}(Q_0)\r]=\lf[|Q_0|^{-\frac1q}L^q(Q_0)\r]
=\lf[|Q_0|^{-\frac1q} R_{q,q}(Q_0)\r]$$
with equivalent norms.

Moreover, it is shown in \cite[Theorem 1 and Corollary 1]{tyyBJMA}
that the endpoint spaces of Riesz--Morrey spaces
are Lebesgue spaces or Morrey spaces.
In this sense, we regard the Riesz--Morrey space
as a bridge connecting the Lebesgue space and the Morrey space.
Thus, a natural question arises:
whether or not Riesz--Morrey spaces are truly new spaces
different from Lebesgue spaces or Morrey spaces?
Very recently, Zeng et al. \cite{zcty21} give an \emph{affirmative}
answer to this question via constructing two nontrivial functions
over $\rn$ and any given cube $Q$ of $\rn$.
It should be pointed out that the nontrivial function on the cube $Q$
is geometrically similar to the striking function constructed by
Dafni et al. in the proof of \cite[Proposition 3.2]{DHKY18}.
Furthermore, we have the following classifications of Riesz--Morrey spaces,
which is just \cite[Corollary 3.7]{zcty21}.

\begin{theorem}\label{thm-ClasRM}
\begin{itemize}
	\item[{\rm(i)}]
	Let $p\in(1,\fz]$ and $q\in[1,p)$. Then
	$$RM_{p,q,\az}(\rn)
	\begin{cases}
		=L^q(\rn) &{\rm if\ } \az=\frac1p-\frac1q,\\
		\supsetneqq L^{\frac{p}{1-p\az}}(\rn)&{\rm if\ } \az\in\lf(\frac1p-\frac1q,0\r),\\
		=L^p(\rn) &{\rm if\ } \az=0,\\
		=\{0\} &{\rm if\ } \az\in\lf(-\fz,\frac1p-\frac1q\r)\cup(0,\fz).
	\end{cases}$$
	In particular, if $\az\in(-\frac1q,0)$, then
	$RM_{\fz,q,\az}(\rn)=M_{q}^{-1/\az}(\rn)$ which is just the Morrey space
	defined in Remark \ref{rem-Morrey}.
	
	\item[{\rm(ii)}]
	Let $p\in[1,\fz]$ and $q\in[p,\fz]$. Then
	$$RM_{p,q,\az}(\rn)
	\begin{cases}
		=L^q(\rn) &{\rm if\ } \az=\frac1p-\frac1q=0,\\
		=\{0\} &{\rm if\ } \az=\frac1p-\frac1q\neq0,\\
		=\{0\} &{\rm if\ } \az\in\rr\setminus\lf\{\frac1p-\frac1q\r\}.
	\end{cases}$$
	
	\item[{\rm(iii)}]
	Let $p\in(1,\fz]$, $q\in[1,p)$, and
	$Q_0$ be any cube of $\rn$. Then
	$$RM_{p,q,\az}(Q_0)
	\begin{cases}
		=L^q(Q_0) &{\rm if\ } \az=\lf(-\fz,\frac1p-\frac1q\r],\\
		\supsetneqq L^{\frac{p}{1-p\az}}(Q_0)&{\rm if\ } \az\in\lf(\frac1p-\frac1q,0\r),\\
		=L^p(Q_0) &{\rm if\ } \az=0,\\
		=\{0\} &{\rm if\ } \az\in(0,\fz).
	\end{cases}$$
	In particular, $RM_{\fz,q,\az}(Q_0)=M_{q}^{-1/\az}(Q_0)$ if $\az\in(-\frac1q,0)$.

	\item[{\rm(iv)}]
	Let $p\in[1,\fz]$, $q\in[p,\fz]$, and
	$Q_0$ be any cube of $\rn$. Then
	$$RM_{p,q,\az}(Q_0)
	\begin{cases}
		=L^q(Q_0) &{\rm if\ } \az\in(-\fz,0],\\
		=\{0\} &{\rm if\ } \az\in(0,\fz).
	\end{cases}$$
\end{itemize}
\end{theorem}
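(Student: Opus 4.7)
The strategy is to decompose the classification by the sign of $\az$ relative to the critical values $0$ and $1/p-1/q$, reducing everything to three elementary ingredients: H\"older's inequality on each single cube, an $\ell^s$-embedding over the collection $\{Q_i\}_i$, and Riesz's identity from Proposition~\ref{Riesz}. Two scale identities drive the whole argument. First, setting $r:=p/(1-p\az)$, H\"older gives, whenever $r\ge q$,
$$|Q|^{1/p-\az-1/q}\|f\|_{L^q(Q)}\le\|f\|_{L^r(Q)}$$
because the residual exponent of $|Q|$ vanishes thanks to $1/r=1/p-\az$. Second, the weight exponent $1-p\az-p/q$ is exactly zero when $\az=1/p-1/q$, collapsing the norm to $\sup[\sum_i\|f\|_{L^q(Q_i)}^p]^{1/p}$; at the other border $\az=0$ the Riesz--Morrey norm is literally $\|\cdot\|_{R_{p,q}(\cx)}$. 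Finally, the $p=\fz$ endpoint is immediate: the sum becomes a supremum over $i$, identifying $RM_{\fz,q,\az}(\cx)$ with the Morrey space $M_q^{-1/\az}(\cx)$ by matching exponents.

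For the positive identifications, $\az=0$ gives $RM_{p,q,0}(\cx)=R_{p,q}(\cx)=L^p(\cx)$ directly from Proposition~\ref{Riesz} in (i) and (iii). The borderline $\az=1/p-1/q$ on $\rn$ in (i) follows from the collapse above combined with
$$\sum_i\|f\|_{L^q(Q_i)}^p\le\lf(\sum_i\|f\|_{L^q(Q_i)}^q\r)^{p/q}\le\|f\|_{L^q(\rn)}^p$$
(valid since $p/q\ge 1$), together with the trivial single-cube lower bound from an exhausting cube. On $Q_0$ in (iii), the whole range $\az\in(-\fz,1/p-1/q]$ absorbs into $L^q(Q_0)$: the weight satisfies $|Q_i|^{1-p\az-p/q}\le|Q_0|^{1-p\az-p/q}$ (the exponent being nonnegative), reducing the upper bound to the preceding inequality, while the matching lower bound is again $Q_1=Q_0$. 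In the open range $\az\in(1/p-1/q,0)$ one checks $q<r<p$ (so $p/r>1$), applies the displayed H\"older cube-by-cube, and then aggregates via
$$\sum_i\|f\|_{L^r(Q_i)}^p\le\lf(\sum_i\|f\|_{L^r(Q_i)}^r\r)^{p/r}\le\|f\|_{L^r(\cx)}^p$$
to obtain the bounded inclusion $L^{p/(1-p\az)}(\cx)\subset RM_{p,q,\az}(\cx)$. The $\az\le 0$ piece of (iv) is handled symmetrically by H\"older with conjugate exponents $q/p$ and $q/(q-p)$, plus the bound $\sum_i|Q_i|^s\le|Q_0|^s$ for $s\ge 1$; the resulting equality $\|f\|_{RM}=|Q_0|^{1/p-\az-1/q}\|f\|_{L^q(Q_0)}$ gives $RM_{p,q,\az}(Q_0)=L^q(Q_0)$.

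The triviality statements split into two complementary tests. For $\az<1/p-1/q$ on $\rn$ in (i), (ii), any $f\not\equiv 0$ kills the norm via a single expanding cube: as $|Q|\to\fz$ the factor $|Q|^{1/p-\az-1/q}$ blows up (its exponent being strictly positive) while $\|f\|_{L^q(Q)}$ remains bounded below, forcing $\|f\|_{RM_{p,q,\az}(\rn)}=\fz$. For $\az>0$ in (i), (ii), (iv), one partitions a cube $Q_*$ on which $f$ is essentially bounded below by Lebesgue density into $N$ equi-subcubes: a direct computation shows the Riesz--Morrey sum is bounded below by a constant multiple of $N^{p\az}|Q_*|^{1-p\az}$, which diverges as $N\to\fz$ (using Jensen cleanly when $p\ge q$ and an indicator-level density estimate when $p<q$). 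The leftover case $\az=0$ with $p<q$ in (ii) is precisely the Riesz remark $R_{p,q}(\rn)=\{0\}$ recorded after Proposition~\ref{Riesz}.

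The principal obstacle is the \emph{strict} inclusion in (i) and (iii) on the open range $\az\in(1/p-1/q,0)$: the inclusion $L^{p/(1-p\az)}(\cx)\subset RM_{p,q,\az}(\cx)$ is the easy direction, but exhibiting an explicit $f\in RM_{p,q,\az}(\cx)\setminus L^{p/(1-p\az)}(\cx)$ is delicate. I would model this on the fractal function of Dafni, Hyt\"onen, Korte and Yue from \cite[Proposition~3.2]{DHKY18}, originally used to separate $L^p(Q_0)$ from $JN_p(Q_0)$: a superposition of suitably scaled bumps supported on a self-similar tree of dyadic subcubes of $Q_0$, with amplitudes balanced so that $\|f\|_{L^{p/(1-p\az)}(Q_0)}=\fz$ while the weighted sum defining $\|f\|_{RM_{p,q,\az}(Q_0)}$ stays finite, the key mechanism being that the negative $\az$ makes $|Q_i|^{1-p\az-p/q}$ damp small-scale contributions enough. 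A translated-and-repeated copy lifts the counterexample from $Q_0$ to $\rn$. Combining these fractal examples with the inclusions, identifications, and triviality tests above then yields all four items.
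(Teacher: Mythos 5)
Your reductions for the identifications and trivialities are fine: the H\"older-plus-$\ell^s$ estimates, the collapse at $\az=\frac1p-\frac1q$, the $\az=0$ case via Proposition \ref{Riesz}, the expanding-cube test for $\az<\frac1p-\frac1q$ on $\rn$, the subdivision/density test for $\az>0$, and the $p=\fz$ Morrey identification all check out. But there is a genuine gap at exactly the point this theorem exists for: the strict inclusions $RM_{p,q,\az}(\cx)\supsetneqq L^{p/(1-p\az)}(\cx)$ for $\az\in(\frac1p-\frac1q,0)$. Note that the survey does not prove Theorem \ref{thm-ClasRM} itself; it quotes it from \cite{zcty21}, and it records explicitly that the substance of that paper is the construction of two nontrivial functions, one on a given cube (geometrically similar to the fractal of \cite{DHKY18}) and a separate one on $\rn$. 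Your proposal only announces the intention to ``balance amplitudes'' on a self-similar tree of dyadic cubes: no choice of scales or amplitudes is given, and nothing is verified about why the negative exponent in the weight $|Q_i|^{1-p\az-\frac pq}$ keeps the supremum over \emph{all} disjoint families (not just the tree cubes) finite while the $L^{p/(1-p\az)}$ norm diverges. That balancing is the delicate heart of the matter, not a routine adaptation, so for finite $p$ the strictness in items (i) and (iii) remains unproved in your write-up (for $p=\fz$ the classical power function does settle strictness).

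Moreover, the claimed passage from $Q_0$ to $\rn$ by ``a translated-and-repeated copy'' fails as stated: if you repeat congruent copies of the cube counterexample over a lattice, then taking the disjoint family consisting of $N$ of these congruent cubes makes every term contribute the same positive constant, so the defining sum grows like $N$ and $\|f\|_{RM_{p,q,\az}(\rn)}=\fz$. Even the single zero-extension of the cube example is not automatically in $RM_{p,q,\az}(\rn)$: one must control disjoint cubes crossing $\partial Q_0$, whose intersections with $Q_0$ are rectangles rather than cubes, and the cubes of $Q_0$ you would replace them by need not be pairwise disjoint; this requires a genuine argument (or, as in \cite{zcty21}, a separate construction on $\rn$). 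So the proposal is a correct skeleton for the soft parts, but the nontriviality construction and its transfer to $\rn$ — precisely what the cited result supplies — are missing or incorrect.
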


Recall that, by \cite[Theorem 1]{BRV99},
the predual space of the Morrey space is the so-called block space.
Combining this with the duality of
John--Nirenberg--Campanato spaces in \cite[Theorem 3.9]{tyyNA},
the authors in \cite{tyyBJMA} introduced the block-type space which proves
the predual of the Riesz--Morrey space.
Observe that every $(\fz,v,\az)$-block in Definition \ref{def-block}(i)
is exactly a \emph{$(v,\frac{\az}{n})$-block}
introduced in \cite{BRV99}.

\begin{definition}\label{def-block}
Let $u,\ v\in[1,\fz]$, $1/u+1/u'=1=1/v+1/v'$, and $\az\in\rr$.
Let $(RM_{u',v',\az}(\cx))^\ast$ be the \emph{dual space} of
$RM_{u',v',\az}(\cx)$ equipped with the weak-$\ast$ topology.
\begin{itemize}
\item[{\rm(i)}] A function $b$ is called a \emph{$(u,v,\az)$-block} if
$$\supp(b):=\lf\{x\in\cx:\,\,b(x)\neq0\right\}
\subset Q\quad{\rm and}\quad\|b\|_{L^v(Q)}\le|Q|^{\frac1v-\frac1u-\az}.$$

\item[{\rm(ii)}] The \emph{space of $(u,v,\az)$-chains}, $\widetilde{ B}_{u,v,\az}(\cx)$,
is defined by setting
\begin{align*}
\widetilde{ B}_{u,v,\az}(\cx):=\lf\{h\in\lf( RM_{u',v',\az}(\cx)\right)^\ast:\,\,
h=\sum_{j}\lz_j b_j\ \ {\rm and}\ \ \lf\|\lf\{\lz_j\right\}_j\right\|_{\ell^u}<\fz \right\},
\end{align*}
where $\{b_j\}_j$ are $(u,v,\az)$-blocks supported, respectively,
in subcubes $\{Q_j\}$ of $\cx$ with pairwise disjoint interiors,
and $\{\lz_j\}_j\subset\cc$ with $\|\{\lz_j\}_j\|_{\ell^u}<\fz$
[see \eqref{lp-norm} for the definition of $\|\cdot\|_{\ell^u}$].
Moreover, any $h\in\widetilde{ B}_{u,v,\az}(\cx)$ is called a
\emph{$(u,v,\az)$-chain} and its norm is defined by setting
$$\|h\|_{\widetilde{ B}_{u,v,\az}(\cx)}:=
\inf\lf\|\lf\{\lz_j\right\}_j\right\|_{\ell^u},$$
where the infimum is taken over all decompositions of $h$ as above.

\item[{\rm(iii)}] The \emph{block-type space}  $ B_{u,v,\az}(\cx)$
is defined by setting
\begin{align*}
 B_{u,v,\az}(\cx):=\lf\{g\in\lf( RM_{u',v',\az}(\cx)\right)^\ast:\,\,
g=\sum_{i} h_i\ \ {\rm and}\ \
\sum_i\lf\|h_j\right\|_{\widetilde{ B}_{u,v,\az}(\cx)}<\fz \right\},
\end{align*}
where $\{h_i\}_i$ are $(u,v,\az)$-chains.
Moreover, for any $g\in  B_{u,v,\az}(\cx)$,
$$\|g\|_{ B_{u,v,\az}(\cx)}:=\inf \sum_i\lf\|h_j\right\|_{\widetilde{ B}_{u,v,\az}(\cx)},$$
where the infimum is taken over all decompositions of $g$ as above.

\item[{\rm(iv)}] The \emph{finite block-type space
$ B_{u,v,\az}^{\mathrm{fin}}(\cx)$} is defined to be
the set of all finite summations
$$\sum_{m=1}^M \lz_{m}b_{m},$$
where $M\in\nn$, $\{\lz_{m}\}_{m=1}^M\subset\cc$, and
$\{b_{m}\}_{m=1}^M$ are $(u,v,\az)$-blocks.
\end{itemize}
\end{definition}

The following dual theorem is just \cite[Theorem 2]{tyyBJMA}.

\begin{theorem}\label{dual-RM}
Let $p,\ q\in (1,\fz)$, $1/p+1/p'=1=1/q+1/q'$, and $\alpha\in\rr$.
Then $( B_{p',q',\az}(\cx))^\ast= RM_{p,q,\alpha}(\cx)$
in the following sense:
\begin{enumerate}
\item[{\rm(i)}]If $f\in  RM_{p,q,\alpha}(\cx)$, then $f$ induces a linear
functional $\mathcal{L}_f$ on $ B_{p',q',\az}(\cx)$ with
$$\|\mathcal{L}_f\|_{( B_{p',q',\az}(\cx))^\ast}
  \le C\|f\|_{ RM_{p,q,\alpha}(\cx)},$$
where $C$ is a positive constant independent of $f$.
\item[{\rm(ii)}] If $\mathcal{L}\in( B_{p',q',\az}(\cx))^\ast$,
then there exists some $f\in  RM_{p,q,\alpha}(\cx)$ such that,
for any $g\in B_{p',q',\az}^{\mathrm{fin}}(\cx)$,
$$\mathcal{L}(g)=\int_{\cx}f(x)g(x)\,dx,$$
and
$$\|\mathcal{L}\|_{(B_{p',q',\az}(\cx))^\ast}
\sim\|f\|_{ RM_{p,q,\alpha}(\cx)}$$
with the positive equivalence constants independent of $f$.
\end{enumerate}
\end{theorem}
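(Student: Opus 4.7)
The plan is to establish the two directions of the duality using a block--chain pairing estimate together with the Riesz representation theorem on each cube, mirroring the template already used in the proofs of Theorems \ref{duality} and \ref{t3.9}. The central identity driving every computation is the exponent match
$$p\lf(\frac{1}{q'}-\frac{1}{p'}-\az\r)=1-\frac{p}{q}-p\az,$$
which links a single block estimate to the Riesz--Morrey norm.

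For (i), fix $f\in RM_{p,q,\az}(\cx)$. For any $(p',q',\az)$-block $b$ supported in a cube $Q$, H\"older's inequality gives
$$\lf|\int_\cx f(x)b(x)\,dx\r|\le\|f\|_{L^q(Q)}\|b\|_{L^{q'}(Q)}\le\|f\|_{L^q(Q)}|Q|^{\frac{1}{q'}-\frac{1}{p'}-\az}.$$
Given a $(p',q',\az)$-chain $h=\sum_j\lz_jb_j$ with blocks supported in interior pairwise disjoint cubes $\{Q_j\}_j$, applying H\"older on the index $j$ with conjugate exponents $p'$ and $p$ and using the exponent identity above produces
$$\lf|\int_\cx f h\r|\le\lf\|\{\lz_j\}_j\r\|_{\ell^{p'}}\lf[\sum_j|Q_j|^{1-p\az-p/q}\|f\|_{L^q(Q_j)}^p\r]^{\frac1p}\le\lf\|\{\lz_j\}_j\r\|_{\ell^{p'}}\|f\|_{RM_{p,q,\az}(\cx)}.$$
Taking the infimum over all chain representations of $h$, and then over all decompositions $g=\sum_ih_i$ of an element $g\in B_{p',q',\az}(\cx)$, yields $|\mathcal{L}_f(g)|\le\|f\|_{RM_{p,q,\az}(\cx)}\|g\|_{B_{p',q',\az}(\cx)}$, first on $B_{p',q',\az}^{\mathrm{fin}}(\cx)$ and then, by density and continuity, on the whole predual.

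For (ii), I would construct $f$ cube by cube. For any fixed cube $Q\subset\cx$, every $g\in L^{q'}(Q)$ supported in $Q$ is a scalar multiple of a $(p',q',\az)$-block, so the natural inclusion $L^{q'}(Q)\hookrightarrow B_{p',q',\az}(\cx)$ is continuous with operator norm at most $|Q|^{\frac{1}{p'}-\frac{1}{q'}+\az}$. Therefore the restriction of $\mathcal{L}$ to functions supported in $Q$ is a bounded linear functional on $L^{q'}(Q)$, and the classical Riesz representation theorem supplies a unique $f_Q\in L^q(Q)$ with $\mathcal{L}(g)=\int_Q f_Q g$ for all such $g$. Uniqueness forces the consistency relation $f_{Q_2}|_{Q_1}=f_{Q_1}$ whenever $Q_1\subset Q_2$, so the family $\{f_Q\}_Q$ glues to a single $f\in L^q_{\loc}(\cx)$ with $\mathcal{L}(g)=\int_\cx fg$ for every $g\in B_{p',q',\az}^{\mathrm{fin}}(\cx)$.

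To verify $f\in RM_{p,q,\az}(\cx)$, fix a finite collection $\{Q_i\}_i$ of interior pairwise disjoint cubes and, for each $i$ and each $\vp\in(0,1)$, invoke $L^q$--$L^{q'}$ duality on $Q_i$ to select $g_i$ supported in $Q_i$ with $\|g_i\|_{L^{q'}(Q_i)}=|Q_i|^{\frac{1}{q'}-\frac{1}{p'}-\az}$ and $\int_{Q_i}f g_i\ge(1-\vp)\|f\|_{L^q(Q_i)}|Q_i|^{\frac{1}{q'}-\frac{1}{p'}-\az}$. Then each $g_i$ is a $(p',q',\az)$-block, and for any non-negative $\{\lz_i\}_i$ with $\|\{\lz_i\}_i\|_{\ell^{p'}}=1$ the chain $h:=\sum_i\lz_i g_i$ satisfies $\|h\|_{\widetilde{B}_{p',q',\az}(\cx)}\le1$, so $\mathcal{L}(h)\le\|\mathcal{L}\|_{(B_{p',q',\az}(\cx))^\ast}$. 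Optimizing the inner sum by the converse H\"older inequality on $\ell^p$, letting $\vp\to0^+$, and passing to the supremum over finite collections (with infinite collections handled by monotone convergence on partial sums) delivers $\|f\|_{RM_{p,q,\az}(\cx)}\le\|\mathcal{L}\|_{(B_{p',q',\az}(\cx))^\ast}$. The most delicate step is this last selection argument: because $L^q$--$L^{q'}$ duality is not attained on each $Q_i$ one must work with $\vp$-almost optimizers and verify the chain property uniformly before interchanging suprema with limits; once this is done, combining both bounds gives $\|\mathcal{L}_f\|_{(B_{p',q',\az}(\cx))^\ast}\sim\|f\|_{RM_{p,q,\az}(\cx)}$, completing the proof.
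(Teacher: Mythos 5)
Your proof is correct, and it takes the same standard block-duality route that this family of results uses (H\"older plus the exponent identity $p(\frac{1}{q'}-\frac{1}{p'}-\az)=1-\frac{p}{q}-p\az$ for the upper bound; Riesz representation of $\mathcal{L}$ on $L^{q'}(Q)$ for each cube, gluing by consistency, and testing against almost-extremal blocks with $\ell^{p'}$ coefficients via converse H\"older for the lower bound) --- the survey itself only quotes this theorem from \cite[Theorem 2]{tyyBJMA}, whose proof runs along exactly these lines, parallel to Theorem \ref{duality}. Two cosmetic points: for $q\in(1,\fz)$ the $L^{q}$--$L^{q'}$ duality on each $Q_i$ is in fact attained, so the $\vp$-almost optimizers are a convenience rather than a necessity, and to pass from the bound $\|f\|_{RM_{p,q,\alpha}(\cx)}\le\|\mathcal{L}\|_{(B_{p',q',\az}(\cx))^\ast}$ to the full equivalence one should state explicitly that $\mathcal{L}$ coincides with $\mathcal{L}_f$ on all of $B_{p',q',\az}(\cx)$ by density of $B_{p',q',\az}^{\mathrm{fin}}(\cx)$ together with the continuity furnished by your part (i).
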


Also, for the Riesz--Morrey space,
there exist three open questions unsolved so far.
The first question is on the relation between
the Riesz--Morrey space and the weak Lebesgue space.
\begin{question}\label{RM-wLp}
Let $p\in(1,\fz)$, $q\in[1,p)$, and $\az\in(\frac1p-\frac1q,0)$.
Then Zeng et al. \cite[Remark 3.4]{zcty21} showed that
$$RM_{p,q,\az}(\rn)\nsubseteq L^{\frac{p}{1-p\az},\fz}(\rn)
\nsubseteq RM_{p,q,\az}(\rn),$$
which implies that, on $\rn$,
the Riesz--Morrey space and the weak Lebesgue space
do not cover each other.
Also, for any given cube $Q_0$ of $\rn$,
Zeng et al. \cite[Remark 3.6]{zcty21} showed that
$$L^{\frac{p}{1-p\az},\fz}(Q_0)\nsubseteq RM_{p,q,\az}(Q_0).$$
However, it is still unknown whether or not
$$RM_{p,q,\az}(Q_0)\nsubseteq L^{\frac{p}{1-p\az},\fz}(Q_0)$$
still holds true. This question was posed in \cite[Remark 3.6]{zcty21}
and is still \emph{unclear} so far.
\end{question}
The following Questions \ref{RM-JNC} and \ref{M-bdd}
are just \cite[Remarks 4 and 5]{tyyBJMA}, respectively.
\begin{question}\label{RM-JNC}
As a counterpart of \eqref{Mor=Cam},
for any given $p\in[1,\fz)$, $q\in[1,p)$, $s\in\zz_+$, and
$\az\in[\frac1p-\frac1q,0)$,
it is interesting to ask whether or not
$$JN_{(p,q,s)_\alpha}(\cx)= \lf[RM_{p,q,\az}(\cx)/\mathcal{P}_s(\cx)\r]$$
and, for any $f\in JN_{(p,q,s)_\alpha}(\cx)$,
$$\|f\|_{JN_{(p,q,s)_\alpha}(\cx)}
\sim\lf\|f-\sigma(f)\r\|_{RM_{p,q,\az}(\cx)},$$
with the positive equivalence constants independent of $f$,
still hold true. This is still \emph{unclear} so far.
\end{question}

\begin{question}\label{M-bdd}
Recall that, for any given $f\in L^1_\loc(\cx)$ and any $x\in\cx$,
the \emph{Hardy--Littlewood maximal function} $\cm(f)(x)$ is defined by setting
\begin{align}\label{HL}
\cm(f)(x):=\sup_{Q\ni x}\fint_Q |f(y)|\,dy,
\end{align}
where the supremum is taken over all cubes $Q$ containing $x$.
Meanwhile, $\cm$ is called the \emph{Hardy--Littlewood maximal operator}.
It is well known that $\cm$ is bounded on $L^q(\cx)$ for any given $q\in(1,\fz]$;
see, for instance, \cite[p.\,31, Theorem 2.5]{Duo}.
Moreover, $\cm$ is also bounded on $ M_{q}^{-1/\az}(\cx)$
for any given $q\in(1,\fz]$ and $\az\in[-\frac1q,0]$;
see, for instance, \cite[Theorem 1]{CF88}.
To sum up, the boundedness of $\cm$ on
endpoint spaces of Riesz--Morrey spaces
(Lebesgue spaces and Morrey spaces)
have already been obtained. Therefore,
it is very interesting to ask whether or not $\cm$ is bounded
on the Riesz--Morrey space $ RM_{p,q,\az}(\cx)$
with $p\in(1,\fz]$, $q\in[1,p)$, and $\az\in(\frac1p-\frac1q,0)$.
This is a challenging and important problem which is still open so far.
\end{question}

\subsection{Congruent Riesz--Morrey spaces}\label{Subsec-RM-Appl}

To obtain the boundedness of several important operators,
we next consider a special Riesz--Morrey space via congruent cubes,
denoted by $RM_{p,q,\alpha}(\rn)$, as in Subsection \ref{Subsec-ConJNC}.
In this subsection, we first recall the definition of $RM_{p,q,\alpha}^{\rm con}(\rn)$,
and then review the boundedness of the Hardy--Littlewood maximal operator
on this space.

\begin{definition}\label{def-RM}
Let $p,\,q\in[1,\infty]$ and $\alpha\in\rr$.
The \emph{special Riesz--Morrey space via congruent cubes}
(for short, \emph{congruent Riesz--Morrey space})
$RM_{p,q,\alpha}^{\rm con}(\rn)$
is defined to be the set of all locally integrable functions
$f$ on $\rn$ such that
\begin{align*}
\|f\|_{RM_{p,q,\alpha}^{\rm con}(\rn)}:=
\begin{cases}
\displaystyle
\sup\lf[\sum_j|Q_j|^{1-p\az-\frac pq}\|f\|_{L^q(Q_i)}^p\r]^{\frac 1p},
&p\in[1,\fz), \\
\displaystyle
\sup_{{\rm cube\ }Q\subset \rn} |Q|^{-\az-\frac1q}\|f\|_{L^q(Q)},
&p=\fz
\end{cases}
\end{align*}
is finite, where the first supremum is taken over all collections of
interior pairwise disjoint cubes $\{Q_j\}_j$ of $\rn$ with the same side length.
\end{definition}

\begin{remark}\label{rem-RM}
\begin{itemize}
\item [{\rm(i)}]
If we do not require that $\{Q_j\}_j$ have the same size in
the definition of congruent Riesz--Morrey spaces,
then it is just the Riesz--Morrey space $RM_{p,q,\alpha}(\rn)$
in Subsection \ref{Subsec-RM}.

\item [{\rm(ii)}]
If $p=\infty$, $q\in(0,\infty)$, and $\alpha\in[-\frac1q,0)$,
then $RM_{p,q,\alpha}^{\mathrm{con}}(\rn)$ in Definition \ref{def-RM}
coincides with the Morrey space $M_q^{-1/\az}(\rn)$ in Remark \ref{rem-Morrey}.

\item [{\rm(iii)}]
Similarly to Remark \ref{2eqNORM},
for any given $p,\,q\in[1,\infty)$, and $\alpha\in \rr$,
$f\in RM_{p,q,\alpha}^{\mathrm{con}}(\rn)$ if and only if
$f\in L^1_\loc(\rn)$  and
$$\|f\|_{\widetilde{RM}_{p,q,\alpha}^{\mathrm{con}}(\rn)}
:=\sup_{r\in(0,\fz)}\lf[\int_{\rn}\lf\{|B(y,r)|^{-\alpha}
\lf[\fint_{B(y,r)}\lf|f(x)\r|^q dx\r]^{\frac{1}{q}}\r\}^p\,dy\r]^{\frac{1}{p}}$$
is finite; moreover,
$$\|\cdot\|_{RM_{p,q,\alpha}^{\mathrm{con}}(\rn)}
\sim\|\cdot\|_{\widetilde{RM}_{p,q,\alpha}^{\mathrm{con}}(\rn)};$$
see \cite{JTYYZ2} for more details.
Recall that, for any $y\in\rn$ and $r\in(0,\fz)$,
$$B(y,r):=\{x\in\rn:\ |x-y|<r\}.$$
\end{itemize}
\end{remark}

The following boundedness of the Hardy--Littlewood maximal operator
on congruent Riesz--Morrey spaces was obtained in \cite{JTYYZ2}.

\begin{theorem}\label{bdd-HLM}
Let  $p,\,q\in(1,\infty)$, $\alpha\in\rr$, and $\cm$ be
the Hardy--Littlewood maximal operator as in \eqref{HL}.
Then $\cm$ is bounded on $RM_{p,q,\alpha}^{\rm con}(\rn)$.
\end{theorem}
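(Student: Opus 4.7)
I would begin by invoking the equivalent integral norm from Remark~\ref{rem-RM}(iii), which reduces the problem to establishing
\begin{align*}
\sup_{r\in(0,\fz)}\lf[\int_{\rn}\lf\{|B(y,r)|^{-\az}
\lf[\fint_{B(y,r)}|\mathcal{M}(f)(x)|^q\,dx\r]^{\frac{1}{q}}\r\}^p\,dy\r]^{\frac{1}{p}}
\ls\|f\|_{\widetilde{RM}_{p,q,\az}^{\mathrm{con}}(\rn)}.
\end{align*}
For each fixed $r\in(0,\fz)$ and $y\in\rn$, I would perform the familiar local--nonlocal splitting $f=f_1+f_2$ with $f_1:=f\mathbf{1}_{B(y,2r)}$, and treat the two resulting pieces separately.

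For the local piece, the classical $L^q(\rn)$-boundedness of $\mathcal{M}$ (applicable because $q>1$), together with $|B(y,r)|^{-\az}\sim|B(y,2r)|^{-\az}$, produces
\begin{align*}
|B(y,r)|^{-\az}\lf[\fint_{B(y,r)}|\mathcal{M}(f_1)|^q\,dx\r]^{\frac{1}{q}}
\ls|B(y,2r)|^{-\az}\lf[\fint_{B(y,2r)}|f|^q\,dx\r]^{\frac{1}{q}};
\end{align*}
taking the $L^p(\rn,dy)$-norm and then the supremum over $r$ yields a contribution bounded by $\|f\|_{\widetilde{RM}_{p,q,\az}^{\mathrm{con}}(\rn)}$.

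For the non-local piece, the key geometric observation is that, for $x\in B(y,r)$, every ball $B(x,t)$ meeting $\supp(f_2)\subset\rn\setminus B(y,2r)$ must satisfy $t>r$, in which case $B(x,t)\subset B(y,2t)$; this produces the pointwise bound
\begin{align*}
\mathcal{M}(f_2)(x)\ls\sup_{t>2r}\fint_{B(y,t)}|f(z)|\,dz \quad\text{for every }x\in B(y,r),
\end{align*}
and H\"older's inequality upgrades this to $(\fint_{B(y,r)}|\mathcal{M}(f_2)|^q\,dx)^{1/q}\ls\sup_{t>2r}(\fint_{B(y,t)}|f|^q\,dz)^{1/q}$, a quantity independent of $x$.

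The main obstacle is that, after multiplying by $|B(y,r)|^{-\az}$ and taking the $L^p(\rn,dy)$-norm, the supremum over $t$ sits \emph{inside} the integral in $y$, whereas the norm $\|f\|_{\widetilde{RM}_{p,q,\az}^{\mathrm{con}}(\rn)}$ keeps the supremum \emph{outside}. I would bridge this gap by dyadically decomposing $(2r,\fz)=\bigcup_{k\ge1}[2^kr,2^{k+1}r)$ and, on each annular scale, using the doubling property of the Lebesgue measure to bound $\sup_{t\in[2^kr,2^{k+1}r)}(\fint_{B(y,t)}|f|^q)^{1/q}$ by a constant multiple of $(\fint_{B(y,2^{k+1}r)}|f|^q)^{1/q}$. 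The identity $|B(y,r)|^{-p\az}=2^{(k+1)np\az}|B(y,2^{k+1}r)|^{-p\az}$ then pulls out a geometric weight, Fubini reduces the estimate to a series of $L^p(\rn,dy)$-terms each controlled by $\|f\|_{\widetilde{RM}_{p,q,\az}^{\mathrm{con}}(\rn)}^p$, and the summability of $\sum_k 2^{knp\az}$ in the regime $\az<0$ closes the argument. The remaining regimes should be handled by case analysis: $\az=0$ with $p\ge q$ reduces, by Proposition~\ref{Riesz}, to the classical $L^p$-boundedness of $\mathcal{M}$, while $\az>0$ (and other borderline cases) are in many instances vacuous by tiling arguments analogous to Theorem~\ref{thm-ClasRM}.
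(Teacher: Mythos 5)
There is nothing in the survey to compare your argument with line by line: Theorem \ref{bdd-HLM} is stated without proof and attributed to \cite{JTYYZ2}. That said, your plan is correct and follows exactly the route the survey signals, since Remark \ref{rem-RM}(iii) is included precisely because the ball-form norm is the essential tool in \cite{JTYYZ2}; the local/non-local splitting, the $L^q(\rn)$-boundedness of $\cm$ for the local piece, the pointwise bound $\cm(f_2)(x)\ls\sup_{t>2r}\fint_{B(y,t)}|f(z)|\,dz$ for the far piece, and the dyadic summation producing the geometric series $\sum_k 2^{knp\az}$ are the standard way to close the estimate, and they do give the boundedness for every $\az<0$ (irrespective of whether the space is nontrivial there). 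Two points need tightening. First, at $\az=0$ with $p\ge q$ you invoke Proposition \ref{Riesz}, which concerns the non-congruent space $R_{p,q}(\rn)$; since the congruent norm is a supremum over a smaller family of cube collections, Proposition \ref{Riesz} only yields $\|f\|_{RM^{\mathrm{con}}_{p,q,0}(\rn)}\le\|f\|_{L^p(\rn)}$, and you must supply the reverse inequality (Lebesgue differentiation plus Fatou as the common side length, or the radius $r$, tends to $0$) in order to identify $RM^{\mathrm{con}}_{p,q,0}(\rn)$ with $L^p(\rn)$ before quoting the classical $L^p$-boundedness of $\cm$. Second, the remaining cases, namely $\az>0$ and $\az=0$ with $q>p$, are indeed vacuous, but Theorem \ref{thm-ClasRM} does not literally apply because it classifies $RM_{p,q,\az}(\rn)$ rather than the congruent space; you should verify triviality of $RM^{\mathrm{con}}_{p,q,\az}(\rn)$ directly, for $\az>0$ by letting $r\to0^+$ in the ball-form norm and applying Fatou at Lebesgue points of $|f|^q$, and for $\az=0$ with $q>p$ by letting $r\to\fz$, where the $y$-integral for a nonzero $f$ grows like $r^{n(1-p/q)}$. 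With these two easy supplements your argument is complete.
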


Moreover, via Theorem \ref{bdd-HLM},
Jia et al. \cite{JTYYZ2} also establishes the boundedness of
Calder\'on--Zygmund operators on congruent Riesz--Morrey spaces.

Finally, since a congruent Riesz--Morrey space is a
\emph{ball Banach function space},
we refer the reader to \cite{tyyz21} for
the equivalent characterizations of the boundedness and
the compactness of Calder\'on--Zygmund commutators
on ball Banach function spaces.
It should be mentioned that, a crucial assumption in \cite{tyyz21}
is the boundedness of $\cm$,
and hence Theorem \ref{bdd-HLM} provides an essential
tool when studying the boundedness of operators on congruent Riesz--Morrey spaces.

\section{Vanishing subspace}\label{Sec-Van}

In this section, we focus on several vanishing subspaces
of aforementioned John--Nirenberg-type spaces.
In what follows,
$C^\fz(\rn)$ denotes the set of all infinitely differentiable functions on $\rn$;
$\mathbf{0}$ denotes the origin of $\rn$;
for any $\az:=(\az_1,\ldots,\az_n)\in\zz_+^n:=(\mathbb{Z}_+)^n$, let
$\partial^\az:=(\frac\partial{\partial x_1})^{\az_1}\cdots(\frac\partial{\partial x_n})^{\az_n}$;
for any given normed linear space $\cy$ and
any given its subset $\cx$,
$\overline{\cx}^{\cy}$ denotes the \emph{closure} of the set $\cx$ in $\cy$
in terms of the topology of $\cy$, and,
if $\mathcal{Y}=\rn$, we then denote $\overline{\cx}^{\cy}$ simply by $\overline{\cx}$.

\subsection{Vanishing BMO spaces}\label{Subsec-VanBMO}

We now recall several vanishing subspaces of the space $\BMO(\rn)$.
\begin{enumerate}
\item[$\bullet$] $\VMO(\rn)$, introduced by Sarason \cite{Sarason75},
is defined by setting
$$\VMO(\rn):=\overline{C_{\rm u}(\rn)\cap\BMO(\rn)}^{\BMO(\rn)},$$
where $C_{\rm u}(\rn)$ denotes the set of all uniformly continuous functions on $\rn$.

\item[$\bullet$] $\CMO(\rn)$, announced in Neri \cite{N75},
is defined by setting
$$\CMO(\rn):=\overline{C_{\rm c}^\fz(\rn)}^{\BMO(\rn)},$$
where $C_{\rm c}^\fz(\rn)$ denotes the set of all infinitely differentiable functions on $\rn$ with
compact support.
In addition, by approximations of the identity, it is easy to find that
\begin{align}\label{Cc=C0}
\CMO(\rn)=\overline{C_{\rm c}(\rn)}^{\BMO(\rn)}=\overline{C_0(\rn)}^{\BMO(\rn)},
\end{align}
where $C_{\rm c}(\rn)$ denotes the set of all functions on $\rn$ with
compact support, and
$C_0(\rn)$ the set of all continuous functions on $\rn$
which vanish at the infinity.

\item[$\bullet$] $\MMO(\rn)$, introduced by Torres and Xue \cite{TX19},
is defined by setting
$$\MMO(\rn):=\overline{A_\fz(\rn)}^{\BMO(\rn)},$$
where
$$A_\fz(\rn):=\lf\{b\in C^\fz(\rn)\cap L^\fz(\rn):
\,\,\forall\ \az\in\zz^n_+\setminus\{\mathbf{0}\},
\lim_{|x|\to\fz}\partial^\az b(x)=0\r\}.$$

\item[$\bullet$] $\XMO(\rn)$, introduced by Torres and Xue \cite{TX19},
is defined by setting
$$\XMO(\rn):=\overline{B_\fz(\rn)}^{\BMO(\rn)},$$
where
$$B_\fz(\rn):=\lf\{b\in C^\fz(\rn)\cap\BMO(\rn):
\,\,\forall\ \az\in\zz^n_+\setminus\{\mathbf{0}\},
\lim_{|x|\to\fz}\partial^\az b(x)=0\r\}.$$

\item[$\bullet$] $\xMO(\rn)$, introduced by Tao el al. \cite{txyyJFAA},
is defined by setting
$$\xMO(\rn):=\overline{B_1(\rn)}^{\BMO(\rn)},$$
where
$$B_1(\rn):=\lf\{b\in C^1(\rn)\cap\BMO(\rn):\,\,
\lim_{|x|\to\fz}|\nabla b(x)|=0\r\}$$
with $C^1(\rn)$ being the set of all functions $f$ on $\rn$ whose gradients
$\nabla f:=(\frac{\partial f}{\partial x_1},\dots,\frac{\partial f}{\partial x_n})$
are continuous.
\end{enumerate}

The relation of these vanishing subspaces reads as follows.

\begin{proposition}\label{subset}
$\CMO(\rn)\subsetneqq\MMO(\rn)\subsetneqq\XMO(\rn)=\xMO(\rn)\subsetneqq\VMO(\rn).$
\end{proposition}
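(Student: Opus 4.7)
The plan is to verify the three inclusions and one equality by comparing the generating classes directly, then to produce explicit separators for the three strict inclusions; only the equality $\XMO(\rn)=\xMO(\rn)$ requires a genuine approximation argument.

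First I would dispose of the easy containments at the level of generating sets, after which closure in $\BMO(\rn)$ gives the corresponding containments of spaces. Since $C_{\rm c}^\fz(\rn)\subset A_\fz(\rn)$ (a compactly supported smooth function is bounded and has every derivative eventually zero, hence vanishing at infinity), we obtain $\CMO(\rn)\subset\MMO(\rn)$. The inclusion $L^\fz(\rn)\subset\BMO(\rn)$ from Proposition \ref{bmoBMO} gives $A_\fz(\rn)\subset B_\fz(\rn)$, so $\MMO(\rn)\subset\XMO(\rn)$. For $\XMO(\rn)\subset\VMO(\rn)$ the key point is that any $b\in B_\fz(\rn)$ has a continuous gradient that tends to $0$ at infinity, hence is bounded; thus $b$ is Lipschitz, in particular uniformly continuous, so $B_\fz(\rn)\subset C_{\rm u}(\rn)\cap\BMO(\rn)$.

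For the equality $\XMO(\rn)=\xMO(\rn)$, the inclusion $\XMO(\rn)\subset\xMO(\rn)$ is immediate from $B_\fz(\rn)\subset B_1(\rn)$. For the reverse, given $b\in B_1(\rn)$, I would mollify: let $\{\phi_\vp\}_{\vp>0}$ be a standard approximate identity with $\phi_\vp\in C_{\rm c}^\fz(\rn)$, and set $b_\vp:=b*\phi_\vp$. Then $b_\vp\in C^\fz(\rn)\cap\BMO(\rn)$, and for any $\az\in\zz_+^n$ with $|\az|\ge1$, pick $i$ with $\az_i\ge1$ and write $\partial^\az b_\vp=(\partial_i b)*\partial^{\az-e_i}\phi_\vp$; since $\partial_i b$ is continuous and vanishes at infinity while $\partial^{\az-e_i}\phi_\vp$ has compact support, $\partial^\az b_\vp$ vanishes at infinity as well. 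Hence $b_\vp\in B_\fz(\rn)$. Because $\nabla b$ is bounded, $b$ is uniformly continuous, so $b_\vp\to b$ uniformly and therefore in $\BMO(\rn)$ (recall $\|\cdot\|_{\BMO(\rn)}\le 2\|\cdot\|_{L^\fz(\rn)}$ from Proposition \ref{bmoBMO}). This gives $b\in\XMO(\rn)$ and completes the equality.

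For the three strict inclusions I would exhibit explicit separators. To see $\CMO(\rn)\subsetneqq\MMO(\rn)$, any smooth bounded function with non-vanishing mean oscillation at infinity (for example a suitable non-trivial smoothing of an oscillatory bounded function) lies in $A_\fz(\rn)\subset\MMO(\rn)$ yet fails the Uchiyama characterization of $\CMO(\rn)$ recalled in Theorem \ref{CMO-char}. For $\MMO(\rn)\subsetneqq\XMO(\rn)$, a smooth truncation of $\log|x|$, namely $b(x):=\psi(x)\log|x|$ with $\psi\in C^\fz(\rn)$ equal to $1$ for $|x|\ge2$ and $0$ for $|x|\le1$, lies in $B_\fz(\rn)$ since each $\partial^\az b$ is $O(|x|^{-|\az|})$ at infinity, yet $b\notin L^\fz(\rn)$, and because $\MMO(\rn)$-elements enjoy a local boundedness/vanishing property inherited from $A_\fz(\rn)$ that $b$ lacks, one concludes $b\notin\MMO(\rn)$. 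For $\XMO(\rn)\subsetneqq\VMO(\rn)$, a function of type $b(x):=\sin(\log(2+|x|))$ (or any bounded uniformly continuous BMO function whose oscillations persist at infinity) belongs to $C_{\rm u}(\rn)\cap\BMO(\rn)\subset\VMO(\rn)$, while the mean oscillation characterization of $\xMO(\rn)$ in Theorem \ref{xMO-char}, combined with the equality $\XMO(\rn)=\xMO(\rn)$ just proved, would be used to rule out membership in $\XMO(\rn)$.

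The main obstacle is the strict inclusion $\XMO(\rn)\subsetneqq\VMO(\rn)$: exhibiting a concrete $\VMO$ function not in $\XMO$ requires invoking the mean oscillation characterizations in Theorems \ref{VMO-char} and \ref{xMO-char} and verifying that a candidate such as $\sin(\log(2+|x|))$ meets the VMO vanishing conditions on small cubes but fails the additional decay-at-infinity condition that characterizes $\xMO$. The mollification step for $\XMO=\xMO$ is routine once one notices that $\nabla b$ bounded forces $b$ Lipschitz, which upgrades $\BMO$-convergence to $L^\fz$-convergence; the other two strict separations reduce to verifying the relevant equivalent characterizations already stated in this survey.
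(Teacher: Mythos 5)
Your containments at the level of generating classes are fine (including the observation that $B_\fz(\rn)\subset C_{\rm u}(\rn)\cap\BMO(\rn)$ because a continuous gradient vanishing at infinity is bounded), and your mollification proof of $\xMO(\rn)\subset\XMO(\rn)$ is correct; note that the survey itself does not prove Proposition \ref{subset} but cites \cite[p.\,5]{TX19} for the first two strict inclusions and \cite[Corollary 1.3]{txyyJFAA} for $\XMO(\rn)=\xMO(\rn)\subsetneqq\VMO(\rn)$ (where the equality is obtained through the oscillation characterization of Theorem \ref{xMO-char}), so a direct approximation argument is a legitimate alternative for that part. The strict inclusions, however, are where your proposal has genuine gaps.

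Most seriously, your separator for $\XMO(\rn)\subsetneqq\VMO(\rn)$ does not work: for $b(x)=\sin(\log(2+|x|))$ (smoothed at the origin) one has $|\nabla b(x)|\ls(2+|x|)^{-1}\to0$ as $|x|\to\fz$, so $b\in B_1(\rn)\subset\xMO(\rn)=\XMO(\rn)$; equivalently, $b$ satisfies both (ii)$_1$ and (ii)$_2$ of Theorem \ref{xMO-char}, since for a fixed cube $Q$ one has $\co(b;Q+x)\ls\ell(Q)\sup_{Q+x}|\nabla b|\to0$ as $|x|\to\fz$. Thus this function cannot lie in $\VMO(\rn)\setminus\XMO(\rn)$; a correct separator is, for instance, $b(x)=\sin(x_1)$, which is bounded and Lipschitz, hence in $C_{\rm u}(\rn)\cap\BMO(\rn)\subset\VMO(\rn)$, but whose mean oscillation over unit cubes translated to infinity in the $x_2$-direction is a fixed positive constant, so (ii)$_2$ fails and $b\notin\XMO(\rn)$. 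Second, your reason for $\psi(\cdot)\log|\cdot|\notin\MMO(\rn)$ is not an argument: $\MMO(\rn)$ is a closure in the $\BMO(\rn)$-norm of a class of bounded functions, and such a closure does not inherit boundedness (the $\BMO$-closure of $L^\fz(\rn)$ contains unbounded functions), so no ``local boundedness/vanishing property'' is available. A genuine proof can run as follows: if $\|f-g\|_{\BMO(\rn)}<\ez$ with $g\in L^\fz(\rn)$, then for unit cubes $Q,Q'$ at distance $D$ one has $|f_Q-f_{Q'}|\le 2\|g\|_{L^\fz(\rn)}+C_n\,\ez\log(2+D)$, so every $f\in\overline{L^\fz(\rn)}^{\BMO(\rn)}\supset\MMO(\rn)$ has unit-cube averages growing like $o(\log D)$, whereas $\psi(\cdot)\log|\cdot|$ has averages growing exactly like $\log D$; alternatively one may invoke the Garnett--Jones description of the closure of $L^\fz(\rn)$. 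Finally, your separator for $\CMO(\rn)\subsetneqq\MMO(\rn)$ is only gestured at; to complete it you should exhibit an explicit function such as $\sin(\tfrac12\log(1+|x|^2))$, check that it lies in $A_\fz(\rn)$ (bounded, smooth, all derivatives $O(|x|^{-1})$), and verify that condition (iii) of Theorem \ref{CMO-char} fails along suitable large cubes centered at the origin.
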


Indeed, $$\CMO(\rn)\subsetneqq\MMO(\rn)\subsetneqq\XMO(\rn)$$
was obtained in \cite[p.\,5]{TX19}.
Moreover, $$\XMO(\rn)=\xMO(\rn)\subsetneqq\VMO(\rn)$$
was obtained in \cite[Corollary 1.3]{txyyJFAA},
which completely answered the open question proposed in
\cite[p.\,6]{TX19}.

Next, we investigate the mean oscillation characterizations
of these vanishing subspace.
Recall that, for any cube $Q$ of $\rn$, and any $f\in L_{\loc}^1(\rn)$,
the \emph{mean oscillation} $\co(f;Q)$ is defined by setting
\begin{align*}
\co(f;Q):=\fint_Q\lf|f(x)-f_Q\r|\,dx
=\frac1{|Q|}\int_Q\lf|f(x)-\frac1{|Q|}\int_Q f(y)\,dy\r|\,dx.
\end{align*}
The earliest results of $\VMO(\rn)$ was obtained by
Sarason in \cite{Sarason75}, and Theorem \ref{VMO-char} below
is a part of \cite[Theorem 1]{Sarason75}.
In what follows, $a\to0^+$ means $a\in(0,\fz)$ and $a\to0$.
\begin{theorem}\label{VMO-char}
$f\in\VMO(\rn)$ if and only if $f\in\BMO(\rn)$ and
$$\lim_{a\to0^+}\sup_{|Q|=a}\co(f; Q)=0.$$
\end{theorem}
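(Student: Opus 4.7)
The plan is to prove the two implications separately.

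(Necessity) Suppose $f \in \VMO(\rn)$. Given $\varepsilon\in(0,\fz)$, by the definition of $\VMO(\rn)$, pick $g \in C_{\rm u}(\rn) \cap \BMO(\rn)$ with $\|f - g\|_{\BMO(\rn)} < \varepsilon$. For any cube $Q$ of $\rn$,
\begin{equation*}
\co(f;Q) \le \co(f - g;Q) + \co(g;Q) \le 2\|f - g\|_{\BMO(\rn)} + \co(g;Q) < 2\varepsilon + \co(g;Q).
\end{equation*}
The uniform continuity of $g$ yields some $\delta\in(0,\fz)$ such that $\co(g;Q) < \varepsilon$ whenever the side length of $Q$ is less than $\delta$. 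Therefore $\limsup_{a\to 0^+}\sup_{|Q|=a}\co(f;Q) \le 3\varepsilon$ and the arbitrariness of $\varepsilon$ yields the claimed limit.

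(Sufficiency) Suppose $f \in \BMO(\rn)$ satisfies $\lim_{a\to 0^+}\sup_{|Q|=a}\co(f;Q) = 0$. I would approximate $f$ in $\BMO(\rn)$ by uniformly continuous BMO functions via mollification. Fix a non-negative $\varphi \in C_{\rm c}^\fz(\rn)$ with $\int_\rn \varphi(x)\,dx = 1$ and $\supp(\varphi) \subset B(\mathbf{0}, 1)$, and set $\varphi_\varepsilon(\cdot) := \varepsilon^{-n}\varphi(\cdot/\varepsilon)$ and $f_\varepsilon := f * \varphi_\varepsilon$ for any $\varepsilon\in(0,\fz)$. Jensen's inequality gives $\|f_\varepsilon\|_{\BMO(\rn)} \le \|f\|_{\BMO(\rn)}$. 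Moreover, using $\int_\rn \nabla\varphi_\varepsilon(x)\,dx = \mathbf{0}$, one can rewrite, for any $x\in\rn$,
\begin{equation*}
\nabla f_\varepsilon(x) = \int_{B(x,\varepsilon)}\bigl[f(y) - f_{B(x,\varepsilon)}\bigr]\nabla\varphi_\varepsilon(x - y)\,dy,
\end{equation*}
which, combined with Proposition \ref{equBMO}, yields $|\nabla f_\varepsilon(x)| \le C\varepsilon^{-1}\|f\|_{\BMO(\rn)}$ with $C$ independent of $x$. Hence $f_\varepsilon$ is globally Lipschitz continuous and therefore belongs to $C_{\rm u}(\rn)\cap\BMO(\rn)$.

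It remains to establish $\|f - f_\varepsilon\|_{\BMO(\rn)}\to 0$ as $\varepsilon\to 0^+$, after which $\{f_{1/k}\}_{k\in\nn}$ witnesses $f\in \overline{C_{\rm u}(\rn)\cap\BMO(\rn)}^{\BMO(\rn)} = \VMO(\rn)$. From Fubini's theorem it follows that $(f_\varepsilon)_Q = \int_\rn \varphi_\varepsilon(y)f_{Q-y}\,dy$ for any cube $Q$, whence
\begin{equation*}
\bigl|(f - f_\varepsilon)(x) - (f - f_\varepsilon)_Q\bigr| \le \int_\rn\varphi_\varepsilon(y)\bigl\{|f(x) - f_Q| + |f(x - y) - f_{Q - y}|\bigr\}\,dy,
\end{equation*}
and integrating in $x$ gives $\co(f - f_\varepsilon;Q) \le \co(f;Q) + \sup_{|y|\le\varepsilon}\co(f;Q - y)$. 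Given $\eta\in(0,\fz)$, pick $a_0\in(0,\fz)$ via the hypothesis so that $\co(f;Q)<\eta$ whenever $\ell(Q) < a_0$. For $\varepsilon\in(0,a_0/2)$ and cubes $Q$ with $\ell(Q) < a_0/2$, this bound immediately gives $\co(f - f_\varepsilon;Q)<2\eta$. For cubes with $\ell(Q) \ge a_0/2$, a sharper argument is required: after writing $(f - f_\varepsilon)(x) = \int_\rn[f(x) - f(x - y)]\varphi_\varepsilon(y)\,dy$, one exploits the symmetric-difference estimate $|Q \triangle (Q - y)| \ls \ell(Q)^{n-1}\varepsilon$ for $|y|\le\varepsilon$ together with the John--Nirenberg inequality in Proposition \ref{equBMO} to control the integrable tails of $f - f_Q$ over the thin boundary region, yielding an estimate of the form $\co(f - f_\varepsilon;Q) \ls (\varepsilon/\ell(Q))^{\tau}\|f\|_{\BMO(\rn)}$ for some $\tau\in(0,1)$, which is uniformly small over such $Q$ when $\varepsilon$ is small.

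The main technical obstacle is this last step, namely, the uniform BMO convergence of $\{f_\varepsilon\}_{\varepsilon\in(0,\fz)}$ to $f$ on large cubes, since the pointwise smallness of $f - f_\varepsilon$ is unavailable there; handling it requires combining boundary-area estimates with the exponential decay of $f - f_Q$ supplied by the John--Nirenberg inequality, so that the bound on $\co(f-f_\varepsilon;Q)$ depends only on $\varepsilon/\ell(Q)$ and $\|f\|_{\BMO(\rn)}$.
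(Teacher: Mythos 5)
Your necessity argument is correct, and your sufficiency set-up (mollification, the Jensen-type bound $\|f_\varepsilon\|_{\BMO(\rn)}\lesssim\|f\|_{\BMO(\rn)}$, and the gradient bound $|\nabla f_\varepsilon|\lesssim\varepsilon^{-1}\|f\|_{\BMO(\rn)}$ giving $f_\varepsilon\in C_{\rm u}(\rn)\cap\BMO(\rn)$) is the classical route; the survey itself does not prove this theorem but quotes Sarason's Theorem 1, whose proof is exactly of this mollification type. The genuine gap is in your decisive last step: the claimed estimate $\co(f-f_\varepsilon;Q)\lesssim(\varepsilon/\ell(Q))^{\tau}\|f\|_{\BMO(\rn)}$ for cubes with $\ell(Q)\ge a_0/2$ is false, and no symmetric-difference/boundary argument can yield it, because on a large cube the size of $f-f_\varepsilon$ is governed by the oscillation of $f$ at scale $\varepsilon$ throughout the \emph{interior} of $Q$, not by a thin boundary region. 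Concretely, let $f(x):=\sin(x_1/\varepsilon)$; then $\|f\|_{\BMO(\rn)}\lesssim1$, while $f-f\ast\varphi_\varepsilon$ is a sinusoid of frequency $1/\varepsilon$ whose amplitude $|1-\int_{\rn}e^{-iz_1}\varphi(z)\,dz|$ is a positive constant independent of $\varepsilon$, so $\co(f-f_\varepsilon;Q)$ stays bounded away from $0$ on all cubes with $\ell(Q)\gg\varepsilon$ and is not $O((\varepsilon/\ell(Q))^{\tau})$. This $f$ of course violates the vanishing hypothesis at scale $\varepsilon$, which is precisely the point: your large-cube estimate never uses that hypothesis, and even granting it, a bound depending only on $\varepsilon/\ell(Q)$ and $\|f\|_{\BMO(\rn)}$ cannot hold (the rate must involve the vanishing modulus of $f$, as lacunary examples with slowly decaying modulus show).

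The repair is standard and simpler than what you attempt. Set $\omega(t):=\sup_{\ell(R)\le t}\co(f;R)$, so $\omega(t)\to0$ as $t\to0^+$ by hypothesis. For $|y|\le\varepsilon$ and any cube $Q$ with $\ell(Q)\ge\varepsilon$, tile $Q$ by subcubes $\{Q_i\}_i$ with side lengths in $[\varepsilon,2\varepsilon)$ and let $\widetilde{Q_i}$ be the concentric cube of side length $\ell(Q_i)+2\varepsilon\le4\varepsilon$, so that $Q_i\cup(Q_i-y)\subset\widetilde{Q_i}$; then $\int_{Q_i}|f(x)-f(x-y)|\,dx\le2\int_{\widetilde{Q_i}}|f-f_{\widetilde{Q_i}}|\lesssim|Q_i|\,\omega(4\varepsilon)$, and summing over $i$ and averaging against $\varphi_\varepsilon$ gives $\fint_Q|f-f_\varepsilon|\lesssim\omega(4\varepsilon)$, hence $\co(f-f_\varepsilon;Q)\le2\fint_Q|f-f_\varepsilon|\lesssim\omega(4\varepsilon)$. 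For cubes with $\ell(Q)<\varepsilon$, use $\co(f-f_\varepsilon;Q)\le\co(f;Q)+\co(f_\varepsilon;Q)\le\omega(\varepsilon)+2\int_{\rn}\varphi_\varepsilon(z)\co(f;Q-z)\,dz\le3\omega(\varepsilon)$, exactly as in \eqref{ineq-ast} in the proof of Proposition \ref{f-ast-phi}. Altogether $\|f-f_\varepsilon\|_{\BMO(\rn)}\lesssim\omega(4\varepsilon)\to0$ as $\varepsilon\to0^+$, which completes your sufficiency direction without any John--Nirenberg inequality or boundary-area estimate.
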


The following equivalent characterization of $\CMO(\rn)$
is just Uchiyama \cite[p.\,166]{U78}.
\begin{theorem}\label{CMO-char}
$f\in\CMO(\rn)$ if and only if $f\in\BMO(\rn)$ and
satisfies the following three conditions:
\begin{itemize}
\item [{\rm(i)}] $\displaystyle{\lim_{a\to0^+}\sup_{|Q|=a}\co(f; Q)=0};$
\item [{\rm (ii)}]for any cube $Q$ of $\rn$,
$\displaystyle{\lim_{|x|\to\fz}\co(f; Q+x)=0};$
\item [{\rm(iii)}]$\displaystyle{\lim_{a\to\fz}\sup_{|Q|=a}\co(f; Q)=0}.$
\end{itemize}
\end{theorem}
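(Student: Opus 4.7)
The plan is to prove both implications. The forward direction is a routine verification of (i)--(iii) on the dense subspace $C_c^\infty(\rn)$, followed by a continuity argument. The reverse direction is substantially deeper and requires constructing an explicit approximating sequence in $C_c^\infty(\rn)$ via two successive regularizations: mollification (using (i)) and spatial truncation (using (ii) and (iii)).

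\textbf{Forward direction.} First I would verify (i)--(iii) directly for any $g\in C_c^\infty(\rn)$. Condition (i) follows from the uniform continuity of $g$, since $\co(g;Q)\le\omega_g(\diam Q)\to 0$ as $|Q|\to 0$, where $\omega_g$ denotes the modulus of continuity of $g$. Condition (ii) follows because $\supp g$ is compact, so $g\equiv 0$ on $Q+x$ for all sufficiently large $|x|$, making $\co(g;Q+x)=0$. Condition (iii) follows from the crude bound $\co(g;Q)\le 2|Q|^{-1}\|g\|_{L^1(\rn)}\to 0$ as $|Q|\to\fz$. The transfer to the $\BMO(\rn)$-closure then rests on the elementary Lipschitz-type inequality
\[
|\co(f;Q)-\co(g;Q)|\le\|f-g\|_{\BMO(\rn)},
\]
which I would obtain by writing $f-f_Q=(g-g_Q)+[(f-g)-(f-g)_Q]$, applying the triangle inequality inside the average, and using the definition of $\BMO(\rn)$. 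Given $f\in\CMO(\rn)$ and $\{g_k\}_k\subset C_c^\infty(\rn)$ with $g_k\to f$ in $\BMO(\rn)$, each of the three limiting statements for $f$ then follows from the corresponding statement for $g_k$ by a standard $\varepsilon/2$-argument.

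\textbf{Reverse direction.} Let $f\in\BMO(\rn)$ satisfy (i)--(iii). The goal is to produce $\{h_k\}_k\subset C_c^\infty(\rn)$ converging to $f$ in $\BMO(\rn)$. I would proceed in two steps. In Step~1 (mollification), fix a standard non-negative mollifier $\phi_\ez\in C_c^\infty(\rn)$ supported in $B(\mathbf{0},\ez)$ with integral one, and set $f_\ez:=f*\phi_\ez\in C^\fz(\rn)\cap\BMO(\rn)$. By Minkowski's integral inequality,
\[
\co(f-f_\ez;Q)\le\int_{\rn}\phi_\ez(y)\,\co\bigl(f-f(\cdot-y);Q\bigr)\,dy,
\]
which can be dominated by a term controlled by $\sup_{|Q'|\le C\ez^n}\co(f;Q')$ after a covering argument, and this tends to zero as $\ez\to 0^+$ by (i); hence $f_\ez\to f$ in $\BMO(\rn)$. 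In Step~2 (spatial truncation), for each $R>0$, choose $\chi_R\in C_c^\infty(\rn)$ with $\chi_R\equiv 1$ on $B(\mathbf{0},R)$, $\supp\chi_R\subset B(\mathbf{0},2R)$, and $|\nabla\chi_R|\le C/R$, and define
\[
h_{\ez,R}(x):=\bigl[f_\ez(x)-(f_\ez)_{B(\mathbf{0},2R)}\bigr]\chi_R(x)\in C_c^\infty(\rn).
\]
Then $f_\ez-h_{\ez,R}$ is a constant on $B(\mathbf{0},R)$ and equals $f_\ez$ outside $B(\mathbf{0},2R)$, so to bound $\|f_\ez-h_{\ez,R}\|_{\BMO(\rn)}$ one estimates $\co(f_\ez-h_{\ez,R};Q)$ case-by-case according to the side length $\ell(Q)$ and the position of $Q$: small cubes near the annulus $B(\mathbf{0},2R)\setminus B(\mathbf{0},R)$ are handled using (i) and the bound $|\nabla\chi_R|\le C/R$; fixed-scale cubes sitting far outside $B(\mathbf{0},2R)$ are handled using (ii); cubes of very large side length are handled using (iii). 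A suitable simultaneous limit $\ez\to 0^+$ and $R\to\fz$ then yields $h_{\ez,R}\to f$ in $\BMO(\rn)$.

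\textbf{Main obstacle.} The hardest step is Step~2 of the reverse direction. Multiplication by a smooth cutoff is not a bounded operation on $\BMO(\rn)$ in any elementary way: cubes that straddle the transition annulus mix values of $f_\ez$ at points where $\chi_R$ has gradient of order $1/R$, and cubes lying entirely outside $B(\mathbf{0},2R)$ but not extremely large or extremely small are exactly those left uncontrolled by (i) alone. The delicate point is to partition all cubes into three regimes aligned with conditions (i), (ii), (iii), and to choose $R=R(\ez)\to\fz$ compatibly with $\ez\to 0^+$ so that each of the three error contributions is uniformly small. Making this case analysis rigorous--in particular, choosing the thresholds between the three regimes and verifying the uniform bounds on the oscillations in each--is the technical core of the proof.
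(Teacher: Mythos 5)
This survey does not prove Theorem \ref{CMO-char} at all: it is quoted directly from Uchiyama \cite[p.\,166]{U78}, so there is no in-paper argument to compare yours against; your proposal has to stand on its own. Its first half does: the forward direction (verification of (i)--(iii) on $C_{\rm c}^\fz(\rn)$ plus the inequality $|\co(f;Q)-\co(g;Q)|\le\|f-g\|_{\BMO(\rn)}$) is complete, and Step 1 of the reverse direction is also sound, since subdividing a cube with $\ell(Q)\ge\ez$ into subcubes of side $\sim\ez$ gives $\|f-f\ast\phi_\ez\|_{\BMO(\rn)}\lesssim\sup_{\ell(Q')\le C\ez}\co(f;Q')\to0$ by (i) alone.

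The gap is in Step 2, and it is not merely a matter of bookkeeping the three regimes you describe. First, condition (ii) is a statement about translates of one fixed cube; your case ``fixed-scale cubes far outside $B(\mathbf{0},2R)$ are handled using (ii)'' needs the uniform statement $\lim_{M\to\fz}\sup_{Q\cap Q(\mathbf{0},M)=\emptyset}\co(f;Q)=0$, i.e. (ii') of Remark \ref{xMO-replace}, and deducing (ii') from (i)+(ii)+(iii) requires an extra argument (a net over side lengths in a compact range $[a_0,A_0]$, with small and large scales first removed by (i) and (iii)); the survey explicitly warns that (ii) and (ii') are not interchangeable without (iii). Second, and more seriously, the cutoff error is not controlled by the oscillation of $f_\ez$ on $Q$ in the regimes you assign: for a cube $Q$ meeting the transition annulus, the product-rule term is roughly $\frac{\ell(Q)}{R}\fint_Q|f_\ez-(f_\ez)_{B(\mathbf{0},2R)}|$, and $\fint_Q|f_\ez-(f_\ez)_{B(\mathbf{0},2R)}|$ involves the drift of averages $|(f_\ez)_Q-(f_\ez)_{B(\mathbf{0},2R)}|$, which is of order $\log(R/\ell(Q))$ times the relevant oscillations. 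Using only $\|f\|_{\BMO(\rn)}$ here leaves an $O(\|f\|_{\BMO(\rn)})$ contribution for $\ell(Q)\sim R$, so ``(i) plus $|\nabla\chi_R|\le C/R$'' does not close the small- and intermediate-scale cases near the annulus. What is actually needed is a telescoping of the averages through all dyadic scales between $\ell(Q)$ and $R$, observing that every intermediate cube is either far from the origin (so (ii') applies) or of side comparable to $R$ (so (iii) applies), whence each telescoping term is small and the factor $\frac{\ell(Q)}{R}\log(R/\ell(Q))\le e^{-1}$ absorbs the logarithm. This telescoping-plus-uniformization argument is the real core of Uchiyama's proof, and it is exactly the piece your outline defers; without it the reverse direction is not established.
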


Very recently,
Tao el al. obtained the following equivalent characterization
of both $\XMO(\rn)$ and $\xMO(\rn)$,
which is just \cite[Theorem 1.2]{txyyJFAA}.

\begin{theorem}\label{xMO-char}
The following statements are mutually equivalent:
\begin{itemize}
\item [{\rm(i)}] $f\in\xMO(\rn)$;
\item [{\rm (ii)}] $f\in\BMO(\rn)$ and enjoys the properties that
\begin{itemize}
\item [{\rm (ii)$_1$}] $\displaystyle{\lim_{a\to0^+}\sup_{|Q|=a}\co(f; Q)=0};$
\item [{\rm (ii)$_2$}]for any cube $Q$ of $\rn$,
$\displaystyle{\lim_{|x|\to\fz}\co(f; Q+x)=0}.$
\end{itemize}
\item [{\rm (iii)}] $f\in\XMO(\rn)$.
\end{itemize}
\end{theorem}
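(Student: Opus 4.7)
The plan is to establish the chain $\mathrm{(iii)}\Rightarrow\mathrm{(i)}\Rightarrow\mathrm{(ii)}\Rightarrow\mathrm{(iii)}$. The first implication is immediate, since every $g\in B_\fz(\rn)$ lies in $B_1(\rn)$ by definition, so taking $\BMO$-closures yields $\XMO(\rn)\subset\xMO(\rn)$ and hence $\mathrm{(iii)}$ implies $\mathrm{(i)}$.

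For $\mathrm{(i)}\Rightarrow\mathrm{(ii)}$, I would first show that every generator $g\in B_1(\rn)$ satisfies both $\mathrm{(ii)}_1$ and $\mathrm{(ii)}_2$. Since $|\nabla g|$ is continuous on $\rn$ and vanishes at infinity, it is bounded on $\rn$, and the mean value inequality gives, for any cube $Q$ of side length $\ell$,
\begin{align*}
\co(g;Q)\le \fint_Q\fint_Q |g(x)-g(y)|\,dx\,dy
\le \sqrt{n}\,\ell\sup_{z\in Q}|\nabla g(z)|,
\end{align*}
which tends to $0$ as $\ell\to 0^+$, proving $\mathrm{(ii)}_1$, and which, for a fixed cube $Q$, tends to $0$ as $|x|\to\fz$ when $Q$ is replaced by $Q+x$, proving $\mathrm{(ii)}_2$. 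Next, the set of $f\in\BMO(\rn)$ satisfying both $\mathrm{(ii)}_1$ and $\mathrm{(ii)}_2$ is $\BMO$-closed by the standard $\vp/2$ argument based on the elementary bound
\begin{align*}
\co(f;Q)\le \co(f-f_k;Q)+\co(f_k;Q)
\le 2\|f-f_k\|_{\BMO(\rn)}+\co(f_k;Q).
\end{align*}
Since $\xMO(\rn)=\overline{B_1(\rn)}^{\BMO(\rn)}$ is contained in this closed set, $\mathrm{(ii)}$ follows.

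For $\mathrm{(ii)}\Rightarrow\mathrm{(iii)}$, I plan to use mollification. Fix a non-negative $\phi\in C_{\mathrm c}^\fz(\rn)$ with $\supp(\phi)\subset B(\mathbf 0,1)$ and $\int\phi=1$, set $\phi_\vp(\cdot):=\vp^{-n}\phi(\cdot/\vp)$, and let $f_\vp:=f\ast\phi_\vp$ for $\vp\in(0,\fz)$. Then $f_\vp\in C^\fz(\rn)$, and Jensen's inequality combined with the translation invariance of $\BMO$ gives $\|f_\vp\|_{\BMO(\rn)}\le\|f\|_{\BMO(\rn)}$. For any $\az\in\zz_+^n\setminus\{\mathbf 0\}$, integration by parts yields $\int_{\rn}\partial^\az\phi_\vp=0$, so subtracting a constant does not alter the convolution and
\begin{align*}
\lf|\partial^\az f_\vp(x)\r|
=\lf|\int_{B(x,\vp)}\lf[f(y)-f_{B(x,\vp)}\r]\partial^\az\phi_\vp(x-y)\,dy\r|
\lesssim_{\vp,\phi,\az}\co\lf(f;B(x,\vp)\r),
\end{align*}
which, after comparing the ball to a concentric cube of comparable size, tends to $0$ as $|x|\to\fz$ by $\mathrm{(ii)}_2$. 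Thus $f_\vp\in B_\fz(\rn)\subset\XMO(\rn)$ for every $\vp\in(0,\fz)$. Finally, under $\mathrm{(ii)}_1$, Theorem \ref{VMO-char} identifies $f$ with an element of $\VMO(\rn)$, on which smooth mollifications converge in the $\BMO$-norm, so $\|f-f_\vp\|_{\BMO(\rn)}\to 0$ as $\vp\to 0^+$ and $f\in\XMO(\rn)$.

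The principal obstacle I anticipate is this last $\BMO$-convergence $f_\vp\to f$. While it follows cleanly from the classical fact that $C^\fz(\rn)\cap\VMO(\rn)$ is dense in $\VMO(\rn)$ once $\mathrm{(ii)}_1$ is recast as $f\in\VMO(\rn)$ via Theorem \ref{VMO-char}, a self-contained verification requires estimating $\co(f-f_\vp;Q)$ uniformly over both the position and the scale of $Q$: small cubes are controlled directly through $\mathrm{(ii)}_1$, while for large cubes one must argue that $\co(f-f_\vp;Q)$ is dominated by an average of translated mean oscillations of $f$, which converges to $\co(f;Q)$ as $\vp\to 0^+$ by dominated convergence. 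This bookkeeping between small-scale smoothness and large-scale mean-oscillation control is the technical heart of the argument, and it is exactly where $\mathrm{(ii)}_2$ is dispensable (so that the proof applies to the strictly larger space $\xMO(\rn)\supset\CMO(\rn)$ in Theorem \ref{CMO-char}).
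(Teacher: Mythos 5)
Your proof is correct and follows the natural route, which is also that of the original source \cite[Theorem 1.2]{txyyJFAA} (the present survey only cites this result without reproducing its proof): the chain $\mathrm{(iii)}\Rightarrow\mathrm{(i)}\Rightarrow\mathrm{(ii)}\Rightarrow\mathrm{(iii)}$ via the inclusion $B_\fz(\rn)\subset B_1(\rn)$, the gradient estimate on generators plus the $\BMO$-closedness of the set defined by {\rm(ii)}$_1$ and {\rm(ii)}$_2$, and the mollification step showing $f\ast\phi_\vp\in B_\fz(\rn)$ by the zero-integral trick for $\partial^\az\phi_\vp$ together with $\|f-f\ast\phi_\vp\|_{\BMO(\rn)}\to0$, which under {\rm(ii)}$_1$ follows from Theorem \ref{VMO-char} and the classical fact that mollifications converge in $\BMO(\rn)$ on $\VMO(\rn)$. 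The only imprecise point is your alternative ``self-contained'' sketch for large cubes (knowing that averaged translated oscillations converge to $\co(f;Q)$ does not by itself make $\co(f-f_\vp;Q)$ small), but since your argument rests on the Sarason/VMO mollification fact rather than on that sketch, no essential gap remains.
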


\begin{remark}\label{xMO-replace}
Proposition \ref{CMO-char}{\rm (ii)} can be replaced by
\begin{itemize}
\item [{\rm(ii')}]
$\displaystyle{\lim_{M\to\fz}\sup_{Q\cap Q(\mathbf{0},M)=\emptyset}\co(f;Q)=0},$
\end{itemize}
where $Q(\mathbf{0},M)$ denotes the cube centered at $\mathbf{0}$ with the side length $M$.
But, ${\rm(ii)_2}$ of Theorem \ref{xMO-char}{\rm (ii)} can not be replaced by {\rm (ii')};
see \cite[Proposition 2.5]{txyyJFAA} for more details.
\end{remark}

However, the equivalent characterization of $\MMO(\rn)$ is still unknown;
see \cite[Proposition 2.5 and Remark 2.6]{txyyJFAA} for more details
of the following open question.

\begin{question}\label{openQ-MMO}
It is interesting to find the equivalent characterization of $\MMO(\rn)$,
as well as its localized counterpart (see Question \ref{openQ-localXMO}),
via the mean oscillations.
\end{question}

As for the applications of these vanishing subspaces,
we know that the commutator $[b,T]$,
generated by $b\in\BMO(\rn)$ and the Calder\'on--Zygmund operator $T$,
plays an important role in harmonic analysis, complex analysis,
partial differential equations, and other fields in mathematics.
Here, we only list several typical \emph{bilinear} results;
other \emph{linear} and \emph{multi-linear}
results can be founded, for instance, in
\cite{tyyMMAS,tyyPA,L20} and their references.

In what follows, let
$\mathbb{Z}_+^{3n}:=(\mathbb{Z}_+)^{3n}$
and $L_{\rm c}^\fz(\rn)$ denote the set of all functions
$f\in L^\fz(\rn)$ with compact support.
We now consider the following particular type
bilinear Calder\'on--Zygmund operator $T$, whose kernel $K$ satisfies
\begin{itemize}
\item[{\rm(i)}]The standard \emph{size} and \emph{regularity} conditions:
for any multi-index $\az:=(\az_1,\ldots,\az_{3n})\in\zz_+^{3n}$ with
$|\az|:=\az_1+\cdots+\az_{3n}\le1$, there exists a positive constant
$C_{(\az)}$, depending on $\az$, such that,
for any $x,\ y,\ z\in\rn$ with $x\neq y$ or $x\neq z$,
\begin{align}\label{sizeregular}
|\partial^\az K(x,y,z)|\le C_{(\az)} (|x-y|+|x-z|)^{-2n-|\az|}.
\end{align}
Here and thereafter,
$\partial^\az:=(\frac\partial{\partial x_1})^{\az_1}\cdots
 (\frac\partial{\partial x_{3n}})^{\az_{3n}}$.

\item[{\rm(ii)}]The additional decay condition:
there exist positive constants $C$ and $\dz$ such that,
for any $x,\ y,\ z\in\rn$ with $|x-y|+|x-z|>1$,
\begin{align}\label{decay}
|K(x,y,z)|\le C (|x-y|+|x-z|)^{-2n-2-\dz}
\end{align}
\end{itemize}
and, for any $f,\ g\in L_{\rm c}^\fz(\rn)$ and $x\notin\supp(f)\cap\supp(g)$,
$T$ is supposed to have the following usual representation:
\begin{align*}
T(f,g)(x)=\int_{\rnn}K(x,y,z)f(y)g(z)\,dy\,dz,
\end{align*}
here and thereafter, $\supp(f):=\{x\in\rn:\ f(x)\neq 0\}$.
Notice that the (inhomogeneous) Coifman--Meyer bilinear
Fourier multipliers and the bilinear pseudodifferential
operators with certain symbols satisfy the above two conditions;
see, for instance, \cite{TX19} and its references.

Recall that, usually, a non-negative measurable function $w$ on $\rn$
is called a \emph{weight} on $\rn$.
For any given $\mathbf{p}:=(p_1,p_2)\in(1,\fz)\times(1,\fz)$,
let $p$ satisfy $\frac1p=\frac1{p_1}+\frac1{p_2}$.
Following \cite{BDMT15},  we call $\mathbf{w}:=(w_1,w_2)$ a
\emph{vector} $\mathbf{A}_{\mathbf{p}}(\rn)$ \emph{weight},
denoted by $\mathbf{w}:=(w_1,w_2)\in \mathbf{A}_{\mathbf{p}}(\rn)$,
if
\begin{align*}
[\mathbf{w}]_{\mathbf{A}_{\mathbf{p}}(\rn)}&:=
\sup_Q\lf[\frac 1{|Q|}\int_Q w(x)\,dx\r]\lf\{\frac 1{|Q|}
\int_Q \lf[w_1(x)\r]^{1-p_1'}\,dx\r\}^{\frac p{p_1'}}\\
&\quad\times\lf\{\frac 1{|Q|}\int_Q \lf[w_2(x)\r]^{1-p_2'}\,dx\r\}^{\frac p{p_2'}}<\fz,
\end{align*}
where $w:=w_1^{p/p_1}w_2^{p/p_2}$,
$ 1/{p_1}+1/{p_1'}=1=1/{p_2}+1/{p_2'}$,
and the supremum is taken over all cubes $Q$ of $\rn$.
In what follows, for any given weight $w$ on $\rn$
and any measurable subset $E\subseteqq\rn$,
the \emph{symbol $L^p_w(E)$}, with $p\in(0,\fz)$, denotes the set of all measurable
functions $f$ on $E$ such that
$$\|f\|_{L^p_w(E)}:=\lf[\int_E |f(x)|^p w(x)\,dx\r]^\frac1p<\fz,$$
and, when $w\equiv1$, we write $L^p_w(E)=:L^p(E)$.
Also, $\|\cdot\|_{L^\fz(E)}$ represents
the essential supremum on $E$.

Recall also that the \emph{bilinear commutators}
$[b,T]_1$ and $[b,T]_2$ are defined, respectively,
by setting, for any $f,\ g\in L_{\rm c}^\fz(\rn)$ and
$x\notin\supp(f)\cap\supp(g)$,
\begin{align}\label{c1}
[b,T]_1(f,g)(x)
:=&\,\lf(bT(f,g)-T(bf,g)\r)(x)\noz\\
=&\,\int_{\rnn}[b(x)-b(y)]K(x,y,z)f(y)g(z)\,dy\,dz
\end{align}
and
\begin{align}\label{c2}
[b,T]_2(f,g)(x)
:=&\,\lf(bT(f,g)-T(f,bg)\r)(x)\noz\\
=&\,\int_{\rnn}[b(x)-b(z)]K(x,y,z)f(y)g(z)\,dy\,dz.
\end{align}

The following theorem, obtained in \cite[Theorem 1]{BT13}
for any given $p\in(1,\fz)$
and in \cite[Theorem 1]{txy18}
for any given $p\in(\frac12,1]$,
showed that the bilinear compact commutators $\{[b,T]_i\}_{i=1,2}$
are compact for $b\in\CMO(\rn)$.

\begin{theorem}\label{compact-thm-CMO}
Let $(p_1,p_2)\in(1,\fz)\times(1,\fz)$,
$p\in(\frac12,\fz)$ with $\frac1p=\frac1{p_1}+\frac1{p_2}$,
$b\in\CMO(\rn)$, and
$T$ be a bilinear Calder\'on--Zygmund operator whose kernel satisfies
\eqref{sizeregular}.
Then, for any $i\in\{1,2\}$,
the bilinear commutator $[b,T]_i$ as in \eqref{c1} or \eqref{c2}
is compact from $L^{p_1}(\rn)\times L^{p_2}(\rn)$ to $L^{p}(\rn)$.
\end{theorem}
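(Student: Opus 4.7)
The plan is to reduce the compactness problem to the case where the symbol is a smooth function with compact support, and then to apply a bilinear version of the Fréchet--Kolmogorov--Riesz compactness criterion. First I would exploit the hypothesis $b\in\CMO(\rn)=\overline{C_{\mathrm c}^\infty(\rn)}^{\BMO(\rn)}$ to pick a sequence $\{b_k\}_k\subset C_{\mathrm c}^\infty(\rn)$ with $\|b_k-b\|_{\BMO(\rn)}\to0$. The (already established) boundedness of bilinear commutators with $\BMO$ symbols gives a constant $C>0$, independent of the symbol, such that
\begin{align*}
\lf\|[b,T]_i-[b_k,T]_i\r\|_{L^{p_1}(\rn)\times L^{p_2}(\rn)\to L^p(\rn)}
=\lf\|[b-b_k,T]_i\r\|_{L^{p_1}(\rn)\times L^{p_2}(\rn)\to L^p(\rn)}
\le C\|b-b_k\|_{\BMO(\rn)}.
\end{align*}
Since the compact bilinear operators form a closed subset (in the operator-norm topology) of the bounded bilinear operators from $L^{p_1}(\rn)\times L^{p_2}(\rn)$ to $L^p(\rn)$, it then suffices to prove that $[b,T]_i$ is compact whenever $b\in C_{\mathrm c}^\infty(\rn)$.

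For this reduced problem, I would invoke a bilinear Fréchet--Kolmogorov--Riesz criterion: a bounded subset $\mathcal F\subset L^p(\rn)$ is precompact if and only if
\begin{itemize}
\item[(a)] $\mathcal F$ is uniformly bounded in $L^p(\rn)$;
\item[(b)] $\lim_{A\to\infty}\sup_{h\in\mathcal F}\|h\mathbf 1_{\rn\setminus Q(\mathbf 0,A)}\|_{L^p(\rn)}=0$ (uniform decay at infinity);
\item[(c)] $\lim_{|t|\to 0}\sup_{h\in\mathcal F}\|h(\cdot+t)-h(\cdot)\|_{L^p(\rn)}=0$ (equicontinuity in the mean).
\end{itemize}
Fixing any bounded subset $\mathcal B\subset L^{p_1}(\rn)\times L^{p_2}(\rn)$, I would apply (a)--(c) to $\mathcal F:=\{[b,T]_i(f,g):(f,g)\in\mathcal B\}$. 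Condition (a) follows directly from the known $L^{p_1}\times L^{p_2}\to L^p$ boundedness of $[b,T]_i$ for $\BMO$ symbols, combined with $\|b\|_{\BMO(\rn)}\le 2\|b\|_{L^\infty(\rn)}<\infty$.

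The substance lies in (b) and (c). For (b) I would split the integration domain according to whether $(y,z)$ lies close to or far from $\supp(b)$: when $(y,z)$ is far from $\supp(b)$, the factor $b(x)-b(y)$ (respectively $b(x)-b(z)$) forces $x$ to lie near $\supp(b)$, yielding uniform compact support for the output modulo the tail controlled by the extra decay hypothesis \eqref{decay}; when $(y,z)$ is near $\supp(b)$, the output decays in $|x|$ because of the size condition \eqref{sizeregular} together with \eqref{decay} for $|x|$ large. For (c) I would use the Lipschitz regularity of $b$ together with the regularity half of \eqref{sizeregular} and a standard splitting $|t|<\delta|x-y|$ versus the complementary region, obtaining an estimate of the form $\|[b,T]_i(f,g)(\cdot+t)-[b,T]_i(f,g)(\cdot)\|_{L^p(\rn)}\le \omega(|t|)\|f\|_{L^{p_1}(\rn)}\|g\|_{L^{p_2}(\rn)}$ with $\omega(|t|)\to 0$ as $|t|\to 0$; the smoothness of $b$ handles the commutator difference $b(x+t)-b(y)-[b(x)-b(y)]$, while the decay \eqref{decay} is again what rescues the integrals on the far regions.

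The hard part will be (b), since for the bilinear situation the symmetry of $b$ being in only one slot forces asymmetric treatment of the two integration variables; the extra decay condition \eqref{decay} (which is precisely why the theorem of Bényi--Torres requires it, unlike the mere Calderón--Zygmund size/regularity \eqref{sizeregular}) is the key input that makes the far-field estimate integrable. For the range $p\in(1/2,1]$, duality is unavailable, so I would expect to further split the output by a Littlewood--Paley or maximal-function device (as in Torres--Xue \cite{txy18}), replacing duality arguments with pointwise estimates involving the sharp maximal function and appropriate weighted bounds, at the cost of some technical care to keep all implicit constants independent of $(f,g)\in\mathcal B$.
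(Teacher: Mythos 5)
Your overall skeleton is the standard one (and essentially the one behind the results the survey cites from \cite{BT13} and \cite{txy18}, since the survey itself gives no proof): approximate $b\in\CMO(\rn)$ by $b_k\in C_{\rm c}^\fz(\rn)$ in the $\BMO$ norm, use the uniform bound $\|[b-b_k,T]_i\|\ls\|b-b_k\|_{\BMO(\rn)}$ together with the closedness of compact bilinear operators under operator-norm limits, and then verify a Fr\'echet--Kolmogorov--Riesz criterion for smooth compactly supported symbols. However, there is a genuine flaw in how you propose to carry out the key steps (b) and (c): you repeatedly lean on the additional decay condition \eqref{decay}, which is \emph{not} a hypothesis of Theorem \ref{compact-thm-CMO} --- this theorem assumes only \eqref{sizeregular}. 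The decay \eqref{decay} is precisely the \emph{extra} assumption that appears later, in Theorem \ref{compact-thm}, in order to enlarge $\CMO(\rn)$ to $\XMO(\rn)$; your parenthetical claim that \eqref{decay} ``is precisely why the theorem of B\'enyi--Torres requires it'' has the logic inverted, and as written your far-field estimate and part of your equicontinuity estimate rest on an assumption you do not have.

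The gap is repairable, because once $b\in C_{\rm c}^\fz(\rn)$ the compact support of $b$ does the work that you assigned to \eqref{decay}. Say $\supp(b)\subset B(\mathbf{0},R)$ and $|x|\ge 2R$; then $b(x)=0$, so in $[b,T]_1(f,g)(x)$ the variable $y$ is confined to $\supp(b)$ and $|x-y|\gtrsim|x|$, and the size half of \eqref{sizeregular} alone gives, via H\"older first in $z$ and then in $y$ over the compact set $\supp(b)$, a bound of the form $|[b,T]_1(f,g)(x)|\ls \|b\|_{L^\fz(\rn)}\|f\|_{L^{p_1}(\rn)}\|g\|_{L^{p_2}(\rn)}|x|^{-n-n/p_2}$, which is exactly the uniform decay at infinity needed for (b); the case of $[b,T]_2$ is symmetric. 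Likewise, (c) needs only the regularity half of \eqref{sizeregular} plus the Lipschitz bound and compact support of $b$ (split according to $|t|\le\frac12(|x-y|+|x-z|)$ or not), with no kernel decay beyond \eqref{sizeregular}. If you rewrite (b) and (c) along these lines --- and, for $p\in(\frac12,1]$, use the quasi-Banach versions of the Fr\'echet--Kolmogorov criterion and of the ``operator-norm limit of compact operators is compact'' lemma, as in \cite{txy18}, rather than any duality --- the argument closes using only the stated hypothesis \eqref{sizeregular}.
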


If we require an extra additional decay \eqref{decay}
for the Calder\'on--Zygmund kernel in Theorem \ref{compact-thm-CMO},
we can then replace $\CMO(\rn)$ by $\XMO(\rn)$,
that is, deletes condition (iii) in Theorem \ref{CMO-char} of $\CMO(\rn)$.
This new compactness result was first obtained in \cite[Theorem 1.1]{TX19}
and then generalized into the weighted case,
namely, the following Theorem \ref{compact-thm}
which is just \cite[Theorem 1.4]{txyyJFAA}.

\begin{theorem}\label{compact-thm}
Let ${\bf p}:=(p_1,p_2)\in(1,\fz)\times(1,\fz)$,
$p\in(\frac12,\fz)$ with $\frac1p=\frac1{p_1}+\frac1{p_2}$,
${\bf w}:=(w_1,w_2)\in {\bf A}_{{\bf p}}(\rn)$,
$w:=w_1^{p/p_1}w_2^{p/p_2}$, $b\in\XMO(\rn)$, and
$T$ be a bilinear Calder\'on--Zygmund operator whose kernel satisfies
\eqref{sizeregular} and \eqref{decay}. Then, for any $i\in\{1,2\}$,
the bilinear commutator $[b,T]_i$ as in \eqref{c1} or \eqref{c2}
is compact from $\Lpwa\times\Lpwb$ to $\Lpw$.
\end{theorem}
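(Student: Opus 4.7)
\medskip

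\noindent\textbf{Proof proposal for Theorem \ref{compact-thm}.}
The plan is to follow the classical three-step compactness scheme, but upgraded to the weighted bilinear setting and exploiting the extra kernel decay \eqref{decay} in place of condition (iii) of Theorem \ref{CMO-char} (which $\XMO$ does \emph{not} enjoy). First I would invoke a weighted boundedness statement for the bilinear commutator $[b,T]_i$: by the theory developed in \cite{BDMT15} (and extended to $p\in(1/2,1]$ in works like \cite{txy18}), one has, for every $b\in\BMO(\rn)$,
$$\bigl\|[b,T]_i(f,g)\bigr\|_{L^p_w(\rn)}\le C\,[\mathbf{w}]_{\mathbf{A}_{\mathbf{p}}(\rn)}^{\kappa}\,\|b\|_{\BMO(\rn)}\,\|f\|_{L^{p_1}_{w_1}(\rn)}\|g\|_{L^{p_2}_{w_2}(\rn)}$$
for some positive constant $C$ and some exponent $\kappa$ depending on $\mathbf{p}$ and $n$. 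Since $\XMO(\rn)=\overline{B_\fz(\rn)}^{\BMO(\rn)}$, given $b\in\XMO(\rn)$ I would pick $\{b_\vp\}_{\vp>0}\subset B_\fz(\rn)$ with $\|b-b_\vp\|_{\BMO(\rn)}\to 0$, so that the weighted operator-norm bound above reduces the problem to showing that $[b_\vp,T]_i$ is compact from $L^{p_1}_{w_1}(\rn)\times L^{p_2}_{w_2}(\rn)$ to $L^p_w(\rn)$ for each $b_\vp\in B_\fz(\rn)$, because the uniform limit (in operator norm) of compact operators is compact.

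Next I would establish a weighted bilinear Fr\'echet--Kolmogorov criterion: a bounded set $\mathcal{F}\subset L^p_w(\rn)$ is precompact if and only if (i) it is uniformly bounded, (ii) $\lim_{|h|\to 0}\sup_{F\in\mathcal{F}}\|F(\cdot+h)-F(\cdot)\|_{L^p_w(\rn)}=0$, and (iii) $\lim_{R\to\fz}\sup_{F\in\mathcal{F}}\|F\,\mathbf{1}_{\rn\setminus Q(\mathbf{0},R)}\|_{L^p_w(\rn)}=0$. Such a criterion can be deduced from the classical Fr\'echet--Kolmogorov theorem via the Rubio de Francia extrapolation (or by a direct argument based on $A_\fz$-weights controlling $L^p_w$ tails by $L^q$ tails for large $q$). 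With this in hand, let $\mathcal{F}:=\{[b_\vp,T]_i(f,g):\|f\|_{L^{p_1}_{w_1}(\rn)}\le 1,\ \|g\|_{L^{p_2}_{w_2}(\rn)}\le 1\}$; condition (i) is automatic from the weighted boundedness.

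For conditions (ii) and (iii) I would perform the standard kernel truncation: fix small $\eta,\delta\in(0,\fz)$ with $\delta\gg 1$, and split
$$K(x,y,z)=K_\eta^\delta(x,y,z)+R_\eta^\delta(x,y,z),$$
where $K_\eta^\delta$ is the portion of $K$ on $\{\eta\le |x-y|+|x-z|\le\delta\}$ smoothed by a cutoff, and $R_\eta^\delta$ is the remainder. For the translation condition, I would use the smoothness of $b_\vp$ together with the regularity estimate \eqref{sizeregular} on $K_\eta^\delta$; the inner piece near the diagonal (scale $<\eta$) is controlled by the standard size bound combined with the $C^1$-regularity of $b_\vp$, which allows me to put an extra factor $|x-y|+|x-z|$ absorbing the singularity. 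For the tail condition (iii), for $|x|\ge R$ I would split $\rnn=E_1\cup E_2$ where $E_1:=\{(y,z):|x-y|+|x-z|\le |x|/2\}$ and $E_2:=\rnn\setminus E_1$. On $E_2$, the decay condition \eqref{decay} yields an integrable majorant independent of $\vp$ which gives tail decay via weighted H\"older's inequality; on $E_1$, both $|y|\gtrsim|x|$ and $|z|\gtrsim|x|$, so the factors $b_\vp(x)-b_\vp(y)$ or $b_\vp(x)-b_\vp(z)$ can be controlled by the mean-value theorem together with the hypothesis $\lim_{|x|\to\fz}|\nabla b_\vp(x)|=0$ and the characterization (ii)$_2$ of Theorem \ref{xMO-char}.

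The main obstacle I expect is condition (iii): without property (iii) of Theorem \ref{CMO-char} (which fails for generic $\XMO$ functions, see Proposition \ref{subset} and Remark \ref{xMO-replace}), the customary large-cube argument from \cite{BT13,txy18} is unavailable, and the only tools for taming $b_\vp(x)-b_\vp(y)$ at large $x$ are the vanishing of $\nabla b_\vp$ at infinity and the extra kernel decay \eqref{decay}. Orchestrating these two ingredients so that the tails decay uniformly in $(f,g)$ \emph{and} in $\vp$ (to commute the limits with the approximation in Step 1) is the delicate technical core of the argument; everything else is a routine adaptation of the CMO-compactness scheme to the weighted bilinear setting.
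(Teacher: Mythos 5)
Note first that this survey does not prove Theorem \ref{compact-thm} at all: it is quoted verbatim from \cite[Theorem 1.4]{txyyJFAA}, so your proposal can only be measured against the proof given there. In architecture you match it: approximate $b\in\XMO(\rn)$ by $b_\vp\in B_\fz(\rn)$ in the $\BMO$ norm, use the weighted boundedness of bilinear commutators with $\BMO$ symbols and vector $\mathbf{A}_{\mathbf{p}}$ weights to reduce to $b_\vp$ (the uniform-limit-of-compact-operators argument survives the quasi-Banach case $p\le1$), and then verify a weighted Fr\'echet--Kolmogorov criterion for $\{[b_\vp,T]_i(f,g)\}$, exploiting the smoothness of $b_\vp$, the vanishing of $\nabla b_\vp$ at infinity, and the extra decay \eqref{decay}. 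This is exactly the route of \cite{txyyJFAA}. (One point you handle correctly only implicitly: $b_\vp\in B_\fz(\rn)$ need not be bounded, so all estimates must go through $\|\nabla b_\vp\|_{L^\fz(\rn)}$ and its smallness at infinity, never through $\|b_\vp\|_{L^\fz(\rn)}$.)

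There is, however, a concrete gap in your tail estimate (condition (iii)). On $E_1=\{(y,z):\ |x-y|+|x-z|\le|x|/2\}$ you propose to use the mean value theorem together with $\sup_{|\xi|\gtrsim|x|}|\nabla b_\vp(\xi)|\to0$, but with only the size bound \eqref{sizeregular} the kernel integral produces
$$\int_{E_1}|x-y|\,(|x-y|+|x-z|)^{-2n}|f(y)|\,|g(z)|\,dy\,dz
\ \ls\ |x|\,\cm f(x)\,\cm g(x),$$
since summing the dyadic scales $2^k\ls|x|$ of $|x-y|+|x-z|$ yields a factor comparable to $|x|$. The resulting bound $o(1)\cdot|x|\cdot\cm f(x)\cm g(x)$ is useless: mere vanishing of $\nabla b_\vp$ at infinity does not give decay like $o(1/|x|)$. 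The fix, which is where \eqref{decay} is genuinely indispensable, is to split $E_1$ once more at a fixed unit scale: on $\{|x-y|+|x-z|\le1\}$ the size bound plus the smallness of $\nabla b_\vp$ near $x$ gives an $o(1)$ contribution (the scale sum is now $O(1)$), while on $\{1\le|x-y|+|x-z|\le|x|/2\}$ one must use \eqref{decay}, so that $|b_\vp(x)-b_\vp(y)|\,|K(x,y,z)|\ls(|x-y|+|x-z|)^{-2n-1-\dz}$ and the scale sum converges with a small constant; only the far region $\{|x-y|+|x-z|>|x|/2\}$ is then handled as in your $E_2$, giving decay $\ls|x|^{-(1+\dz)}$. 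Without this intermediate-scale use of \eqref{decay}, the step as you wrote it fails, which is precisely the mechanism by which the additional decay hypothesis replaces condition (iii) of Theorem \ref{CMO-char}; your translation-continuity argument and the rest of the scheme are otherwise a routine adaptation, as you say.
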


On the other hand, if the kernel behaves ``good'', such as the
\emph{Riesz transforms} $\{\mathcal{R}_j\}_{j=1}^n$:
$$\mathcal{R}_j(f)(x):={\rm \,p.\,v.\,}
\pi^{-\frac{n+1}{2}}\Gamma\lf(\frac{n+1}{2}\r)
\int_{\rn}\frac{y_j}{|y|^{n+1}}f(x-y)\,dy,$$
then the reverse of Theorem \ref{compact-thm-CMO}
holds true as well; see, for instance, the following
Theorem \ref{compact-thm-CMO-char} which is
just \cite[Theorem 3.1]{CCHTW18}.
Besides, it should be mentioned that the linear case of
Theorem \ref{compact-thm-CMO-char}
was obtained by Uchiyama \cite[Theorem 2]{U78}.

\begin{theorem}\label{compact-thm-CMO-char}
Let $(p_1,p_2)\in(1,\fz)\times(1,\fz)$ and
$p\in(\frac12,\fz)$ with $\frac1p=\frac1{p_1}+\frac1{p_2}$.
Then, for any $i\in\{1,2\}$ and $j\in\{1,\dots,n\}$,
the bilinear commutator $[b,\mathcal{R}_j]_i$
is compact from $L^{p_1}(\rn)\times L^{p_2}(\rn)$ to $L^{p}(\rn)$
if and only if $b\in\CMO(\rn)$.
\end{theorem}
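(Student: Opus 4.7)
The plan is to prove the two implications separately. For sufficiency, the strategy is to apply Theorem \ref{compact-thm-CMO} directly. It suffices to check that each Riesz transform $\mathcal{R}_j$, when viewed in the bilinear framework (acting nontrivially in one of the two variables), gives rise to a bilinear Calder\'on--Zygmund operator whose kernel satisfies the size and regularity condition \eqref{sizeregular}. Since the Riesz kernel $y_j/|y|^{n+1}$ is a standard homogeneous Calder\'on--Zygmund kernel with the correct order of singularity and arbitrarily smooth off the diagonal, this verification is routine. Once this is done, the hypothesis $b\in\CMO(\rn)$ together with Theorem \ref{compact-thm-CMO} immediately yields the compactness of $[b,\mathcal{R}_j]_i$ from $L^{p_1}(\rn)\times L^{p_2}(\rn)$ into $L^p(\rn)$.

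The necessity direction is the main content. Given that $[b,\mathcal{R}_j]_i$ is compact, and hence bounded, from $L^{p_1}(\rn)\times L^{p_2}(\rn)$ into $L^p(\rn)$, I would first invoke a Coifman--Rochberg--Weiss type characterization (in the bilinear setting, the Riesz transform being a ``good'' operator in the sense that boundedness of the commutator already forces $b\in\BMO(\rn)$) to conclude $b\in\BMO(\rn)$. With the BMO membership in hand, by Theorem \ref{CMO-char} it then remains to establish the three vanishing conditions:
\begin{enumerate}
\item[\rm(i)] $\lim_{a\to 0^+}\sup_{|Q|=a}\co(b;Q)=0$;
\item[\rm(ii)] $\lim_{|x|\to\fz}\co(b;Q+x)=0$ for each fixed cube $Q$;
\item[\rm(iii)] $\lim_{a\to\fz}\sup_{|Q|=a}\co(b;Q)=0$.
\end{enumerate}

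Each of these is handled by contraposition: if one of them fails, I would construct a normalized sequence $\{(f_k,g_k)\}_k\subset L^{p_1}(\rn)\times L^{p_2}(\rn)$ such that $\{[b,\mathcal{R}_j]_i(f_k,g_k)\}_k$ admits no $L^p$-convergent subsequence, contradicting compactness. The choice of test functions is dictated by which condition fails: for the failure of (i), pick cubes $Q_k$ with $|Q_k|\to 0$ on which $\co(b;Q_k)$ stays bounded below, and build $f_k,g_k$ adapted to $Q_k$ and a suitably placed reference cube away from $Q_k$ (so that the Riesz kernel is roughly constant and nondegenerate there); for the failure of (ii), use fixed shapes translated to infinity so that the corresponding commutator outputs have essentially disjoint supports moving to infinity; for the failure of (iii), rescale by dilations $Q_k$ of increasing size. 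In each case, the sign structure of the Riesz kernel $y_j/|y|^{n+1}$ is exploited via the standard Uchiyama-type separation trick: split space into a region where the kernel has a definite sign and a tail region whose contribution is negligible in $L^p$, so that a lower bound of the form $\|[b,\mathcal{R}_j]_i(f_k,g_k)\|_{L^p}\gtrsim\co(b;Q_k)\gtrsim\epsilon>0$ is achieved, together with pairwise $L^p$-separation (or weak convergence to zero) preventing any convergent subsequence.

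The main obstacle is precisely this bilinear adaptation of Uchiyama's construction: in the linear case one needs to pair the commutator output against a carefully chosen dual test function, whereas in the bilinear case one must simultaneously choose two test inputs $(f_k,g_k)$ and then control the off-diagonal (non-principal) pieces of the bilinear Riesz kernel. I expect that the most delicate point is verifying the pointwise lower bound on the commutator after subtracting the off-diagonal and tail errors, so as to transfer the lower bound on $\co(b;Q_k)$ to a lower bound on the $L^p$-norm of $[b,\mathcal{R}_j]_i(f_k,g_k)$ uniform in $k$; once this is secured, the three cases of failure of (i)--(iii) are handled by essentially parallel arguments with the appropriate choices of scales and locations of the cubes $Q_k$.
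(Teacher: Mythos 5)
Your overall architecture is the same as that of the source this survey cites for Theorem \ref{compact-thm-CMO-char} (the survey itself records the statement without proof, referring to \cite[Theorem 3.1]{CCHTW18}): sufficiency by specializing the ``$\CMO(\rn)$ implies compactness'' result (Theorem \ref{compact-thm-CMO}), and necessity by upgrading compactness to boundedness, invoking the bilinear Coifman--Rochberg--Weiss-type characterization to get $b\in\BMO(\rn)$, and then verifying the three conditions of Theorem \ref{CMO-char} by contraposition with cube-adapted test pairs and an Uchiyama-type sign/nondegeneracy lower bound $\|[b,\mathcal{R}_j]_i(f_k,g_k)\|_{L^p(\rn)}\gs\co(b;Q_k)$. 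So the plan of the necessity part is essentially the known argument.

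The genuine gap is in your identification of the operator, and it affects the sufficiency step (and the kernel analysis in the necessity step). The operator in this theorem is the $j$-th \emph{bilinear} Riesz transform, whose kernel is $K_j(x,y,z)=c_n\,(x_j-y_j)\,(|x-y|^2+|x-z|^2)^{-(2n+1)/2}$ (with the companion family built from $x-z$); this is a genuine bilinear Calder\'on--Zygmund kernel satisfying \eqref{sizeregular}, so the reduction to Theorem \ref{compact-thm-CMO} is legitimate for it. Your reading --- the linear Riesz transform ``acting nontrivially in one of the two variables,'' with kernel $y_j/|y|^{n+1}$ --- does not fit the framework: the associated bilinear kernel carries a Dirac mass in the remaining variable, so it does not satisfy \eqref{sizeregular} and Theorem \ref{compact-thm-CMO} cannot be applied to it; worse, under that reading the statement itself fails. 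Indeed, if $\mathcal{R}_j$ acts only in the first variable, then $[b,T]_2\equiv 0$ is compact for every $b$, so compactness cannot force $b\in\CMO(\rn)$; and even in the matched variable, a bilinear operator of the form $(f,g)\mapsto([b,\mathcal{R}_j]f)\,g$ is never compact in the bilinear sense unless $[b,\mathcal{R}_j]=0$: fix $f$ with $h:=[b,\mathcal{R}_j]f\neq 0$, pick a set $E$ of positive finite measure on which $|h|\ge\delta>0$, and let $g_k$ be Rademacher-type functions of unit $L^{p_2}(\rn)$-norm supported in $E$; then, for $k\neq m$, $\|h(g_k-g_m)\|_{L^p(\rn)}\ge 2\delta|E|^{-1/p_2}(|E|/2)^{1/p}$, so $\{hg_k\}_k$ has no convergent subsequence although $(f,g_k)$ stays in the product of balls. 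Once you replace your kernel by the bilinear Riesz kernels above, the verification of \eqref{sizeregular} is indeed routine, and the remainder of your necessity plan (the three-case contradiction with suitably scaled, translated, or dilated cubes and the separation of the commutator images) is the standard argument of the cited proof; note also that the ``boundedness implies $\BMO$'' input you use is itself a nontrivial bilinear result (Chaffee, Li--Wick) that must be quoted for these specific nondegenerate kernels.
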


However, the corresponding equivalent characterization of $\XMO(\rn)$ is still unknown.
For simplicity, we state this question in unweighted case.

\begin{question}\label{openQ-XMO}
Let $(p_1,p_2)\in(1,\fz)\times(1,\fz)$,
and $p\in(\frac12,\fz)$ be such that $\frac1p=\frac1{p_1}+\frac1{p_2}$.
Then it is interesting to find some bilinear
Calder\'on--Zygmund operator $T$ such that,
for any $i\in\{1,2\}$, the bilinear commutator $[b,T]_i$
is compact from $L^{p_1}(\rn)\times L^{p_2}(\rn)$ to $L^{p}(\rn)$
\emph{if and only if} $b\in\XMO(\rn)$.
\end{question}

Next, recall the Riesz transform characterizations of
$\BMO(\rn)$ and its vanishing subspaces.
\begin{theorem}\label{Riesz-trans-char}
Let $f\in L^1_\loc(\rn)$. Then
\begin{itemize}
\item[{\rm(i)}](\cite[Theorem 3]{FS72})
$f\in \BMO(\rn)$
if and only if there exist
functions $\{f_j\}_{j=0}^n\subset L^\fz(\rn)$ such that
$$f=f_0+\sum_{j=1}^n \mathcal{R}_j(f_j)$$
and
\begin{align}\label{RjBMO}
C^{-1}\|f\|_{\BMO(\rn)}
\le \sum_{j=0}^n\lf\|f_j\r\|_{L^\fz(\rn)}
\le C\|f\|_{\BMO(\rn)}
\end{align}
for some positive constant $C$ independent of $f$ and $\{f_j\}_{j=0}^n$.

\item[{\rm(ii)}](\cite[Theorem 1]{Sarason75})
$f\in \VMO(\rn)$
if and only if there exist
functions $\{f_j\}_{j=0}^n\subset [C_{\rm u}(\rn)\cap L^\fz(\rn)]$ such that
$$f=f_0+\sum_{j=1}^n \mathcal{R}_j(f_j)$$
and \eqref{RjBMO} holds true in this case.

\item[{\rm(iii)}](\cite[p.\,185]{N75})
$f\in \CMO(\rn)$
if and only if there exist
functions $\{f_j\}_{j=0}^n\subset C_0(\rn)$ such that
$$f=f_0+\sum_{j=1}^n \mathcal{R}_j(f_j)$$
and \eqref{RjBMO} holds true in this case.
\end{itemize}
\end{theorem}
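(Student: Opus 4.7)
For \textbf{Part (i)}, the classical Fefferman--Stein result, the forward direction (sufficiency of the decomposition) is routine: since $L^\fz(\rn)\subset\BMO(\rn)$ with $\|\cdot\|_{\BMO(\rn)}\le 2\|\cdot\|_{L^\fz(\rn)}$ and each Riesz transform $\mathcal{R}_j$ is a Calder\'on--Zygmund operator mapping $L^\fz(\rn)$ boundedly into $\BMO(\rn)$, the right half of \eqref{RjBMO} follows. For the converse, adopt the convention $\mathcal{R}_0:=I$ and invoke the duality $(H^1(\rn))^\ast=\BMO(\rn)$ together with the Riesz transform characterization of $H^1(\rn)$, which guarantees that the map
$$\Phi:H^1(\rn)\longrightarrow \bigoplus_{j=0}^n L^1(\rn),\quad g\longmapsto(g,\mathcal{R}_1 g,\ldots,\mathcal{R}_n g),$$
is an isomorphism onto its range with equivalent norms. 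Given $f\in\BMO(\rn)$, the functional $\Phi(g)\mapsto\langle f,g\rangle_{\BMO,H^1}$ is well defined and bounded on $\Phi(H^1(\rn))$; extending it by Hahn--Banach to the ambient $L^1$-sum and representing the extension via $[\bigoplus_{j=0}^n L^1(\rn)]^\ast=\bigoplus_{j=0}^n L^\fz(\rn)$ produces $(\widetilde f_0,\ldots,\widetilde f_n)\in\bigoplus_{j=0}^n L^\fz(\rn)$ pairing with $g\in H^1(\rn)$ as $\int \widetilde f_0 g+\sum_{j=1}^n\int \widetilde f_j\mathcal{R}_j g$. Using the anti-self-adjointness $\mathcal{R}_j^\ast=-\mathcal{R}_j$ and renaming $f_0:=\widetilde f_0$ and $f_j:=-\widetilde f_j$ for $j\ge 1$, we obtain $f=f_0+\sum_{j=1}^n\mathcal{R}_j(f_j)$ modulo constants, together with the left half of \eqref{RjBMO}.

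\textbf{Parts (ii) and (iii)} refine this decomposition by restricting the $L^\fz$-components to $C_{\rm u}(\rn)\cap L^\fz(\rn)$ and to $C_0(\rn)$, respectively. For the sufficiency directions, I would show that $\mathcal{R}_j$ maps $C_{\rm u}(\rn)\cap L^\fz(\rn)$ into $\VMO(\rn)$ and $C_0(\rn)$ into $\CMO(\rn)$ by verifying the mean-oscillation conditions of Theorems \ref{VMO-char} and \ref{CMO-char} directly: for $g\in C_{\rm u}(\rn)\cap L^\fz(\rn)$ and a cube $Q$ of small side length, the pointwise principal value defining $\mathcal{R}_j(g)-[\mathcal{R}_j(g)]_Q$ splits into a local part (controlled by the modulus of continuity of $g$) and a far part (controlled by the kernel decay applied to $g\in L^\fz$), both vanishing as $|Q|\to0^+$ uniformly in $Q$. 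For $\CMO$, by \eqref{Cc=C0} it suffices to take $g\in C_{\rm c}(\rn)$, whereupon the large-cube and translation-to-infinity conditions of Theorem \ref{CMO-char} follow from the compact support of $g$ together with the $|x|^{-n-1}$ kernel decay of $\mathcal{R}_j$ off that support.

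The converses of (ii) and (iii) are the delicate part and will be \textbf{the main obstacle}. The natural strategy combines the density definitions of $\VMO(\rn)$ and $\CMO(\rn)$ with Part (i): given $f\in\VMO(\rn)$ (respectively, $\CMO(\rn)$), pick an approximating sequence $\{\varphi_k\}_k$ from $C_{\rm u}(\rn)\cap\BMO(\rn)$ (respectively, $C_{\rm c}^\fz(\rn)$) converging to $f$ in $\BMO(\rn)$, apply the decomposition of Part (i) to each $\varphi_k$ to obtain bounded sequences $\{f_j^{(k)}\}_k\subset L^\fz(\rn)$, and extract weak-$\ast$ limits $f_j\in L^\fz(\rn)$. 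The obstruction is that Part (i) is obtained via a non-constructive Hahn--Banach extension, which provides no handle on the regularity or decay of the produced components; one must therefore either produce an explicit decomposition (for instance, via Riesz potentials of an atomic expansion of a predual element), or argue via an open-mapping / quotient-space formulation that the bounded surjection
$$S:\bigoplus_{j=0}^n\mathcal{E}_j\longrightarrow\BMO(\rn),\quad S(f_0,\ldots,f_n):=f_0+\sum_{j=1}^n\mathcal{R}_j(f_j),$$
with $\mathcal{E}_j$ taken to be $C_{\rm u}(\rn)\cap L^\fz(\rn)$ (respectively $C_0(\rn)$), has closed range equal to $\VMO(\rn)$ (respectively $\CMO(\rn)$); the norm equivalence in \eqref{RjBMO} then follows from the open mapping theorem applied to the quotient.
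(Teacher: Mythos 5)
The paper itself offers no proof of Theorem \ref{Riesz-trans-char}: it is quoted from the literature, with (i), (ii), (iii) attributed to Fefferman--Stein, Sarason, and Neri, respectively, so your proposal has to be judged against those classical arguments. Your treatment of (i) is essentially the Fefferman--Stein proof and is sound: the embedding $g\mapsto(g,\mathcal{R}_1g,\ldots,\mathcal{R}_ng)$ of $H^1(\rn)$ into an $L^1$-sum, Hahn--Banach, the $L^\fz$ representation of the dual, and $\mathcal{R}_j^\ast=-\mathcal{R}_j$ is exactly the standard route (two small remarks: the estimate coming from the boundedness of $\mathcal{R}_j:L^\fz(\rn)\to\BMO(\rn)$ is the \emph{left} inequality in \eqref{RjBMO}, while the norm-preserving extension yields the \emph{right} one, so your labels are swapped; and the identity $f=f_0+\sum_{j=1}^n\mathcal{R}_j(f_j)$ holds modulo an additive constant, which you do acknowledge). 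In the sufficiency half of (ii), however, the splitting you describe does not quite work as stated: if you cut $g$ at the doubled cube, the ``far'' part of $\mathcal{R}_jg$ oscillates on $Q$ by an amount controlled only by $\|g\|_{L^\fz(\rn)}$ (the factor $\ell(Q)$ gained from the kernel gradient is lost against the distance $\sim\ell(Q)$ to the support), so nothing vanishes as $|Q|\to0^+$. You need an intermediate scale (cut at radius $\sqrt{\ell(Q)}$, say, so the local part is small by uniform continuity and the far part contributes $\ls\|g\|_{L^\fz(\rn)}\sqrt{\ell(Q)}$), or, more simply, approximate $\mathcal{R}_jg$ in $\BMO(\rn)$ by $\mathcal{R}_j(g\ast\phi_\ez)=(\mathcal{R}_jg)\ast\phi_\ez$, which is Lipschitz; this is fixable but not what you wrote.

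The genuine gap is that the necessity halves of (ii) and (iii) are not proved. You candidly flag them as ``the main obstacle'', correctly dismiss the weak-$\ast$ compactness route (limits of $L^\fz$ components need not be continuous or vanish at infinity), but your alternative is circular: calling $S(f_0,\ldots,f_n):=f_0+\sum_{j=1}^n\mathcal{R}_j(f_j)$ a ``bounded surjection'' onto $\VMO(\rn)$, resp.\ $\CMO(\rn)$, assumes exactly what must be shown, and the open mapping theorem only upgrades surjectivity to the norm equivalence \eqref{RjBMO}; it cannot produce surjectivity. The missing input for (iii) is a duality argument in the spirit of Neri: using $(\CMO(\rn))^\ast=H^1(\rn)$ (Coifman--Weiss), compute the adjoint of $S:\bigoplus_{j=0}^nC_0(\rn)\to\CMO(\rn)$, namely $S^\ast g=(g,-\mathcal{R}_1g,\ldots,-\mathcal{R}_ng)$ viewed in the dual sum of spaces of finite measures; the Riesz-transform characterization of $H^1(\rn)$ shows $\|S^\ast g\|\sim\|g\|_{L^1(\rn)}+\sum_{j=1}^n\|\mathcal{R}_jg\|_{L^1(\rn)}\sim\|g\|_{H^1(\rn)}\gs\|g\|_{(\CMO(\rn))^\ast}$, so $S^\ast$ is bounded below and the closed-range theorem gives that $S$ is onto $\CMO(\rn)$. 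For (ii) even this scheme does not transfer verbatim, because the dual of $C_{\rm u}(\rn)\cap L^\fz(\rn)$ is not a space of measures and $(\VMO(\rn))^\ast$ is not $H^1(\rn)$; Sarason's proof proceeds by different (approximation/semigroup) arguments. None of this machinery appears in your proposal, so as it stands parts (ii) and (iii) are established only in the sufficiency direction.
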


\begin{question}\label{openQ-Riesz-trans-char}
Since the Riesz transform is well defined on $L^\fz(\rn)$,
it is interesting to find the counterpart of Theorem \ref{Riesz-trans-char}
when $f\in\MMO(\rn)$.
Moreover, since the Riesz transform characterization is useful when
proving the duality of CMO-$H^1$ type,
it is also interesting to find the dual spaces of $\MMO(\rn)$ and $\XMO(\rn)$.
\end{question}

When $\rn$ is replaced by some cube $Q_0$ with finite length, we have
$\VMO(Q_0)=\CMO(Q_0)$; see \cite{D02} for more details.
Moreover, the vanishing subspace on the \emph{spaces of homogeneous type},
denoted by $\mathfrak{X}$,
was studied in Coifman et al. \cite{CRW1976} and they proved
$(\mathcal{VMO}(\mathfrak{X}))^\ast=H^1(\mathfrak{X})$,
where $\mathcal{VMO}(\mathfrak{X})$ denotes
the closure in $\BMO(\mathfrak{X})$
of continuous functions on $\mathfrak{X}$ with compact support.
Notice that, when $\mathfrak{X}=\rn$, by \eqref{Cc=C0}, we have
$\mathcal{VMO}(\mathfrak{X})=\mathcal{VMO}(\rn)=\CMO(\rn)$.

Finally, we consider the localized version of these vanishing subspaces.
The following characterization of local $\VMO(\rn)$
is a part of \cite[Theorem 1]{B02}.
\begin{proposition}\label{vmo=VMO}
Let $\vmo(\rn)$ be the closure of $C_{\rm u}(\rn)\cap\bmo(\rn)$ in $\bmo(\rn)$.
Then $f\in\vmo(\rn)$ if and only if $f\in\bmo(\rn)$ and
$$\lim_{a\to0^+}\sup_{|Q|=a}\co(f;Q)=0.$$
\end{proposition}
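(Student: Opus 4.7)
For the forward direction $(\Longrightarrow)$, suppose $f \in \vmo(\rn)$ and pick an approximating sequence $\{f_k\}_k \subset C_{\rm u}(\rn) \cap \bmo(\rn)$ with $\|f - f_k\|_{\bmo(\rn)} \to 0$. By Proposition~\ref{bmoBMO}, the same convergence holds in $\BMO(\rn)$. Given $\epsilon > 0$, fix $k$ with $\|f - f_k\|_{\BMO(\rn)} < \epsilon$; the uniform continuity of $f_k$ provides $\delta > 0$ such that $\sup_{\diam Q \leq \delta}\co(f_k; Q) < \epsilon$. Then $\co(f; Q) \leq \co(f - f_k; Q) + \co(f_k; Q) \leq \|f - f_k\|_{\BMO(\rn)} + \co(f_k; Q) < 2\epsilon$ whenever $\diam Q \leq \delta$, and letting $\epsilon \to 0^+$ yields the vanishing small-cube oscillation.

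For the converse $(\Longleftarrow)$, the plan is to approximate $f$ by smooth mollifications. Fix a nonnegative $\varphi \in C_{\rm c}^\fz(\rn)$ with $\int_{\rn}\varphi = 1$ and $\supp \varphi \subset B(\mathbf{0}, 1)$; set $\varphi_t(\cdot) := t^{-n}\varphi(\cdot/t)$ and $f_t := f * \varphi_t$. Because $\int_{\rn}\nabla \varphi_t\,dx = \mathbf{0}$, subtracting the local mean yields
$$
|\nabla f_t(x)| = \lf|\int_{B(x,t)}\lf[f(y) - f_{B(x,t)}\r]\nabla\varphi_t(x - y)\,dy\r| \ls t^{-1}\co(f; B(x,t)),
$$
so $\|\nabla f_t\|_{L^\fz(\rn)} \ls t^{-1}\epsilon_f(|B(\mathbf{0},t)|)$, where $\epsilon_f(a) := \sup_{|Q|=a}\co(f;Q) \to 0$ as $a \to 0^+$ by hypothesis. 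In particular, $f_t$ is Lipschitz and so uniformly continuous; a Fubini computation, using $\psi_t^Q(y) := \fint_Q\varphi_t(x-y)\,dx \leq \min\{1/|Q|,\|\varphi\|_{L^\fz(\rn)}t^{-n}\}$, gives $f_t \in \bmo(\rn)$ with $\|f_t\|_{\bmo(\rn)} \ls \|f\|_{\bmo(\rn)}$.

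The heart of the proof is to show $f_t \to f$ in $\bmo(\rn)$ as $t \to 0^+$. Split by $\ell(Q) < c_0$ or $\ell(Q) \geq c_0$. For cubes with $\ell(Q) < c_0$ (where $f_{Q,c_0} = f_Q$), combine two regimes: when $\ell(Q) \leq t$, the refined Lipschitz bound gives $\co(f_t; Q) \ls (\ell(Q)/t)\epsilon_f(|B(\mathbf{0},t)|)$; when $\ell(Q) > t$, the Fubini estimate yields $\co(f_t; Q) \ls \co(f; Q^{\ast})$ with $Q^{\ast}$ concentric with $Q$ and side length $\ell(Q) + 2t$. Together with the bound $\co(f;Q) \leq \epsilon_f(|Q|)$, these deliver $\sup_{\ell(Q) < c_0}\co(f_t - f; Q) \to 0$ uniformly as $t \to 0^+$.

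\emph{The main obstacle} is the case $\ell(Q) \geq c_0$, for which $f_{Q,c_0} = 0$ and one needs the uniform $L^1$-type convergence $\sup_{\ell(Q) \geq c_0}\fint_Q |f_t - f|\,dx \to 0$. The natural reduction
$$
\fint_Q |f_t - f|\,dx \leq \int_{\rn}\varphi_t(z)\,\fint_Q \lf|f(x-z) - f(x)\r|\,dx\,dz
$$
leads to asking for the uniform translation-continuity estimate $\sup_{\ell(Q) \geq c_0}\fint_Q |f(x-z) - f(x)|\,dx \to 0$ as $|z| \to 0$, which is \emph{not} automatic from $f \in \bmo(\rn)$ alone (rapidly oscillating bounded examples refute it) and must exploit the vanishing small-cube oscillation in an essential way. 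A feasible route couples the a.e.\ pointwise bound $|f_t(x) - f(x)| \ls \co(f; B(x,t)) + |f_{B(x,t)} - f(x)|$ with a John--Nirenberg-based uniform $L^p$-control on the second term averaged over $Q$; this is the step where the argument will require the most care.
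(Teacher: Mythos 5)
The paper itself gives no proof of this proposition (it is quoted from Bourdaud \cite[Theorem 1]{B02}), so your proposal has to stand on its own. Your forward direction is correct and routine. The converse, however, has two genuine gaps, both located in the claim that $\|f_t-f\|_{\bmo(\rn)}\to0$. First, in the regime $t<\ell(Q)<c_0$ you bound $\co(f_t-f;Q)$ by $\co(f_t;Q)+\co(f;Q)\ls\co(f;Q^\ast)+\epsilon_f(|Q|)$; these quantities are small only when $\ell(Q)$ is small, whereas for cubes of side comparable to $c_0$ they are merely bounded by $\|f\|_{\BMO(\rn)}$. So the asserted uniform convergence over all $\ell(Q)<c_0$ does not follow from your two regimes: for a typical $f\in\vmo(\rn)$ and a fixed cube of side $c_0/2$, both terms stay bounded away from $0$ as $t\to0^+$, even though $\co(f_t-f;Q)$ itself does tend to $0$ (the failure is of the triangle-inequality splitting, not of the conclusion). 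Second, the decisive large-cube estimate $\sup_{\ell(Q)\ge c_0}\fint_Q|f_t-f|\,dx\to0$ is only announced as ``feasible'', and the tool you point to cannot deliver it: the John--Nirenberg inequality controls $f-f_Q$ in $L^p(Q)$ by the full norm $\|f\|_{\BMO(\rn)}$, so it gives boundedness, never the smallness that must come from the hypothesis at scale $t$.

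Both gaps are closed by one scale-$t$ subdivision estimate, which is the standard Sarason-type device: for every cube $Q$ with $\ell(Q)\ge t$,
$$\fint_Q\lf|f(x)-f_t(x)\r|\,dx\ls\sup_{\ell(R)\le4t}\co(f;R)=:\epsilon(t).$$
Indeed, partition $Q$ into subcubes $\{Q_j\}_j$ with side lengths in $[t,2t)$. On each $Q_j$ one has $\co(f-f_t;Q_j)\le\co(f;Q_j)+\co(f_t;Q_j)\ls\epsilon(t)$ (the second term by your own Fubini estimate, since the enlarged cube has side length at most $4t$), and also $|(f-f_t)_{Q_j}|\ls\epsilon(t)$, because $(f_t)_{Q_j}=\int_{\rn}\varphi_t(z)f_{Q_j-z}\,dz$ and, for $|z|\le t$, $|f_{Q_j}-f_{Q_j-z}|\ls\co(f;Q_j^\ast)\le\epsilon(t)$ with $Q_j^\ast$ a cube of side length at most $4t$ containing $Q_j\cup(Q_j-z)$. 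Hence $\int_{Q_j}|f-f_t|\le|Q_j|[\co(f-f_t;Q_j)+|(f-f_t)_{Q_j}|]\ls|Q_j|\,\epsilon(t)$, and summing over $j$ gives the display. This single estimate yields at once the large-cube bound $\sup_{\ell(Q)\ge c_0}\fint_Q|f_t-f|\ls\epsilon(t)$ and, via $\co(f_t-f;Q)\le2\fint_Q|f_t-f|$, the uniform smallness on all cubes with $t\le\ell(Q)<c_0$; combined with your Lipschitz bound for $\ell(Q)\le t$ and the verification that $f_t\in C_{\rm u}(\rn)\cap\bmo(\rn)$, it gives $\|f_t-f\|_{\bmo(\rn)}\to0$ and hence $f\in\vmo(\rn)$. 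It also shows that the uniform translation-continuity you worried about is not needed: only translations by $|z|\le t$ tested against cubes of side $\sim t$ enter. As written, though, the proposal does not yet constitute a proof of the converse.
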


Moreover, the following localized result of $\CMO(\rn)$ is just
Dafni \cite[Theorem 6]{D02};
see also \cite[Theorem 3]{B02}.

\begin{theorem}\label{cmo-char}
Let $\cmo(\rn)$ be the closure of $C_0(\rn)$ in $\bmo(\rn)$ .
Then $f\in\cmo(\rn)$ if and only if $f\in\bmo(\rn)$ and
$$\lim_{a\to0^+}\sup_{|Q|=a}\co(f;Q)
=0=
\lim_{M\to\fz}\sup_{|Q|>1,\,Q\cap Q(\mathbf{0},M)=\emptyset}\fint_Q|f|.$$
\end{theorem}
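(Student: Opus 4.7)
The plan is to prove both implications separately, handling the necessity by a direct triangle-inequality argument and the sufficiency by a mollification-plus-truncation construction.

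For the necessity direction, I would fix $c_0=1$ in the definition of $\bmo(\rn)$ and, given $\eta>0$, pick $g\in C_0(\rn)$ with $\|f-g\|_{\bmo(\rn)}<\eta$. Using Proposition \ref{bmoBMO} (which yields $\|f-g\|_{\BMO(\rn)}\le 2\|f-g\|_{\bmo(\rn)}$), for any cube $Q$ I would bound
$$\co(f;Q)\le\co(f-g;Q)+\co(g;Q)\le 2\eta+\co(g;Q).$$
Since $g$ is uniformly continuous on $\rn$, the term $\co(g;Q)$ tends to $0$ as $|Q|\to 0^+$, uniformly in $Q$, giving condition (i) upon letting $\eta\to 0^+$. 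For condition (ii), when $\ell(Q)>1=c_0$ I note that $f_{Q,c_0}=0=g_{Q,c_0}$, hence
$$\fint_Q|f|\le\fint_Q|(f-g)-(f-g)_{Q,c_0}|+\fint_Q|g|\le\|f-g\|_{\bmo(\rn)}+\fint_Q|g|,$$
and the last term vanishes in the prescribed limit because $g\in C_0(\rn)$.

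For the sufficiency direction, I would fix a standard mollifier $\{\phi_\epsilon\}_{\epsilon>0}\subset C_{\rm c}^\fz(\rn)$ and smooth cutoffs $\chi_R\in C_{\rm c}^\fz(\rn)$ with $0\le\chi_R\le 1$, $\chi_R\equiv 1$ on $Q(\mathbf{0},R)$, $\chi_R\equiv 0$ outside $Q(\mathbf{0},2R)$, and $\|\nabla\chi_R\|_{L^\fz(\rn)}\le C/R$. Setting $g_{\epsilon,R}:=(\chi_Rf)\ast\phi_\epsilon\in C_{\rm c}^\fz(\rn)\subset C_0(\rn)$, I would split the error as
$$f-g_{\epsilon,R}=(1-\chi_R)f+[\chi_R f-(\chi_R f)\ast\phi_\epsilon]=:\mathrm{I}_R+\mathrm{II}_{\epsilon,R}$$
and estimate each piece separately in $\bmo(\rn)$. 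For $\mathrm{II}_{\epsilon,R}$, a standard mollification argument controls the mean oscillation on cubes of side less than $c_0$ by invoking condition (i), while on cubes of side at least $c_0$ the estimate $\fint_Q|\mathrm{II}_{\epsilon,R}|\to 0$ follows from Minkowski's inequality and the uniform bound $\fint_Q|f|\le\|f\|_{\bmo(\rn)}$ together with condition (i). For $\mathrm{I}_R$, cubes $Q\subset Q(\mathbf{0},R)$ contribute nothing; cubes with $\ell(Q)\ge 1$ meeting $\mathrm{supp}(1-\chi_R)$ are handled by $\fint_Q|\mathrm{I}_R|\le\fint_Q|f|$ plus condition (ii) when $Q$ is far from the origin, or by absorbing them into the $c_0$-scale bound when $Q\subset Q(\mathbf{0},2R+1)$ via a finer choice of $R$.

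The hard part will be cubes $Q$ with $\ell(Q)<1$ that intersect the buffer region $Q(\mathbf{0},2R)\setminus Q(\mathbf{0},R)$ or lie entirely in $\rn\setminus Q(\mathbf{0},2R)$: here condition (ii) does not directly apply because $|Q|<1$, and the naive bound by $\|f\|_{\bmo(\rn)}$ is of the wrong order. My plan is to enlarge such a small $Q$ to a cube $\widetilde{Q}$ of side at least $1$ containing $Q$ with $\widetilde{Q}\cap Q(\mathbf{0},R-2)=\emptyset$, use condition (ii) to control $\fint_{\widetilde Q}|f|$, and then combine $|f_Q|\le(|\widetilde Q|/|Q|)\fint_{\widetilde Q}|f|$ with the Lipschitz bound $\|\nabla\chi_R\|_\fz\le C/R$ to derive an estimate of the form
$$\co(\mathrm{I}_R;Q)\le\co(f;Q)+C\ell(Q)R^{-1}\sup_{\widetilde Q\supset Q,\,\ell(\widetilde Q)\ge 1}\fint_{\widetilde Q}|f|,$$
with the first term small by condition (i) and the second by condition (ii). Choosing $R=R_k\to\fz$ and $\epsilon=\epsilon_k\to 0^+$ in a coupled way then delivers $\{g_{\epsilon_k,R_k}\}\subset C_0(\rn)$ with $\|f-g_{\epsilon_k,R_k}\|_{\bmo(\rn)}\to 0$, completing the sufficiency.
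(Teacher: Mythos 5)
Your necessity argument is fine, and since the survey offers no proof of this theorem (it simply cites Dafni \cite[Theorem 6]{D02} and Bourdaud \cite[Theorem 3]{B02}), your sufficiency argument must stand on its own. Its truncate-and-mollify skeleton is indeed the standard route, but the step you yourself single out as the hard part contains a genuine error. From $|f_Q|\le(|\widetilde Q|/|Q|)\fint_{\widetilde Q}|f|$ you can only conclude $\co(\mathrm{I}_R;Q)\ls\co(f;Q)+\ell(Q)R^{-1}(|\widetilde Q|/|Q|)\fint_{\widetilde Q}|f|$; the ratio $|\widetilde Q|/|Q|\sim\ell(Q)^{-n}$, which your displayed inequality silently drops, makes the second term of size $R^{-1}\ell(Q)^{1-n}\fint_{\widetilde Q}|f|$, and this is unbounded as $\ell(Q)\to0^+$ in every dimension $n\ge2$. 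Worse, the displayed inequality itself is false: take $f$ a smooth bump with flat top equal to $1$ of width $\rho$ located in the buffer region (such an $f$ satisfies all hypotheses), and let $Q$ be a cube of side $\sim\rho$ inside the flat top; then $\co(f;Q)=0$ and $\co(\mathrm{I}_R;Q)=\co(\chi_R;Q)\sim\rho/R$, while your right-hand side is $\ls\rho^{n+1}/R$. The correct repair is the logarithmic comparison of means across dyadic scales, $|f_Q|\ls\fint_{\widetilde Q}|f|+(1+\log\frac{1}{\ell(Q)})\|f\|_{\BMO(\rn)}$, which gives $\ell(Q)R^{-1}|f_Q|\ls R^{-1}(1+\|f\|_{\bmo(\rn)})$ uniformly in small $Q$; the same control is also quietly needed when you verify that $\chi_R f$ has uniformly vanishing small-scale oscillation before invoking the mollification lemma for $\mathrm{II}_{\epsilon,R}$.

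Two further points need honest treatment. First, you claim the term $\co(f;Q)$ is ``small by condition (i)'', but (i) only controls cubes of sufficiently small side length; for far cubes with $\delta\le\ell(Q)<1$ you must instead use (ii) through the enlargement bound $\co(f;Q)\le2\fint_Q|f|\le2(2/\delta)^n\fint_{\widetilde Q}|f|$, and the parameters must be coupled accordingly: given a target $\varepsilon$, first choose $\delta$ from (i), then $R$ so large that the bound from (ii) beats the factor $(2/\delta)^n$; your coupling of $R$ with the mollification parameter $\epsilon$ does not address this. Second, for cubes with $\ell(Q)\ge1$ that meet $\supp(1-\chi_R)$ but are not themselves far from the origin (for instance huge cubes containing $Q(\mathbf{0},R)$), the bound $\fint_Q|\mathrm{I}_R|\le\fint_Q|f|$ is useless and ``absorbing them into the $c_0$-scale bound via a finer choice of $R$'' is not an argument; what works is to cover $Q\cap\supp(1-\chi_R)$ by boundedly overlapping cubes of side $2$ lying outside $Q(\mathbf{0},R-4)$ and contained in a fixed dilate of $Q$, so that (ii) yields $\fint_Q|\mathrm{I}_R|\ls\sup_{Q'}\fint_{Q'}|f|$, small as $R\to\fz$. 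With these repairs the strategy goes through; as written, the sufficiency proof does not.
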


In addition, the localized version of Theorem \ref{Riesz-trans-char}
can be found in \cite[Corollary 1]{G79} for $\bmo(\rn)$,
and in \cite[Theorems 1 and 3]{B02} for $\vmo(\rn)$ and $\cmo(\rn)$,
respectively.

\begin{question}\label{openQ-localXMO}
Let ${\rm mmo\,}(\rn)$, $\xmo(\rn)$, and ${\rm x_1mo\,}(\rn)$
be, respectively, the closure in $\bmo(\rn)$ of $A_\fz(\rn)$,
$B_\fz(\rn)$, and $B_1(\rn)$.
It is interesting to find the counterparts of
\begin{itemize}
\item[{\rm(i)}] Theorem \ref{cmo-char}
with $\cmo(\rn)$ replaced by $\xmo(\rn)$;

\item[{\rm(ii)}] Theorem \ref{xMO-char} with $\XMO(\rn)$ and $\xMO(\rn)$
replaced, respectively, by $\xmo(\rn)$ and ${\rm x_1mo\,}(\rn)$;

\item[{\rm(iii)}] Question \ref{openQ-Riesz-trans-char} with $\MMO(\rn)$
replaced by ${\rm mmo\,}(\rn)$;

\item[{\rm(iv)}] the dual result $(\cmo(\rn))^\ast=h^1(\rn)$,
in \cite[Theorem 9]{D02},
with $\cmo(\rn)$ replaced by ${\rm mmo\,}(\rn)$ or $\xmo(\rn)$,
where $h^1(\rn)$ is the \emph{localized Hardy space};

\item[{\rm(v)}] the equivalent characterizations for ${\rm mmo\,}(\rn)$ and
$\xmo(\rn)$ via \emph{localized Riesz transforms}.
\end{itemize}
\end{question}

\begin{remark}\label{VMorrey}
For the studies of vanishing Morrey spaces,
we refer the reader to \cite{AS18,AAS19,A20,AAS20}.
\end{remark}

\subsection{Vanishing John--Nirenberg--Campanato spaces}\label{Subsec-VanJNp}

Very recently, the vanishing subspaces of John--Nirenberg spaces
were also studied in \cite{tyyACV,BB20}.
Indeed, as a counterpart of Subsection \ref{Subsec-VanBMO},
the vanishing subspaces of JNC spaces enjoy similar characterizations
which are summarized in this subsection.

\begin{definition}\label{def-VJNp}
Let $p\in(1,\fz)$, $q\in[1,\fz)$, $s\in\zz_+$, and $\az\in\rr$.
The \emph{vanishing subspace $VJN_{(p,q,s)_\az}(\cx)$} is defined by setting
$$VJN_{(p,q,s)_\az}(\cx):=\lf\{f\in JN_{(p,q,s)_\az}(\cx):\,\,
\limsup_{a\to0^+}\sup_{{\rm size}\le a}\widetilde{\co}_{(p,q,s)_\az}(f;\{Q_i\}_i)=0  \r\},$$
where
\begin{align*}
\widetilde{\co}_{(p,q,s)_\az}(f;\{Q_i\}_i)
:=\lf\{\sum_i|Q_i|\lf[|Q_i|^{-\alpha}\lf\{\fint_{Q_i}
\lf|f(x)-P_{Q_i}^{(s)}(f)(x)\r|^q\,dx\r\}^{\frac 1q}\r]^p\r\}^{\frac 1p}
\end{align*}
and the supremum is taken over all collections of interior pairwise
disjoint cubes $\{Q_i\}_i$ of $\cx$ with side lengths no more than $a$.
To simplify the notation, write
$VJN_{p,q}(\cx):=VJN_{(p,q,0)_0}(\cx)$
and $VJN_{p}(\cx):=VJN_{p,1}(\cx)$.
\end{definition}

On the unit cube $[0,1]^n$, the space $VJN_{(p,q,s)_\az}([0,1]^n)$
was studied by A. Brudnyi and Y. Brudnyi in \cite{BB20} with different symbols.
The following characterization (Theorem \ref{VJNp-BB})
and duality (Theorem \ref{Vdual-BB}) are just, respectively,
\cite[Theorem 3.14 and 3.7]{BB20}.
Notice that, when $\az\ge\frac{s+1}n$, from \cite[Lemma 4.1]{BB20},
we deduce that
$JN_{(p,q,s)_\alpha}([0,1]^n)=\mathcal{P}_s([0,1]^n)$ is trivial.

\begin{theorem}\label{VJNp-BB}
Let $p$, $q\in[1,\fz)$, $s\in\zz_+$, and $\az\in(-\fz,\frac{s+1}n)$. Then
$$VJN_{(p,q,s)_\az}([0,1]^n)=\overline{C^\fz([0,1]^n)\cap
JN_{(p,q,s)_\az}([0,1]^n)}^{JN_{(p,q,s)_\az}([0,1]^n)},$$
where $C^\fz([0,1]^n):=C^\fz(\rn)|_{[0,1]^n}$ denotes the restriction of
infinitely differentiable functions from $\rn$ to $[0,1]^n$.
\end{theorem}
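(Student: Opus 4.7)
The plan is to prove the two inclusions separately. For the easier ``$\supseteq$'' direction, I would first check that every $f\in C^\fz([0,1]^n)\cap JN_{(p,q,s)_\az}([0,1]^n)$ lies in $VJN_{(p,q,s)_\az}([0,1]^n)$. By Taylor's theorem with remainder, for any cube $Q\subset[0,1]^n$ with $\ell(Q)\le a$,
$$\lf[\fint_Q\lf|f(x)-T_Q^s(f)(x)\r|^q\,dx\r]^{\frac1q}\le C a^{s+1}\|f\|_{C^{s+1}([0,1]^n)},$$
where $T_Q^s(f)$ denotes the Taylor polynomial of degree $s$ at the center of $Q$. Using \eqref{Cs} and the fact that subtracting any fixed polynomial does not change $f-P_Q^{(s)}(f)$, one gets $\|f-P_Q^{(s)}(f)\|_{L^q(Q)}\ls\|f-T_Q^s(f)\|_{L^q(Q)}$ up to a constant depending on $s$. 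Hence, for any admissible collection $\{Q_i\}_i$ with $\ell(Q_i)\le a$,
$$\widetilde{\co}_{(p,q,s)_\az}(f;\{Q_i\}_i)\le Ca^{s+1-n\az}\lf(\sum_i|Q_i|\r)^{\frac1p}\le Ca^{s+1-n\az},$$
which tends to $0$ as $a\to0^+$ since $\az<\frac{s+1}{n}$. Then I would show $VJN_{(p,q,s)_\az}([0,1]^n)$ is closed in the JNC norm via the triangle inequality $\widetilde{\co}_{(p,q,s)_\az}(f;\{Q_i\}_i)\le\|f-f_k\|_{JN_{(p,q,s)_\az}([0,1]^n)}+\widetilde{\co}_{(p,q,s)_\az}(f_k;\{Q_i\}_i)$; letting first $k\to\fz$ and then $a\to0^+$ yields the vanishing property for the limit.

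For the harder ``$\subseteq$'' direction, given $f\in VJN_{(p,q,s)_\az}([0,1]^n)$, I would construct smooth approximants by mollification. Extend $f$ to some $\widetilde f$ on $\rn$ via an even reflection across each face of $[0,1]^n$, let $\phi_\vp$ be a standard smooth mollifier supported in $B(\mathbf{0},\vp)$, and set $f_\vp:=(\widetilde f*\phi_\vp)|_{[0,1]^n}\in C^\fz([0,1]^n)$. The goal is to show $\|f-f_\vp\|_{JN_{(p,q,s)_\az}([0,1]^n)}\to0$ as $\vp\to0^+$. Fix $\eta>0$. For any collection $\{Q_i\}_i$, I would split it into ``small'' cubes $\{Q_i^S\}_i$ with $\ell(Q_i^S)\le a$ and ``large'' cubes $\{Q_i^L\}_i$ with $\ell(Q_i^L)>a$, where $a$ is chosen using the vanishing property of $f$ so that $\widetilde{\co}_{(p,q,s)_\az}(f;\{Q_i^S\}_i)<\eta/4$ uniformly in the choice of $\{Q_i^S\}_i$. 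A key lemma, to be proved via Minkowski's integral inequality applied to the defining convolution and to \eqref{PQsf}, will give
$$\widetilde{\co}_{(p,q,s)_\az}(f_\vp;\{Q_i^S\}_i)\le C\,\widetilde{\co}_{(p,q,s)_\az}\lf(\widetilde f;\{\wz Q_i^{S,\vp}\}_i\r),$$
where $\wz Q_i^{S,\vp}$ denotes the $\vp$-neighborhood of $Q_i^S$; for $\vp\le a$, these enlargements remain cubes of side length at most $3a$, so their contribution is still $<\eta/4$ (provided the extension $\widetilde f$ satisfies a comparable vanishing condition, which is inherited from the reflective construction). For the large cubes, the disjointness in $[0,1]^n$ forces their number to be at most $a^{-n}$, and on each fixed $Q_i^L$ the classical $L^q$-approximation by mollifiers gives $\fint_{Q_i^L}|f-f_\vp|^q\to0$ as $\vp\to0^+$, and hence, by \eqref{Cs} applied to $f-f_\vp$, the full large-cube contribution is $<\eta/2$ for $\vp$ small. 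Combining the small- and large-cube estimates through the $\ell^p$ triangle inequality then yields $\|f-f_\vp\|_{JN_{(p,q,s)_\az}([0,1]^n)}<\eta$.

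The main obstacle will be the key convolution lemma controlling $\widetilde{\co}_{(p,q,s)_\az}(f_\vp;\{Q_i^S\}_i)$ by a vanishing-type quantity of $\widetilde f$ on slightly enlarged cubes, which must be delicate enough to hold \emph{uniformly} in the collection $\{Q_i^S\}_i$ (because the defining supremum in the JNC norm is taken over all such collections at once). A subsidiary technical issue is the clean construction of the extension $\widetilde f$: the reflective extension seems natural, but one must verify that it preserves the vanishing property (equivalently, that $\widetilde f\in VJN_{(p,q,s)_\az}(Q_*)$ for some cube $Q_*\supsetneq[0,1]^n$), which amounts to showing that across a face of $[0,1]^n$ the piecewise-reflected function still obeys polynomial approximation of order $s+1$ on small cubes straddling the face. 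The assumption $\az<\frac{s+1}{n}$ enters crucially in the easy direction (to drive $a^{s+1-n\az}\to 0$) and implicitly in the hard direction to guarantee that $JN_{(p,q,s)_\az}([0,1]^n)$ is nontrivial, so that the approximation problem is meaningful.
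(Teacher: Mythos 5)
The survey does not actually prove this theorem: it is quoted from \cite[Theorem 3.14]{BB20}, so there is no in-paper argument to compare with and your proposal must stand on its own. Your ``$\supseteq$'' direction does stand: the Taylor estimate produces the factor $a^{s+1-n\az}$ with $s+1-n\az>0$, and since $P^{(s)}_{Q}$ is linear and bounded via \eqref{Cs}, your triangle-inequality argument shows the vanishing class is closed, hence contains the closure of $C^\fz([0,1]^n)\cap JN_{(p,q,s)_\az}([0,1]^n)$. The ``$\subseteq$'' direction, however, has two genuine gaps, and the first is exactly at the ``key convolution lemma'' you flag. In the form you state it the lemma cannot be used: if some cubes $Q_i^S$ have side length much smaller than $\vp$, then (a) comparing the normalized oscillation on $Q_i^S$ with that on its $\vp$-neighborhood costs a factor that blows up like a power of $\vp/\ell(Q_i^S)$, and (b) the enlarged cubes $\{\wz Q_i^{S,\vp}\}_i$ are no longer interior pairwise disjoint (their overlap multiplicity can be of order $(\vp/\ell(Q_i^S))^n$), so $\widetilde{\co}_{(p,q,s)_\az}(\wz f;\{\wz Q_i^{S,\vp}\}_i)$ is not a competitor in the suprema defining the norm or the vanishing modulus of $\wz f$ and cannot be bounded by them. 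You cannot assume $\vp\le\min_i\ell(Q_i^S)$ because one fixed $f_\vp$ must handle all collections simultaneously. The correct device is the one this survey itself uses in Proposition \ref{f-ast-phi}: bound $\co(f\ast\phi_\vp;Q)\le2\int_{\rn}|\phi_\vp(z)|\,\co(\wz f;Q-z)\,dz$ (and its order-$s$, $L^q$ analogue, using that $z\mapsto P_z(\cdot-z)$ averages to a polynomial of degree at most $s$) and then apply Minkowski's integral inequality, so that for each fixed $z$ the whole collection is translated by the same vector, hence stays interior pairwise disjoint with unchanged side lengths; this yields the uniformity in $\{Q_i^S\}_i$ for free, with no cube enlargement at all.

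The second gap is the extension. Even reflection does not preserve order-$(s+1)$ local polynomial approximation when $s\ge1$: already for $n=1$, $s=1$, $p\in(1,\fz)$, $\az\in(1+\frac1p,2)\subset(-\fz,\frac{s+1}{n})$ and $f(x)=x$ (a smooth element of $VJN_{(p,q,1)_\az}([0,1])$), the reflected function behaves like $|x|$ across the face, and a single interval $I_a$ of length $a$ centered at the reflection point gives $\inf_{P\in\mathcal{P}_1(I_a)}\fint_{I_a}\big|\,|x|-P(x)\big|\,dx\sim a$, hence a contribution $\sim a^{\frac1p+1-\az}\to\fz$ as $a\to0^+$. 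So $\wz f$ need not even belong to $JN_{(p,q,s)_\az}$ of the larger cube, let alone satisfy the vanishing estimate your small-cube step requires; the issue you describe as ``to be verified'' is in fact false for even reflection in this range of parameters. To salvage the extension route you need a Hestenes-type higher-order reflection (matching polynomial approximation up to order $s$ across each face), or you must avoid extension and approximate intrinsically on the cube (in the spirit of the local polynomial approximation techniques of Brudnyi--Brudnyi); note that the translation-plus-Minkowski lemma above still needs values of $\wz f$ slightly outside $[0,1]^n$, so the extension step is not cosmetic. Your large-cube estimate (at most $a^{-n}$ cubes, each of volume at least $a^n$, plus $L^q$ convergence of mollification and \eqref{Cs}) is fine once these two points are repaired.
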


\begin{theorem}\label{Vdual-BB}
Let $p$, $q\in(1,\fz)$, $s\in\zz_+$, and $\az\in(-\fz,\frac{s+1}n)$. Then
$$\lf(VJN_{(p,q,s)_\az}([0,1]^n) \r)^\ast=HK_{(p',q',s)_\az}([0,1]^n),$$
where $\frac1{p}+\frac1{p'}=1=\frac1{q}+\frac1{q'}$.
\end{theorem}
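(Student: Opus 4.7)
My plan is to establish a natural isometric isomorphism between $HK_{(p',q',s)_\az}([0,1]^n)$ and $(VJN_{(p,q,s)_\az}([0,1]^n))^\ast$ via the integration pairing, exploiting the duality in Theorem \ref{duality} and the smooth-approximation description in Theorem \ref{VJNp-BB}.

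First I would verify that $VJN_{(p,q,s)_\az}([0,1]^n)$ is a closed subspace of $JN_{(p,q,s)_\az}([0,1]^n)$: if $\{f_k\}_k\subset VJN_{(p,q,s)_\az}([0,1]^n)$ converges to $f$ in $JN_{(p,q,s)_\az}([0,1]^n)$, then the triangle inequality
$$\widetilde{\co}_{(p,q,s)_\az}(f;\{Q_i\}_i)\le \|f-f_k\|_{JN_{(p,q,s)_\az}([0,1]^n)}+\widetilde{\co}_{(p,q,s)_\az}(f_k;\{Q_i\}_i),$$
taken first over all cube collections $\{Q_i\}_i$ of size at most $a$ and then sending $k\to\fz$ followed by $a\to0^+$, yields $f\in VJN_{(p,q,s)_\az}([0,1]^n)$. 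Consequently, by Theorem \ref{duality}, every $g\in HK_{(p',q',s)_\az}([0,1]^n)$ yields a bounded functional $\cl_g:\ f\mapsto\int_{[0,1]^n}fg$ on $JN_{(p,q,s)_\az}([0,1]^n)$, whose restriction to $VJN_{(p,q,s)_\az}([0,1]^n)$ satisfies $\|\cl_g|_{VJN}\|_{(VJN)^\ast}\le C\|g\|_{HK_{(p',q',s)_\az}([0,1]^n)}$. Injectivity of $g\mapsto\cl_g|_{VJN}$ follows from Theorem \ref{VJNp-BB}, which places $C^\fz([0,1]^n)\cap JN_{(p,q,s)_\az}([0,1]^n)$ inside $VJN_{(p,q,s)_\az}([0,1]^n)$, so that smooth functions separate elements of the ambient Lebesgue space.

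For surjectivity and the reverse norm inequality, given $\phi\in(VJN_{(p,q,s)_\az}([0,1]^n))^\ast$, I would first extend $\phi$ by Hahn--Banach to a norm-preserving functional $\widetilde\phi$ on $JN_{(p,q,s)_\az}([0,1]^n)$. Testing $\widetilde\phi$ against smooth functions (which lie in $VJN$ by Theorem \ref{VJNp-BB}) produces a distribution $g$ on $[0,1]^n$, and I would show that $g$ admits a representation $g=\sum_j\lz_j a_j$ in the $HK$-sense by mimicking the atomic construction in the proof of Theorem \ref{duality}: applying the Calder\'on--Zygmund decomposition at suitable levels and subtracting polynomial projections $P_{Q_j}^{(s)}$ to enforce the cancellation condition of $(p',q',s)_\az$-atoms, with coefficients $\{\lz_j\}_j$ whose $\ell^{p'}$-norm is controlled by $\|\phi\|_{(VJN)^\ast}$.

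The main obstacle will be this surjectivity step, namely passing from $\widetilde\phi\in JN^\ast=(HK)^{\ast\ast}$, which may be strictly larger than the canonical image of $HK$, to an honest element of $HK_{(p',q',s)_\az}([0,1]^n)$. The gap must be closed by exploiting the vanishing condition defining $VJN$: testing $\phi$ against sequences of $VJN$-functions whose $JN$-norms concentrate on dyadic cubes of vanishing size forces uniform $\ell^{p'}$-control on finite atomic truncations of the candidate representer, after which a weak-$\ast$ compactness argument delivers the desired $g\in HK_{(p',q',s)_\az}([0,1]^n)$. Informally, the smallness-at-small-scales condition for $VJN$ is precisely dual to the support and cancellation requirements on $(p',q',s)_\az$-atoms in Definition \ref{uvas-atom}, which is what makes $HK$ the predual of $VJN$ rather than merely its antecedent in the $(HK)^\ast=JN$ duality.
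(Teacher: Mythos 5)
First, a point of reference: the survey does not prove Theorem \ref{Vdual-BB} at all; it quotes it from A.~Brudnyi and Y.~Brudnyi \cite[Theorem 3.7]{BB20}. So your proposal must stand on its own, and as written it does not close the decisive step. The soft parts are fine: $VJN_{(p,q,s)_\az}([0,1]^n)$ is closed in $JN_{(p,q,s)_\az}([0,1]^n)$ (each $\widetilde{\co}_{(p,q,s)_\az}(\cdot;\{Q_i\}_i)$ is a seminorm, or simply because Theorem \ref{VJNp-BB} exhibits $VJN$ as a closure), every $g\in HK_{(p',q',s)_\az}([0,1]^n)$ is by definition an element of $(JN_{(p,q,s)_\az}([0,1]^n))^\ast$ and hence restricts boundedly to $VJN$, and the density of smooth functions reduces the representation problem to identifying a candidate representer on smooth test functions. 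The genuine gap is exactly what you label ``the main obstacle'': passing from the Hahn--Banach extension $\widetilde\phi\in(JN_{(p,q,s)_\az})^\ast=(HK_{(p',q',s)_\az})^{\ast\ast}$ to an honest element of $HK_{(p',q',s)_\az}([0,1]^n)$ representing $\phi$ on $VJN$ with controlled norm. Your proposed fix, ``mimicking the atomic construction in the proof of Theorem \ref{duality},'' does not transfer: that proof runs in the opposite direction, representing functionals on $HK$ by $JN$-functions through local $L^{q'}$--$L^{q}$ duality on individual cubes; it never manufactures an $\ell^{p'}$-summable polymer decomposition out of a functional, and you do not say which function the Calder\'on--Zygmund decomposition is applied to, at which levels, or why the resulting coefficients are $\ell^{p'}$-controlled by $\|\phi\|_{(VJN)^\ast}$.

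What is actually needed, and missing, is a quantitative weak-$\ast$ compactness (Fatou-type) property of $HK$: bounded sequences of finite atomic sums whose pairings against all of $VJN$ (equivalently, against smooth functions) converge must have their limit functional again induced by an element of $HK$, with norm at most the liminf of the $HK$-norms --- or, alternatively, an abstract predual criterion of Dixmier--Ng type (compactness of the unit ball of $HK$ in the topology generated by $VJN$). Your closing paragraph asserts that the vanishing-at-small-scales condition ``forces uniform $\ell^{p'}$-control on finite atomic truncations,'' but no mechanism is given for producing those truncations from $\phi$, nor for why the small-scale vanishing condition is what excludes the part of $(HK)^{\ast\ast}\setminus HK$; this is precisely the hard content of any VMO--$H^1$-type two-star theorem (compare Coifman--Weiss \cite{CW77} and Dafni \cite{D02}, where the analogous step requires real work), and asserting it informally is not a proof. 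A smaller issue: injectivity of $g\mapsto\mathcal{L}_g|_{VJN}$ needs more than ``smooth functions separate elements of the ambient Lebesgue space,'' since the pairing of $g\in HK$ with a general $JN$-function is not an absolutely convergent integral; you must argue that an $HK$-element vanishing against all smooth functions vanishes as a functional on all of $JN_{(p,q,s)_\az}([0,1]^n)$.
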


It is obvious that Theorems \ref{VJNp-BB} and \ref{Vdual-BB} hold true
with $[0,1]^n$ replaced by any cube $Q_0$ of $\rn$.
As an application of the duality,
Tao et al. \cite[Proposition 5.7]{tyyACV} showed that,
for any $p\in(1,\fz)$ and any given cube $Q_0$ of $\rn$,
$$\lf[L^p(Q_0)/\cc\r]\subsetneqq VJN_p(Q_0)$$
which proves the \emph{nontriviality} of $VJN_p(Q_0)$,
here and thereafter,
$$L^p(\cx)/\cc:=\lf\{f\in L^1_{\loc}(\cx):\ \|f\|_{L^p(\cx)/\cc}<\fz \r\}$$
with
$$\|f\|_{L^p(\cx)/\cc}:=\inf_{c\in\cc} \|f+c\|_{L^p(\cx)}.$$
\begin{remark}
There exists a gap in the proof of \cite[Proposition 5.7]{tyyACV}:
We can not deduce
\begin{align}\label{VJNpQ0}
	\lf(VJN_p(Q_0)\r)^{\ast\ast}=JN_p(Q_0),
\end{align}
namely, \cite[(5.2)]{tyyACV},
directly from Theorems \ref{Vdual-BB} and \ref{duality}
because, in the statements of these dual theorems,
$q$ can not equal to $1$.
Indeed, \eqref{VJNpQ0} still holds true
due to the equivalence of $JN_{p,q}(Q_0)$ with $q\in[1,p)$.
Precisely, let $p\in(1,\fz)$ and $q\in(1,p)$.
By Theorems \ref{Vdual-BB} and \ref{duality},
we obtain
\begin{align*}
\lf(VJN_{p,q}(Q_0)\r)^{\ast\ast}=JN_{p,q}(Q_0),
\end{align*}
which, together with Theorems \ref{JNpqa=JNp1a}
and \ref{VJNp-char} below, further implies that
\begin{align*}
\lf(VJN_{p}(Q_0)\r)^{\ast\ast}
=\lf(VJN_{p,q}(Q_0)\r)^{\ast\ast}
=JN_{p,q}(Q_0)=JN_{p}(Q_0),
\end{align*}
and hence \eqref{VJNpQ0} holds true.
This fixes the gap in the proof of \cite[(5.2)]{tyyACV}.
\end{remark}

Next, we consider the case $\cx=\rn$.
The following proposition indicates that the
convolution is a suitable tool
when approximating functions in $JN_p(\rn)$,
which is a counterpart of \cite[Lemma 1]{Sarason75}.
Indeed, the approximate functions in the proofs of
both Theorems \ref{VJNp-char} and \ref{CJNp-char} are
constructed via the convolution; see \cite{tyyACV} for more details.

\begin{proposition}\label{f-ast-phi}
Let $p\in(1,\fz)$ and $\varphi\in L^1(\rn)$
with compact support.
If $f\in JN_p(\rn)$, then $f\ast\varphi\in JN_p(\rn)$ and
$$\|f\ast\varphi\|_{JN_p(\rn)}\le2\|\varphi\|_{L^1(\rn)}\|f\|_{JN_p(\rn)}.$$
\end{proposition}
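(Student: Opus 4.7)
The plan is to prove the norm inequality directly from the definition of $\|\cdot\|_{JN_p(\rn)}$, exploiting two standard tricks: optimizing the mean by an arbitrary constant, and Minkowski's integral inequality. First I would note that, since $f\in JN_p(\rn)\subset L^1_{\loc}(\rn)$ and $\varphi\in L^1(\rn)$ has compact support, the convolution $(f\ast\varphi)(x)=\int_{\rn}f(x-y)\varphi(y)\,dy$ is well defined for almost every $x\in\rn$ and belongs to $L^1_{\loc}(\rn)$; this legitimizes writing $(f\ast\varphi)_Q$ for any cube $Q\subset\rn$.

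Fix any cube $Q\subset\rn$. For any constant $c\in\cc$ one has the elementary bound
\begin{equation*}
\fint_Q\lf|(f\ast\varphi)(x)-(f\ast\varphi)_Q\r|\,dx
\le 2\fint_Q\lf|(f\ast\varphi)(x)-c\r|\,dx.
\end{equation*}
I would choose $c:=\int_{\rn}\varphi(y)f_{Q-y}\,dy$, which is finite because $y\mapsto f_{Q-y}$ is bounded on the compact support of $\varphi$. Using the definition of the convolution and the change of variables $u=x-y$,
\begin{equation*}
(f\ast\varphi)(x)-c=\int_{\rn}\varphi(y)\lf[f(x-y)-f_{Q-y}\r]\,dy,
\end{equation*}
so Fubini and translation invariance of Lebesgue measure give
\begin{equation*}
\fint_Q\lf|(f\ast\varphi)(x)-c\r|\,dx
\le\int_{\rn}|\varphi(y)|\,g_Q(y)\,dy,\quad
g_Q(y):=\fint_{Q-y}\lf|f(u)-f_{Q-y}\r|\,du.
\end{equation*}

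Now let $\{Q_i\}_i$ be any collection of interior pairwise disjoint cubes of $\rn$. Combining the two displays above and applying Minkowski's integral inequality (valid because $p\ge1$) to the outer $\ell^p$-sum,
\begin{equation*}
\lf[\sum_i|Q_i|\lf\{\fint_{Q_i}\lf|(f\ast\varphi)(x)-(f\ast\varphi)_{Q_i}\r|\,dx\r\}^p\r]^{1/p}
\le 2\int_{\rn}|\varphi(y)|\lf[\sum_i|Q_i|\,g_{Q_i}(y)^p\r]^{1/p}\!dy.
\end{equation*}
The crucial observation is that, for each fixed $y\in\rn$, the translated family $\{Q_i-y\}_i$ is again a collection of interior pairwise disjoint cubes of $\rn$ with $|Q_i-y|=|Q_i|$, so by the very definition of $\|f\|_{JN_p(\rn)}$,
\begin{equation*}
\sum_i|Q_i|\,g_{Q_i}(y)^p
=\sum_i|Q_i-y|\lf\{\fint_{Q_i-y}\lf|f(u)-f_{Q_i-y}\r|\,du\r\}^p
\le\|f\|_{JN_p(\rn)}^p.
\end{equation*}
Plugging this uniform bound into the previous display yields the estimate $2\|\varphi\|_{L^1(\rn)}\|f\|_{JN_p(\rn)}$ on the right-hand side, and taking the supremum over $\{Q_i\}_i$ completes the proof.

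There is really no serious obstacle here; the only mild technical point to be careful about is verifying that $c$ is finite and that Fubini applies, which both follow from the compact support of $\varphi$ together with the local integrability of $f$. The factor $2$ is the price paid for replacing the mean $(f\ast\varphi)_Q$ by the more convenient constant $c$; it could conceivably be removed by a more delicate argument, but the present formulation suffices for the intended approximation applications in the proofs of Theorems \ref{VJNp-char} and \ref{CJNp-char}.
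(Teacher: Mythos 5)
Your proof is correct and takes essentially the same route as the paper: both arguments reduce matters to the pointwise estimate $\co(f\ast\varphi;Q)\le 2\int_{\rn}|\varphi(z)|\,\co(f;Q-z)\,dz$ and then conclude via Minkowski's integral inequality together with the observation that, for each fixed $z$, the translated family $\{Q_i-z\}_i$ is again a collection of interior pairwise disjoint cubes with the same measures. The only cosmetic difference is how the factor $2$ appears: you replace $(f\ast\varphi)_Q$ by the constant $c=\int_{\rn}\varphi(y)f_{Q-y}\,dy$ and use $\fint_Q|g-g_Q|\,dx\le 2\fint_Q|g-c|\,dx$, whereas the paper uses the double-average bound $\fint_{Q-z}\fint_{Q-z}|f(x)-f(y)|\,dy\,dx\le 2\,\co(f;Q-z)$.
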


\begin{proof}
Let $p,\ \varphi$, and $f$ be as in this lemma.
Then, for any cube $Q$ of $\rn$, by the Fubini theorem, we have
\begin{align}\label{ineq-ast}
\co(f\ast\varphi;Q)&=\fint_Q\lf|f\ast\varphi(x)-(f\ast\varphi)_Q \r|\,dx\notag\\
&=\fint_Q\lf|\fint_Q\int_{\rn}\varphi(z)[f(x-z)-f(y-z)]\,dz\,dy \r|\,dx\notag\\
&\le\int_{\rn}\fint_Q\fint_Q|\varphi(z)|\lf|f(x-z)-f(y-z) \r|\,dy\,dx\,dz\notag\\
&=\int_{\rn}|\varphi(z)|\fint_{Q-z}\fint_{Q-z}\lf|f(x)-f(y) \r|\,dy\,dx\,dz\notag\\
&\le2\int_{\rn}|\varphi(z)|\co(f;Q-z)\,dz,
\end{align}
where $Q-z:=\{w-z:\,\,w\in Q\}$.
Therefore, for any interior pairwise disjoint subcubes $\{Q_i\}_i$ of $\rn$,
by \eqref{ineq-ast} and the Minkowski generalized integral inequality,
we conclude that
\begin{align*}
&\lf\{\sum_i|Q_i|\lf[\co(f\ast\varphi;Q_i)\r]^p\r\}^\frac1p\\
&\quad\le2\lf\{\sum_i|Q_i|\lf[\int_{\rn}|\varphi(z)|\co(f;Q-z)\,dz\r]^p\r\}^\frac1p\\
&\quad=2\lf\{\sum_i\lf[\int_{\rn}|Q_i|^\frac1p|\varphi(z)|\co(f;Q_i-z)\,dz\r]^p\r\}^\frac1p\\
&\quad\le2\int_{\rn}\lf\{\sum_i\lf[|Q_i|^\frac1p|\varphi(z)|\co(f;Q_i-z)\r]^p\r\}^\frac1p dz\\
&\quad=2\int_{\rn}|\varphi(z)|\lf\{\sum_i|Q_i-z|\lf[\co(f;Q_i-z)\r]^p\r\}^\frac1p dz\\
&\quad\le2\|\varphi\|_{L^1(\rn)}\|f\|_{JN_p(\rn)},
\end{align*}
where $Q_i-z:=\{w-z:\,\,w\in Q_i\}$ for any $i$.
This further implies that
$$\|f\ast\varphi\|_{JN_p(\rn)}\le2\|\varphi\|_{L^1(\rn)}\|f\|_{JN_p(\rn)}$$
and hence finishes the proof of Proposition \ref{f-ast-phi}.
\end{proof}

The following equivalent characterization is just \cite[Theorem 3.2]{tyyACV}.
\begin{theorem}\label{VJNp-char}
Let $p\in(1,\fz)$.
Then the following three statements are mutually equivalent:
\begin{itemize}
\item [{\rm(i)}] $f\in \overline{D_p(\rn)\cap JN_p(\rn)}^{JN_p(\rn)}
=:VJN_p(\rn)$,
where
$$D_p(\rn):=\lf\{f\in C^\fz(\rn):\,\,|\nabla f|\in L^p(\rn)\r\}$$
and $\nabla f$ denotes the gradient of $f$;

\item [{\rm(ii)}] $f\in JN_{p}(\rn)$ and, for any given $q\in[1,p)$,
$$\lim_{a\to0^+}\sup_{\{\{Q_i\}_i:\,\,\ell(Q_i)\le a,\ \forall\,i\}}
\lf\{ \sum_i|Q_i|\lf[\fint_{Q_i}\lf|f(x)-f_{Q_i}\r|^q\, dx\r]^\frac pq \r\}^\frac1p=0,$$
where the supremum is taken over all collections $\{Q_i\}_i$ of interior pairwise
disjoint subcubes of $\rn$ with side lengths no more than $a$;

\item [{\rm(iii)}] $f\in JN_{p}(\rn)$ and
$$\lim_{a\to0^+}\sup_{\{\{Q_i\}_i:\,\,\ell(Q_i)\le a,\ \forall\,i\}}
\lf\{ \sum_i|Q_i|\lf[\fint_{Q_i}\lf|f(x)-f_{Q_i}\r|\,dx\r]^p \r\}^\frac1p=0,$$
where the supremum is taken over all collections $\{Q_i\}_i$ of interior pairwise
disjoint subcubes  of $\rn$ with side lengths no more than $a$.
\end{itemize}
\end{theorem}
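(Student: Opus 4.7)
The plan is to establish $\rm(ii)\Leftrightarrow(iii)$ as one easy equivalence, and then to close the loop via $\rm(i)\Rightarrow(iii)\Rightarrow(i)$. The implication $\rm(ii)\Rightarrow(iii)$ is immediate from H\"older's inequality applied to each mean oscillation, since
$$\fint_{Q_i}\lf|f(x)-f_{Q_i}\r|\,dx
\le\lf[\fint_{Q_i}\lf|f(x)-f_{Q_i}\r|^q\,dx\r]^{\frac1q}$$
for any $q\in[1,p)$, which bounds the $q=1$ sum by the $q$-sum cube by cube. For the converse, I would observe that the proof of the norm equivalence $JN_{(p,q,0)_0}=JN_{(p,1,0)_0}$ in Proposition \ref{JNpqa=JNp1a}, which rests on the good-$\lambda$ argument of Lemma \ref{goodlambda}, is applied cube by cube; this yields a scale-uniform version of the equivalence, namely, $N_{p,q,a}(f)\ls N_{p,1,a}(f)$ with an implicit constant depending only on $n$, $p$, and $q$, where $N_{p,q,a}(f)$ denotes the truncated seminorm in which the supremum is restricted to cube families of side length at most $a$. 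Letting $a\to0^+$ then gives $\rm(iii)\Rightarrow(ii)$.

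For $\rm(i)\Rightarrow(iii)$, given $\varepsilon\in(0,\fz)$, I would pick $g\in D_p(\rn)\cap JN_p(\rn)$ with $\|f-g\|_{JN_p(\rn)}<\varepsilon$. The classical $(1,1)$-Poincar\'e inequality on any cube $Q$, combined with H\"older's inequality, yields
$$\fint_Q\lf|g(x)-g_Q\r|\,dx\le C_{(n)}\ell(Q)\lf[\fint_Q|\nabla g(x)|^p\,dx\r]^{\frac1p}.$$
Raising to the $p$-th power and summing over any interior pairwise disjoint family $\{Q_i\}_i$ with $\ell(Q_i)\le a$ yields
$$\lf\{\sum_i|Q_i|\lf[\fint_{Q_i}|g(x)-g_{Q_i}|\,dx\r]^p\r\}^{\frac1p}
\le C_{(n)}\,a\,\lf[\sum_i\int_{Q_i}|\nabla g(x)|^p\,dx\r]^{\frac1p}
\le C_{(n)}\,a\,\|\nabla g\|_{L^p(\rn)},$$
which tends to $0$ as $a\to0^+$. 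Combining this bound with $\|f-g\|_{JN_p(\rn)}<\varepsilon$ and the triangle inequality, and sending first $a\to0^+$ and then $\varepsilon\to0^+$, yields $\rm(iii)$.

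The essential and most delicate implication is $\rm(iii)\Rightarrow(i)$, for which I would employ a mollification argument. Fix a non-negative radial $\phi\in C_{\mathrm{c}}^\fz(\rn)$ with $\supp(\phi)\subset B(\mathbf{0},1)$ and $\int\phi=1$, and set $\phi_\varepsilon(x):=\varepsilon^{-n}\phi(x/\varepsilon)$ and $f_\varepsilon:=f\ast\phi_\varepsilon$. By Proposition \ref{f-ast-phi}, $f_\varepsilon\in JN_p(\rn)\cap C^\fz(\rn)$. Using the mean-zero property $\int\nabla\phi_\varepsilon=0$, one rewrites
$$\nabla f_\varepsilon(x)=\int_{\rn}\lf[f(y)-f_{B(x,\varepsilon)}\r]\nabla\phi_\varepsilon(x-y)\,dy,$$
so that the bound $|\nabla\phi_\varepsilon|\ls\varepsilon^{-(n+1)}\mathbf{1}_{B(\mathbf{0},\varepsilon)}$ yields the pointwise estimate
$$|\nabla f_\varepsilon(x)|\ls\varepsilon^{-1}\fint_{B(x,\varepsilon)}\lf|f(y)-f_{B(x,\varepsilon)}\r|\,dy.$$
Raising to the $p$-th power, integrating in $x$, and discretizing via a grid of cubes of side length comparable to $\varepsilon$ then expresses $\|\nabla f_\varepsilon\|_{L^p(\rn)}^p$ in terms of a mean oscillation sum of $f$ at scale $\sim\varepsilon$, which is finite thanks to $\rm(iii)$; hence $f_\varepsilon\in D_p(\rn)\cap JN_p(\rn)$. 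The hard part is to prove $\|f-f_\varepsilon\|_{JN_p(\rn)}\to0$ as $\varepsilon\to0^+$: one splits the defining supremum over cube families into small cubes (of side length at most a threshold $a$) and large cubes. On small cubes, the oscillation of $f-f_\varepsilon$ is controlled, through the convolution structure, by the oscillation of $f$ on slightly enlarged cubes, which is uniformly small thanks to $\rm(iii)$ provided $\varepsilon\ll a$; on large cubes, one bounds the contribution by combining the uniform $JN_p$-boundedness of $f-f_\varepsilon$ (from Proposition \ref{f-ast-phi}) with a dominated-convergence argument exploiting the pointwise convergence $f_\varepsilon\to f$ almost everywhere. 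This careful splitting argument, and in particular the coordination between the transition scale $a$ and the mollification parameter $\varepsilon$, is the technical heart of the proof.
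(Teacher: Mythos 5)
Your architecture matches the approach indicated in the paper (the approximating functions are indeed mollifications, which is exactly what Proposition \ref{f-ast-phi} is set up for), and the implications $\rm(ii)\Leftrightarrow(iii)$ and $\rm(i)\Rightarrow(iii)$ are correct as you present them: the scale-uniform version of the $q$-independence does follow because the John--Nirenberg/good-$\lambda$ machinery is applied inside each fixed cube $Q_i$, so it only ever sees subcubes of side length at most $a$, and the Poincar\'e argument for $\rm(i)\Rightarrow(iii)$ is complete. (Minor point: finiteness of $\|\nabla f_\varepsilon\|_{L^p(\rn)}$ already follows from $f\in JN_p(\rn)$ via your discretization; condition (iii) is not needed there.)

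The genuine gap is in the large-cube half of $\rm(iii)\Rightarrow(i)$. You propose to control the supremum over families of cubes with $\ell(Q_i)>a$ by ``uniform $JN_p$-boundedness of $f-f_\varepsilon$ plus dominated convergence using $f_\varepsilon\to f$ a.e.'' Dominated convergence only gives, for each \emph{fixed} cube $Q$, that $\int_Q|f-f_\varepsilon|\to 0$; but the $JN_p$ seminorm requires smallness \emph{uniformly} over arbitrary, possibly infinite, families of disjoint large cubes spread over all of $\rn$, and nothing in the hypotheses provides equi-integrability or decay at infinity ($f\in JN_p(\rn)$ need not be globally integrable, and $f-f_\varepsilon$ need not become small near infinity in any uniform sense), while the uniform bound $\|f-f_\varepsilon\|_{JN_p(\rn)}\ls\|f\|_{JN_p(\rn)}$ contains no smallness at all. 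The mechanism that actually closes this step is structural, not a limit interchange: since $f(x)-f_\varepsilon(x)=\int_{\rn}\phi_\varepsilon(z)[f(x)-f(x-z)]\,dz$ involves only values of $f$ within distance $\varepsilon$ of $x$, one partitions each large cube $Q$ into subcubes $\{R_m\}_m$ of side length comparable to $\varepsilon$, obtains $\fint_{R_m}|f-f_\varepsilon|\ls\co(f;3R_m)$, and then, using $\co(f-f_\varepsilon;Q)\le 2\fint_Q|f-f_\varepsilon|$ together with H\"older's inequality in $m$, gets
$$|Q|\lf[\co(f-f_\varepsilon;Q)\r]^p\ls\sum_m\lf|3R_m\r|\lf[\co(f;3R_m)\r]^p.$$
Summing over a disjoint family of large cubes and splitting the dilated cubes $3R_m$ into boundedly many disjoint subfamilies bounds the large-cube contribution by a constant multiple of the truncated seminorm of $f$ at scale $\sim\varepsilon$, which is small by (iii). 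In other words, the smallness for large cubes must come from the vanishing condition at scale $\varepsilon$, not from pointwise convergence; as written, your sketch of the technical heart would not go through.
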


Now, we recall another vanishing subspace of $JN_p(\rn)$ introduced in \cite{tyyACV},
which is of CMO type.
\begin{definition}\label{def-CJNp}
Let $p\in(1,\fz)$. The \emph{vanishing subspace $CJN_p(\rn)$} of $JN_p(\rn)$
is defined by setting
$$CJN_p(\rn):=\overline{C_{\rm c}^\fz(\rn)}^{JN_p(\rn)},$$
where $C_{\rm c}^\fz(\rn)$ denotes the set of
all infinitely differentiable functions on $\rn$
with compact support.
\end{definition}

The following theorem is just \cite[Theorem 4.3]{tyyACV}.
\begin{theorem}\label{CJNp-char}
Let $p\in(1,\fz)$. Then $f\in CJN_p(\rn)$
if and only if $f\in JN_p(\rn)$ and $f$ satisfies the following two conditions:
\begin{itemize}
\item [{\rm(i)}]
$$\lim_{a\to0^+}\sup_{\{\{Q_i\}_i:\,\,\ell(Q_i)\le a,\ \forall\,i\}}
\lf\{ \sum_i|Q_i|\lf[\fint_{Q_i}\lf|f(x)-f_{Q_i}\r|\,dx\r]^p \r\}^\frac1p=0,$$
where the supremum is taken over all collections $\{Q_i\}_i$ of interior pairwise
disjoint subcubes  of $\rn$ with side lengths $\{\ell(Q_i)\}_i$ no more than $a$;

\item [{\rm(ii)}]
$$\lim_{a\to\fz}\sup_{\{Q\subset\rn:\,\,\ell(Q)\ge a\}}
|Q|^{1/p}\fint_Q\lf|f(x)-f_Q\r|\,dx=0,$$
where the supremum is taken over all cubes $Q$ of $\rn$ with side lengths
$\ell(Q)$ no less than $a$.
\end{itemize}
\end{theorem}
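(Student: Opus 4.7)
The plan is to verify necessity by a stability-plus-testing argument on $C_{\rm c}^{\fz}(\rn)$-functions, and to establish sufficiency by constructing $C_{\rm c}^{\fz}(\rn)$-approximants through mollification followed by smooth truncation.

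For necessity, both (i) and (ii) pass to $JN_p(\rn)$-limits. Given a disjoint collection $\{Q_i\}_i$ of cubes with $\ell(Q_i)\le a$ and any $\phi\in C_{\rm c}^{\fz}(\rn)$, the triangle inequality yields
$$\lf\{\sum_i|Q_i|[\co(f;Q_i)]^p\r\}^{1/p}\le\|f-\phi\|_{JN_p(\rn)}+\lf\{\sum_i|Q_i|[\co(\phi;Q_i)]^p\r\}^{1/p},$$
and the one-cube analogue, $|Q|^{1/p}\co(f;Q)\le\|f-\phi\|_{JN_p(\rn)}+|Q|^{1/p}\co(\phi;Q)$, handles (ii). Hence both conditions reduce to checking them on $\phi\in C_{\rm c}^{\fz}(\rn)$: the Lipschitz bound $\co(\phi;Q)\le\sqrt{n}\,\|\nabla\phi\|_{L^\fz(\rn)}\ell(Q)$ together with the fact that only cubes meeting a fixed bounded neighborhood of $\supp\phi$ contribute proves (i), and $\fint_Q|\phi-\phi_Q|\le 2\|\phi\|_{L^1(\rn)}/|Q|$ for $Q\supset\supp\phi$ gives $|Q|^{1/p}\co(\phi;Q)\le 2\|\phi\|_{L^1(\rn)}|Q|^{-1/p'}\to 0$, proving (ii).

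For sufficiency, given $f\in JN_p(\rn)$ satisfying (i) and (ii), I would take $\phi_{R,\vp}:=\chi_R(f\ast\psi_\vp)\in C_{\rm c}^{\fz}(\rn)$, where $\psi_\vp$ is a standard mollifier and $\chi_R$ is a smooth cutoff with $\chi_R\equiv 1$ on $Q(\mathbf{0},R)$, $\supp\chi_R\subset Q(\mathbf{0},2R)$, and $\|\nabla\chi_R\|_{L^\fz(\rn)}\ls 1/R$. The triangle inequality splits the error as
$$\|f-\phi_{R,\vp}\|_{JN_p(\rn)}\le\|f-f\ast\psi_\vp\|_{JN_p(\rn)}+\|(1-\chi_R)(f\ast\psi_\vp)\|_{JN_p(\rn)}.$$
For the mollification term, I would decompose each competing collection $\{Q_i\}$ by side length: on cubes with $\ell(Q_i)\le a$, the Jensen-type bound $[\co(f\ast\psi_\vp;Q)]^p\le 2^p\int\psi_\vp(z)[\co(f;Q-z)]^p\,dz$ (in the spirit of Proposition \ref{f-ast-phi}) combined with condition (i) applied to each translate of the collection produces a contribution as small as desired; on cubes with $\ell(Q_i)>a$ and $\vp\ll a$, one exploits that $Q_i$ and $Q_i-z$ lie in a common slight enlargement $\widetilde Q_i$ and bounds $\co(f-f\ast\psi_\vp;Q_i)\ls\int\psi_\vp(z)\co(f;\widetilde Q_i)\,dz$, which tends to $0$ by translation-continuity. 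For the truncation term, observe that $(1-\chi_R)(f\ast\psi_\vp)$ vanishes on $Q(\mathbf{0},R)$; contributing cubes split into those with $\ell(Q_i)\ge R$ (controlled by (ii) inherited by $f\ast\psi_\vp$ from $f$) and those in the annulus $Q(\mathbf{0},2R)\setminus Q(\mathbf{0},R)$ with $\ell(Q_i)<R$, where the pointwise identity
$$(1-\chi_R)g(x)-(1-\chi_R)g(y) = (g(x)-g(y))(1-\chi_R(x)) + g(y)(\chi_R(y)-\chi_R(x)),$$
with $g:=f\ast\psi_\vp$, combined with $\|\nabla\chi_R\|_{L^\fz(\rn)}\ls 1/R$ introduces the small factor $\ell(Q_i)/R$.

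The main obstacle is uniformly controlling the mollification error on medium-to-large cubes, since condition (i) furnishes smallness only in the small-cube regime. My strategy is a local enlargement and overlap argument: because the enlarged cubes $\widetilde Q_i$ have bounded multiplicity (for $\vp\ll\ell(Q_i)$), the sum $\sum_i|Q_i|[\co(f;\widetilde Q_i)]^p$ is dominated by a constant multiple of $\|f\|_{JN_p(\rn)}^p$, and the translation-continuity of $L^1_{\loc}$ forces each $\fint_{Q_i}|f(\cdot)-f(\cdot-z)|$ to $0$ uniformly after a standard $\eta$-truncation of the (possibly infinite) collection using the finiteness of $\|f\|_{JN_p(\rn)}^p$. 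Once this uniform translation-continuity is in hand, choosing $R$ large first (to exploit (ii)) and then $\vp$ small (to apply the mollification estimate) completes the construction.
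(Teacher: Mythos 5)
Your necessity argument is correct, and your overall plan for sufficiency (mollify, then cut off) is the same convolution-based construction that the survey attributes to the original proof (cf.\ Proposition \ref{f-ast-phi}). The genuine gaps are in the sufficiency half, at precisely the two places where the real work lies. First, your treatment of the mollification error on cubes with $\ell(Q_i)>a$ does not work as stated: the bound $\co(f-f\ast\psi_\vp;Q_i)\ls\co(f;\widetilde Q_i)$ is only a uniform bound, not smallness, and your proposed remedy (bounded overlap plus an $\eta$-truncation plus translation continuity of $L^1_\loc$) cannot be made uniform over the admissible collections: which finitely many cubes carry the tail depends on the collection, while the supremum over collections sits inside the limit $\vp\to0^+$, so for each fixed $\vp$ there may be a collection, located far away, on which $f$ oscillates at scale $\vp$. (Moreover, bounded overlap of the $\widetilde Q_i$ does not by itself give $\sum_i|Q_i|[\co(f;\widetilde Q_i)]^p\ls\|f\|_{JN_p(\rn)}^p$, since the $JN_p$ norm only tests interior pairwise disjoint families; cubes of very different sizes with bounded overlap need not split into boundedly many disjoint subfamilies without further argument.) The standard repair, which reduces everything to hypothesis (i), is to subdivide each large cube $Q_i$ into subcubes $R_{i,j}$ with $\ell(R_{i,j})\sim\vp$, estimate $\int_{Q_i}|f-f\ast\psi_\vp|\le\sum_j\int_{R_{i,j}}|f-f\ast\psi_\vp|\ls\sum_j|R_{i,j}|\,\co(f;\widetilde R_{i,j})$ with $\widetilde R_{i,j}$ a fixed dilate of $R_{i,j}$, and then apply the H\"older inequality in $j$ to get $\sum_i|Q_i|[\co(f-f\ast\psi_\vp;Q_i)]^p\ls\sum_{i,j}|R_{i,j}|[\co(f;\widetilde R_{i,j})]^p$; since the $\widetilde R_{i,j}$ are cubes of side $\ls\vp$ at a single scale with bounded overlap, this is small by condition (i) as $\vp\to0^+$, with no appeal to translation continuity.

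Second, the truncation term is not under control as written, because conditions (i) and (ii) are blind to additive constants while your approximant is not: for $f\equiv c\neq0$ one has $\chi_R(f\ast\psi_\vp)=c\chi_R$, and a single cube $Q$ of side $\sim R$ straddling the transition annulus gives $|Q|^{1/p}\co(f-c\chi_R;Q)\sim|c|R^{n/p}\to\fz$, so your construction fails already for constants even though constants trivially belong to $CJN_p(\rn)$. Relatedly, your annulus estimate needs smallness of $\fint_{Q_i}|f\ast\psi_\vp|$, i.e.\ of local averages of $f$, which does not follow from (ii) alone. The missing step is to observe that (ii) forces the means $f_{Q(\mathbf{0},2^k)}$ to converge to some $c_\fz$ (telescoping, since $|f_{Q(\mathbf{0},2^k)}-f_{Q(\mathbf{0},2^{k+1})}|\le2^n\co(f;Q(\mathbf{0},2^{k+1}))=o(2^{-kn/p})$ is summable), to replace $f$ by $f-c_\fz$ (harmless for the seminorm and for (i), (ii)), and only then to cut off; condition (ii) then yields the quantitative decay of local averages of $f-c_\fz$ at infinity that the term $g(y)(\chi_R(y)-\chi_R(x))$ in your pointwise identity actually requires. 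With these two repairs the mollify-and-truncate scheme can be carried out, but as proposed the sufficiency direction has real gaps.
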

Moreover, Tao et al. \cite[Theorem 4.4]{tyyACV} showed that
Theorem \ref{CJNp-char}(ii) can be replaced by the following statement:
$$\lim_{a\to\fz}\sup_{\{\{Q_i\}_i:\,\,\ell(Q_i)\ge a,\ \forall\,i\}}
\lf\{ \sum_i|Q_i|\lf[\fint_{Q_i}\lf|f(x)-f_{Q_i}\r|\,dx\r]^p \r\}^\frac1p=0,$$
where the supremum is taken over all collections $\{Q_i\}_i$ of interior pairwise
disjoint subcubes of $\rn$ with side lengths $\{\ell(Q_i)\}_i$ greater than $a$.

Furthermore, Tao et al. \cite[Corollary 4.5]{tyyACV} showed that
Theorem \ref{CJNp-char} holds true with
$$\fint_Q\lf|f(x)-f_Q\r|\,dx
\quad{\rm and}\quad
\fint_{Q_i}\lf|f(x)-f_{Q_i}\r|\,dx$$
in (i) and (ii) replaced, respectively, by
$$\lf[\fint_Q\lf|f(x)-f_Q\r|^q\,dx\r]^\frac1q
\quad{\rm and}\quad
\lf[\fint_{Q_i}\lf|f(x)-f_{Q_i}\r|^q\,dx\r]^\frac1q$$
for any $q\in[1,p)$.

However, there still exist some unsolved questions on the vanishing
John--Nirenberg space. The first question is on the case $p=1$.
\begin{question}\label{VJNS-p=1}
	The proof of \cite[Theorem 3.2]{tyyACV} indicates that
	(i) and (iii) of Theorem \ref{VJNp-char} are equivalent when $p=1$.
	However, the corresponding equivalent characterization of
	$CJN_1(\rn)$ is still \emph{unclear} so far.
\end{question}
The following question is just \cite[Question 5.5]{tyyACV}.
\begin{question}\label{openQ-CVJNp}
\begin{itemize}
\item[\rm{(i)}] It is still unknown whether or not
Theorems \ref{VJNp-char} and \ref{CJNp-char} hold true
with $JN_p(\rn)$ replaced by $JN_{(p,q,s)_\az}(\rn)$
when $p$, $q\in[1,\fz)$, $s\in\zz_+$, and $\az\in\rr\setminus\{0\}$.

\item[\rm{(ii)}] It is interesting to ask whether or not,
for any given $p\in(1,\fz)$, $q\in[1,\fz)$, $s\in\zz_+$, and $\az\in\rr$,
$$\lf(CJN_{(p,q,s)_\az}(\rn)\r)^\ast=HK_{(p',q',s)_\az}(\rn)
{\quad\rm or\quad}
\lf(CJN_{(p,q,s)_\az}(\rn)\r)^{\ast\ast}=JN_{(p,q,s)_\az}(\rn)$$
still holds true, where $1/p+1/p'=1=1/q+1/q'$, $CJN_{(p,q,s)_\az}(\rn)$ denotes
the closure of $C_{\rm c}^\fz(\rn)$ in $JN_{(p,q,s)_\az}(\rn)$,
and $HK_{(p',q',s)_\az}(\rn)$ the Hardy-type space introduced in
\cite[Definition 3.6]{tyyNA}.
\end{itemize}
\end{question}

Obviously, $[L^p(\rn)/\cc]\subset CJN_p(\rn)\subset VJN_p(\rn)\subset JN_p(\rn)$.
Then the last question naturally arises,
which is just \cite[Questions 5.6 and 5.8]{tyyACV}.
\begin{question}\label{Q-nontrivial}
Let $p\in(1,\fz)$.
It is interesting to ask whether or not
$$\lf[L^p(\rn)/\cc\r]\subsetneqq CJN_p(\rn)\subsetneqq VJN_p(\rn)\subsetneqq JN_p(\rn)$$
holds true. This is still \emph{unclear} so far.
\end{question}

\subsection{Vanishing congruent John--Nirenberg--Campanato spaces}\label{Subsec-VanConJNp}

As a counterpart of Subsection \ref{Subsec-VanJNp},
the vanishing subspace of congruent John--Nirenberg--Campanato spaces
$VJN_{(p,q,s)_{\alpha}}^{\mathrm{con}}(\mathcal{X})$
was studied in \cite{jtyyz21}.

\begin{definition}\label{d4.1}
Let $p$, $q\in[1,\infty)$, $s\in\zz_+$, and $\alpha\in\rr$.
The \emph{space}
$VJN_{(p,q,s)_{\alpha}}^{\mathrm{con}}(\mathcal{X})$ is defined by setting
$$VJN_{(p,q,s)_{\alpha}}^{\mathrm{con}}(\mathcal{X})
:=\overline{D_p(\mathcal{X})\cap
JN_{(p,q,s)_{\alpha}}^{\mathrm{con}}(\mathcal{X})}
^{JN_{(p,q,s)_{\alpha}}^{\mathrm{con}}(\mathcal{X})},$$
where
$$D_p(\cx):=\lf\{f\in C^\fz(\cx):\,\,|\nabla f|\in L^p(\cx)\r\}.$$
Also, simply write
$VJN_{p,q}^{\mathrm{con}}(\mathcal{X})
:=VJN_{(p,q,0)_{0}}^{\mathrm{con}}(\mathcal{X})$
and
$VJN_p^{\mathrm{con}}(\mathcal{X})
:=VJN_{p,1}^{\mathrm{con}}(\mathcal{X})$.
\end{definition}
\begin{remark}
Let $p$, $q\in[1,\infty)$, $s\in\zz_+$, $\alpha\in\rr$,
and $Q_0$ be any cube of $\rn$.
Then the observation $D_p(Q_0)=C^{\infty}(Q_0)$ implies that
$$VJN_{(p,q,s)_{\alpha}}^{\mathrm{con}}(Q_0)
=\overline{C^{\infty}(Q_0)\cap
	JN_{(p,q,s)_{\alpha}}^{\mathrm{con}}(Q_0)}^{JN_{(p,q,s)_{\alpha}}^{\mathrm{con}}(Q_0)}.$$
\end{remark}

Recall that $\mathcal{D}_m(\cx)$ with $m\in\zz$
is defined in the beginning of Subsection \ref{Subsec-ConJNC}.
The following characterizations, namely,
Theorems \ref{t4.3} and \ref{C-infty}, are
just \cite[Theorems 3.5 and 3.9]{jtyyz21}, respectively.

\begin{theorem}\label{t4.3}
Let $p$, $q\in[1,\infty)$, $s\in\zz_+$,
$\alpha\in(-\infty,\frac{s+1}{n})$,
and $Q_0$ be a cube of $\rn$. Then
$f\in VJN_{(p,q,s)_\alpha}^{\mathrm{con}}(Q_0)$
if and only if $f\in L^q(Q_0)$ and
\begin{align}\label{CJN00}
\limsup_{m\rightarrow\infty}\sup_{\{Q_j\}_j\subset \mathcal{D}_m(Q_0)}
\left[\sum_{j}\left|Q_{j}\right|\lf\{\left|Q_{j}\right|^{-\alpha}\left[\fint_{Q_{j}}
\left|f-P_{Q_{j}}^{(s)}(f)\right|^{q}\right]^{\frac{1}{q}}\r\}^{p} \right]^{\frac{1}{p}}=0,
\end{align}
where the second supremum is taken over all collections of interior pairwise disjoint
cubes $\{Q_j\}_j\subset\mathcal{D}_m(Q_0)$ for any $m\in \zz$.
\end{theorem}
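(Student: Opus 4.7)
First I would prove the necessity by smooth approximation. Assuming $f\in VJN_{(p,q,s)_{\alpha}}^{\mathrm{con}}(Q_0)$, Proposition \ref{p2.1}(i) together with $\mathcal{P}_s(Q_0)\subset L^q(Q_0)$ (since $Q_0$ is bounded) delivers $f\in L^q(Q_0)$ at once. Then pick $\{f_k\}_{k\in\nn}\subset C^\infty(Q_0)\cap JN_{(p,q,s)_\alpha}^{\mathrm{con}}(Q_0)$ with $f_k\to f$ in $JN_{(p,q,s)_{\alpha}}^{\mathrm{con}}(Q_0)$. For any smooth $g$ and any cube $Q_j$ of side length $\ell$, Taylor's theorem combined with the bound \eqref{Cs} yields the pointwise estimate
\[
\bigl|g(x)-P_{Q_j}^{(s)}(g)(x)\bigr|\lesssim \ell^{s+1}\bigl\|\nabla^{s+1}g\bigr\|_{L^\infty(Q_j)}, \qquad x\in Q_j;
\]
summing over $\{Q_j\}_j\subset\mathcal{D}_m(Q_0)$ (whose total measure is at most $|Q_0|$) and using $\alpha<(s+1)/n$, one obtains
\[
[g]^{(m)}_{(p,q,s)_\alpha,Q_0}\lesssim \bigl\|\nabla^{s+1}g\bigr\|_{L^\infty(Q_0)}\bigl(2^{-m}\ell(Q_0)\bigr)^{s+1-n\alpha}|Q_0|^{1/p-\alpha}\to 0 \text{ as } m\to\infty.
\]
An $\varepsilon/2$--triangle argument applied to $[f]^{(m)}\le\|f-f_k\|_{JN_{(p,q,s)_\alpha}^{\mathrm{con}}(Q_0)}+[f_k]^{(m)}$ then yields \eqref{CJN00}.

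For the sufficiency, suppose $f\in L^q(Q_0)$ satisfies \eqref{CJN00}. I would extend $f$ by zero to $\rn$ and set $f_\epsilon:=f\ast\varphi_\epsilon$ for a standard nonnegative mollifier with $\supp\varphi_\epsilon\subset B(\mathbf{0},\epsilon)$; then $f_\epsilon\in C^\infty(Q_0)$ and $\|f-f_\epsilon\|_{L^q(Q_0)}\to 0$ as $\epsilon\to 0^+$. The essential identity is the translation-covariance of the projection,
\[
P_{Q_j}^{(s)}(f(\cdot-y))(x)=P_{Q_j-y}^{(s)}(f)(x-y),
\]
verified directly from the defining orthogonality \eqref{PQsf} via a change of variables; combined with Minkowski's integral inequality, this yields
\[
|Q_j|^{-\alpha}\biggl(\fint_{Q_j}\bigl|f_\epsilon-P_{Q_j}^{(s)}(f_\epsilon)\bigr|^q\biggr)^{1/q}\le \int_{B(\mathbf{0},\epsilon)}\varphi_\epsilon(y)\,\omega(Q_j-y)\,dy,
\]
where $\omega(Q):=|Q|^{-\alpha}(\fint_Q|f-P_Q^{(s)}(f)|^q)^{1/q}$. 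Given $\varepsilon'>0$, pick $m_0$ so that $[f]^{(m)}<\varepsilon'/3$ for all $m\ge m_0$. For $m\ge m_0$, the shifted family $\{Q_j-y\}_j$ remains pairwise disjoint with the same common side length, so one more Minkowski step together with \eqref{CJN00} controls $[f_\epsilon]^{(m)}$ by $\varepsilon'/3$ plus a boundary-layer remainder which vanishes as $\epsilon\to 0^+$. For $m<m_0$, $|Q_j|\ge 2^{-m_0n}|Q_0|$, so $|Q_j|^{-\alpha}$ is bounded by a constant $C(m_0)$ and Minkowski gives $[f-f_\epsilon]^{(m)}\lesssim C(m_0)\|f-f_\epsilon\|_{L^q(Q_0)}<\varepsilon'/3$ for $\epsilon$ small enough.

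The main obstacle is the large-$m$ bound in the sufficiency direction: although translation preserves the disjointness of a dyadic family, the shifted cubes $Q_j-y$ may protrude from $Q_0$, so the translated density $\omega(Q_j-y)$ must be controlled using the zero-extension of $f$. The restriction $\epsilon\ll 2^{-m_0}\ell(Q_0)$ will confine the affected cubes to a thin boundary layer whose total contribution to $[f_\epsilon]^{(m)}$ is bounded by the $JN$-density on a shell of measure $O(\epsilon\ell(Q_0)^{n-1})$; verifying that this bound is uniform in both the dyadic configuration $\{Q_j\}_j\subset\mathcal{D}_m(Q_0)$ and the shift $y\in B(\mathbf{0},\epsilon)$ is the delicate bookkeeping step on which the sufficiency argument rests.
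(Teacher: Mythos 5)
Your necessity argument is essentially correct and is the natural one: $f\in L^q(Q_0)$ follows from Proposition \ref{p2.1}(i) since $\mathcal{P}_s(Q_0)\subset L^q(Q_0)$, and for a smooth approximant $g$ the Taylor polynomial of degree $s$ together with \eqref{Cs} gives $[g]^{(m)}_{(p,q,s)_\alpha,Q_0}\lesssim\|\nabla^{s+1}g\|_{L^\infty(Q_0)}(2^{-m}\ell(Q_0))^{s+1-n\alpha}|Q_0|^{1/p}\to0$ because $\alpha<\frac{s+1}{n}$, after which the triangle inequality for $[\cdot]^{(m)}$ finishes the proof (this uses the reading $C^\infty(Q_0)=C^\infty(\rn)|_{Q_0}$, which is the intended one, so that $\nabla^{s+1}g$ is bounded on $Q_0$). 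Note that the survey does not reproduce a proof of Theorem \ref{t4.3} (it is quoted from the cited reference), so your attempt has to stand on its own.

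The sufficiency half, however, has a genuine gap, and it is not just ``delicate bookkeeping.'' First, extension by zero is fatal for part of the admissible range of $\alpha$: take $\alpha\in(\frac{1}{np},\frac{s+1}{n})$ (nonempty, e.g., for $s\ge1$, or for $s=0$ and $p>1$) and $f\equiv1$ on $Q_0$. For $m$ with $2^{-m}\ell(Q_0)\sim\epsilon$, the $\sim2^{m(n-1)}$ cubes of $\mathcal{D}_m(Q_0)$ along one face of $Q_0$ carry $f-f_\epsilon=1-\varphi_\epsilon\ast\mathbf{1}_{Q_0}$, which in rescaled coordinates is a fixed non-polynomial profile of height about $\tfrac12$; hence each such cube contributes $\gtrsim|Q_j|^{1-\alpha p}$ and $[f-f_\epsilon]^{(m)}\gtrsim\epsilon^{(1-n\alpha p)/p}\to\infty$ as $\epsilon\to0^+$. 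So the boundary-layer remainder is \emph{not} $O(\epsilon\,\ell(Q_0)^{n-1})$: the weight $|Q_j|^{-\alpha}$ amplifies the artificial jump created at $\partial Q_0$, and your scheme fails to converge even for constant $f$. One must instead use a boundary-respecting approximation (for instance, even reflection of $f$ across the faces of $Q_0$ before mollifying, or first dilating $f$ toward the center of $Q_0$), so that no discontinuity is introduced. Second, even for shifted cubes $Q_j-y$ that stay inside $Q_0$, they are congruent to, but not members of, $\mathcal{D}_m(Q_0)$, whereas hypothesis \eqref{CJN00} only controls dyadic families; to bound $\{\sum_j|Q_j|\,\omega(Q_j-y)^p\}^{1/p}$ you need a vanishing analogue of the equivalence in Remark \ref{2eqNORM}(i), i.e., that dyadic smallness at levels $m\ge m_0$ forces smallness of the corresponding quantity over \emph{arbitrary} families of pairwise disjoint congruent cubes of small side length. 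This requires the covering and polynomial-comparison (chaining) argument behind that equivalence and is nowhere supplied in your proposal; without it, the key Minkowski step does not close.
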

\begin{corollary}\label{p1a0}
Let $p=1$, $q\in[1,\fz)$, $s\in\zz_+$, $\az=0$,
and $Q_0$ be a cube of $\rn$.
Then \eqref{CJN00} holds true for any $f\in L^q(Q_0)$.
\end{corollary}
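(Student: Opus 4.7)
The plan is to derive Corollary \ref{p1a0} as a short consequence of Theorem \ref{t4.3} combined with Proposition \ref{p2.1}(ii). With $p=1$, $\alpha=0$, and any $q\in[1,\infty)$, the hypotheses of Proposition \ref{p2.1}(ii) are satisfied ($q\geq p=1$ and $\alpha=0\leq 0$), and it yields
$$JN^{\mathrm{con}}_{(1,q,s)_0}(Q_0) = |Q_0|^{1-1/q}\,L^q(Q_0)/\mathcal{P}_s(Q_0)$$
with equivalent norms; the scaling factor is a harmless constant, so as a set this coincides with $L^q(Q_0)/\mathcal{P}_s(Q_0)$. Moreover $\alpha=0<(s+1)/n$, so Theorem \ref{t4.3} characterizes the membership $f\in VJN^{\mathrm{con}}_{(1,q,s)_0}(Q_0)$ precisely by $f\in L^q(Q_0)$ together with \eqref{CJN00}. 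Thus, once I know $VJN^{\mathrm{con}}_{(1,q,s)_0}(Q_0) = JN^{\mathrm{con}}_{(1,q,s)_0}(Q_0)$, the corollary follows.

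To verify that last equality, I would invoke a standard density argument. By Definition \ref{d4.1} and the remark immediately after it (giving $D_1(Q_0)=C^\infty(Q_0)$), the space $VJN^{\mathrm{con}}_{(1,q,s)_0}(Q_0)$ is by construction the closure in the $JN^{\mathrm{con}}$-norm of $C^\infty(Q_0)\cap JN^{\mathrm{con}}_{(1,q,s)_0}(Q_0)$, which, through the identification above, becomes $C^\infty(Q_0)\cap L^q(Q_0)/\mathcal{P}_s(Q_0)$. Because the two norms are equivalent, this closure agrees with the corresponding closure taken in the quotient $L^q$-norm. Any infinitely differentiable function with compact support in the interior of $Q_0$ belongs to $C^\infty(Q_0)\cap L^q(Q_0)$, and such functions are dense in $L^q(Q_0)$ by a routine mollification; passing to the quotient then gives the desired density, so the closure is all of $L^q(Q_0)/\mathcal{P}_s(Q_0)$.

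The heart of the argument is the observation that at $p=1$ and $\alpha=0$ the congruent JNC space collapses to the Lebesgue quotient $L^q(Q_0)/\mathcal{P}_s(Q_0)$, rendering the vanishing requirement automatic. I anticipate no serious obstacle: the only care needed is in bookkeeping the quotient-by-$\mathcal{P}_s$ structure and in transferring density from the $L^q$-topology to the $JN^{\mathrm{con}}$-topology via Proposition \ref{p2.1}(ii). A self-contained alternative would approximate $f\in L^q(Q_0)$ by a continuous function $g$, split
$$f-P_{Q_j}^{(s)}(f) = \bigl[g-P_{Q_j}^{(s)}(g)\bigr]+\bigl[(f-g)-P_{Q_j}^{(s)}(f-g)\bigr],$$
control the $g$-piece by uniform continuity (using the identity $g-P_{Q_j}^{(s)}(g)=(g-g_{Q_j})-P_{Q_j}^{(s)}(g-g_{Q_j})$ together with \eqref{Cs}) and control the $(f-g)$-piece by \eqref{Cs} followed by Hölder's inequality on $\sum_j |Q_j|^{1-1/q}\|f-g\|_{L^q(Q_j)}$, then let the approximation error tend to zero; but this essentially redoes a fragment of the proof of Theorem \ref{t4.3} and strikes me as less efficient than the route through Proposition \ref{p2.1}(ii).
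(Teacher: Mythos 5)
Your proof is correct and takes essentially the same route as the paper's: the paper likewise combines Proposition \ref{p2.1}(ii) with the definition of $VJN_{(1,q,s)_0}^{\mathrm{con}}(Q_0)$ to obtain $\lf[L^q(Q_0)/\mathcal{P}_s(Q_0)\r]=VJN_{(1,q,s)_0}^{\mathrm{con}}(Q_0)=JN_{(1,q,s)_0}^{\mathrm{con}}(Q_0)$ and then concludes via Theorem \ref{t4.3}. The only difference is that you make explicit the density (mollification) step that the paper leaves implicit in the phrase ``the definition of $VJN_{(p,q,s)_\alpha}^{\mathrm{con}}(Q_0)$''.
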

\begin{proof}
By Proposition \ref{p2.1}(ii) and the definition of
$VJN_{(p,q,s)_{\alpha}}^{\mathrm{con}}(Q_0)$,
we have
$$\lf[L^q(Q_0)/\mathcal{P}_s(Q_0)\r]=VJN_{(p,q,s)_{\alpha}}^{\mathrm{con}}(Q_0)
=JN_{(p,q,s)_{\alpha}}^{\mathrm{con}}(Q_0),$$
which, combined with Theorem \ref{t4.3},
then completes the proof of Corollary \ref{p1a0}.
\end{proof}

\begin{theorem}\label{C-infty}
Let $p\in[1,\infty)$ and $q\in[1,p]$.
Then $f\in VJN_{p,q}^{\mathrm{con}}(\rn)$ if and only if $f\in JN_{p,q}^{\mathrm{con}}(\rn)$ and
$$\limsup_{m\rightarrow\infty}\sup_{\{Q_j\}_j\subset \mathcal{D}_m(\rn)}
  \lf[\sum_{j}|Q_j|\lf(\fint_{Q_j}\lf|f-f_{Q_j}\r|^q\r)^\frac{p}{q}\r]^{\frac{1}{p}}=0,$$
where the second supremum is taken over all collections of interior pairwise disjoint
cubes $\{Q_j\}_j\subset\mathcal{D}_m(\rn)$ for any $m\in \zz$.
\end{theorem}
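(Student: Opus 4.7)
My plan is to prove the necessity and the sufficiency separately, following the strategy of Theorem \ref{VJNp-char} but adapted to the congruent norm.

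\textbf{Necessity.} Assuming $f\in VJN_{p,q}^{\mathrm{con}}(\rn)$, I would first observe that the quantity $[\cdot]^{(m)}_{(p,q,0)_0,\rn}$ appearing in Definition \ref{Defin.jncc} is a seminorm dominated by $\|\cdot\|_{JN_{p,q}^{\mathrm{con}}(\rn)}$. Thus, for any $\eta>0$, I can pick $g\in D_p(\rn)\cap JN_{p,q}^{\mathrm{con}}(\rn)$ with $\|f-g\|_{JN_{p,q}^{\mathrm{con}}(\rn)}<\eta$, and the triangle inequality reduces the claim to $\lim_{m\to\fz}[g]^{(m)}_{(p,q,0)_0,\rn}=0$ for any $g\in D_p(\rn)$. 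For such $g$, the Poincar\'e--Wirtinger inequality on a cube $Q_j$ of side length $2^{-m}$ gives $\fint_{Q_j}|g-g_{Q_j}|^q\le C2^{-mq}\fint_{Q_j}|\nabla g|^q$; because $q\le p$, Jensen's inequality yields $(\fint_{Q_j}|\nabla g|^q)^{p/q}\le\fint_{Q_j}|\nabla g|^p$, whence
$$[g]^{(m)}_{(p,q,0)_0,\rn}\le C\,2^{-m}\|\nabla g\|_{L^p(\rn)}\longrightarrow0\quad\text{as }m\to\fz.$$
Letting $\eta\to0^+$ then finishes this direction.

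\textbf{Sufficiency.} Given $f\in JN_{p,q}^{\mathrm{con}}(\rn)$ satisfying the vanishing condition, I would construct approximants by mollification $g_\epsilon:=f*\phi_\epsilon$ with a standard smooth mollifier $\phi_\epsilon$. The three facts to verify are that (i) $g_\epsilon\in D_p(\rn)$, (ii) $g_\epsilon\in JN_{p,q}^{\mathrm{con}}(\rn)$, and (iii) $\|f-g_\epsilon\|_{JN_{p,q}^{\mathrm{con}}(\rn)}\to0$ as $\epsilon\to0^+$. For (i), using $\int\nabla\phi_\epsilon=0$, I can write
$$\nabla g_\epsilon(x)=\int\nabla\phi_\epsilon(x-y)[f(y)-f_{B(x,\epsilon)}]\,dy,$$
so $|\nabla g_\epsilon(x)|\le C\epsilon^{-1}(\fint_{B(x,\epsilon)}|f-f_{B(x,\epsilon)}|^q)^{1/q}$. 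Integrating the $p$-th power and invoking the integral representation of the congruent norm (Remark \ref{2eqNORM}(ii)) yields $\|\nabla g_\epsilon\|_{L^p(\rn)}\le C\epsilon^{-1}\|f\|_{JN_{p,q}^{\mathrm{con}}(\rn)}<\fz$. For (ii), I would adapt the proof of Proposition \ref{f-ast-phi} to the $q$-th power mean oscillation via Minkowski's integral inequality, obtaining $\|g_\epsilon\|_{JN_{p,q}^{\mathrm{con}}(\rn)}\le C\|\phi_\epsilon\|_{L^1(\rn)}\|f\|_{JN_{p,q}^{\mathrm{con}}(\rn)}$.

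\textbf{The approximation step.} This is the main obstacle. Given $\eta>0$, I choose $m_0$ with $\sup_{k\ge m_0}[f]^{(k)}<\eta$. For each collection $\{Q_j\}_j\subset\mathcal{D}_m(\rn)$ with $m\ge m_0$, the identity
$$(f-g_\epsilon)(x)-(f-g_\epsilon)_{Q_j}=\int\phi_\epsilon(z)\big\{[f(x)-f_{Q_j}]-[f(x-z)-f_{Q_j-z}]\big\}\,dz$$
together with Minkowski's generalized integral inequality, applied via the non-dyadic equivalence in Remark \ref{2eqNORM}(i), shows
$$[f-g_\epsilon]^{(m)}_{(p,q,0)_0,\rn}\le C\sup_{k\ge m}[f]^{(k)}\le C\eta.$$
For scales $m<m_0$ (the hardest regime), I use the reformulation
$$(f-g_\epsilon)(x)-(f-g_\epsilon)_{Q_j}=\int\phi_\epsilon(z)[h_z(x)-(h_z)_{Q_j}]\,dz,$$
where $h_z:=f-T_zf$ and $T_zf(\cdot):=f(\cdot-z)$; for cubes of side $\ell(Q_j)\gg\epsilon\ge|z|$, the difference between $f-f_{Q_j}$ and its translate $T_z(f-f_{Q_j-z})$ is concentrated near $\partial Q_j$ in a boundary layer of width $\epsilon$, and estimating this contribution against $\|f\|_{JN_{p,q}^{\mathrm{con}}(\rn)}$ produces a factor vanishing as $\epsilon/\ell(Q_j)\to0$.

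\textbf{Main obstacle.} The delicate point is obtaining uniform smallness of $[f-g_\epsilon]^{(m)}_{(p,q,0)_0,\rn}$ over the \emph{infinitely many} large scales $m<m_0$; the crude bound $[h_z]^{(m)}\le 2[f]^{(m)}$ is too weak, and one must carefully exploit the geometric fact that for $|z|\le\epsilon$ and $\ell(Q_j)\gg\epsilon$ the sets $Q_j$ and $Q_j-z$ almost coincide, reducing the estimate to a boundary-layer contribution whose $L^p$-mass vanishes as $\epsilon\to0^+$ uniformly in $m$.
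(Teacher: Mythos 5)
Your necessity argument (Poincar\'e plus Jensen on each dyadic cube, then density) is correct, and steps (i) and (ii) of the sufficiency are fine: the congruent norm is translation invariant in exactly the way needed for the analogue of Proposition \ref{f-ast-phi}, and the gradient bound via Remark \ref{2eqNORM}(ii) is the right way to get $f\ast\phi_\epsilon\in D_p(\rn)$; this mollification route is indeed the one behind the cited proof (the survey itself only quotes \cite[Theorem 3.9]{jtyyz21}). The genuine gap is precisely in the regime you flag as the main obstacle, the large cubes, where your proposed mechanism is wrong. For $|z|\le\epsilon$ one has $[f(x)-f_{Q_j}]-[f(x-z)-f_{Q_j-z}]=[f(x)-f(x-z)]-[f_{Q_j}-f_{Q_j-z}]$; only the difference of the two means is a boundary-layer integral over $Q_j\triangle(Q_j-z)$, while the term $f(x)-f(x-z)$ lives on all of $Q_j$ and has no reason to be concentrated near $\partial Q_j$. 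In particular no estimate of the form ``$[f-f\ast\phi_\epsilon]^{(m)}\ls(\epsilon 2^m)^{\theta}\|f\|_{JN^{\mathrm{con}}_{p,q}(\rn)}$'' can hold: if $f$ is a smooth compactly supported function oscillating like $\sin(x_1/\epsilon)$ on a unit cube $Q$, then $\|f\|_{JN^{\mathrm{con}}_{p,q}(\rn)}\ls1$, yet for $z=(\pi\epsilon,0,\dots,0)$ one has $\fint_{Q}|f(x)-f(x-z)|\,dx\sim1$ however small $\epsilon/\ell(Q)$ is. The smallness on large cubes cannot come from the geometry of the translation alone; it must come from the vanishing hypothesis at scale $\epsilon$, which your plan explicitly declines to use there.

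The way to close the step is to feed the small-scale hypothesis into the large cubes: for $\ell(Q_j)\ge2\epsilon$ partition $Q_j$ into congruent subcubes $\{S_{j,k}\}_k$ of side about $2\epsilon$, note that for $x\in S_{j,k}$ and $|z|\le\epsilon$ both $x$ and $x-z$ lie in $2S_{j,k}$, and bound $\lf[\fint_{Q_j}|f(x)-f(x-z)|^q\,dx\r]^{1/q}$ by $\lf[\frac{1}{|Q_j|}\sum_k\int_{2S_{j,k}}|f-f_{2S_{j,k}}|^q\r]^{1/q}$. Since all the $2S_{j,k}$ have equal measure and $p/q\ge1$, the power-mean (H\"older) inequality in $k$ gives $|Q_j|\lf[\fint_{Q_j}|f(x)-f(x-z)|^q\,dx\r]^{p/q}\ls\sum_k|2S_{j,k}|\lf[\fint_{2S_{j,k}}|f-f_{2S_{j,k}}|^q\r]^{p/q}$, with the key point that the exponent arithmetic removes all dependence on $|Q_j|$; summing over $j$ and splitting $\{2S_{j,k}\}_{j,k}$ into boundedly many disjoint congruent families of side $4\epsilon$ bounds everything, uniformly in the scale $m$, by the vanishing quantity at side length $\sim\epsilon$. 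Only after this does Minkowski in $z$ finish the estimate for $[f-f\ast\phi_\epsilon]^{(m)}$ at all large scales. A secondary, fixable point: both here and in your small-scale step the cubes $Q_j-z$ and $2S_{j,k}$ are not dyadic, so you need the vanishing condition for arbitrarily placed congruent cubes of small side length; Remark \ref{2eqNORM}(i) as stated is an equivalence of full norms (suprema over all scales), so you must check that its proof yields a scale-by-scale comparison before invoking it.
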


We can partially answer Question \ref{Q-nontrivial} in the congruent JNC space as follows.
\begin{proposition}\label{nontrivial}
Let $I_0$ be any given bounded interval of $\rr$,
and $Q_0$ any given cube of $\rn$.
\begin{enumerate}
\item[{\rm(i)}] (\cite[Proposition 3.11]{jtyyz21})
If $p\in(1,\fz)$ and $q\in[1,p)$, then
$[L^p(\rr)/\cc]\subsetneqq VJN_{p,q}^{\mathrm{con}}(\rr)$.

\item[{\rm(ii)}] (\cite[Proposition 3.12]{jtyyz21})
If $p\in(1,\fz)$ and $q\in[1,p)$, then
$VJN_{p,q}^{\mathrm{con}}(\rr)\subsetneqq JN_{p,q}^{\mathrm{con}}(\rr)$
and
$VJN_{p,q}^{\mathrm{con}}(I_0)\subsetneqq JN_{p,q}^{\mathrm{con}}(I_0)$.

\item[{\rm(iii)}] (\cite[Proposition 4.40]{jtyyz21})
If $p\in(1,\fz)$ and $q\in(1,p)$, then
$[L^p(Q_0)/\cc]\subsetneqq VJN_{p,q}^{\mathrm{con}}(Q_0)$.
\end{enumerate}
\end{proposition}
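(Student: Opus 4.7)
The plan is to treat the three strict inclusions uniformly, in each case first establishing the inclusion by soft arguments and then producing an explicit separating function.

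For (i), I would first show the continuous embedding $L^p(\rr)\hookrightarrow JN_{p,q}^{\mathrm{con}}(\rr)$ with $\|g\|_{JN_{p,q}^{\mathrm{con}}(\rr)}\le 2\|g\|_{L^p(\rr)}$ by a cube-by-cube estimate: on each congruent cube $Q$, the $L^q$-triangle inequality and Jensen's inequality give $(\fint_Q|g-g_Q|^q)^{1/q}\le 2(\fint_Q|g|^q)^{1/q}$, and then the assumption $q<p$ yields $(\fint_Q|g|^q)^{p/q}\le \fint_Q|g|^p$, so that summing over any congruent partition collapses to $2^p\|g\|_{L^p(\rr)}^p$. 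Since $C_c^\infty(\rr)\subset D_p(\rr)\cap JN_{p,q}^{\mathrm{con}}(\rr)$ is dense in $L^p(\rr)$, this embedding lifts $L^p$-approximation to $D_p$-approximation in the $JN_{p,q}^{\mathrm{con}}$-norm, yielding $[L^p(\rr)/\cc]\subset VJN_{p,q}^{\mathrm{con}}(\rr)$. For the strictness, I would produce an explicit $f\in VJN_{p,q}^{\mathrm{con}}(\rr)\setminus[L^p(\rr)/\cc]$ built from a bounded smooth profile with slow decay at infinity, designed so that no constant shift can pull it into $L^p(\rr)$ while its small-scale congruent oscillation sums still vanish via Theorem~\ref{C-infty}. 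Part (iii) proceeds analogously on $Q_0$, with $C^\infty(Q_0)\subset D_p(Q_0)\cap JN_{p,q}^{\mathrm{con}}(Q_0)$ replacing the density step; strictness uses a smoothed truncation of a singularity such as $|x-x_0|^{-n/p}$ for some $x_0\in Q_0$, which lies in $L^q(Q_0)\setminus L^p(Q_0)$ when $q<p$, verified via Theorem~\ref{t4.3} by controlling cubes far from $x_0$ through derivative estimates summing as a geometrically convergent tail and handling the singular cube via an appropriate polynomial subtraction.

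For (ii), the inclusion $VJN_{p,q}^{\mathrm{con}}\subset JN_{p,q}^{\mathrm{con}}$ is immediate from the definition, so the task is to find a function in $JN_{p,q}^{\mathrm{con}}$ whose congruent partition sums fail to vanish as the mesh refines. Guided by Theorems~\ref{C-infty} and~\ref{t4.3}, I would build $f$ as a lacunary or self-similar sum of the form $f=\sum_k c_k\mathbf{1}_{E_k}$, with sets $\{E_k\}$ calibrated to dyadic scales so that at each level $m$ some congruent partition captures oscillation of order one, yet the tail of the sum stays absolutely summable in the $JN_{p,q}^{\mathrm{con}}$-norm.

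The main obstacle throughout is precisely the construction and verification of these witness functions. The inclusion sides rest on standard density and triangle-inequality arguments, but certifying that a candidate really sits in the intended vanishing (or non-vanishing) space demands sharp uniform-in-scale control of the congruent partition sums, balancing local oscillation estimates against the interaction of the supremum over partitions with the $\limsup_{m\to\infty}$; this is the technical heart of \cite{jtyyz21}.
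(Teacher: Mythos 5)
Your soft parts are fine but they are not where the content of Proposition~\ref{nontrivial} lies: the cube-wise estimate $\|g\|_{JN^{\mathrm{con}}_{p,q}}\le 2\|g\|_{L^p}$ (valid for $q\le p$ by Jensen's inequality), combined with density of smooth functions, does give $[L^p/\cc]\subset VJN_{p,q}^{\mathrm{con}}$, and $VJN_{p,q}^{\mathrm{con}}\subset JN_{p,q}^{\mathrm{con}}$ is definitional. Every \emph{strict} inclusion, however, is left at the level of ``I would produce a witness'': no function is actually exhibited, none of the scale-by-scale estimates required by Theorems~\ref{t4.3} and~\ref{C-infty} is carried out, and you yourself defer exactly this step as ``the technical heart''. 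As it stands the proposal is a plan, not a proof (note also that this survey does not prove the proposition itself; it cites \cite{jtyyz21}).

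Moreover, the one concrete candidate you do name fails. For (iii) you suggest a smoothed truncation of $|x-x_0|^{-n/p}$ with $x_0\in Q_0$. To stay outside $L^p(Q_0)$ the candidate must retain essentially this blow-up rate (a bounded truncation lies in $L^\fz(Q_0)\subset L^p(Q_0)$), and then for every $m$ the single cube $Q_{j_0}\in\mathcal{D}_m(Q_0)$ containing $x_0$, of side length $\dz:=2^{-m}\ell(Q_0)$, already satisfies $|Q_{j_0}|(\fint_{Q_{j_0}}|f-f_{Q_{j_0}}|^q\,dx)^{p/q}\ge c>0$ with $c$ independent of $m$: on $\{x\in Q_{j_0}:\ |x-x_0|\le\ez\dz\}$, whose measure is at least $2^{-n}$ times that of the corresponding ball, one has $f\ge(\ez\dz)^{-n/p}\ge 2f_{Q_{j_0}}$ once $\ez=\ez(n,p)$ is small, since $f_{Q_{j_0}}\ls\dz^{-n/p}$. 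Hence the limit in Theorem~\ref{t4.3} stays bounded away from zero, so $f\notin VJN_{p,q}^{\mathrm{con}}(Q_0)$, and neither constant subtraction (here $s=0$) nor smoothing repairs this; a witness must spread its largeness diffusely (a logarithmically damped singularity, or a fractal construction in the spirit of \cite{DHKY18}). In fact the hypothesis $q\in(1,p)$ in (iii), which a purely constructive plan would not need, points to the route actually available in \cite{jtyyz21} and parallel to the argument recalled after Theorem~\ref{Vdual-BB}: by Theorems~\ref{VMO-H1-duality} and~\ref{t3.9}, $(VJN_{p,q}^{\mathrm{con}}(Q_0))^{\ast\ast}=JN_{p,q}^{\mathrm{con}}(Q_0)$, so $VJN_{p,q}^{\mathrm{con}}(Q_0)=[L^p(Q_0)/\cc]$ together with the reflexivity of $L^p(Q_0)/\cc$ would force $JN_{p,q}^{\mathrm{con}}(Q_0)=[L^p(Q_0)/\cc]$, contradicting $L^p(Q_0)\subsetneqq JN_{p,q}(Q_0)\subset JN_{p,q}^{\mathrm{con}}(Q_0)$. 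Similar care is missing in (i), where membership in $JN_{p,q}^{\mathrm{con}}(\rr)$ forces single large cubes to have bounded contribution, which together with $f\notin L^p(\rr)$ pins the decay to the borderline rate $(1+|x|)^{-1/p}$ (up to logarithms), and in (ii), where a witness such as $x^{-1/p}\mathbf{1}_{(0,1)}$ works but only after the congruent-scale estimates are actually verified.
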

Also, it is easy to show that $[L^1(Q_0)/\cc]= VJN_{1}^{\mathrm{con}}(Q_0)
=JN_{1}^{\mathrm{con}}(Q_0)$; see Remark \ref{LpJNpWLp}(ii).

The following VMO-$H^1$ type duality is just \cite[Theorem 4.39]{jtyyz21}.
\begin{theorem}\label{VMO-H1-duality}
	Let $p$, $q\in(1,\infty)$, $s\in\zz_+$, $\frac1{p}+\frac1{p'}=1=\frac1{q}+\frac1{q'}$,
	$\alpha\in(-\infty,\frac{s+1}{n})$,
	and $Q_0$ be any given cube of $\rn$. Then
	$$\lf(VJN_{(p,q,s)_{\alpha}}^{\mathrm{con}}(Q_0)\r)^*
	=HK_{(p',q',s)_{\alpha}}^{\mathrm{con}}(Q_0)$$
	in the following sense: there exists an isometric isomorphism
	$$K:\ HK_{(p',q',s)_{\alpha}}^{\mathrm{con}}(Q_0)
	\longrightarrow \lf(VJN_{(p,q,s)_{\alpha}}^{\mathrm{con}}(Q_0)\r)^*$$
	such that, for any $g\in HK_{(p',q',s)_\alpha}^{\mathrm{con}}(Q_0)$
	and $f\in VJN_{(p,q,s)_\alpha}^{\mathrm{con}}(Q_0)$,
	$$
	\langle Kg,f\rangle=\langle g,f\rangle.
	$$
\end{theorem}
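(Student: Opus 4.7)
The plan is to adapt Brudnyi--Brudnyi's proof of the non-congruent analogue (Theorem \ref{Vdual-BB}) to the congruent setting, combining the duality $(HK^{\mathrm{con}}_{(p',q',s)_\alpha}(Q_0))^{\ast} = JN^{\mathrm{con}}_{(p,q,s)_\alpha}(Q_0)$ of Theorem \ref{t3.9} with the concrete vanishing characterization in Theorem \ref{t4.3}.

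First, I would define the candidate map $K: HK^{\mathrm{con}}_{(p',q',s)_\alpha}(Q_0) \to (VJN^{\mathrm{con}}_{(p,q,s)_\alpha}(Q_0))^{\ast}$ by restriction of the Theorem \ref{t3.9} pairing: for $g \in HK^{\mathrm{con}}_{(p',q',s)_\alpha}(Q_0)$ and $f \in VJN^{\mathrm{con}}_{(p,q,s)_\alpha}(Q_0) \subseteq JN^{\mathrm{con}}_{(p,q,s)_\alpha}(Q_0)$, set $\langle Kg, f \rangle := \langle g, f \rangle$. The inclusion $VJN^{\mathrm{con}} \subseteq JN^{\mathrm{con}}$ with the same induced norm together with Theorem \ref{t3.9}(i) yields at once the upper bound $\|Kg\|_{(VJN^{\mathrm{con}})^{\ast}} \le \|g\|_{HK^{\mathrm{con}}}$. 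Injectivity of $K$ follows from the fact that $C^{\infty}(\overline{Q_0})$ is contained in $VJN^{\mathrm{con}}_{(p,q,s)_\alpha}(Q_0)$ (uniform continuity on $\overline{Q_0}$ implies the vanishing oscillation condition of Theorem \ref{t4.3}), so $Kg = 0$ forces $g$ to annihilate a dense family of test functions.

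For surjectivity, I would introduce a mollification $S_\epsilon$ on $Q_0$, constructed via a Stein-type reflection extension to $\mathbb{R}^n$ followed by standard convolution and restriction, and verify through an adaptation of Proposition \ref{f-ast-phi} that $S_\epsilon$ is uniformly bounded on $JN^{\mathrm{con}}_{(p,q,s)_\alpha}(Q_0)$, has image inside $C^{\infty}(\overline{Q_0}) \cap VJN^{\mathrm{con}}_{(p,q,s)_\alpha}(Q_0)$, and satisfies $S_\epsilon f \to f$ in the $JN^{\mathrm{con}}_{(p,q,s)_\alpha}$-norm whenever $f \in VJN^{\mathrm{con}}_{(p,q,s)_\alpha}(Q_0)$ (this last property being essentially the content of Theorem \ref{t4.3}, since vanishing oscillations on small cubes lets one control the commutator of $S_\epsilon$ with averaging). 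Given $\mathcal{L} \in (VJN^{\mathrm{con}}_{(p,q,s)_\alpha}(Q_0))^{\ast}$, the composition $\widetilde{\mathcal{L}}_\epsilon(f) := \mathcal{L}(S_\epsilon f)$ is then a bounded linear functional on all of $JN^{\mathrm{con}}_{(p,q,s)_\alpha}(Q_0)$ with norm at most $C\|\mathcal{L}\|$. Theorem \ref{t3.9}(ii) yields representing elements $g_\epsilon \in HK^{\mathrm{con}}_{(p',q',s)_\alpha}(Q_0)$ with uniformly bounded norms, so that the Banach--Alaoglu theorem applied in the identification $JN^{\mathrm{con}} = (HK^{\mathrm{con}})^{\ast}$ produces a weak-$\ast$ cluster point $g \in HK^{\mathrm{con}}_{(p',q',s)_\alpha}(Q_0)$. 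The identity $Kg = \mathcal{L}$ on $VJN^{\mathrm{con}}_{(p,q,s)_\alpha}(Q_0)$ then follows by letting $\epsilon \to 0^+$ and invoking the strong convergence $S_\epsilon f \to f$.

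The hard part will be constructing $S_\epsilon$ on $Q_0$ with the three simultaneous properties: uniform boundedness on $JN^{\mathrm{con}}_{(p,q,s)_\alpha}(Q_0)$, output lying in $C^{\infty}(\overline{Q_0}) \cap VJN^{\mathrm{con}}_{(p,q,s)_\alpha}(Q_0)$, and strong approximation of the identity on $VJN^{\mathrm{con}}_{(p,q,s)_\alpha}(Q_0)$. Proposition \ref{f-ast-phi} provides the template on $\mathbb{R}^n$ via convolution, but on $Q_0$ boundary effects require either Stein-type reflection or a careful local averaging, and the strong approximation property must be extracted precisely from the vanishing oscillation condition of Theorem \ref{t4.3} together with the integral representation in Remark \ref{2eqNORM}(ii). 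A secondary technical point is that $HK^{\mathrm{con}}$ is not a priori reflexive, so the weak-$\ast$ limit must be taken in the dual $(HK^{\mathrm{con}})^{\ast} = JN^{\mathrm{con}}$ rather than in $HK^{\mathrm{con}}$ itself, and identification of the limit as an element of $HK^{\mathrm{con}}$ requires uniform control of the atomic decompositions provided by Theorem \ref{t3.9}(ii).
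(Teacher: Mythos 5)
Your construction of $K$ by restriction, the bound $\|Kg\|_{(VJN^{\mathrm{con}})^\ast}\le\|g\|_{HK^{\mathrm{con}}}$, and the injectivity via smooth functions are fine, but the surjectivity argument has a genuine gap at its central step. After composing $\mathcal{L}$ with the mollifier you obtain functionals $\widetilde{\mathcal{L}}_\epsilon$ on $JN^{\mathrm{con}}_{(p,q,s)_\alpha}(Q_0)$ and then invoke Theorem \ref{t3.9}(ii) to get ``representing elements $g_\epsilon\in HK^{\mathrm{con}}_{(p',q',s)_\alpha}(Q_0)$''. Theorem \ref{t3.9}(ii) says the opposite: every bounded functional on $HK^{\mathrm{con}}$ is represented by a $JN^{\mathrm{con}}$ function, i.e.\ it identifies $(HK^{\mathrm{con}})^\ast=JN^{\mathrm{con}}$. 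Nothing in the paper identifies $(JN^{\mathrm{con}})^\ast$ with $HK^{\mathrm{con}}$, and such an identification is not to be expected (it is the analogue of claiming $(\mathrm{BMO})^\ast=H^1$); the dual of the big space is strictly larger than the predual. So the objects $g_\epsilon$ you need are simply not produced by the quoted results. The subsequent compactness step compounds the problem: Banach--Alaoglu in the identification $JN^{\mathrm{con}}=(HK^{\mathrm{con}})^\ast$ gives weak-$\ast$ compactness of bounded sets \emph{in $JN^{\mathrm{con}}$}, the dual, not in $HK^{\mathrm{con}}$, which is the predual and is not exhibited as a dual space here; a bounded net in $HK^{\mathrm{con}}$ has no guaranteed cluster point in $HK^{\mathrm{con}}$ in any topology available from the cited theorems. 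This is exactly the delicate point that makes dualities of $\mathrm{CMO}$--$H^1$ type nontrivial, and it cannot be bypassed by Hahn--Banach extension to $JN^{\mathrm{con}}$ plus abstract compactness.

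Two further points would still be missing even if the above were repaired. First, the theorem asserts an \emph{isometric} isomorphism, so you also need the lower bound $\|g\|_{HK^{\mathrm{con}}}\le\|Kg\|_{(VJN^{\mathrm{con}})^\ast}$, i.e.\ that the unit ball of $VJN^{\mathrm{con}}$ is norming for $HK^{\mathrm{con}}$ (with the norm $\|\cdot\|_{\widehat{JN}^{\mathrm{con}}}$ of \eqref{inff-P}); your sketch never addresses this. Second, the smoothing operator $S_\epsilon$ is only proposed, not constructed: Proposition \ref{f-ast-phi} covers convolution on $\rn$ for $s=0$, $\alpha=0$, $q=1$ only, and on a cube one must additionally control boundary reflection, the congruent (equal side length) constraint in the norm, and the interaction of mollification with the projections $P^{(s)}_Q$ when $s\ge1$, as well as prove the strong convergence $S_\epsilon f\to f$ in $JN^{\mathrm{con}}$-norm for $f\in VJN^{\mathrm{con}}$ from Theorem \ref{t4.3}. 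Note also that this survey only states the theorem by citing \cite[Theorem 4.39]{jtyyz21}; the proof there does not proceed through a representation of $(JN^{\mathrm{con}})^\ast$, so you should not expect the missing step to be supplied by the quoted results.
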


Similarly to Question \ref{openQ-CVJNp}(ii), the following question,
posed in \cite[Remark 4.41]{jtyyz21}, is still unsolved so far.

\begin{question}
For any given $p$, $q\in(1,\infty)$,
$s\in\zz_+$, and $\alpha\in(-\infty,\frac{s+1}{n})$,
it is interesting to ask whether or not
$$
\lf(CJN_{(p,q,s)_{\alpha}}^{\mathrm{con}}(\rn)\r)^{\ast}
=HK_{(p',q',s)_{\alpha}}^{\mathrm{con}}(\rn)
\quad\text{and}\quad
\lf(CJN_{(p,q,s)_{\alpha}}^{\mathrm{con}}(\rn)\r)^{\ast\ast}
=JN_{(p,q,s)_{\alpha}}^{\mathrm{con}}(\rn)$$
hold true, where $CJN_{(p,q,s)_{\alpha}}^{\mathrm{con}}(\rn)$ denotes the closure
of $C_{\mathrm{c}}^{\infty}(\rn)$ in $JN_{(p,q,s)_{\alpha}}^{\mathrm{con}}(\rn)$
and $\frac1{p}+\frac1{p'}=1=\frac1{q}+\frac1{q'}$.
This is still \emph{unclear} so far.
\end{question}

\noindent\textbf{Acknowledgements}.
Jin Tao would like to thank Hongchao Jia and Jingsong Sun
for some useful discussions on this survey.

\bigskip

\noindent Jin Tao, Dachun Yang (Corresponding author) and Wen Yuan

\medskip

\noindent Laboratory of Mathematics and Complex Systems
(Ministry of Education of China),
School of Mathematical Sciences, Beijing Normal University,
Beijing 100875, People's Republic of China

\smallskip

\noindent{\it E-mails:} \texttt{jintao@mail.bnu.edu.cn} (J. Tao)

\noindent\phantom{{\it E-mails:}} \texttt{dcyang@bnu.edu.cn} (D. Yang)

\noindent\phantom{{\it E-mails:}} \texttt{wenyuan@bnu.edu.cn} (W. Yuan)

\end{document}